\begin{document}
\begin{frontmatter}
\title{Inverse relations and reciprocity laws involving\\partial Bell polynomials and related extensions}
\author{Alfred Schreiber}
\address{Department of Mathematics and Mathematical Education,\\Europa-Universit{\"a}t Flensburg,\\Auf dem Campus 1, D-24943 Flensburg, Germany}
\ead{info@alfred-schreiber.de}
\begin{abstract}
The objective of this paper is, in the main, twofold: Firstly, to develop an algebraic setting for dealing with Bell polynomials and related extensions. Secondly, based on the author's previous work on multivariate Stirling polynomials (2015), to present a number of new results related to different types of inverse relationships, among these (1) the use of multivariable Lah polynomials for characterizing self-orthogonal families of polynomials that can be represented by Bell polynomials, (2) the introduction of `generalized Lagrange inversion polynomials' that invert functions characterized in a specific way by sequences of constants, (3) a general reciprocity theorem according to which, in particular, the partial Bell polynomials $B_{n,k}$ and their orthogonal companions $A_{n,k}$ belong to one single class of Stirling polynomials: $A_{n,k}=(-1)^{n-k}B_{-k,-n}$. Moreover, of some numerical statements (such as Stirling inversion, Schl\"omilch-Schl\"afli formulas) generalized polynomial versions are established. A number of well-known theorems (Jabotinsky, Mullin-Rota, Melzak, Comtet) are given new proofs. 
\end{abstract}
\begin{keyword}
Bell polynomials \sep Stirling polynomials \sep Fa\`{a} di Bruno's formula \sep Jabotinsky's formula \sep Lagrange inversion \sep Inverse relations \sep Reciprocity theorems \sep Binomial type sequences

\MSC[2010] 05A19 \sep 05E15 \sep 05E99 \sep 11B73 \sep 11B83 \sep 11C08 \sep 13F25 \sep 13N15 \sep 40E99 \sep 46E25
\end{keyword}
\end{frontmatter}
\section{Introduction}
The history of the Bell polynomials essentially originates with the problem of expanding a composite function $f\comp\varphi$ into a Taylor series. Fa\`{a} di Bruno's famous formula \cite{crai2005, john2002} for the higher derivatives of $f\comp\varphi $ provides a solution that takes on an elegant form using the (partial) Bell polynomials $B_{n,k}$:
\begin{equation}\label{fdb_intro}
	(f\comp\varphi)^{(n)}=\sum_{k=0}^n(f^{(k)}\comp\varphi)\cdot B_{n,k}(\varphi',\varphi'',\ldots,\varphi^{(n-k+1)}).
\end{equation}
A wider scope of interesting properties and applications has been opened up, among other things, through the investigations of Bell \cite{bell1934}, Riordan \cite{rior1958,rior1968}, and Comtet \cite{comt1974}. And even in the past two decades, research on Bell polynomials has still received considerable attention, as evidenced by a wealth of relevant literature, e.\,g. \cite{abbo2005,bigw2012,chhs2006,feng2020,figr2005,miho2008a,miho2010,schr2015,wtwa2009}. 

In this paper a unifying framework is developed for dealing with Bell polynomials and related extensions on an exclusively algebraic basis, which makes it easier to bring general concepts into play and to avoid \emph{ad-hoc} calculations as far as possible. 

Above all, the polynomial substitution introduced in this context proves to be an efficient tool. It can already be roughly read from the Fa\`{a} di Bruno formula that there is a correspondence between functions and polynomials. This basic observation is reflected in rules, according to which the composition of functions corresponds to a specific form of substituting a family of polynomials into a multivariable polynomial. 

Another key concept that is directly related to this is inversion. The following variant was a starting point for the author's previous study \cite{schr2015}. If we replace $\varphi$ in \eqref{fdb_intro} by its compositional inverse $\inv{\varphi}$, the question naturally arises whether $B_{n,k}(\inv{\varphi}',\inv{\varphi}'',\ldots,\inv{\varphi}^{(n-k+1)})$ can be represented by a polynomial expression in $\varphi',\varphi'',\ldots,\varphi^{(n-k+1)}$. This problem actually has a solution in the form of a Laurent polynomial $A_{n,k}$ such that
\begin{equation}\label{inversion_intro}
	B_{n,k}(\inv{\varphi}',\inv{\varphi}'',\ldots,\inv{\varphi}^{(n-k+1)})=A_{n,k}(\varphi',\varphi'',\ldots,\varphi^{(n-k+1)})\comp\inv{\varphi}.
\end{equation}

\sloppy
\noindent
Todorov \cite{todo1981} presumably was the first to establish at least a semi-ex\-plicit (determinantal) expression for $A_{n,k}$. A few years earlier Comtet \cite{comt1974} had represented the subfamily $A_{n,1}$ by means of $B_{n-1+k,k}(0,X_2,\ldots,X_n)$, \mbox{$0\leq k\leq n-1$} (see \eq{comtet_formula} below). In \cite{schr2015}, the Comtet formula was extended to the entire family $A_{n,k}$, which eventually also enabled an explicit representation in the form of a partition polynomial (cf. \theorem{mainresult} and \eq{stirling_partitionsum} below). A number of fundamental properties have also been proven, among which the orthogonality relation deserves special emphasis:
\begin{equation}\label{orthogonality_intro}
	\sum_{j=k}^n A_{n,j}B_{j,k}=\kronecker{n}{k}\qquad(1\leq k\leq n).
\end{equation}
\eq{orthogonality_intro} is the perfect analogue of the corresponding relation between the Stirling numbers, for in addition to the well-known fact that $B_{n,k}(1,\ldots,1)$ is equal to the Stirling number of the second kind $s_2(n,k)$, it indeed turns out that $A_{n,k}(1,\ldots,1)$ is equal to the \emph{signed} Stirling number of the first kind $s_1(n,k)$. In other interesting cases, too, identities that are satisfied by the Stirling numbers can be raised to the level of the corresponding polynomials. To name just a few examples, we mention a theorem by Khelifa and Cherruault \cite{khch2000} (Section~5.4), the introduction of multivariate Lah polynomials (Section~5.5), and the reciprocity theorem $A_{n,k}=(-1)^{n-k}B_{-k,-n}$ (Section~8). The latter shows that the partial Bell polynomials and their orthogonal companions ultimately belong to \emph{one} kind of multivariate \emph{Stirling polynomials}.\\[-1.5ex]

\fussy
The content is organized as follows:
\\[-1.5ex]

\textit{Section~2}.---\,In \cite{schr2015} it was sufficient to axiomatically describe the required algebra of functions. However, in order to continue and deepen these investigations it proves advantageous using a suitable standard model instead, here the algebra of formal power series over a field of characteristic zero. Section~2 summarizes the notations used and some of the results required from \cite{schr2015}. 

\enlargethispage{1ex}
\textit{Section~3}.---\,For any function $\varphi$ with Taylor coefficients $\varphi_0,\varphi_1,\varphi_2,\ldots$, a higher-order derivative operator $\Omega_n(\cdot\,|\,\varphi)$ is introduced that assigns to every function term $f$ (whether containing $\varphi$ or not) a polynomial $\Omega_n(f\,|\,\varphi)$ by replacing in $f^{(n)}(0)$ each occurence of $\varphi_j$ with $X_j$ ($j=0,1,2,\ldots$). Based on rules for evaluating $\Omega_n(f\,|\,\varphi)$ for composite function terms, we obtain several instances and classes of polynomials that are closely related to the $B_{n,k}$ and will play a crucial role in subsequent sections.

\textit{Section~4}.---\,The effect is studied the composition of functions has on polynomials, which depend in a specific way on those functions. Two composition rules will be proved in this context. The first one is the polynomial counterpart of a functional identity $h=f\comp g$. The second rule reformulates a theorem of Jabotinsky and is given here a new proof. It appears as an indispensable tool in many of the proofs to follow.

\textit{Section~5}.---\,This section deals with the class of B-representable polynomials that can be written in the form $Q_{n,k}=B_{n,k}(H_1,\ldots,H_{n-k+1})$ for all $n\geq k\geq 1$. Section~5.1 contains some criteria (necessary, sufficient) and the simple fact that a regular B-representable family of polynomials $Q_{n,k}$ has a unique orthogonal companion $\ortho{Q}_{n,k}$. In Section~5.2 it is shown that identities which are valid for Stirling numbers\,---\,such as the Stirling inversion or the Schl{\"o}milch-Schl{\"a}fli formulas\,---\,can be extended to regular B-representable polynomials. In the remaining subsections, special B-representable polynomial families and their orthogonal companions are examined. 

\textit{Section~6}.---\,As is well-known there is a close connection between the complete Bell polynomials and binomial sequences. Therefore it appears reasonable to apply to this area the results on Bell polynomials obtained up to then. Most identities known from the literature are direct consequences from our previously established statements. Special interest deserves the binomial sequence related to trees that has been studied by Knuth and Pittel \cite{knpi1989} and is given here a new explicit representation. Contributions are also made to the theorem of Mullin and Rota. We give a new proof, supplemented by a `both-or-none statement', which generalizes a lemma of Yang \cite{yang2008}.

\textit{Section~7}.---\,Lagrange's classical formula for the inversion of a power series has been generalized in numerous forms (see, e.\,g., Gessel \cite[Theorem~2.1.1]{gess2016}). In this section we are, loosely speaking, concerned with the problem of constructing multivariate polynomials, which convert a sequence of constants that characterizes a given function $f$ into the corresponding sequence of constants that characterizes the inverse function $\inv{f}$. Theorem~\ref{generalized_lagrange_inversion} explicitly describes the intricate form of these `generalized Lagrange inversion polynomials'. Some special cases are discussed in detail, in particular a corollary ($=$ Theorem~\ref{comtet_thm_F}) that proves to be equivalent to Comtet's Theorem~F \cite[p.\,151]{comt1974}. 

\textit{Section~8}.---\,This final section is about certain laws of reciprocity. The formulation of such laws requires that the domain of the indices of the polynomials involved can be extended to the integers. In order to achieve this, a new and straightforward procedure is proposed (based on the results from Section~3). The above-mentioned reciprocity law can thus be generalized to any regular B-representable families of polynomials $Q_{n,k}$. The main result (Theorem~\ref{reciprocity_B_rep}) then states that $\ortho{Q}_{n,k}=(-1)^{n-k}Q_{-k,-n}$ for all $n,k\in\integers$. Finally, two reciprocities for the potential polynomials are derived. The first implies the well-known Schur-Jabotinsky theorem (cited in \cite{gess2016}); the second extends Comtet's Theorem~C \cite[p.\,142]{comt1974} and is shown to be essentially a general version of a binomial transformation attributed to Melzak.
%
\section{Basic notions and preliminaries}
\subsection{Algebra of functions}
Throughout this chapter $\const$ is supposed to be a fixed commutative field of characteristic zero (so that $\rationals\subseteq\const$). We denote by $\funcs:=\const\lbrack\lbrack x\rbrack\rbrack$ the algebra of formal power series in $x$ with coefficients in $\const$. As customary, addition and scalar multiplication is defined coordinatewise, and product is defined by the Cauchy convolution. The elements $f\in\funcs$ will be called \textit{functions}. Besides their variable-free notation, we equally write $f(x)$ if the `argument' $x$ is to be referred in any way. So, the coefficient of $x^n$ in $f(x)=\sum_{n\geq0}c_n x^n$ is written $c_n=[x^n]f(x)$. We denote by $\funcs_0$ the set of all $f\in\funcs$ whose leading coefficient, $[x^0]f(x)$, is zero; furthermore we write $\funcs_1$ for the complement of $\funcs_0$ in $\funcs$. Then, $\funcs_1$ is the subring of the units of $\funcs$, that is, its elements are precisely those functions $f\in\funcs$ having a multiplicative inverse, from now on denoted by $f^{-1}$ or by $\nicefrac{1}{f}$ and called \emph{reciprocal of} $f$. 
\label{Translation rule for coefficients}
Another subalgebra of $\funcs$ is the ring of polynomials, $\poly:=\const[x]$. When dealing with multivariate Stirling polynomials, it proves appropriate to include certain elements of $\const(x)$, in particular $\nicefrac{1}{x}$. Therefore, we also admit as functions Laurent polynomials, i.\,e., elements of $\laurentpoly:=\const[x^{-1},x]$, and only once (in Section~8, \remark{schur_jabotinsky_thm}) even formal Laurent series over $\const$.

Next we introduce, as a third binary operation, the \emph{composition} $\comp$ of functions by the following rule of substitution:
\begin{equation}\label{def_composition}
	(f\comp g)(x):=f(g(x))=\sum_{n\geq0}([x^n]f(x))g(x)^n.
\end{equation}
This, however, defines but a partial operation. For almost all\footnote{\label{polynomial_case}The few times (cf. \remark{basic_CR_polynomialcase}) that we shall have to treat Laurent polynomials as composite functions $f\comp g$, we will limit ourselves to the case $f\in\poly$, $g\in\laurentpoly$.}
 of our purposes, two cases will suffice, in which (\ref{def_composition}) makes good sense:

\begin{align*}
f\in\funcs\text{ and }g\in\funcs_0\quad\text{(called 0-\emph{case}),}\\
f\in\laurentpoly\text{ and }g\in\funcs_1\quad\text{(called 1-\emph{case}).}
\end{align*}

\noindent
In either case we get a well-defined composite function $f\comp g$. The identity element is $\id=\id(x):=x$, satisfying $f\comp\id=\id\comp f=f$. Moreover, we have $f\comp(g\comp h)=(f\comp g)\comp h$, $(f+g)\comp h=(f\comp h)+(g\comp h)$, and $(f\cdot g)\comp h=(f\comp h)\cdot(g\comp h)$, whenever the terms involved are meaningful. By $\invfuncs$ we denote the set of $g\in\funcs_0$, for which a compositional inverse $f\in\funcs_0$ exists satisfying $f\comp g=g\comp f=\id$. We write $\inv{g}$ for the (unique) inverse of $g$. The set of invertible functions in $\funcs$ forms a non-abelian group with composition. Recall that $\invfuncs=\left\{g\in\funcs_0\,\mid\,[x]g(x)\neq0\right\}$. 

\begin{rem}\label{invertible_product}
We note the following simple, but useful statement: \emph{The product of two functions in $\funcs$ is invertible, if and only if either is invertible while the other is a unit.} From this follows immediately
\begin{equation}\label{invertible_function}
f\in\invfuncs\iff \id^{-1}\cdot f\in\funcs_1.
\end{equation}
\end{rem}
\label{Invertible product}
\noindent
Among the few special functions we shall be concerned with further on, consider the exponential function in $\funcs$ defined as usual by $\exp(x):=1+x+x^2/2!+x^3/3!+\dotsb$ (occasionally written in traditional notation $e^x$). Since $\exp$ belongs to $\funcs_1$, it has no compositional inverse, and $\log$ cannot be represented in $\funcs$. Alternatively, we define $\expm:=\exp-1$, which has Mercator's series for $\log(1+x)$ as its inverse in $\invfuncs$, namely
\[
	\logm(x):=\inv{\expm}(x)=\sum_{n\geq1}(-1)^n\frac{x^n}{n}.
\]
\label{Exponential function}
\label{Exponential and logarithm modified}

\vspace*{-2ex}
\noindent
Finally, we need our algebra of functions to be endowed with a \emph{derivation}, that is, an additive operator $D$ satisfying the Leibniz product rule $D(fg)=fD(g)+D(f)g$. A derivation is said to be \emph{normalized} when it takes $x$ to 1. A normalized derivation $D$ on $\poly$ agrees with the ordinary derivation known from calculus: $D(a_0+a_1x+a_2x^2+\dotsb +a_n x^n)=a_1+2 a_2 x+\dotsb +n a_n x^{n-1}$. Recall now that $D$ can be extended in a unique way to a derivation on the rational functions (quotient field of $\poly$) as well as to a derivation on $\funcs$ (see, e.\,g., \cite[p.\,31]{kolc1973}, and for some more details \cite{bend1980}). We still denote these extensions by $D$ (only some few times writing $f'$ instead of $D(f)$). As a consequence, the chain rule $D(f\comp g)=(D(f)\comp g)\cdot D(g)$ is satisfied in both the 0-case and the 1-case as well.

Iterating $D$ leads, in the usual way, to derivatives of higher order $D^n(f)$, $n\geq0$. For example, applied to $\expm$, $\logm$, and $\id^{-1}$ one has for all $n\geq1$:
{\allowdisplaybreaks
\begin{align}
	\label{higherderiv_exp}
	D^n(\expm)&=D^n(\exp)=\exp=1+\expm,\\
	\label{higherderiv_log}
	D^n(\logm)&=(-1)^{n-1}(n-1)!(1+\id)^{-n},\\
	\label{higherderiv_rec}
	D^n(\id^{-1})&=(-1)^n n!\id^{-(n+1)}.
\end{align}
}
Here, as in all similar cases, the term $(1+\id)^{-n}$ in (\ref{higherderiv_log}) is to be understood as the $n$th power of $(1+\id)^{-1}$ which, of course, is the function (geometric series) $1-\id+\id^2-\id^3+\dotsb\in\funcs_1$. Generally, every $f\in\funcs$ can be written in form of a Taylor series $f(x)=\sum_{n\geq0}D^n(f)(0)x^n/n!$.

We conclude this subsection with some remarks concerning the operator $\theta D$, which is a derivation whenever $\theta\in\funcs$. Comtet \cite{comt1973} called $\theta D$ \emph{Lie derivation} (with respect to the function $\theta$) and defined it by $\theta D(f)(x):=\theta(x)f'(x)$ (see also \cite{ mima1998}). Since long the special case $\theta=\id$ as well as the nice expansions resulting from repeatedly applying $xD$ to a function had been studied in the literature (see, e.\,g., \cite{boya2012,jord1939, masc2016,tosc1949}). However, leaving $\theta$ unspecified, one obtains far less satisfactory results (an issue we shall come back to in Section~5.6). As is shown in \cite{schr2015}, the situation takes a happy turn when, following Todorov \cite{todo1981}, one chooses $\theta$ to be of the form $D(\varphi)^{-1}$. Therefore, given any function $\varphi$ with $D(\varphi)(0)\neq0$, we define $\liederiv{\varphi}$ by
\begin{equation}\label{todorovs_choice}
	\liederiv{\varphi}(f):=D(\varphi)^{-1}D(f).
\end{equation}
The operator $D_{\varphi}$ may be called \emph{derivation with respect to} $\varphi$, because it acts on a function as if $\varphi$ were its independent variable, for instance:
 $\liederiv{\varphi}(1+3\varphi-\varphi^5)$ $=3-5\varphi^4$, or $\liederiv{\varphi}(e^{\varphi})=e^{\varphi}$. 

\begin{rem}
A peculiar detail: Just the classical operator $xD$ cannot be represented as a $D_{\varphi}$, since $\varphi=\log$ is not a function in our sense. Alternatively, one could use $D_{\log}$ as an abbreviation for \mbox{$D_{\logm}-D$} thus achieving the analogous effect a genuine operator $D_{\log}$ would have on, say, meromorphic functions.
\end{rem}

For an arbitrary $\varphi\in\invfuncs$ and $n\geq0$ it can easily be shown \cite[Proposition 2.2: \emph{Pourchet's formula}]{schr2015} that
\begin{equation}\label{pourchet_formula}
		\hliederiv{\varphi}{n}(f)=D^n(f\comp \inv{\varphi})\comp\varphi.
\end{equation}
Specializing $f=\id$ in (\ref{pourchet_formula}), we get the $n$th Taylor coefficient of the inverse function $\inv{\varphi}$ by just interchanging $\id$ and $\varphi$ in $\hliederiv{\id}{n}(\varphi)(0)$ ($=$ the $n$th Taylor coefficient of $\varphi$), i.\,e., we have $D^n(\inv{\varphi})(0)=\hliederiv{\varphi}{n}(\id)(0)$. Compared to the classical term $D^{n-1}((\id/\varphi)^n)(0)$, which Lagrange proposed to obtain the inverse of a power series, the iterative expression $\hliederiv{\varphi}{n}(\id)(0)$ proves simpler and, in most cases, requires significantly less computational amount.
\label{Computing inverse functions}

The next subsection will exhibit the important part the operator $D_{\varphi}$ plays in setting up a suitable framework for dealing with multivariate Bell polynomials and their related extensions.

\subsection{Multivariate polynomials}
We consider polynomials over the field $\const$ as well as some kinds of special Laurent polynomials from $\const[X_0^{-1},X_1^{-1},X_0,X_1,\ldots,X_n]$ which, for brevity, will be referred as polynomials, too. The partial derivation on $\const[X_1,\ldots,X_n]$ with respect to $X_j$ can be regarded as the (unique) normalized derivation on $\const'[X_j]$ where $\const'=\const[X_1,\ldots,X_{j-1},X_{j+1},\ldots,X_n]$; it extends in a unique way to the rational functions $\const(X_1,\ldots,X_n)$ and is denoted by $\pderop{X_j}$.

Given a polynomial $P$ in $X_1,X_2,\ldots$ and any sequence of polynomials $(Q_n)$, we denote by $P\comp Q_{\num}$ the result obtained by replacing in $P$ each indeterminate $X_j$ by $Q_j$ (the $\num$-sign marks the indexed place that corresponds to the indeterminate's index). Besides $P\comp Q_{\num}$, we also use the traditional notation $P(Q_1,Q_2,\ldots)$. When the sequence $Q_1,Q_2,\ldots$ is constant, the substitution will be called \emph{unification}, and $\num$ be dropped. In the (default) case $Q_1=Q_2=\cdots=1$ we plainly write $P\comp 1$, which gives the sum of the coefficients of $P$. 

A reasonable sense can also be attached to expressions of the form \mbox{$Q_{\num}\comp R_{\num}$}. Suppose $Q_n\in\const[X_1,\ldots,X_{q(n)}]$, with an integer $q(n)\geq 1$. We then take \mbox{$Q_{\num}\comp R_{\num}$} as an abbreviation for $Q_{\num}(R_1,\ldots,R_{q(\num)})$. The composition of polynomials obeys the \emph{associative law}:
\begin{equation}\label{composition_associative_law}
	(P\comp Q_{\num})\comp R_{\num}=P\comp (Q_{\num}\comp R_{\num}).
\end{equation}
Given any function $\varphi$, consider the mapping $P\mapsto P^{\varphi}:=P\comp D^{\num}(\varphi)$, which assigns to each polynomial $P$ the function obtained from $P$ by replacing $X_j$ by $D^{j}(\varphi)$ for each $j$. The following substitution rule is obvious:
\begin{equation}\label{substitution_rule}
	P(Q_1,\ldots,Q_n)^{\varphi}=P(Q_1^{\varphi},\ldots,Q_n^{\varphi}).
\end{equation}
\sloppy
\begin{rem}\label{identical_polynomials}
Let $P,Q$ be polynomials in $X_1,X_2,\ldots$ satisfying $P^{\varphi}(0)=Q^{\varphi}(0)$ for every $\varphi\in\funcs$. We then have $P=Q$. --- Since the constants in any given sequence $c_1,c_2,\ldots\in\const$ may be regarded as Taylor coefficients $c_j=D^{j}(\varphi)(0)$ of some function $\varphi$, one merely has to recall that distinct polynomials over an infinite integral domain cannot give rise to the same polynomial function.
\end{rem}

\fussy
In the sequel double-indexed families of polynomials play a major role. We use the notation $(U_{n,k})$ with $n,k\geq 0$ to mean the infinite family, and moreover $(U_{i,j})_{0\leq i,j\leq n}$ to denote its initial part in form of a quadratic matrix. The family (and the matrix as well) is called \emph{(lower) triangular} if $U_{n,k}=0$ for $n<k$. We say that polynomial families $(U_{n,k})$, $(V_{n,k})$ are \emph{orthogonal companions} (of each other) when they satisfy the \emph{orthogonality relation}
\begin{equation}\label{orthogonality_relation}
	\sum_{j=0}^{n}U_{n,j}V_{j,k}=\kronecker{n}{k}
\end{equation}
for all $n,k\geq0$ ($\kronecker{n}{k}$ Kronecker's symbol). In this case we equally write $U_{n,k}=\ortho{V}_{n,k}$ or $V_{n,k}=\ortho{U}_{n,k}$. The families involved will be called \emph{regular} inasmuch they necessarily have non-singular matrices.\smallskip

Next, we summarize without proofs some relevant material from the author's paper on multivariate Stirling polynomials the following sections are based upon. For details we refer the reader to \cite{schr2015}.

Unless otherwise stated we assume $\varphi$ to be an arbitrary function from $\invfuncs$, and $i,j,k,n,\ldots$ to be non-negative integers.

\begin{prop}\label{msp_1}\label{Todorov's determinant}
$\mspace{-5mu}$There exist polynomials $A_{n,k}\in\const[X_1^{-1}\mspace{-5mu},X_2,\ldots,X_{n-k+1}]$ such that
\[
	\hliederiv{\varphi}{n}=\sum_{k=0}^{n}A_{n,k}^{\varphi}\cdot D^{k}.
\]
The family $(A_{n,k})$ is triangular, regular, and uniquely determined by the differential recurrence
\[
	A_{n+1,k}=\frac{1}{X_1}\left(A_{n,k-1}+\sum_{j=1}^{n-k+1}X_{j+1}\pderiv{A_{n,k}}{X_j}\right),\quad A_{n,0}=\kronecker{n}{0}.
\]
\end{prop}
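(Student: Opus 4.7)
My plan is to prove the statement by induction on $n$, using the asserted recurrence simultaneously as the definition of $A_{n+1,k}$ and as the verification step for the operator identity. The base case $n=0$ is trivial: $D_\varphi^0=\id=1\cdot D^0$ fits with $A_{0,0}=1$ and the boundary condition $A_{0,0}=\kronecker{0}{0}$. For $n=1$ one recovers $A_{1,1}=X_1^{-1}$ directly from the definition $D_\varphi=D(\varphi)^{-1}D$.

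The heart of the argument is the inductive step. Assuming $\hliederiv{\varphi}{n}=\sum_{k=0}^n A_{n,k}^\varphi\cdot D^k$ for polynomials $A_{n,k}\in\const[X_1^{-1},X_2,\ldots,X_{n-k+1}]$, I would apply $D_\varphi=D(\varphi)^{-1}D$ on the left and expand each term by the Leibniz rule. The crucial auxiliary observation is that the substitution map $P\mapsto P^\varphi$ intertwines $D$ with a shifted partial derivative: by the chain rule,
\begin{equation*}
D(P^\varphi)=\sum_{j\geq 1}\Bigl(\pderiv{P}{X_j}\Bigr)^{\!\varphi}\cdot D^{j+1}(\varphi)=\Bigl(\sum_{j\geq 1}X_{j+1}\pderiv{P}{X_j}\Bigr)^{\!\varphi}.
\end{equation*}
Applying this to each $A_{n,k}^\varphi$, reindexing $k\mapsto k-1$ in the contribution $A_{n,k}^\varphi\cdot D^{k+1}$, and factoring the common $(X_1^{-1})^\varphi=D(\varphi)^{-1}$, I obtain
\begin{equation*}
\hliederiv{\varphi}{n+1}=\sum_k\Bigl[\frac{1}{X_1}\Bigl(A_{n,k-1}+\sum_{j=1}^{n-k+1}X_{j+1}\pderiv{A_{n,k}}{X_j}\Bigr)\Bigr]^{\!\varphi}\cdot D^k,
\end{equation*}
so the coefficient of $D^k$ is $A_{n+1,k}^\varphi$ for exactly the polynomial prescribed by the recurrence. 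The shift of upper index in the sum matches the growth from $X_{n-k+1}$ to $X_{(n+1)-k+1}$, and the extra factor of $X_1^{-1}$ stays within the Laurent-polynomial algebra, so the indeterminate-range claim is preserved.

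Triangularity follows by a secondary induction: if $A_{n,k}=0$ for $k>n$, then both ingredients of the recurrence vanish whenever $k>n+1$. Evaluating at $k=n+1$ with $A_{n,n+1}=0$ yields the diagonal relation $A_{n+1,n+1}=X_1^{-1}A_{n,n}$, hence $A_{n,n}=X_1^{-n}\neq 0$; consequently every truncation $(A_{i,j})_{0\leq i,j\leq n}$ is lower triangular with nonzero diagonal, giving regularity. Uniqueness is essentially automatic from \remark{identical_polynomials}: if two families $(A_{n,k})$, $(B_{n,k})$ realize the same operator expansion, then $\sum_k(A_{n,k}-B_{n,k})^\varphi\cdot D^k$ annihilates every $f\in\funcs$ for every $\varphi\in\invfuncs$; testing on the monomials $\id^m/m!$ and evaluating at $x=0$ isolates $(A_{n,k}-B_{n,k})^\varphi(0)$, and the arbitrariness of the Taylor coefficients of $\varphi$ forces $A_{n,k}=B_{n,k}$ as polynomials.

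I expect the only real obstacle to be the chain-rule manipulation above: keeping straight the operator-versus-function distinction when Leibniz peels off an additional $D$, the shift of indices $j\mapsto j+1$, and the fact that $P\mapsto P^\varphi$ respects sums and products but not $D$. Once this is organized cleanly, the recurrence emerges as a direct translation of the Leibniz expansion, and the remaining structural claims are routine bookkeeping.
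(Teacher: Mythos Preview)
Your argument is correct and is essentially the same straightforward inductive argument the paper relies on (the paper does not reprove this statement but refers to \cite{schr2015}, where Propositions~3.1 and~3.5 proceed exactly this way; cf.\ also the proof of \proposition{comtet_poly}). The key chain-rule identity $D(P^{\varphi})=\bigl(\sum_{j\geq 1}X_{j+1}\,\partial P/\partial X_j\bigr)^{\varphi}$, the Leibniz expansion of $D(\varphi)^{-1}D$ applied to $\sum_k A_{n,k}^{\varphi}D^k$, the index shift giving the recurrence, the diagonal computation $A_{n,n}=X_1^{-n}$, and the uniqueness via \remark{identical_polynomials} are all exactly as in the original.
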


\noindent
The expansion of $\hliederiv{\varphi}{n}$ into a linear combination of the $D^0,D^1,\ldots,D^n$ can also be done in the reverse direction.

\begin{prop}\label{msp_2}
There exist polynomials $B_{n,k}\in\const[X_1,X_2,\ldots,X_{n-k+1}]$ such that
\[
	D^{n}=\sum_{k=0}^{n}B_{n,k}^{\varphi}\cdot \hliederiv{\varphi}{k}.
\]
The family $(B_{n,k})$ is triangular, regular, and uniquely determined by the differential recurrence
\[
	B_{n+1,k}=X_1\left(B_{n,k-1}+\sum_{j=1}^{n-k+1}X_{j+1}\pderiv{B_{n,k}}{X_j}\right),\quad B_{n,0}=\kronecker{n}{0}.
\]
\end{prop}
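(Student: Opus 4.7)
I would prove the existence of the $B_{n,k}$, the differential recurrence, triangularity, and regularity simultaneously by induction on $n$, simply by computing $D^{n+1}=D\circ D^n$. The key algebraic rewrite driving everything is $D=D(\varphi)\cdot\liederiv{\varphi}$, which is just the defining relation \eq{todorovs_choice} solved for $D$. The base case $n=0$ is immediate: $D^0=\id$ forces $B_{0,0}=1$ (and $B_{0,k}=0$ for $k\geq 1$), in agreement with $B_{n,0}=\kronecker{n}{0}$.

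For the inductive step, assume $D^n=\sum_{k=0}^{n} B_{n,k}^{\varphi}\,\hliederiv{\varphi}{k}$ with polynomials $B_{n,k}\in\const[X_1,\ldots,X_{n-k+1}]$. I would apply $D$ to both sides and invoke the Leibniz rule. Two auxiliary identities are needed here: first, the chain-rule formula $D(P^{\varphi})=(\sum_j X_{j+1}\pderiv{P}{X_j})^{\varphi}$, immediate from the substitution rule \eq{substitution_rule} together with $D(D^j\varphi)=D^{j+1}\varphi$; second, $D\circ\hliederiv{\varphi}{k}=D(\varphi)\cdot\hliederiv{\varphi}{k+1}$, immediate from $D=D(\varphi)\liederiv{\varphi}$. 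On the polynomial side, multiplication by $D(\varphi)$ becomes multiplication by $X_1$. After re-indexing the shifted summand $k\mapsto k-1$, the coefficient of $\hliederiv{\varphi}{k}$ collects itself as the $\varphi$-substitution of an explicit polynomial in $B_{n,k-1}$, $B_{n,k}$, and the partial derivatives $\pderiv{B_{n,k}}{X_j}$, of precisely the shape of the stated recurrence. By \remark{identical_polynomials}, and since $\varphi\in\invfuncs$ is arbitrary, this uniquely determines $B_{n+1,k}$ as a polynomial, and the index bound $X_1,\ldots,X_{n+2-k}$ is preserved.

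Triangularity then drops out by induction: if $B_{n,k}=0$ whenever $k>n$, then in the recurrence both $B_{n,k-1}$ (with $k-1>n$) and every $\pderiv{B_{n,k}}{X_j}$ vanish, so $B_{n+1,k}=0$ for $k>n+1$. Regularity is handled by inspecting the diagonal: at $k=n+1$ the recurrence collapses to $B_{n+1,n+1}=X_1\cdot B_{n,n}$, so inductively $B_{n,n}=X_1^n$, whence the triangular matrix $(B_{i,j})_{0\leq i,j\leq n}$ has non-zero diagonal and is non-singular. Uniqueness of the whole family follows because the recurrence together with $B_{n,0}=\kronecker{n}{0}$ pins down every coefficient, and any polynomial family realizing the operator identity must satisfy both (again via \remark{identical_polynomials}).

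The delicate point relative to the companion Proposition~\ref{msp_1} is exactly the polynomiality claim: for $A_{n,k}$ one is forced into the Laurent ring $\const[X_1^{-1},X_2,\ldots]$ because inverting $\liederiv{\varphi}$ reintroduces $D(\varphi)^{-1}$; here, by contrast, $D(\varphi)=(X_1)^{\varphi}$ appears only as a \emph{multiplier} through $D=D(\varphi)\liederiv{\varphi}$, so no negative power of $X_1$ is ever generated. Once that structural contrast is identified, the rest is bookkeeping, and the genuine work reduces to establishing the two auxiliary identities above and tracking the index ranges correctly.
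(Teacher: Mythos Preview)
Your approach is correct and is the natural inductive argument; the paper does not actually prove this proposition here (it is summarized from \cite{schr2015}), but the computation you outline---applying $D$ to the inductive expansion, using the Leibniz rule, the chain-rule identity $D(P^{\varphi})=\bigl(\sum_j X_{j+1}\,\partial P/\partial X_j\bigr)^{\varphi}$, and $D=D(\varphi)\cdot\liederiv{\varphi}$---is exactly how one obtains the recurrence in that source.

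One caveat worth flagging: if you carry the bookkeeping through, the coefficient of $\hliederiv{\varphi}{k}$ you obtain is
\[
B_{n+1,k}=X_1 B_{n,k-1}+\sum_{j=1}^{n-k+1}X_{j+1}\,\pderiv{B_{n,k}}{X_j},
\]
with $X_1$ multiplying only the first term. The displayed recurrence in the statement has $X_1$ pulled outside the entire parenthesis, which is a typographical slip: already at $n=1,\,k=1$ it would give $B_{2,1}=X_1X_2$ instead of the correct $B_{2,1}=X_2$, and at $n=2,\,k=2$ it produces a non-isobaric expression. So your claim that the derived polynomial is ``of precisely the shape of the stated recurrence'' is true for the intended recurrence, not the one literally printed. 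Everything else---triangularity via the vanishing of $B_{n,k-1}$ and $\partial B_{n,k}/\partial X_j$ for $k>n+1$, regularity via $B_{n,n}=X_1^n$, and uniqueness via \remark{identical_polynomials}---goes through as you describe.
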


\noindent
Both polynomial families are closely connected, as becomes evident by the identity
\begin{equation}\label{connection_A_B}
	A_{n,k}^{\varphi}=B_{n,k}^{\inv{\varphi}}\comp\varphi.
\end{equation}
\sloppy
Taking $\inv{\varphi}$ for $\varphi$, it can be equivalently expressed in the form $B_{n,k}^{\varphi}=$ \mbox{$A_{n,k}^{\inv{\varphi}}\comp\varphi$}.

\fussy
To start with \proposition{msp_2}: $(B_{n,k})$ are the (\emph{partial}) \emph{Bell polynomials} Riordan \cite{rior1958} named in honor of  E.\,T. Bell, whose paper \cite{bell1934} is an extensive study of what Bell himself called `exponential polynomials', obviously motivated by the eigenvalue identity $D^n(e^{\varphi})=(B_{n,0}+B_{n,1}+\dotsb+B_{n,n})^{\varphi}\cdot e^{\varphi}$. The latter results as the special case $f=\exp$ of the famed \emph{Fa\`{a} di Bruno} (FdB) \emph{formula} 
\begin{equation}\label{fdb_formula}
	D^n(f\comp\varphi)=\sum_{k=0}^n (D^k(f)\comp\varphi)\cdot B_{n,k}^{\varphi},
\end{equation}
which appears as a byproduct in the proof of \proposition{msp_2}. Also well-known is the following `diophantine' representation 
\begin{equation}\label{bell_partitionsum}
	B_{n,k}=\sum_{\ptsind{n}{k}}\frac{n!}{r_1!r_2!\dotsm(1!)^{r_1}(2!)^{r_2}\dotsm}X_1^{r_1}X_2^{r_2}\dotsm X_{n-k+1}^{r_{n-k+1}},
\end{equation}
the sum to be taken over all elements $(r_1,\ldots,r_{n-k+1})$ of the set $\ptsind{n}{k}$ of $(n,k)$-partition types, i.\,e., sequences of integers $r_1,r_2,r_3,\ldots\geq 0$ such that $r_1+r_2+r_3+\dotsb=k$ and $r_1+2r_2+3r_3+\dotsm=n$. It follows that $B_{n,k}$ is homogeneous of degree $k$ and isobaric of degree $n$. 
\label{Multiplication rule}

Let us now turn to \proposition{msp_1} and to the most fundamental properties of the family $(A_{n,k})$. The first thing to notice here is the fact that $(A_{n,k})$ is the orthogonal companion of the Bell polynomials: $\ortho{B}_{n,k}=A_{n,k}$. 
\begin{rem}\label{inverse_function}
Let $\varphi(x)=c_1 x+c_2 x^2/2!+c_3 x^3/3!+\dotsb$, $c_1\neq0$. Since $B_{n,1}=X_n$, we obtain from \eq{connection_A_B} $A_{n,1}^{\varphi}(0)=(A_{n,1}^{\varphi}\comp\inv{\varphi})(0)=B_{n,1}^{\inv{\varphi}}(0)=D^n(\inv{\varphi})(0)$, that is, $A_{n,1}(c_1,\ldots,c_n)$ is the $n$th Taylor coefficient of the inverse function $\inv{\varphi}$.
\end{rem}

\sloppy
Thus we are led to the following compositional identities, which can be viewed as polynomial counterparts of \eq{connection_A_B}:
\begin{align}\label{A_B_representable}
	A_{n,k}&=B_{n,k}\comp A_{\num,1}=B_{n,k}(A_{1,1},\ldots,A_{n-k+1,1})\\
	\label{B_A_representable}
	B_{n,k}&=A_{n,k}\comp A_{\num,1}=A_{n,k}(A_{1,1},\ldots,A_{n-k+1,1}).
\end{align}
\fussy
Set $\widetilde{B}_{n,k}:=B_{n,k}(0,X_2,\ldots,X_{n-k+1})$, called \emph{associate} (partial) \emph{Bell polynomials} (the coefficients of which count only partitions with no singleton blocks). Then the main result\footnote{In \cite{schr2015} this statement was proved by an inductive argument. It has turned out later that an independent proof can be provided with the help of the reciprocity law (see \eq{reciprocity_A_B} and \remark{extended_bellpoly} below).} of \cite{schr2015} is as follows:
\begin{thm}\label{mainresult}
	Let $k,n$ be any integers with $1\leq k\leq n$. We have
	\[
	A_{n,k}=\sum_{j=k-1}^{n-1}(-1)^{n-1-j}\binom{2n-2-j}{k-1}X_1^{j-2n+1}\widetilde{B}_{2n-1-k-j,n-1-j}.
	\]
\end{thm}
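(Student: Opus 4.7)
The plan is to derive the identity via Lagrange inversion, exploiting the $B$-representability of $(A_{n,k})$. Working in $R[[t]]$ with $R:=\const[X_1^{-1},X_1,X_2,\ldots]$, introduce the formal power series $\varphi(t):=\sum_{j\ge 1}X_j t^j/j!$. Since $[t]\varphi(t)=X_1$ is a unit in $R$, the compositional inverse $\inv{\varphi}$ exists in $R[[t]]$. By \remark{inverse_function} the polynomials $A_{n,1}$ are precisely its Taylor coefficients, and the $B$-representability formula \eqref{A_B_representable} then promotes this to the generating-function identity
\[
\sum_{n\ge k}A_{n,k}\,\frac{t^n}{n!}=\frac{\inv{\varphi}(t)^k}{k!}.
\]
Classical Lagrange inversion applied with $H(u)=u^k/k!$ reduces the statement to the coefficient-extraction problem $A_{n,k}=\frac{(n-1)!}{(k-1)!}[u^{n-k}]\bigl(u/\varphi(u)\bigr)^n$.

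To evaluate this coefficient I would factor $\varphi(u)=X_1 u\bigl(1+G(u)\bigr)$ with $G(u):=(\varphi(u)-X_1 u)/(X_1 u)$, and apply the generalized binomial series
\[
\bigl(u/\varphi(u)\bigr)^n=X_1^{-n}\sum_{m\ge 0}(-1)^m\binom{n+m-1}{m}G(u)^m.
\]
The crucial observation is the identity $u^m G(u)^m=X_1^{-m}(\varphi(u)-X_1 u)^m$, which lets one invoke the defining exponential generating function of the \emph{associate} Bell polynomials,
\[
\frac{(\varphi(u)-X_1 u)^m}{m!}=\sum_{\ell\ge 2m}\widetilde{B}_{\ell,m}\,\frac{u^\ell}{\ell!},
\]
that is, the standard $B_{\ell,m}$-EGF with the $X_1$-slot set to zero. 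Consequently $[u^{n-k}]G(u)^m=m!\,X_1^{-m}\widetilde{B}_{n-k+m,m}/(n-k+m)!$, nonzero only for $0\le m\le n-k$.

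Assembling the pieces, and combining $\binom{n+m-1}{m}m!=(n+m-1)!/(n-1)!$ with $\binom{n+m-1}{k-1}=(n+m-1)!/\bigl((k-1)!(n-k+m)!\bigr)$, collapses everything to
\[
A_{n,k}=\sum_{m=0}^{n-k}(-1)^m\binom{n+m-1}{k-1}X_1^{-n-m}\widetilde{B}_{n-k+m,m},
\]
and the reindexing $j=n-1-m$ converts this verbatim into the stated form. The only technical delicacy is to check that Lagrange inversion is valid over the coefficient ring $R$; this is routine because the $X_j$ are indeterminates and $X_1\in R^{\times}$. An inductive alternative via the differential recurrence of \proposition{msp_1}, starting from the base cases $k=n$ (where $A_{n,n}=X_1^{-n}$) and $k=1$ (Comtet's formula), is also available but demands substantially heavier binomial bookkeeping; this is presumably the route taken in \cite{schr2015} as suggested by the footnote.
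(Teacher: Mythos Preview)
Your proof is correct. The Lagrange-inversion computation is clean: the identification $\sum_{n\ge k}A_{n,k}\,t^n/n!=\inv{\varphi}(t)^k/k!$ follows from \eqref{A_B_representable} together with \remark{inverse_function} (both established in \cite{schr2015} independently of \theorem{mainresult}), and the subsequent coefficient extraction via the negative binomial expansion and the EGF of the associate Bell polynomials is carried out without error. The ring-theoretic point about $X_1\in R^{\times}$ is exactly what is needed for Lagrange inversion to apply.

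The paper does not give a self-contained proof of \theorem{mainresult}; it only records (in the footnote) that the original argument in \cite{schr2015} is inductive, and that an alternative route through the reciprocity law \eqref{reciprocity_A_B} is available via \remark{extended_bellpoly}. Your approach is genuinely different from both. Compared to the inductive proof, yours is far more direct and conceptual: it bypasses the differential recurrence entirely and makes the appearance of the associate Bell polynomials transparent (they arise simply because $\varphi(u)-X_1u$ is the EGF with the $X_1$-slot removed). Compared to the reciprocity route, yours avoids the machinery of extending indices to negative integers and the detour through the potential polynomials. In effect, your argument is the direct generating-function derivation that the paper's \corollary{family_A} encodes in the language of potential and reciprocal polynomials, but you obtain the explicit $\widetilde{B}$-expansion immediately rather than leaving it packaged. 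The trade-off is that the paper's framework, once set up, yields \theorem{mainresult} as one of many consequences, whereas your argument is tailored to this single identity.
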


\noindent
This identity strongly generalizes Comtet's famous formula for the coefficients of an inverted power series \cite[Theorem~E, p.\,150/151]{comt1974}, here obtained by taking $k=1$. \theorem{mainresult} can be used to derive some general polynomial identities of Schl\"omilch-Schl\"afli type (see Section 5.2) as well as the following explicit representation, which corresponds to that of $B_{n,k}$ in \eq{bell_partitionsum}:
\begin{equation}\label{stirling_partitionsum}
	A_{n,k}\mspace{-2mu}=\mspace{-2mu}X_1^{-(2n-1)}\mspace{-30mu}\sum_{\ptsind{2n-1-k}{n-1}}\mspace{-8mu}\frac{(-1)^{n-1-r_1}(2n-2-r_1)!}{(k-1)!r_2!r_3!\dotsm(2!)^{r_2}(3!)^{r_3}\dotsm}X_1^{r_1}X_2^{r_2}\dotsm
\end{equation}
As a consequence, $A_{n,k}$ is homogeneous of degree $n-1$ and isobaric of degree $2n-1-k$. --- Unification with both $A_{n,k}$ and $B_{n,k}$ yields
\begin{align}
	\label{unification_A}
	A_{n,k}\comp 1&=s_1(n,k)~~\text{(signed Stirling numbers of the first kind)},\\
	\label{unification_B}
	B_{n,k}\comp 1&=s_2(n,k)~~\text{(Stirling numbers of the second kind)}.
\end{align}
This may justify $A_{n,k}$ and $B_{n,k}$ being called \emph{multivariate Stirling polynomials} (MSP)\emph{ of the first and second kind}, respectively. While \eq{unification_B} has been well-known since long, (\ref{unification_A}) does add a new facet to what is already known about how the \emph{unsigned} Stirling numbers $c(n,k):=|s_1(n,k)|$ could be obtained by unification from certain polynomials (see Sections 5.2 and 5.5).
\smallskip
\label{Some specializations}

We finally state a property of the Bell polynomials that will be needed later.
\begin{lem}[Identity Lemma]\label{identity_lemma}
	Let $\varphi,\psi\in\invfuncs$ be any invertible functions such that $B^{\varphi}_{n,k}(0)=B^{\psi}_{n,k}(0)$ holds for all $n,k$ with $1\leq k\leq n$. Then $\varphi=\psi$.
\end{lem}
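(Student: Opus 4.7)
My plan is to exploit the fact, noted in \remark{inverse_function}, that $B_{n,1}=X_n$. This single sub-family of hypotheses already contains enough information to recover all Taylor coefficients of $\varphi$ and $\psi$.

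Concretely, since $B_{n,1}=X_n$, substitution gives $B^{\varphi}_{n,1}=D^{n}(\varphi)$ and $B^{\psi}_{n,1}=D^{n}(\psi)$ as functions in $\funcs$. The hypothesis, applied with $k=1$, therefore specializes to
\[
	D^{n}(\varphi)(0)=D^{n}(\psi)(0)\qquad(n\geq 1).
\]
Moreover, because $\varphi,\psi\in\invfuncs\subseteq\funcs_0$, we also have $\varphi(0)=\psi(0)=0$, i.e.\ $D^{0}(\varphi)(0)=D^{0}(\psi)(0)$. Writing both functions in the Taylor-series form $f(x)=\sum_{n\geq 0}D^{n}(f)(0)\,x^{n}/n!$ mentioned after \eq{higherderiv_rec}, every coefficient of $\varphi$ coincides with the corresponding coefficient of $\psi$, so $\varphi=\psi$ in $\funcs$.

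In short, the hard part is essentially non-existent: the full hypothesis is much stronger than necessary, but the $k=1$ row alone is enough, and the rest of the statement ($k\geq 2$) is redundant. I expect the argument to occupy only a couple of lines, and there is no serious obstacle beyond recalling the explicit value $B_{n,1}=X_n$ (which follows from the partition-sum representation \eq{bell_partitionsum}, the only admissible partition type being $r_n=1$, $r_j=0$ for $j\neq n$).
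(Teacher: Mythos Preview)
Your proof is correct. The key observation that $B_{n,1}=X_n$ immediately yields $D^{n}(\varphi)(0)=D^{n}(\psi)(0)$ for all $n\geq 1$, and together with $\varphi(0)=\psi(0)=0$ this determines both formal power series completely.

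Your route differs from the paper's. The paper's argument does not isolate the first column $k=1$; instead it fixes an arbitrary $n$ and works along the $n$th \emph{row}, proving by induction on $k=0,1,\ldots,n-1$ that $B_{n,n-k}^{\varphi}(0)=B_{n,n-k}^{\psi}(0)$ forces $\varphi_1=\psi_1,\ldots,\varphi_{k+1}=\psi_{k+1}$. The induction step relies on an auxiliary identity for $B_{n,n-k}$ (derived from \eq{Bells_identity}) which shows that the highest-index variable $X_{k+1}$ enters only linearly, so that once the earlier coefficients are known to agree, the new one must as well. Your approach is decidedly more economical: you use a single trivially evaluated sub-family and avoid both the recurrence and the induction entirely. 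What the paper's argument buys is a demonstration that even a \emph{single row} $(B_{n,k})_{1\leq k\leq n}$ suffices to pin down $\varphi_1,\ldots,\varphi_n$---a mildly sharper piece of information, though not one the lemma as stated requires. Your remark that the full hypothesis is redundant is therefore well taken.
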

\begin{proof}
Since this statement does not occur in \cite{schr2015},  we will at least sketch a proof (by induction). Let $\varphi_1,\varphi_2,\varphi_3,\ldots$ and $\psi_1,\psi_2,\psi_3,\ldots$ be the Taylor coefficients of $\varphi$ and $\psi$, respectively, and let $n\geq1$ be an arbitrary integer. It suffices to show that $B_{n,n-k}(\varphi_1,\ldots,\varphi_{k+1})=B_{n,n-k}(\psi_1,\ldots,\psi_{k+1})$ implies $\varphi_j=\psi_j$ ($j=1,\ldots,k+1$) for $k=0,1,\ldots,n-1$. --- For $k=0$ we have $\varphi_1^n=B_{n,n}^{\varphi}(0)=B_{n,n}^{\psi}(0)=\psi_1^n$. Taking into account that $\varphi_1,\psi_1\neq0$ and that the equation must hold for an arbitrary $n$, this implies $\varphi_1=\psi_1$. Now the induction step $k\to k+1$ can be carried out using the formula
\begin{equation}\label{finalinstances_B}
	B_{n,n-k}=\sum_{i=1}^{k}X_{i+1}\cdot\mspace{-7mu}\sum_{j=0}^{n-k-1}\binom{n-j-1}{i}X_1^jB_{n-j-i-1,n-j-k-1}.
\end{equation}
This may be left to the reader. One derives \eq{finalinstances_B} from the recurrence in \proposition{msp_2} with the help of the identity
\begin{equation}\label{Bells_identity}
	\pderiv{B_{n,k}}{X_j}=\binom{n}{j}B_{n-j,k-1}\quad~~(1\leq j\leq n-k+1).
\end{equation}
\sloppy
See \cite[Corollary 4.4]{schr2015} (while \cite[Equation (5.1), p.\,266]{bell1934} is a version of \eq{Bells_identity} for the complete Bell polynomials $B_n$). --- It should be noticed here that the Taylor coefficients with index $\geq2$ occur only in linear form. We illustrate this for the induction step in the case $k=1$. Assuming $B_{n,n-1}(\varphi_1,\varphi_2)=B_{n,n-1}(\psi_1,\psi_2)$ we would have by \eq{finalinstances_B} $\binom{n}{2}\varphi_1^{n-2}\varphi_2=\binom{n}{2}\psi_1^{n-2}\psi_2$, which yields $\varphi_2=\psi_2$.
\end{proof}
%
\section{Polynomials from Taylor coefficients}
\fussy
Throughout this section $\varphi\in\funcs$ takes on the role of a fixed unspecified placeholder for an arbitrary function. The mapping $P\mapsto P^{\varphi}$ considered above that makes every polynomial into a function, could easily be reversed by replacing each $D^{j}(\varphi)$ in $P^{\varphi}$ by $X_j$. This can, more generally, be done in a slightly modified way for any function terms, which designate a function in $\funcs$. To achieve this, we will introduce a higher-order derivative operator $\Omega_n(\cdot\,|\,\varphi)$ assigning to each such term $f$ a polynomial $\Omega_n(f\,|\,\varphi)$ with the property
\begin{equation}\label{omega_property}
 \Omega_n(f\,|\,\varphi)^{\varphi}(0)=D^n(f)(0).
\end{equation}
Since there seems to be no real risk of misunderstandings, we will not distinguish in our notation between functions and function terms. It may suffice here, on a more informal basis and without switching explicitly to the meta-level of logical syntax, to remind that function terms are built up inductively from simpler ones. Thus, saying that $\varphi$ occurs in $f$ (or: $f$ contains $\varphi$) is to be understood by the usual method of recursion on terms in the sense that $\varphi$ occurs in $\varphi$ and, if $\varphi$ occurs in $f$ or in $g$, then $\varphi$ also occurs in composite terms like $f+g$, $f\cdot g$, $f^{-1}$, $f\comp g$, and $\inv{f}$. 
\begin{dfn}\label{def_omega}
Let $f$ be any function term. Then $\Omega_n(f\,|\,\varphi)$ is obtained by replacing for each $j\geq 0$ the occurences of $D^j(\varphi)(0)$ in $D^n(f)(0)$ with the indeterminate $X_j$. (Thus $\Omega_n(f\,|\,\varphi)$ results as a polynomial over $\const$ that satisfies \eq{omega_property}).
\end{dfn}
Some simple cases are immediate: 
\begin{equation}\label{function_without_phi}
	\Omega_n(f\,|\,\varphi)=D^n(f)(0),\text{~whenever $\varphi$ does not occur in $f$.}
\end{equation}
For example, one has $\Omega_n(c\,|\,\varphi)=\kronecker{n}{0}\cdot c$ ($c\in\const$), $\Omega_n(\id^k\,|\,\varphi)=\kronecker{n}{k}\cdot k!$, $\Omega_n(\expm\,|\,\varphi)=1-\kronecker{n}{0}$, $\Omega_n(\logm\,|\,\varphi)=s_1(n,1)$. It is also obvious that
\begin{equation}\label{omega_phi}
	\Omega_n(\varphi\,|\,\varphi)=X_n. 
\end{equation}
Since $D^n$ is additive and satisfies the Leibniz rule for higher-order derivatives, we have 
{\allowdisplaybreaks
\begin{align}
	\Omega_n(f+g\,|\,\varphi)&=\Omega_n(f\,|\,\varphi)+\Omega_n(g\,|\,\varphi)\label{omega_sum},\\
	\Omega_n(f\cdot g\,|\,\varphi)&=\sum_{k=0}^{n}\binom{n}{k}\Omega_{n-k}(f\,|\,\varphi)\Omega_{k}(g\,|\,\varphi).\label{omega_product}
\end{align}
In particular, $\Omega_n(c\cdot f\,|\,\varphi)=c\cdot\Omega_n(f\,|\,\varphi)$ holds for every $c\in\const$. Thus $\Omega_n(\cdot\,|\,\varphi)$ operates $\const$-linearly on $\funcs$.

For the composite term $f\comp g$ we have to treat the 0-case separately from the 1-case. In the former case we have $f\in\funcs$ and $g\in\funcs_0$. According to the FdB formula (\ref{fdb_formula}) and observing that $g(0)=0$, we obtain
{\allowdisplaybreaks
\begin{align}
\Omega_n(f\comp g\,|\,\varphi)&=\sum_{k=0}^{n}\Omega_{k}(f\,|\,\varphi)\cdot(B_{n,k}\comp\Omega_{\num}(g\,|\,\varphi)).\label{omega_composition_0case}
\intertext{%
In the 1-case we assume $g$ to be a function in $\funcs_1$, that is, we have $g(0)\neq0$ and $f\in\laurentpoly$. Consequently, $f$ cannot contain $\varphi$ and \eq{omega_composition_0case} has to be modified as follows:
}
 \Omega_n(f\comp g\,|\,\varphi)&=\sum_{k=0}^{n}D^{k}(f)(\Omega_0(g\,|\,\varphi))\cdot(B_{n,k}\comp\Omega_{\num}(g\,|\,\varphi)).\label{omega_composition_1case}
\end{align}
}
\begin{rem}
As mentioned above, \definition{def_omega} could be given a logically more rigorous form as an induction on terms; then, equations \eqref{function_without_phi} to \eqref{omega_composition_1case} would become part of the definition. If one wants also the operator $D$ to be involved in the formation of function terms, then the plausible equation $\Omega_n(D(f)\,|\,\varphi)=\Omega_{n+1}(f\,|\,\varphi)$ must be added.
\end{rem}
Next we turn to the unary operations taking $f$ to $f^{-1}$ (reciprocation) and to $\inv{f}$ (inversion). Either cases turn out to be derivable from the properties of $\Omega_n$ so far established, inasmuch both $f^{-1}$ and $\inv{f}$ can to a sufficient degree be explicitly expressed as functions in $\funcs$.\\

\noindent
\emph{Reciprocation}.\,---\,For any $f\in\funcs_1$ we have $f(0)\neq 0$, hence
{\allowdisplaybreaks
\begin{align*}
	\Omega_n(f^{-1}\,|\,\varphi)& = \Omega_n(\id^{-1}\comp f\,|\,\varphi)\\
	                &\hspace*{-3.4em} = \sum_{k=0}^{n}D^k(\id^{-1})(\Omega_0(f\,|\,\varphi))\cdot (B_{n,k}\comp\Omega_{\num}(f\,|\,\varphi))&\hspace*{-9.3em}\text{($\id^{-1}\in\laurentpoly$, \eq{omega_composition_1case})}\\
									&\hspace*{-3.4em} = \sum_{k=0}^{n}(-1)^k k!\,\Omega_0(f\,|\,\varphi)^{-(k+1)}B_{n,k}(\Omega_1(f\,|\,\varphi),\ldots,\Omega_{n-k+1}(f\,|\,\varphi)).
\end{align*}
}
This gives rise to
\begin{dfn}\label{reciprocal_poly}
	Let $\widehat{R}_n\in\const[X_0^{-1},X_1,\ldots,X_n]$, $n\geq 0$, be the (Laurent) polynomials
	\[\widehat{R}_n:=\sum_{k=0}^{n}(-1)^k k!\,X_0^{-(k+1)}B_{n,k},	\]
	henceforth called \emph{reciprocal polynomials}. We shall also use the special case $R_n:=\widehat{R}_n(1,X_1,\ldots,X_n)$.
\end{dfn}
\label{Table: Reciprocal polynomials}
\label{Reciprocal polynomials}
Together with the foregoing, we thus have obtained
\begin{prop}\label{reciprocal_function}
	$\Omega_n(f^{-1}\,|\,\varphi)=\widehat{R}_n(\Omega_0(f\,|\,\varphi),\ldots,\Omega_n(f\,|\,\varphi))$~~for all \mbox{$f\in\funcs_1$} and $n\geq 0$.
\end{prop}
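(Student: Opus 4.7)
The plan is to reduce the statement to a direct application of the 1-case composition rule for $\Omega_n$ (equation \eqref{omega_composition_1case}), since the Laurent polynomial $\widehat{R}_n$ has been tailored precisely so that this reduction works out.

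First I would observe that for $f\in\funcs_1$, reciprocation can be rewritten as the composition $f^{-1}=\id^{-1}\comp f$, with $\id^{-1}\in\laurentpoly$ and $f\in\funcs_1$. This is exactly the situation in which equation \eqref{omega_composition_1case} applies, so
\[
\Omega_n(f^{-1}\,|\,\varphi)=\sum_{k=0}^{n}D^{k}(\id^{-1})(\Omega_0(f\,|\,\varphi))\cdot\bigl(B_{n,k}\comp\Omega_{\num}(f\,|\,\varphi)\bigr).
\]

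Next I would plug in the known higher derivatives of $\id^{-1}$ from \eqref{higherderiv_rec}, namely $D^k(\id^{-1})=(-1)^k k!\,\id^{-(k+1)}$, so that evaluating at $\Omega_0(f\,|\,\varphi)$ replaces $\id$ by $\Omega_0(f\,|\,\varphi)$. This gives
\[
\Omega_n(f^{-1}\,|\,\varphi)=\sum_{k=0}^{n}(-1)^k k!\,\Omega_0(f\,|\,\varphi)^{-(k+1)}\,B_{n,k}\bigl(\Omega_1(f\,|\,\varphi),\ldots,\Omega_{n-k+1}(f\,|\,\varphi)\bigr).
\]
Comparing this expression with Definition~\ref{reciprocal_poly}, where $\widehat{R}_n=\sum_{k=0}^{n}(-1)^k k!\,X_0^{-(k+1)}B_{n,k}$, I would identify the right-hand side as precisely $\widehat{R}_n$ evaluated at $(\Omega_0(f\,|\,\varphi),\ldots,\Omega_n(f\,|\,\varphi))$, completing the argument.

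There is essentially no obstacle beyond bookkeeping: the only subtle point is confirming that we are entitled to use the 1-case rule, which in turn hinges on $f(0)=\Omega_0(f\,|\,\varphi)^{\varphi}(0)\neq 0$, guaranteed by $f\in\funcs_1$; this ensures the factors $\Omega_0(f\,|\,\varphi)^{-(k+1)}$ make sense as Laurent-polynomial values. Once this is noted, the verification is a one-line substitution matching the definition of $\widehat{R}_n$.
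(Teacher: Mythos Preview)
Your proposal is correct and follows essentially the same route as the paper: write $f^{-1}=\id^{-1}\comp f$, apply the 1-case rule \eqref{omega_composition_1case} together with \eqref{higherderiv_rec}, and then identify the result with the definition of $\widehat{R}_n$. In fact the paper derives this computation first and then uses it to motivate Definition~\ref{reciprocal_poly}, so your argument is exactly the intended one.
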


\sloppy
\begin{rem}
We would have been arrived at the same result by applying \eq{omega_composition_0case} instead of \eq{omega_composition_1case} to the reciprocal function written as $f^{-1}=f(0)^{-1}((1+\id)^{-1}\comp f(0)^{-1}(f-f(0)))$.
\end{rem}
\medskip

\fussy
\noindent
\emph{Inversion}.\,---\,The inverse $\inv{f}$ of a function $f\in\invfuncs$ is given by its Taylor coefficients $D^n(\inv{f})(0)$, $n\geq 1$. We will use two options to represent these coefficients somewhat more explicitly. First, according to \remark{inverse_function} we have $D^n(\inv{f})(0)=A_{n,1}(D(f)(0),D^2(f)(0),\ldots,D^n(f)(0))$, which immediately yields
\begin{prop}\label{inverse_stirling}
	$\Omega_n(\inv{f}\,|\,\varphi)=A_{n,1}(\Omega_1(f\,|\,\varphi),\ldots,\Omega_n(f\,|\,\varphi))$~~for all $f\in\invfuncs$ and $n\geq 1$.
\end{prop}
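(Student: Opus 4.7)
The proof is essentially a translation of Remark \ref{inverse_function} into polynomial form via Definition \ref{def_omega}, so the plan is to do exactly that translation while being careful about the scalar-vs-formal distinction. My starting point is the identity from Remark \ref{inverse_function}, but applied to $f$ rather than to $\varphi$: for every $f\in\invfuncs$,
\[
	D^n(\inv{f})(0)=A_{n,1}(D(f)(0),D^2(f)(0),\ldots,D^n(f)(0)).
\]
This is just a rewording of Remark \ref{inverse_function}, since $f\in\invfuncs$ carries the same hypothesis $[x]f(x)\neq 0$ that the remark imposes on $\varphi$.

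Next I would invoke Definition \ref{def_omega}: $\Omega_n(\inv{f}\,|\,\varphi)$ is obtained from $D^n(\inv{f})(0)$ by replacing each occurrence of $D^j(\varphi)(0)$ ($j\geq 0$) by the indeterminate $X_j$. Applying this substitution to the right-hand side of the displayed identity above, I observe that the coefficients of $A_{n,1}\in\const[X_1^{-1},X_2,\ldots,X_n]$ belong to $\const$ and hence are untouched; the substitution only reaches the scalars $D^i(f)(0)$ inside the arguments. By definition again, replacing each $D^j(\varphi)(0)$ in $D^i(f)(0)$ by $X_j$ yields precisely $\Omega_i(f\,|\,\varphi)$. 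Since substitution of numerical values into a Laurent polynomial commutes with any further substitution on those values, this transfers the identity to
\[
	\Omega_n(\inv{f}\,|\,\varphi)=A_{n,1}(\Omega_1(f\,|\,\varphi),\ldots,\Omega_n(f\,|\,\varphi)),
\]
which is the assertion.

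The one technicality I would pause on is the presence of $X_1^{-1}$ in $A_{n,1}$: the right-hand side involves $\Omega_1(f\,|\,\varphi)^{-1}$, so one must justify that this makes sense. The cleanest way is to treat the whole identity as a formal one in a suitable extension ring (equivalently, clearing denominators by multiplying through by $\Omega_1(f\,|\,\varphi)^{2n-1}$ in view of the homogeneity/isobaric degree of $A_{n,1}$ given in the lines following \eqref{stirling_partitionsum}), so that both sides become genuine polynomials and the substitution argument above applies without ambiguity. Once this polynomial identity is established, dividing back restores the claimed Laurent form. I expect no genuine obstacle beyond this bookkeeping — the main work was already done in the proof of Remark \ref{inverse_function} via \eqref{connection_A_B}.
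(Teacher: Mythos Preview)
Your proposal is correct and follows essentially the same route as the paper: start from the identity $D^n(\inv{f})(0)=A_{n,1}(D(f)(0),\ldots,D^n(f)(0))$ of Remark~\ref{inverse_function} and pass to $\Omega$-form via Definition~\ref{def_omega}. The paper compresses this into a single sentence (``which immediately yields''), while your added remarks about the Laurent denominator are just extra care around the same argument.
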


\noindent\sloppy
Our second option is to make use of the Lagrange formula $D^n(\inv{f})(0)=D^{n-1}((\id/f)^n)(0)$. It follows from \eq{invertible_function} that $\id/f=(\id^{-1}\cdot f)^{-1}\in\funcs_1$, whence $\Omega_0(\id/f\,|\,\varphi)=(\id/f)(0)\neq 0$. Therefore, when passing over from $D^n(\inv{f})(0)$ to $\Omega_n(\inv{f}\,|\,\varphi)$ we again have to apply \eq{omega_composition_1case}:
{\allowdisplaybreaks
\begin{align*}
	\Omega_n(\inv{f}\,|\,\varphi)& = \Omega_{n-1}(\id^n\comp\tfrac{\id}{f}\,|\,\varphi)\\
	                 & = \sum_{k=0}^{n-1}D^k(\id^n)(\Omega_0(\tfrac{\id}{f}\,|\,\varphi))\cdot(B_{n-1,k}\comp\Omega_{\num}(\tfrac{\id}{f})\,|\,\varphi)\\
									 & = \sum_{k=0}^{n-1}\powerfall{n}{k}\Omega_0(\tfrac{\id}{f}\,|\,\varphi)^{n-k}\,B_{n-1,k}(\Omega_1(\tfrac{\id}{f}\,|\,\varphi),\ldots,\Omega_{n-k}(\tfrac{\id}{f}\,|\,\varphi)),
\end{align*}
}

\fussy\noindent
where $\powerfall{n}{k}$ means the falling power $n(n-1)\cdots(n-k+1)$, $k\geq 1$, and $\powerfall{n}{0}=1$.\,---\,This shows that $\Omega_n(\inv{f}\,|\,\varphi)$ can be represented by another class of polynomials.
\begin{prop}\label{inverse_lagrange}
	$\Omega_n(\inv{f}\,|\,\varphi)=\widehat{T}_n(\Omega_{0}(\frac{\id}{f}\,|\,\varphi),\ldots,\Omega_{n-1}(\frac{\id}{f}\,|\,\varphi))$ for $n\geq 1$, where $\widehat{T}_n\in\const[X_0,\ldots,X_{n-1}]$ is defined by
\[\widehat{T}_n:=\sum_{k=0}^{n-1}\powerfall{n}{k}X_0^{n-k}B_{n-1,k}\]
and called $n$th tree polynomial~\emph{(cf. part (iii) of \remark{power_unification})}. For later use we set $T_n:=\widehat{T}_n(1,X_1,\ldots,X_{n-1})$.
\end{prop}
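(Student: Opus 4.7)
The proposition essentially packages as a named polynomial the computation that appears just above its statement, so my plan is to go through that derivation carefully and justify each step.

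The plan is to start from the Lagrange formula $D^n(\inv{f})(0)=D^{n-1}((\id/f)^n)(0)$, which holds for any $f\in\invfuncs$ and $n\geq 1$. Since this is an identity between functional expressions in the Taylor coefficients of $f$ (hence of $\varphi$, to the extent that $\varphi$ occurs in $f$), applying the operator $\Omega$ in accordance with \definition{def_omega} gives
\[
  \Omega_n(\inv{f}\,|\,\varphi)=\Omega_{n-1}(\id^n\comp\tfrac{\id}{f}\,|\,\varphi).
\]
To expand the right-hand side via the composition rule, I first need to decide whether we are in the 0-case or the 1-case. Here the outer term $\id^n$ lies in $\poly\subseteq\laurentpoly$, while $\id/f$ lies in $\funcs_1$: indeed, $f\in\invfuncs$ means $\id^{-1}\cdot f\in\funcs_1$ by \eq{invertible_function}, so its reciprocal $\id/f$ is a unit as well (\remark{invertible_product}). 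Thus $\Omega_0(\id/f\,|\,\varphi)\neq 0$ and the 1-case applies.

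Next I apply rule \eq{omega_composition_1case} to $\Omega_{n-1}(\id^n\comp(\id/f)\,|\,\varphi)$, which yields
\[
  \Omega_{n-1}(\id^n\comp\tfrac{\id}{f}\,|\,\varphi)=\sum_{k=0}^{n-1}D^k(\id^n)\bigl(\Omega_0(\tfrac{\id}{f}\,|\,\varphi)\bigr)\cdot\bigl(B_{n-1,k}\comp\Omega_{\num}(\tfrac{\id}{f}\,|\,\varphi)\bigr).
\]
Now $D^k(\id^n)(c)=\powerfall{n}{k}c^{n-k}$ for $0\leq k\leq n-1$, so substituting $c=\Omega_0(\id/f\,|\,\varphi)$ and recalling that $B_{n-1,k}$ depends only on the first $n-k$ indeterminates, the sum becomes
\[
  \sum_{k=0}^{n-1}\powerfall{n}{k}\,\Omega_0(\tfrac{\id}{f}\,|\,\varphi)^{n-k}\,B_{n-1,k}\bigl(\Omega_1(\tfrac{\id}{f}\,|\,\varphi),\ldots,\Omega_{n-k}(\tfrac{\id}{f}\,|\,\varphi)\bigr).
\]
The final step is purely formal: comparing with the definition of $\widehat{T}_n$, the right-hand side is precisely $\widehat{T}_n(\Omega_0(\id/f\,|\,\varphi),\ldots,\Omega_{n-1}(\id/f\,|\,\varphi))$.

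The only step where something could go wrong is the invocation of the 1-case of \eq{omega_composition_1case}, which requires $\id/f\in\funcs_1$; once that membership is verified via \eq{invertible_function}, the rest is bookkeeping. I do not foresee any genuine obstacle, and the proof will be quite short.
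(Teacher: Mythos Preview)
Your proof is correct and follows exactly the same route as the paper: start from the Lagrange formula $D^n(\inv{f})(0)=D^{n-1}((\id/f)^n)(0)$, verify $\id/f\in\funcs_1$ via \eq{invertible_function} so that the 1-case composition rule \eq{omega_composition_1case} applies, expand using $D^k(\id^n)(c)=\powerfall{n}{k}c^{n-k}$, and recognize the result as $\widehat{T}_n$ evaluated at the $\Omega_j(\id/f\,|\,\varphi)$. There is nothing to add.
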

\label{Table: Tree polynomials}
\begin{cor}[\proposition{inverse_stirling} and \ref{inverse_lagrange}]\label{corollary_stirling}
For every $n\geq 1$
	\begin{align*}	
	\text{\emph{(i)~~~}}&A_{n,1}=\widehat{T}_n(\widehat{R}_0(\tfrac{X_1}{1}),\widehat{R}_1(		\tfrac{X_1}{1},\tfrac{X_2}{2}),\ldots,\widehat{R}_{n-1}(\tfrac{X_1}{1},\tfrac{X_2}{2},\ldots,\tfrac{X_n}{n}))\\	
	\text{\emph{(ii)~~~}}&A_{n,1}(X_0,2X_1,3X_2,\ldots)=\widehat{T}_n(\widehat{R}_0,\widehat{R}_1\ldots,\widehat{R}_{n-1})\\
	\text{\emph{(iii)~~~}}&A_{n,1}(1,2X_1,3X_2,\ldots)=T_n(R_1,R_2,\ldots,R_{n-1})
	\end{align*}
\end{cor}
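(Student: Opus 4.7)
The plan is to combine Propositions \ref{inverse_stirling} and \ref{inverse_lagrange} applied to the placeholder $f=\varphi$ itself, so that their two resulting expressions for $\Omega_n(\inv{\varphi}\,|\,\varphi)$ can be equated. On the one hand, Proposition \ref{inverse_stirling} at $f=\varphi$ yields $\Omega_n(\inv{\varphi}\,|\,\varphi)=A_{n,1}(X_1,\ldots,X_n)=A_{n,1}$ by \eqref{omega_phi}. On the other hand, Proposition \ref{inverse_lagrange} at $f=\varphi$ gives the value $\widehat{T}_n(\Omega_0(\id/\varphi\,|\,\varphi),\ldots,\Omega_{n-1}(\id/\varphi\,|\,\varphi))$, so the task reduces to making each $\Omega_{j-1}(\id/\varphi\,|\,\varphi)$ explicit.

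For this I would first treat the auxiliary function $\varphi/\id$: since $\varphi\in\invfuncs$ has $[x]\varphi\neq 0$, the shift of Taylor coefficients that arises from dividing $\varphi$ by $\id$ shows $D^m(\varphi/\id)(0)=D^{m+1}(\varphi)(0)/(m+1)$, hence $\varphi/\id\in\funcs_1$ and $\Omega_m(\varphi/\id\,|\,\varphi)=X_{m+1}/(m+1)$ by Definition \ref{def_omega}. Because $\id/\varphi=(\varphi/\id)^{-1}$ in $\funcs_1$, Proposition \ref{reciprocal_function} immediately produces
\[
  \Omega_{j-1}(\id/\varphi\,|\,\varphi)=\widehat{R}_{j-1}\bigl(X_1/1,\,X_2/2,\,\ldots,\,X_j/j\bigr).
\]
Substituting this back into the Lagrange expression above and equating with $A_{n,1}$ yields (i).

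For (ii), I would apply to (i) the substitution $X_j\mapsto j\,X_{j-1}$ for $j=1,\ldots,n$: on the left it produces $A_{n,1}(X_0,2X_1,3X_2,\ldots)$, while on the right every fraction $X_j/j$ inside $\widehat{R}_{k-1}$ collapses to $X_{j-1}$, so that $\widehat{R}_{k-1}(X_1/1,\ldots,X_k/k)$ reduces to $\widehat{R}_{k-1}(X_0,X_1,\ldots,X_{k-1})=\widehat{R}_{k-1}$. Finally, (iii) is obtained from (ii) by the further specialization $X_0=1$, which by the defining relations $R_j=\widehat{R}_j(1,X_1,\ldots,X_j)$ and $T_n=\widehat{T}_n(1,X_1,\ldots,X_{n-1})$ delivers precisely $T_n(R_1,\ldots,R_{n-1})$ on the right-hand side.

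The only subtlety worth noting is that $\id/\varphi$ has to be legitimized as a genuine element of $\funcs_1$ rather than a formal Laurent expression before Proposition \ref{reciprocal_function} can be invoked; this is achieved by rewriting it as $(\varphi/\id)^{-1}$. Once this observation is in place, the remaining steps amount to routine coefficient bookkeeping and a transparent pair of substitutions.
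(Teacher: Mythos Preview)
Your argument is correct and follows essentially the same route as the paper: both combine Propositions~\ref{inverse_stirling} and~\ref{inverse_lagrange} with Proposition~\ref{reciprocal_function} and the Taylor-coefficient computation $D^m(\varphi/\id)(0)=D^{m+1}(\varphi)(0)/(m+1)$ to obtain~(i), and then derive (ii) and (iii) by the obvious substitutions. The only difference is that the paper applies the two propositions to a generic $f\in\invfuncs$ not containing $\varphi$, obtains the equality $A_{n,1}(f_1,\ldots,f_n)=H_n(f_1,\ldots,f_n)$ for all such coefficient sequences, and then invokes Remark~\ref{identical_polynomials} to pass to the polynomial identity; you instead set $f=\varphi$ from the start, so that $\Omega_j(f\,|\,\varphi)=X_j$ and the polynomial identity falls out directly. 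Your shortcut is the cleaner of the two, and it is in fact exactly the device the paper itself uses a few lines later in the proofs of Corollaries~\ref{family_A} and~\ref{family_B}.
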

\begin{proof}
(i). We assume $f(x)=\sum_{n\geq 1}f_n x^n/n!$ to be an invertible function, in which $\varphi$ does not occur. Thus we have $\Omega_n(f\,|\,\varphi)=f_n$ and $(\id^{-1}\cdot f)(x)=\sum_{n\geq 0}\frac{f_{n+1}}{n+1}\frac{x^n}{n!}$, whence by \proposition{reciprocal_function}
\begin{equation}\label{omega_xdivf}
	\Omega_n(\tfrac{\id}{f}\,|\,\varphi)=\widehat{R}_n(\tfrac{f_1}{1},\tfrac{f_2}{2},\ldots,\tfrac{f_{n+1}}{n+1}).
\end{equation}
Let $H_n(X_1,\ldots,X_n)$ temporarily stand for the polynomial on the right-hand side of (i). Then, combining \proposition{inverse_stirling} with \ref{inverse_lagrange} and \eq{omega_xdivf} we get
\begin{equation*}
	A_{n,1}^{f}(0)=A_{n,1}(f_1,\ldots,f_n)=H_n(f_1,\ldots,f_n)=H_n^{f}(0).
\end{equation*}
The argument from \remark{identical_polynomials} now yields the assertion (i): $A_{n,1}=H_n$.

(ii) and (iii) are immediate consequences of (i). 
\end{proof}

Later on, part (i) of \corollary{corollary_stirling} will be extended to the entire family $(A_{n,k})$ (see \corollary{family_A} below).
\subsection{Fa\`{a} di Bruno polynomials}
Let $f\in\funcs$ be any function, which does not contain $\varphi$. For $n\geq 0$ we define the $n$\emph{th FdB polynomial of} $f$ (for short, $f$-\emph{polynomial}) by
\begin{equation}\label{fdbpoly_0case}
	\Phi_n(f):=\Omega_n(f\comp\varphi\,|\,\varphi)=\sum_{k=0}^n D^k(f)(0)B_{n,k}.
\end{equation}
The sum on the right-hand side results from \eq{omega_composition_0case}, where $\varphi\in\funcs_0$ is assumed. This definition makes also sense in the case $f\in\laurentpoly$, $\varphi\in\funcs_1$. It is clear then by \eq{omega_composition_1case} that the Taylor coefficient $D^k(f)(0)$ in \eq{fdbpoly_0case} has to be replaced with the Laurent polynomial $D^k(f)(X_0)$. 

This definition is still ambiguous for $f\in\poly$, insofar $f\comp\varphi$ can be evaluated with both $\varphi\in\funcs_0$ and $\varphi\in\funcs_1$. To distinguish the two cases, we shall indicate the choice $\varphi\in\funcs_1$ by writing $\widehat{\Phi}_n(f)$ instead of $\Phi_n(f)$, if necessary. 

We check at once that $\Phi_n$ is a $\const$-linear operator obeying the product rule \eqref{omega_product}. The case of a composite function will be treated in Section~4.
\begin{exms}\label{examples_fdbpoly}
Well-known examples of  FdB polynomials are

\sloppy	
(i): the \emph{exponential (or: complete Bell) polynomials} $B_n:=\Phi_n(\exp)=\sum_{k=0}^n D^k(\exp)(0)B_{n,k}$ $=\sum_{k=0}^n B_{n,k}$, $n\geq 0$. We have $\Phi_n(\expm)=B_n$ for $n\geq 1$, however $\Phi_0(\expm)=0\neq B_0=1$.
	
\fussy
(ii): the \emph{logarithmic polynomials} $L_n:=\Phi_n(\logm)=\sum_{k=1}^n D^k(\logm)(0)B_{n,k}$ $\overset{\eqref{higherderiv_log}}{=}\sum_{k=1}^n(-1)^{k-1}(k-1)!B_{n,k}$ (cf. Comtet \cite[p.\,140]{comt1974}).
\label{Table: Logarithmic polynomials}

(iii): the \emph{potential polynomials} $\widehat{P}_{n,k}:=\widehat{\Phi}_n(\id^k)$ \cite[p.\,141]{comt1974}. Expanding \mbox{$\Omega_n(\id^k\comp\varphi\,|\,\varphi)$} according to \eq{omega_composition_1case} gives 
\begin{equation}\label{potential_polynomial_1}
	\widehat{P}_{n,k}=\sum_{j=0}^n \powerfall{k}{j} X_0^{k-j}B_{n,j}.
\end{equation}
\label{Table: Potential polynomials}

\noindent\sloppy
The reciprocal polynomials and the tree polynomials form subfamilies of ($\widehat{P}_{n,k}$), since we have $\widehat{R}_n=\widehat{P}_{n,-1}$ and $\widehat{T}_n=\widehat{P}_{n-1,n}$.

(iv): The fact that powers and falling powers are connected by $(x)_k=\sum_{j=0}^k s_1(k,j)x^j$ suggests introducing \emph{factorial polynomials} by a definition analogous to that of (iii) above: $\widehat{F}_{n,k}:=\widehat{\Phi}_n((\id)_k)$; additionally set $F_{n,k}:=\widehat{F}_{n,k}(1,X_1,X_2,\ldots)$. Since $\widehat{\Phi}_n$ is $\const$-linear, we immediately obtain
\begin{align}
	\label{factorial_potentialpoly}
	\widehat{F}_{n,k}&=\sum_{j=0}^k s_1(k,j)\widehat{P}_{n,j},\\
\intertext{hence by Stirling inversion (see e.\,g. \cite[Proposition 1.4.1b]{stan1986})}
	\label{potential_factorialpoly}
	\widehat{P}_{n,k}&=\sum_{j=0}^k s_2(k,j)\widehat{F}_{n,j}.\\
\intertext{In a more explicit form the factorial polynomials can be written}
	\widehat{F}_{n,k}&=\sum_{r=0}^n\bigg(\sum_{j=r}^k s_1(k,j)(j)_r X_{0}^{j-r}\bigg)B_{n,r}.\notag
\end{align}
\label{Factorial polynomials and their coefficient sums}
\end{exms}

\fussy\
Though perhaps not apparent at first glance, the potential polynomials are indeed closely related to the Bell polynomials; so it is worth examining them in more detail. We start with restricting $k$ to non-negative values (thereby choosing $\varphi\in\funcs_0$). This leads to a special instance of $\widehat{P}_{n,k}$:
\begin{align}\label{power_0case}
	\Phi_n(\id^k)&=\sum_{j=1}^n\Omega_j(\id^k\,|\,\varphi)B_{n,j}=\sum_{j=1}^n\kronecker{j}{k}\cdot k!B_{n,j}\notag\\
	             &=k!B_{n,k}=\widehat{P}_{n,k}(0,X_1,\ldots,X_n).
\end{align}

\noindent
Now consider the function (geometric series) $\gamma:=(1-\id)^{-1}=1+\id+\id^2+\ldots$, which belongs to $\funcs_1$. For the corresponding FdB polynomials we get $\Phi_n(\gamma)=\sum_{k=0}^n k!B_{n,k}$, which may be called \emph{geometric polynomials} (in view of the fact that their univariate version $\Phi_n(\gamma)\comp x$ is well-known in the literature under the same name; see, e.\,g., \cite{karg2017}). 

Looking at \eq{power_0case} it would be justified calling $\widehat{P}_{n,k}(0,X_1,\ldots,X_n)$ partial geometric polynomials. In general, let $f\in\funcs$ be any function and $f_k$ denote its $k$th Taylor coefficient $D^k(f)(0)$, $k\geq 0$. Then, according to what is customary with the exponential (Bell) polynomials, we shall call $\Phi_n(\tfrac{f_k}{k!}\id^k)$ \emph{partial} $f$-\emph{polynomials}. In just this manner we immediately obtain from \eq{power_0case} $B_{n,k}$ as partial $\exp$-polynomials: $B_{n,k}=\Phi_n(\expm_k)$, where $\expm_k:=\id^k/k!$. The partial Bell polynomials itself thus prove to be FdB polynomials.
   
Another instance of $\widehat{P}_{n,k}$ is of interest, too. Taking $(1+\id)^k$ in place of $\id^k$ we obtain
\begin{equation}\label{power_1case}
	\Phi_n((1+\id)^k)=\sum_{j=0}^n j!\binom{k}{j}B_{n,j}=\widehat{P}_{n,k}(1,X_1,\ldots,X_n)=:P_{n,k}.	
\end{equation}
From this follows by virtue of binomial inversion (cf. \cite[Chapter III]{aign1979}) the well-known formula
\begin{equation}\label{bertrand_formula}
	B_{n,k}=\frac{1}{k!}\sum_{j=0}^k (-1)^{k-j}\binom{k}{j}P_{n,j}.
\end{equation}
See \cite[p.\,156]{comt1974} and \cite{todo1981}, where \eq{bertrand_formula} is ascribed to J. Bertrand (1864).
\begin{rem}[Unifications]\label{power_unification}
~
\sloppy
\begin{enumerate}[(i)]
	\item Observing \eq{unification_B} one has $\widehat{P}_{n,k}\comp 1=P_{n,k}\comp 1=\sum_{j=0}^n\powerfall{k}{j}(B_{n,j}\comp 1)=\sum_{j=0}^n\powerfall{k}{j} s_2(n,j)=k^n$.
	\item Unification on both sides of \eq{bertrand_formula} immediately yields the well-known formula $s_2(n,k)=\frac{1}{k!}\sum_{j=1}^k (-1)^{k-j}\binom{k}{j}j^n$ (see \cite[Remark 4.1]{schr2015}, and from a historical perspective \cite[Theorem~1]{boya2012}).
	\item For all $n\geq1$ we have $\widehat{T}_n\comp 1=T_n\comp 1=n^{n-1}=$ the number of labeled, rooted trees on the vertex set $\left\{1,\ldots,n\right\}$.
	\item Set $p(n):=\Phi_n(\gamma)\comp 1$. Then $p(n)=\sum_{k=0}^n k! s_2(n,k)=$ the number of \emph{preferred arrangements}, that is, the total number of ordered partitions of $\left\{1,\ldots,n\right\}$ (see \cite{schr2018} for a Dobi\'{n}ski type series representing these numbers).
	\item Note that the numbers $[n]_k:=\widehat{F}_{n,k}\comp 1=F_{n,k}\comp 1$ do not agree with the falling powers $(n)_k$. Instead, we obtain from \eqref{factorial_potentialpoly} and \eqref{potential_factorialpoly}	$[n]_k=\sum_{j=0}^k s_1(k,j)j^n$ and $k^n=\sum_{j=0}^k s_2(k,j)[n]_j$. In \cite[p.\,253]{schr2014} it is shown that
\[
[n]_k=\sum_{j=1}^{\min(k,n)}(-1)^{k-j}j!s_2(n,j)(c(k-1,j-1)-c(k-1,j)).		
\]
\end{enumerate}
\end{rem}

\fussy
\begin{prop}[Convolution identities]\label{convolution_identities}
Let $n,r,s$ be any integers, $n\geq0$.
\begin{align*}
\text{\emph{(i)}~~~}&\widehat{P}_{n,r+s}=\sum_{k=0}^n\binom{n}{k}\widehat{P}_{n-k,r}\widehat{P}_{k,s}\qquad(\text{also valid for~}P\text{~in place of~}\widehat{P}),\notag\\ 
\text{\emph{(ii)}~~~}&\binom{r+s}{r}B_{n,r+s}=\sum_{k=0}^n\binom{n}{k}B_{n-k,r}B_{k,s}\hspace*{6.6em}(r,s\geq 0).
\end{align*}
\end{prop}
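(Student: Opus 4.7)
The plan is to derive both identities from the Leibniz product rule for the operator $\Omega_n(\cdot\,|\,\varphi)$ in \eqref{omega_product}, which transfers cleanly to $\Phi_n$ and $\widehat{\Phi}_n$ once one observes that composition distributes over multiplication: $(fg)\comp\varphi=(f\comp\varphi)(g\comp\varphi)$. Hence, for either version of the FdB operator,
\[
	\widehat{\Phi}_n(f\cdot g)=\sum_{k=0}^{n}\binom{n}{k}\widehat{\Phi}_{n-k}(f)\widehat{\Phi}_{k}(g),
\]
and the analogous formula for $\Phi_n$ in the 0-case. This is the single algebraic fact the proof rests on.

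For part (i), I would apply the product rule above to the trivial factorization $\id^{r+s}=\id^{r}\cdot\id^{s}$ in $\laurentpoly$, using $\widehat{P}_{n,k}=\widehat{\Phi}_n(\id^k)$ directly from \examples{examples_fdbpoly}(iii). This yields the $\widehat{P}$-form of (i) at once; the $P$-form then follows by the unification $X_0\mapsto 1$ applied to both sides, compatibly with \eqref{power_1case}.

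For part (ii), the cleanest route is not to extract a coefficient from (i) but to redo the same trick with the partial exponentials $\expm_k:=\id^k/k!$, which by the discussion preceding \eqref{power_1case} satisfy $\Phi_n(\expm_k)=B_{n,k}$ for all $n\geq 0$ and $k\geq 0$ (the boundary $n=0$ being covered by $B_{0,0}=1$). The factorization
\[
	\expm_r\cdot\expm_s=\frac{\id^r}{r!}\cdot\frac{\id^s}{s!}=\binom{r+s}{r}\expm_{r+s}
\]
feeds into the product rule for $\Phi_n$ to give
\[
	\binom{r+s}{r}B_{n,r+s}=\Phi_n(\expm_r\cdot\expm_s)=\sum_{k=0}^{n}\binom{n}{k}B_{n-k,r}B_{k,s},
\]
which is exactly (ii). (Alternatively, one obtains the same identity by comparing the $X_0^0$-coefficients in (i), since by \eqref{potential_polynomial_1} the constant term of $\widehat{P}_{n,k}$ in $X_0$ equals $k!\,B_{n,k}$ for $k\geq 0$.)

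I do not anticipate a genuine obstacle: the only point requiring care is to justify that the Leibniz rule for $\Omega_n$ in fact descends to $\Phi_n$ and $\widehat{\Phi}_n$, and to keep the 0-case versus 1-case distinction consistent (the $\widehat{P}$-identity lives in the 1-case with $\varphi\in\funcs_1$ so that $\id^k$ may have $k<0$, while the Bell-polynomial identity in (ii) uses $r,s\geq 0$ and is perfectly happy in the 0-case).
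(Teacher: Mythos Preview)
Your proposal is correct and takes essentially the same approach as the paper: for (i) you apply the Leibniz rule \eqref{omega_product} (via $\widehat{\Phi}_n$) to $\id^{r+s}=\id^r\cdot\id^s$ and then specialize $X_0=1$, exactly as the paper does. For (ii) the paper simply sets $X_0=0$ in (i) and invokes \eqref{power_0case}, which is precisely the alternative you mention in parentheses; your primary route through $\expm_r\cdot\expm_s=\binom{r+s}{r}\expm_{r+s}$ is just the same specialization expressed on the level of functions rather than indeterminates, so the two arguments coincide.
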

\begin{proof}
(i). Evaluate $\widehat{P}_{n,r+s}=\widehat{\Phi}_n(\id^r\cdot\id^s)$ by means of the Leibniz rule \eqref{omega_product}. The same identity for $P_{n,r+s}$ is then obtained for $X_0=1$. --- (ii) Replace $X_0$ by 0 in (i) and apply \eq{power_0case}.
\end{proof}
\begin{rem}
According to Birmajer, Gil and Weiner \cite{bigw2012} the convolution formula (ii) seems to be established for the first time by Cvijovi\'{c} \cite{cvij2011}.
\end{rem}

\begin{prop}\label{comtet_logpoly}
	$L_n=\sum_{j=1}^n (-1)^{j-1}\frac{1}{j}\binom{n}{j}P_{n,j}\qquad(n\geq 1)$.
\end{prop}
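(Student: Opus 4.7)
The plan is to reduce the claim to an elementary combinatorial sum by substituting Bertrand's inversion formula \eqref{bertrand_formula} into the definition $L_n=\sum_{k=1}^{n}(-1)^{k-1}(k-1)!\,B_{n,k}$ from Example~\ref{examples_fdbpoly}(ii), then swapping the order of summation and collapsing the inner sum by the hockey-stick identity.

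In detail, plugging $B_{n,k}=\frac{1}{k!}\sum_{j=0}^{k}(-1)^{k-j}\binom{k}{j}P_{n,j}$ into the displayed expression for $L_n$ and using the sign collapse $(-1)^{k-1}(-1)^{k-j}=(-1)^{j-1}$ together with $(k-1)!/k!=1/k$ I arrive at
\begin{equation*}
 L_n \;=\; \sum_{k=1}^{n}\frac{1}{k}\sum_{j=0}^{k}(-1)^{j-1}\binom{k}{j}P_{n,j}.
\end{equation*}
Interchanging the two summations and recalling that for $n\geq 1$ one has $P_{n,0}=B_{n,0}=0$ (so that the $j=0$ contribution drops out), the claim reduces to verifying the single identity
\begin{equation*}
 \sum_{k=j}^{n}\frac{1}{k}\binom{k}{j} \;=\; \frac{1}{j}\binom{n}{j} \qquad (1\le j\le n).
\end{equation*}

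This last equality is elementary: the term-wise rewriting $\frac{1}{k}\binom{k}{j}=\frac{1}{j}\binom{k-1}{j-1}$ followed by the classical hockey-stick formula $\sum_{k=j}^{n}\binom{k-1}{j-1}=\binom{n}{j}$ finishes the proof. There is no genuine obstacle here; the only spot requiring a moment's care is the clean disposal of the $j=0$ summand (which vanishes automatically because $P_{n,0}=0$ for $n\geq 1$), so that the resulting lower summation index matches the $j=1$ starting point in the statement.
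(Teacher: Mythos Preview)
Your proof is correct and follows essentially the same route as the paper: substitute Bertrand's formula \eqref{bertrand_formula} into the defining expansion of $L_n$, swap the order of summation, and evaluate the inner sum $\sum_{k=j}^{n}\frac{1}{k}\binom{k}{j}=\frac{1}{j}\binom{n}{j}$. The paper leaves this last identity as ``easily verified'' and drops the $j=0$ term without comment, whereas you make both points explicit (the hockey-stick rewriting and the observation $P_{n,0}=B_{n,0}=0$ for $n\ge1$); otherwise the arguments coincide.
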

\begin{proof}
In the linear combination representing $L_n$ (see \examples{examples_fdbpoly}\,(ii)) we replace $B_{n,k}$ by the right-hand side of \eq{bertrand_formula}. This gives
\begin{equation*}
	L_n=\sum_{k=1}^{n}\sum_{j=0}^{k}(-1)^{j-1}\frac{1}{k}\binom{k}{j}P_{n,j}=\sum_{j=1}^{n}\bigg((-1)^{j-1}P_{n,j}\underbrace{\sum_{k=j}^{n}\frac{1}{k}\binom{k}{j}}_{(\ast)}\bigg).
\end{equation*}
One easily verifies that (\textasteriskcentered) is equal to $\frac{1}{j}\binom{n}{j}$. 
\end{proof}
\begin{rem}
The statement of \proposition{comtet_logpoly} is to be found in Comtet \cite[p.\,156]{comt1974}, however flawed by missing the binomial factor.
\end{rem}

Now we will give a more general version of the statements in the Propositions \ref{inverse_stirling} and \ref{inverse_lagrange}.

\begin{thm}\label{inverse_power}
Let $f\in\invfuncs$ and $n,k\in\integers$ with $1\leq k\leq n$. Then we have
\begin{align*}
\text{\emph{(i)}~~~}\Omega_n(\inv{f}^k\,|\,\varphi)&=k!A_{n,k}(\Omega_1(f\,|\,\varphi),\ldots,\Omega_{n-k+1}(f\,|\,\varphi))
,\\
\text{\emph{(ii)}~~~}\Omega_n(\inv{f}^k\,|\,\varphi)&=k!\binom{n-1}{k-1}\widehat{P}_{n-k,n}\comp\widehat{R}_{\num}(\tfrac{\Omega_1(f\,|\,\varphi)}{1},\tfrac{\Omega_2(f\,|\,\varphi)}{2},\ldots).
\end{align*}
\end{thm}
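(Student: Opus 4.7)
For part (i), I would interpret $\inv{f}^k$ as the $0$-case composite $\id^k\comp\inv{f}$, which makes sense because $\inv{f}\in\invfuncs\subseteq\funcs_0$. Applying the rule \eqref{omega_composition_0case} and noting that $\varphi$ does not occur in $\id^k$, so $\Omega_j(\id^k\,|\,\varphi)=\kronecker{j}{k}\cdot k!$, collapses the sum to a single term:
\[
	\Omega_n(\inv{f}^k\,|\,\varphi)=k!\,B_{n,k}(\Omega_1(\inv{f}\,|\,\varphi),\ldots,\Omega_{n-k+1}(\inv{f}\,|\,\varphi)).
\]
Now I would substitute $\Omega_j(\inv{f}\,|\,\varphi)=A_{j,1}(\Omega_1(f\,|\,\varphi),\ldots,\Omega_j(f\,|\,\varphi))$ supplied by Proposition \ref{inverse_stirling}. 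By the associativity of polynomial composition \eqref{composition_associative_law} the result equals $k!\,(B_{n,k}\comp A_{\num,1})(\Omega_1(f\,|\,\varphi),\ldots)$, and \eqref{A_B_representable} identifies $B_{n,k}\comp A_{\num,1}$ with $A_{n,k}$, proving (i).

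For part (ii), the starting point is the Lagrange-B\"urmann identity
\[
	D^n(\inv{f}^k)(0)=\frac{k\,(n-1)!}{(n-k)!}\,D^{n-k}((\id/f)^n)(0)=k!\binom{n-1}{k-1}\,D^{n-k}((\id/f)^n)(0),
\]
which extends to exponent $k$ the classical Lagrange formula already exploited in the proof of Proposition \ref{inverse_lagrange}. Since this holds for \emph{every} $f\in\invfuncs$, Remark \ref{identical_polynomials} transfers it to the polynomial identity
\[
	\Omega_n(\inv{f}^k\,|\,\varphi)=k!\binom{n-1}{k-1}\,\Omega_{n-k}((\id/f)^n\,|\,\varphi).
\]
To evaluate the right-hand side I would read $(\id/f)^n$ as the $1$-case composite $\id^n\comp(\id/f)$, legitimate by \eqref{invertible_function} since $\id/f\in\funcs_1$, and apply \eqref{omega_composition_1case} with $D^j(\id^n)(Y)=\powerfall{n}{j}Y^{n-j}$. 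Comparison of the resulting sum with the definition \eqref{potential_polynomial_1} of $\widehat{P}_{n-k,n}$ then gives
\[
	\Omega_{n-k}((\id/f)^n\,|\,\varphi)=\widehat{P}_{n-k,n}(\Omega_0(\id/f\,|\,\varphi),\ldots,\Omega_{n-k}(\id/f\,|\,\varphi)).
\]

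It remains to identify the arguments $\Omega_j(\id/f\,|\,\varphi)$ with the specialisations of $\widehat{R}_j$ appearing in the statement. For this I would write $\id/f=(\id^{-1}f)^{-1}$; the factor $\id^{-1}f$ lies in $\funcs_1$ because $f\in\invfuncs$ means $f'(0)=(\id^{-1}f)(0)\neq 0$, so Proposition \ref{reciprocal_function} applies and yields $\Omega_j(\id/f\,|\,\varphi)=\widehat{R}_j(\Omega_0(\id^{-1}f\,|\,\varphi),\ldots,\Omega_j(\id^{-1}f\,|\,\varphi))$. The observation $D^i(\id^{-1}f)(0)=D^{i+1}(f)(0)/(i+1)$ (valid because $f(0)=0$) translates under $\Omega$ into $\Omega_i(\id^{-1}f\,|\,\varphi)=\Omega_{i+1}(f\,|\,\varphi)/(i+1)$, giving exactly the form required. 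A final appeal to the associativity \eqref{composition_associative_law} of polynomial composition assembles (ii). The main technical hurdle, beyond the bookkeeping of $0$-case versus $1$-case compositions and the two shifted indexings (the leading $X_0$ of $\widehat{P}_{n-k,n}$ and the division by $j+1$ inside $\widehat{R}_j$), is cleanly justifying Lagrange-B\"urmann for exponent $k\geq 2$, which is the only ingredient not already established in earlier sections.
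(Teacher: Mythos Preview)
Your proposal is correct and follows essentially the same route as the paper's own proof: part (i) via $\id^k\comp\inv f$, the $0$-case rule, Proposition~\ref{inverse_stirling}, and \eqref{A_B_representable}; part (ii) via the general Lagrange inversion formula, the $1$-case rule applied to $\id^n\comp(\id/f)$ to obtain $\widehat P_{n-k,n}$, and Proposition~\ref{reciprocal_function} together with the shift $\Omega_i(f/\id\,|\,\varphi)=\Omega_{i+1}(f\,|\,\varphi)/(i+1)$. The only point worth adding is that for the Lagrange--B\"urmann identity with exponent $k$ the paper simply cites Stanley \cite[Theorem~5.4.2]{stan1999}, so your ``main technical hurdle'' is handled by external reference rather than reproved.
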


\begin{proof}
(i). Observing $\inv{f}(0)=0$ and applying \eq{omega_composition_0case} to $\inv{f}^k=\id^k\comp \inv{f}$ we see that the left hand-side of (i) becomes $\sum_{j=0}^n\Omega_j(\id^k\,|\,\varphi)\cdot(B_{n,j}\comp\Omega_{\num}(\inv{f}\,|\,\varphi))$. Clearly $\Omega_j(\id^k\,|\,\varphi)=D^j(\id^k)(0)=\kronecker{j}{k}k!$. We use \proposition{inverse_stirling} to evaluate $\Omega_{\num}(\inv{f}\,|\,\varphi)$ and get the asserted by \eq{A_B_representable}.

\sloppy
(ii). We proceed in a way similar to that in the argument leading to \proposition{inverse_lagrange}. Let $g$ denote the function $\id/f\in\funcs_1$. We start from the general Lagrange inversion formula $n[x^n]\inv{f}(x)^k=k[x^{n-k}]g(x)^n$ \cite[Theorem~5.4.2]{stan1999}, which may also be written as $\tfrac{1}{n!}D^n(\inv{f}^k)(0)=\tfrac{k}{n(n-k)!}D^{n-k}(g^n)(0)$. Transforming now the Taylor coefficients herein into $\Omega_n$-terms according to \definition{def_omega}, we obtain by \eq{omega_composition_1case} (1-case) and by \eq{potential_polynomial_1}
\begin{align*}
	\Omega_n(\inv{f}^k\,|\,\varphi)&=\tfrac{k(n-1)!}{(n-k)!}\Omega_{n-k}(\id^n\comp g\,|\,\varphi)\\
	&=k!\binom{n-1}{k-1}\sum_{j=0}^{n-k}D^j(\id^n)(\Omega_0(g\,|\,\varphi))\cdot(B_{n,j}\comp\Omega_{\num}(g\,|\,\varphi))\\
	&=k!\binom{n-1}{k-1}\widehat{P}_{n-k,n}(\Omega_{0}(g\,|\,\varphi),\ldots,\Omega_{n-k}(g\,|\,\varphi)).
\end{align*}

\sloppy
By \proposition{reciprocal_function} we have $\Omega_j(g\,|\,\varphi)=\widehat{R}_j(\Omega_0(\tfrac{f}{\id}\,|\,\varphi),\ldots,\Omega_j(\tfrac{f}{\id}\,|\,\varphi))$. Now observe (as in the proof of \corollary{corollary_stirling}) that $f/\id$ has Taylor coefficients $D^{r+1}(f)(0)/(r+1)$, whence $\Omega_r(\tfrac{f}{\id}\,|\,\varphi)=\tfrac{1}{r+1}\Omega_{r+1}(f\,|\,\varphi)$ for $r\geq 0$. This completes the proof. 
\end{proof}
\fussy
\begin{cor}\label{family_A}
For all $n\geq k\geq 1$
\[
A_{n,k}=\binom{n-1}{k-1}\widehat{P}_{n-k,n}(\widehat{R}_0(\tfrac{X_1}{1}),\widehat{R}_1(\tfrac{X_1}{1},\tfrac{X_2}{2}),\ldots,\widehat{R}_{n-1}(\tfrac{X_1}{1},\ldots,\tfrac{X_n}{n})).
\]
\end{cor}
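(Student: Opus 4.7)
The strategy is to simply equate the two expressions for $\Omega_n(\inv{f}^k\,|\,\varphi)$ supplied by parts (i) and (ii) of Theorem~\ref{inverse_power}. Since both sides of that theorem are identically equal to $\Omega_n(\inv{f}^k\,|\,\varphi)$ for every $f\in\invfuncs$, they equal each other. After cancelling the common factor $k!$ this yields
\[
A_{n,k}(\Omega_1(f\,|\,\varphi),\ldots,\Omega_{n-k+1}(f\,|\,\varphi)) = \binom{n-1}{k-1}\widehat{P}_{n-k,n}\comp\widehat{R}_{\num}\bigl(\tfrac{\Omega_1(f\,|\,\varphi)}{1},\tfrac{\Omega_2(f\,|\,\varphi)}{2},\ldots\bigr)
\]
for every invertible $f$.

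The next step is to specialize to functions $f\in\invfuncs$ in which the placeholder $\varphi$ does not occur. By \eqref{function_without_phi} this gives $\Omega_j(f\,|\,\varphi)=D^j(f)(0)=f_j$, so the displayed identity becomes an equality of two polynomial expressions evaluated at arbitrary Taylor coefficients $f_1,f_2,\ldots\in\const$ with $f_1\neq 0$. The final step is to invoke the polynomial identity principle of Remark~\ref{identical_polynomials}: the sequence $(f_j)$ can be prescribed freely (subject to $f_1\neq 0$), and since $\const$ is infinite, agreement of two (Laurent) polynomials on the Zariski-dense set $\{X_1\neq 0\}$ forces their equality in $\const[X_1^{-1},X_2,\ldots,X_{n-k+1}]$. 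This is precisely the asserted identity, once one reads off the right-hand side by unwinding the $\num$-substitution: the $j$-th argument of $\widehat{P}_{n-k,n}$ receives $\widehat{R}_j(\tfrac{X_1}{1},\ldots,\tfrac{X_{j+1}}{j+1})$.

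I expect no substantial obstacle, as every ingredient is already in place: part (i) of Theorem~\ref{inverse_power} is the immediate polynomial shadow of Remark~\ref{inverse_function}, while part (ii) encodes the classical Lagrange inversion formula through the potential and reciprocal polynomials. The only point that demands mild care is bookkeeping: one must verify that the unabbreviated form of $\widehat{P}_{n-k,n}\comp\widehat{R}_{\num}(\ldots)$ coincides term-for-term with the listing $\widehat{R}_0(\tfrac{X_1}{1}),\widehat{R}_1(\tfrac{X_1}{1},\tfrac{X_2}{2}),\ldots$ appearing in the statement, which is routine from the definition of $\widehat{R}_j$ and the fact that $\widehat{P}_{n-k,n}$ is a (Laurent) polynomial in $X_0,X_1,\ldots,X_{n-k}$. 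The corollary then follows at once, and specializing $k=1$ recovers part (i) of Corollary~\ref{corollary_stirling} as promised after its statement.
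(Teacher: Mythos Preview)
Your proof is correct and follows essentially the same route as the paper: equate parts (i) and (ii) of Theorem~\ref{inverse_power} and cancel $k!$. The only difference lies in the final specialization. You choose $f\in\invfuncs$ not containing $\varphi$, obtain a numerical identity in the Taylor coefficients $f_j$, and then invoke the polynomial identity principle of Remark~\ref{identical_polynomials} (extended to Laurent polynomials on the dense set $\{X_1\neq 0\}$). The paper instead sets $f=\varphi$ directly: by \eqref{omega_phi} one has $\Omega_j(\varphi\,|\,\varphi)=X_j$, so the equality of (i) and (ii) is already a polynomial identity in the indeterminates, and no appeal to Remark~\ref{identical_polynomials} is needed. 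This is a one-line shortcut worth knowing, but your argument is equally valid and mirrors the template used earlier in the proof of Corollary~\ref{corollary_stirling}.
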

\begin{proof}
Take $f=\varphi$ and recall that $\Omega_j(\varphi\,|\,\varphi)=X_j$. 
\end{proof}
\begin{rem}\label{s1_numbers_new}
Unification on both sides of the remarkable identity above enables us to represent the Stirling numbers of the first kind by means of the potential polynomials. Define $\rho_s:=\widehat{R}_s(1,\frac{1}{2},\ldots,\frac{1}{s+1})\in\rationals$, $s\geq 0$; then $\rho_0=1$ and by \eq{unification_A}
\[
	s_1(n,k)=\binom{n-1}{k-1}P_{n-k,n}(\rho_1,\ldots,\rho_{n-1})\qquad(1\leq k\leq n).
\]
We note here (without proof) that $\rho_s=\sum_{j=0}^s(-1)^j\binom{s+j}{s}^{-1}\widetilde{s_2}(s+j,j)$ and $\rho_{2j+1}=0$ ($j\geq 1$), where $\widetilde{s_2}(s+j,j):=\widetilde{B}_{s+j,j}\comp 1$ (associated Stirling numbers of the second kind; see, e.\,g., \cite[p.\,76]{rior1958}, \cite[p.\,222]{comt1974}).
\end{rem}

We complete the picture by also establishing statements for $\Omega_n(f^k\,|\,\varphi)$ and $B_{n,k}$ that correspond to those of \theorem{inverse_power} and \corollary{family_A}, respectively.
\begin{thm}\label{power_of_function}
Let $f\in\invfuncs$ and $n,k\in\integers$ with $1\leq k\leq n$. Then we have
\begin{align*}
\text{\emph{(i)}~~~}\Omega_n(f^k\,|\,\varphi)&=k!B_{n,k}(\Omega_1(f\,|\,\varphi),\ldots,\Omega_{n-k+1}(f\,|\,\varphi))
,\\
\text{\emph{(ii)}~~~}\Omega_n(f^k\,|\,\varphi)&=k!\binom{n}{k}\widehat{P}_{n-k,k}(\tfrac{\Omega_1(f\,|\,\varphi)}{1},\tfrac{\Omega_2(f\,|\,\varphi)}{2},\ldots).
\end{align*}
\end{thm}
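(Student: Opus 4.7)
The two parts mirror \theorem{inverse_power} and I would prove them by the same two techniques: the direct use of the 0-case composition rule (for (i)), and a factorisation through $\id^{-1}\cdot f$ combined with the 1-case composition rule (for (ii)).

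For part (i), since $f\in\invfuncs\subseteq\funcs_0$, I would simply write $f^k=\id^k\comp f$ (the 0-case) and invoke \eq{omega_composition_0case}:
\[
\Omega_n(f^k\,|\,\varphi)=\sum_{j=0}^n\Omega_j(\id^k\,|\,\varphi)\cdot\bigl(B_{n,j}\comp\Omega_{\num}(f\,|\,\varphi)\bigr).
\]
Because $\Omega_j(\id^k\,|\,\varphi)=\kronecker{j}{k}\cdot k!$, only the term $j=k$ survives, which is exactly the assertion. This is the perfect analogue of the opening step in the proof of \theorem{inverse_power}(i).

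For part (ii), I would factor $f$ as $f=\id\cdot g$ with $g:=\id^{-1}\cdot f$. By \eq{invertible_function} of \remark{invertible_product}, $g\in\funcs_1$, and $f^k=\id^k\cdot g^k$. An application of the Leibniz rule \eq{omega_product} peels off the $\id^k$-factor,
\[
\Omega_n(f^k\,|\,\varphi)=\sum_{j=0}^n\binom{n}{j}\Omega_{n-j}(\id^k\,|\,\varphi)\,\Omega_j(g^k\,|\,\varphi)=\binom{n}{k}k!\,\Omega_{n-k}(g^k\,|\,\varphi),
\]
since again $\Omega_{n-j}(\id^k\,|\,\varphi)$ vanishes unless $n-j=k$. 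Now $g^k=\id^k\comp g$ falls under the 1-case, so \eq{omega_composition_1case} together with the defining formula \eq{potential_polynomial_1} of $\widehat{P}_{n-k,k}$ yields
\[
\Omega_{n-k}(g^k\,|\,\varphi)=\widehat{P}_{n-k,k}\bigl(\Omega_0(g\,|\,\varphi),\ldots,\Omega_{n-k}(g\,|\,\varphi)\bigr).
\]
To finish, I would invoke the identity $\Omega_r(g\,|\,\varphi)=\tfrac{1}{r+1}\Omega_{r+1}(f\,|\,\varphi)$ (valid because $g=\id^{-1}\cdot f$ has Taylor coefficients $D^{r+1}(f)(0)/(r+1)$, exactly as already used in the closing line of the proof of \theorem{inverse_power}(ii)).

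I expect no genuine obstacle; the proof is essentially bookkeeping. The one point that deserves explicit verification is that $g=\id^{-1}\cdot f$ really lies in $\funcs_1$, so that the 1-case rule is available for $\id^k\comp g$ (in contrast to the 0-case used in part (i)); this is precisely the content of \eq{invertible_function}. The index shift $r\mapsto r+1$ that converts the arguments of $\widehat{P}_{n-k,k}$ into the form displayed in (ii) is then routine.
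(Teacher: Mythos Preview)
Your proposal is correct and coincides with the paper's own proof: part~(i) is exactly the application of \eq{omega_composition_0case} to $\id^k\comp f$, and part~(ii) is the factorisation $f=\id\cdot g$ with $g\in\funcs_1$, followed by the Leibniz rule and the 1-case expansion yielding $\widehat{P}_{n-k,k}\comp\Omega_{\num}(g\,|\,\varphi)$, together with the index shift $\Omega_r(g\,|\,\varphi)=\tfrac{1}{r+1}\Omega_{r+1}(f\,|\,\varphi)$. The paper's write-up is terser but follows the identical route.
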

\begin{proof}
The proof runs in much the same way as it does for \theorem{inverse_power}. (i)~Apply \eq{omega_composition_0case} to $\Omega_n(\id^k\comp f\,|\,\varphi)$.\,---\,(ii)~Choose $g\in\funcs_1$  such that $f=\id\cdot g$; hence $f^k=\id^k\cdot g^k$ and obviously $\Omega_n(f\,|\,\varphi)=n\Omega_{n-1}(g\,|\,\varphi)$. Then, the Leibniz rule \eqref{omega_product} and a straightforward calculation give $\Omega_n(f^k\,|\,\varphi)=k!\binom{n}{k}\widehat{P}_{n-k,k}\comp\Omega_{\num}(g\,|\,\varphi)$. From this the assertion follows.
\end{proof}
\begin{cor}\label{family_B}
For all $n\geq k\geq 1$
\[
B_{n,k}=\binom{n}{k}\widehat{P}_{n-k,k}(\tfrac{X_1}{1},\ldots,\tfrac{X_{n-k+1}}{n-k+1}).
\]
\end{cor}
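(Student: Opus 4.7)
The plan is to derive this corollary exactly as Corollary \ref{family_A} was obtained from Theorem \ref{inverse_power}: specialize $f = \varphi$ in Theorem \ref{power_of_function} and equate the two expressions for $\Omega_n(\varphi^k\,|\,\varphi)$.

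More concretely, first I would recall from \eqref{omega_phi} that $\Omega_j(\varphi\,|\,\varphi) = X_j$ for every $j\geq 0$. Next, I would verify that $\varphi \in \invfuncs$ is an admissible choice in Theorem \ref{power_of_function}, so that both parts (i) and (ii) apply with the substitutions $\Omega_j(f\,|\,\varphi) \mapsto X_j$. Part (i) then yields
\[
\Omega_n(\varphi^k\,|\,\varphi) = k!\,B_{n,k}(X_1,\ldots,X_{n-k+1}),
\]
while part (ii) yields
\[
\Omega_n(\varphi^k\,|\,\varphi) = k!\binom{n}{k}\,\widehat{P}_{n-k,k}\!\left(\tfrac{X_1}{1},\tfrac{X_2}{2},\ldots,\tfrac{X_{n-k+1}}{n-k+1}\right).
\]
Equating these two expressions and dividing by $k!$ gives the asserted identity.

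There is essentially no obstacle here since all the genuine work has been done in Theorem \ref{power_of_function}; the corollary is a one-line specialization. The only thing that requires a brief comment is the truncation of the argument list: although part (ii) formally displays the full sequence $\tfrac{\Omega_1}{1}, \tfrac{\Omega_2}{2}, \ldots$, the potential polynomial $\widehat{P}_{n-k,k}$ depends only on $X_0, X_1, \ldots, X_{n-k}$ (since $B_{n-k,j}$ vanishes for $j > n-k$ in the defining sum \eqref{potential_polynomial_1}). After the substitution $X_0 \mapsto \tfrac{X_1}{1}$, $X_r \mapsto \tfrac{X_{r+1}}{r+1}$, the highest indeterminate that can appear is $X_{n-k+1}$, matching the index range asserted in the statement.
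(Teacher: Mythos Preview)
Your proposal is correct and follows exactly the paper's own approach: take $f=\varphi$ in Theorem~\ref{power_of_function}, use $\Omega_j(\varphi\,|\,\varphi)=X_j$, and equate (i) with (ii). The paper's proof is in fact just this one-line specialization; your additional remark about why only $X_1,\ldots,X_{n-k+1}$ appear is a welcome clarification but not a departure from the original argument.
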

\begin{proof}
Take $f=\varphi$ and recall that $\Omega_j(\varphi\,|\,\varphi)=X_j$. 
\end{proof}
%
%
\section{Composition rules}
In this section we are going to investigate the effect the composition of functions has on polynomials, which depend in a specific way on those functions. The two main results (\theorem{first_CR} and \theorem{second_CR}) will prove to be  efficient tools for dealing with the polynomial families of interest here. 

Let $f,g$ be any functions such that $h=f\comp g$ is well-defined as a function. We write $F_n=\Phi_n(f)$, $G_n=\Phi_n(g)$, $H_n=\Phi_n(h)$ for the corresponding FdB polynomials according to \eq{fdbpoly_0case}. 
\begin{prop}\label{basic_CR}
	$\Phi_n(f\comp g)=\Phi_n(f)\comp\Phi_{\num}(g)$.
\end{prop}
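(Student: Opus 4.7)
The plan is to unfold both sides to $\Omega_n$-expressions via the defining identity $\Phi_n(h) = \Omega_n(h\comp\varphi\mid\varphi)$, and then invoke the composition rule \eqref{omega_composition_0case}. First I would rewrite
\[
\Phi_n(f\comp g) \;=\; \Omega_n\bigl((f\comp g)\comp\varphi\,\big|\,\varphi\bigr) \;=\; \Omega_n\bigl(f\comp(g\comp\varphi)\,\big|\,\varphi\bigr),
\]
using associativity of functional composition. Since $g$ does not contain $\varphi$ and $\varphi\in\funcs_0$, we have $g\comp\varphi\in\funcs_0$, so the 0-case rule \eqref{omega_composition_0case} applies with $g\comp\varphi$ in the role of the inner function, yielding
\[
\Phi_n(f\comp g) \;=\; \sum_{k=0}^{n} \Omega_k(f\mid\varphi)\cdot\bigl(B_{n,k}\comp\Omega_{\num}(g\comp\varphi\mid\varphi)\bigr).
\]
Because $f$ does not contain $\varphi$, \eqref{function_without_phi} gives $\Omega_k(f\mid\varphi)=D^k(f)(0)$; and by the very definition of the FdB polynomials, $\Omega_j(g\comp\varphi\mid\varphi)=\Phi_j(g)=G_j$.

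On the other side, expand $\Phi_n(f) = \sum_{k=0}^{n} D^k(f)(0)\,B_{n,k}$ as in \eqref{fdbpoly_0case} and substitute $X_j\mapsto\Phi_j(g)$. Using $\const$-linearity of polynomial substitution together with the associative law for composition of polynomials \eqref{composition_associative_law}, one obtains
\[
\Phi_n(f)\comp\Phi_{\num}(g) \;=\; \sum_{k=0}^{n} D^k(f)(0)\cdot\bigl(B_{n,k}\comp\Phi_{\num}(g)\bigr),
\]
which is exactly the expression produced by the $\Omega_n$-computation above. Comparing term by term completes the proof.

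There is no serious obstacle; the argument is essentially a reshuffling of the definitions. The only point requiring a bit of care is verifying the hypotheses of the 0-case rule (namely that $g\comp\varphi\in\funcs_0$, so that \eqref{omega_composition_0case} rather than \eqref{omega_composition_1case} is the correct vehicle). For the companion statement about $\widehat{\Phi}_n$ with $\varphi\in\funcs_1$ and $f\in\laurentpoly$, I would repeat the same bookkeeping using \eqref{omega_composition_1case}, with the Taylor coefficients $D^k(f)(0)$ replaced by the Laurent-polynomial values $D^k(f)(X_0)$; the remainder of the argument is unchanged.
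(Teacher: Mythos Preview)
Your argument is correct and follows essentially the same route as the paper: rewrite $\Phi_n(f\comp g)=\Omega_n(f\comp(g\comp\varphi)\mid\varphi)$ via associativity, apply the $\Omega_n$-composition rule to the pair $(f,\,g\comp\varphi)$, and identify the resulting pieces as $D^k(f)(0)$ and $\Phi_{\num}(g)$; the 1-case is handled identically with \eqref{omega_composition_1case} and $D^k(f)(X_0)$. One small wording fix: $g\comp\varphi\in\funcs_0$ holds because $g\in\funcs_0$ in the 0-case (so $g(\varphi(0))=g(0)=0$), not merely because $g$ does not contain $\varphi$ and $\varphi\in\funcs_0$.
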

\begin{proof}
Recall that $\varphi\in\funcs_0$ does not occur in $f$ or in $g$. In the 0-case ($f\in\funcs$, $g\in\funcs_0$) we obtain by \eq{omega_composition_0case}
\begin{align*}
	\Phi_n(f\comp g)&=\Omega_n(f\comp(g\comp\varphi)\,|\,\varphi)\\
	   &=\sum_{k=0}^n\Omega_k(f\,|\,\varphi)\cdot(B_{n,k}\comp\Omega_{\num}(g\comp\varphi\,|\,\varphi))\tag{*}\\
		 &=\bigg(\sum_{k=0}^n D^k(f)(0)B_{n,k}\bigg)\comp\Phi_{\num}(g)=\Phi_n(f)\comp\Phi_{\num}(g).
\intertext{%
In the 1-case ($f\in\laurentpoly$, $g\in\funcs_1$) we have to apply \eq{omega_composition_1case} so that $\Omega_k(f\,|\,\varphi)$ in line (*) becomes $D^k(f)(\Omega_0(g\comp\varphi\,|\,\varphi))=D^k(f)(\Phi_0(g))$, and hence
}
	\Phi_n(f\comp g)&=\bigg(\sum_{k=0}^n D^k(f)(X_0)B_{n,k}\bigg)\comp\Phi_{\num}(g)=\Phi_n(f)\comp\Phi_{\num}(g).\qedhere
\end{align*}
\end{proof}
\begin{rem}\label{fdbpoly_index0}
The statement of \proposition{basic_CR} may alternatively be written as $H_n=F_n(G_1,\ldots,G_n)$ (0-case) or as $H_n=F_n(G_0,G_1,\ldots,G_n)$ (1-case). Note that the 0-case includes $H_0=F_0=f(0)$, whereas in the 1-case we have $H_0=F_0(G_0)$ with $G_0=g(0)\neq 0$.
\end{rem}
\begin{rem}\label{basic_CR_polynomialcase}
Suppose $f\in\poly$ and $g\in\laurentpoly$. Then we have for every $n\geq 0$	$\widehat{\Phi}_n(f\comp g)=\widehat{\Phi}_n(f)\comp\widehat{\Phi}_{\num}(g)$ (by applying the same argument as in the 1-case in the proof of \proposition{basic_CR}).
\end{rem}

From \remark{basic_CR_polynomialcase} we obtain useful \emph{multiplication rules} for the potential polynomials and the partial Bell polynomials.
\begin{cor}[\remark{basic_CR_polynomialcase}]\label{multiplication_rule}
For all $n\geq 0$ and $r,s\in\integers$ we have
\begin{align*}
	\text{\emph{(i)}\quad}&\widehat{P}_{n,r s}=\widehat{P}_{n,r}\comp\widehat{P}_{\num,s}=\widehat{P}_{n,r}(\widehat{P}_{0,s},\ldots,\widehat{P}_{n,s}),\\
	\text{\emph{(ii)}\quad}&\widehat{P}_{n,-r}=\widehat{P}_{n,r}\comp\widehat{R}_{\num}=\widehat{P}_{n,r}(\widehat{R}_{0},\ldots,\widehat{R}_{n}),\\
	\text{\emph{(iii)}\quad}&\widehat{R}_n\comp\widehat{R}_{\num}=\widehat{R}_n(\widehat{R}_0,\ldots,\widehat{R}_{n})=X_n,\\
	\text{\emph{(iv)}\quad}&B_{n,r s}=\frac{r!(s!)^r}{(r s)!}B_{n,r}(B_{1,s},\ldots,B_{n-r+1,s})\quad(r,s\geq 0). 
\end{align*}
\end{cor}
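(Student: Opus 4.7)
The unifying idea is the reading $\widehat{P}_{n,k}=\widehat{\Phi}_n(\id^k)$ for every $k\in\integers$, and in particular $\widehat{R}_n=\widehat{\Phi}_n(\id^{-1})$, so each of (i)--(iv) should be obtained as the image under $\widehat{\Phi}_n$ (respectively $\Phi_n$) of a trivial functional identity $\id^a\comp\id^b=\id^{ab}$. The plan is to invoke \remark{basic_CR_polynomialcase} to turn each such functional composition into the polynomial substitution $\comp_{\num}$.

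For (i) I would write $\id^{rs}=\id^r\comp\id^s$: when $r\geq 0$ one has $\id^r\in\poly$ and $\id^s\in\laurentpoly$, so \remark{basic_CR_polynomialcase} applies and yields $\widehat{P}_{n,rs}=\widehat{P}_{n,r}\comp\widehat{P}_{\num,s}$; the case $r<0$ reduces to this by exchanging $r$ and $s$ (or by chaining with (ii)). The same recipe with $\id^{-r}=\id^r\comp\id^{-1}$ and $r\geq 0$ proves (ii), using $\widehat{\Phi}_n(\id^{-1})=\widehat{R}_n$. For (iii), first observe $\widehat{\Phi}_n(\id)=\Omega_n(\varphi\,|\,\varphi)=X_n$ by \eq{omega_phi}, so the assertion is the image of $\id=\id^{-1}\comp\id^{-1}$ under $\widehat{\Phi}_n$. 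Since neither factor lies in $\poly$, \remark{basic_CR_polynomialcase} does not cover this case directly, and this is the one place where some care is needed.

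To bypass the gap I would apply \proposition{reciprocal_function} twice to an arbitrary $g\in\funcs_1$ not containing $\varphi$. Since $(g^{-1})^{-1}=g$, one gets
\[
D^n(g)(0)=\widehat{R}_n\!\left(\widehat{R}_0(g_0),\widehat{R}_1(g_0,g_1),\ldots,\widehat{R}_n(g_0,\ldots,g_n)\right),
\]
where $g_j=D^j(g)(0)$, with $g_0\neq 0$ and otherwise arbitrary. The identity-of-polynomials argument of \remark{identical_polynomials}, adapted to Laurent polynomials in $X_0^{-1},X_0,X_1,\ldots,X_n$ (clear negative powers of $X_0$ and invoke the standard result for an infinite integral domain), then forces $\widehat{R}_n\comp\widehat{R}_{\num}=X_n$.

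Finally for (iv), in which $r,s\geq 0$, I would switch to the 0-case of \proposition{basic_CR}. Using $\Phi_n(\id^k)=k!B_{n,k}$ (cf. \eq{power_0case}), the decomposition $\id^{rs}=\id^r\comp\id^s$ (assume $s\geq 1$; $s=0$ is trivial) gives
\[
(rs)!\,B_{n,rs}=\Phi_n(\id^r)\comp\Phi_{\num}(\id^s)=(r!\,B_{n,r})(s!\,B_{1,s},\ldots,s!\,B_{n-r+1,s}).
\]
Homogeneity of $B_{n,r}$ of degree $r$ pulls out $(s!)^r$, and dividing by $(rs)!$ yields the stated coefficient $r!(s!)^r/(rs)!$. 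The main obstacle is the subtle step (iii); once that small extension of the composition rule is in hand (either directly, or routed through \proposition{reciprocal_function} as above), the remaining parts are routine instances of the scheme.
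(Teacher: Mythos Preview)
Your proposal follows the paper's approach: each item is the image under $\widehat{\Phi}_n$ (resp.\ $\Phi_n$) of the identity $\id^a\comp\id^b=\id^{ab}$, pushed through the composition rule of \remark{basic_CR_polynomialcase}. The paper's own proof is a terse one-liner for (i) and then derives (ii) by setting $s=-1$ in (i), (iii) by setting $r=-1$ in (ii), and (iv) by putting $X_0=0$ in (i) and invoking \eq{power_0case} together with homogeneity; your treatment of (iv) via the 0-case of \proposition{basic_CR} is the same computation unfolded.

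The one genuine difference is your handling of (iii). The paper simply specializes (ii) to $r=-1$, which tacitly applies \remark{basic_CR_polynomialcase} to $f=\id^{-1}\notin\poly$; it is relying on the evident extension of that remark's argument rather than its stated hypothesis. Your detour through \proposition{reciprocal_function} and the identity argument of \remark{identical_polynomials} avoids that tacit extension and is perfectly sound. So you have not missed anything; if anything, you are being more careful than the paper at the one spot where the formal hypothesis $f\in\poly$ is not literally met.
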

\begin{proof}
(i) $\widehat{P}_{n,r s}=\widehat{\Phi}_n(\id^{r\cdot s})=\widehat{\Phi}_n(\id^{r}\comp\id^{s})=\widehat{\Phi}_n(\id^{r})\comp\widehat{\Phi}_{\num}(\id^{s})=\widehat{P}_{n,r}\comp\widehat{P}_{\num,s}$.\,---\,(ii)~In (i) set $s=-1$.\,---\,(iii)~Take $r=-1$ in (ii) and observe $\widehat{P}_{n,-1}=\widehat{R}_n$, $\widehat{P}_{n,1}=X_n$.\,---\,(iv)~Put $X_0=0$ in (i); then apply \eq{power_0case} and the homogeneity of the $B_{n,r}$.
\end{proof}

\begin{exm}\label{simple_fdb_inversion}
Let $f\in\invfuncs$ and $g=\inv{f}$. Then \proposition{basic_CR} immediately provides a quite simple infinite scheme of inverse relations. We have $F_n\comp G_{\num}=G_n\comp F_{\num}=\Phi_n(\id)=X_n$. For instance, $B_n\comp L_{\num}=L_n\comp B_{\num}=X_n$, where $B_n=\Phi_n(\expm)$ and $L_n=\Phi_n(\logm)$ (see \examples{examples_fdbpoly}). Chou, Hsu and Shiue \cite{chhs2006} discuss this and some more examples, which fit into this scheme.
\end{exm}

\sloppy
\begin{rem}\label{hopf_algebra}
For $n\geq 1$, let $\Phi_n[\invfuncs]$ denote the set of $\Phi_n(g)$, $g\in\invfuncs$. This set forms (together with $\comp$) a non-abelian group. According to \proposition{basic_CR} we have: $\Phi_n[\invfuncs]$ is closed under $\comp$, and its identity is $X_n$, since $G_n\comp X_{\num}=X_n\comp G_{\num}=G_n$. The inverse of $G_n=\Phi_n(g)$ is $\inv{G}_n:=\Phi_n(\inv{g})$, which by \remark{inverse_function}, \eqref{fdbpoly_0case} and \eqref{connection_A_B} can be written somewhat more explicitly as $\inv{G}_n=\sum_{k=1}^n A_{k,1}^{g}(0)B_{n,k}$. 

It should be noticed here that the mapping, which assigns to each \mbox{$g\in\invfuncs$} the infinite sequence $(\Phi_1(g),\Phi_2(g),\Phi_3(g),\ldots)$, is an isomorphism from the group $(\invfuncs,\comp)$ to the direct product $\prod_{n\geq 1}\Phi_n[\invfuncs]$ (in essence, the group of \emph{formal diffeomorphisms} leaving 0 fixed; it figures as a non-commutative Hopf algebra in \cite{brfk2006}).
\end{rem}
\fussy
\begin{lem}[Substitution Lemma]\label{substitution_lemma}
If $f\in\funcs$ and $g\in\funcs_0$, then
\begin{align*}
\text{\emph{(i)}\quad}&B_{n,k}\comp G_{\num}=B_{n,k}(G_1,\ldots,G_{n-k+1})=\sum_{j=k}^n B_{j,k}^{g}(0)B_{n,j},\\
\text{\emph{(ii)}\quad}&F_n\comp G_{\num}=F_n(G_1,\ldots,G_n)=\sum_{j=0}^n F_{j}^{g}(0)B_{n,j}.
\end{align*}
\end{lem}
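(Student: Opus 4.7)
The plan is to read both identities off from Proposition \ref{basic_CR} by a careful choice of the outer function. The core observation is that the partial Bell polynomial $B_{n,k}$ is itself a Fa\`a di Bruno polynomial, namely $B_{n,k}=\Phi_n(\expm_k)$ where $\expm_k=\id^k/k!$. Indeed, $\Phi_n(\id^k/k!)=\frac{1}{k!}\sum_{j=0}^n D^j(\id^k)(0)B_{n,j}=\frac{1}{k!}\cdot k!\,B_{n,k}$, since $D^j(\id^k)(0)=\kronecker{j}{k}k!$. This identification lets us treat $B_{n,k}\comp G_{\num}$ as the output of the composition rule already proved.

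For part (i) I would apply Proposition \ref{basic_CR} with $f=\id^k/k!$ and the given $g\in\funcs_0$: the left-hand side becomes $\Phi_n(f)\comp\Phi_{\num}(g)=B_{n,k}\comp G_{\num}$, while the right-hand side becomes $\Phi_n(f\comp g)=\Phi_n(g^k/k!)=\frac{1}{k!}\sum_{j=0}^n D^j(g^k)(0)\,B_{n,j}$. It remains to show $D^j(g^k)(0)=k!\,B_{j,k}^{g}(0)$. This is where the only real work lies: I would write $g^k=\id^k\comp g$ and invoke the Fa\`{a} di Bruno formula \eqref{fdb_formula}, obtaining
\[
D^j(g^k)(0)=\sum_{i=0}^{j}D^i(\id^k)(g(0))\,B_{j,i}^{g}(0)=\sum_{i=0}^{j}\kronecker{i}{k}k!\,B_{j,i}^{g}(0)=k!\,B_{j,k}^{g}(0),
\]
where the collapse to a single term uses crucially that $g(0)=0$. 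Substituting back and noting that $B_{j,k}=0$ for $j<k$ yields $B_{n,k}\comp G_{\num}=\sum_{j=k}^{n}B_{j,k}^{g}(0)B_{n,j}$.

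For part (ii) I would simply linearise. Writing $F_n=\Phi_n(f)=\sum_{k=0}^{n}D^k(f)(0)\,B_{n,k}$ and composing with $G_{\num}$, we can pull the scalars $D^k(f)(0)$ out and apply (i) term-by-term:
\[
F_n\comp G_{\num}=\sum_{k=0}^{n}D^k(f)(0)\,(B_{n,k}\comp G_{\num})=\sum_{k=0}^{n}D^k(f)(0)\sum_{j=k}^{n}B_{j,k}^{g}(0)\,B_{n,j}.
\]
Swapping the order of summation and recognising that the inner sum $\sum_{k=0}^{j}D^k(f)(0)B_{j,k}^{g}(0)$ is precisely $F_j^{g}(0)$ (apply the $(\cdot)^g$-substitution to $F_j=\sum_{k}D^k(f)(0)B_{j,k}$ and evaluate at $0$) produces the asserted form $\sum_{j=0}^{n}F_j^{g}(0)\,B_{n,j}$.

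The only non-routine step is the Fa\`a di Bruno computation of $D^j(g^k)(0)$, but the collapse of the sum at $g(0)=0$ makes it immediate. The first equality in each line (replacing $B_{n,k}\comp G_{\num}$ by $B_{n,k}(G_1,\ldots,G_{n-k+1})$ and likewise for $F_n$) is just unwinding the $\num$-notation together with the fact that $B_{n,k}$ involves only $X_1,\ldots,X_{n-k+1}$ while $F_n$ involves only $X_1,\ldots,X_n$ (in the 0-case, since $f$ does not contain $\varphi$ and $g(0)=0$ forces $G_0=0$, so no $X_0$ appears).
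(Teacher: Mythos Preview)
Your proof is correct and follows essentially the same route as the paper: both use Proposition~\ref{basic_CR} with $f=\expm_k=\id^k/k!$ to rewrite $B_{n,k}\comp G_{\num}$ as $\Phi_n(\expm_k\comp g)$, then collapse the FdB expansion of $D^j(\expm_k\comp g)(0)$ at $g(0)=0$, and derive (ii) from (i) by linearity and interchanging the order of summation. The only difference is notational (you write $g^k/k!$ where the paper writes $\expm_k\comp g$).
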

\begin{proof}
(i). By \proposition{basic_CR} we have 
\begin{align*}
	B_{n,k}\comp G_{\num}&=\Phi_n(\expm_k)\comp\Phi_{\num}(g)=\Phi_n(\expm_k\comp g)\\
		&=\sum_{j=0}^n D^j(\expm_k\comp g)(0)B_{n,j}. 
\end{align*}
The FdB formula \eqref{fdb_formula} yields
\begin{align*}
	D^j(\expm_k\comp g)(0)&=\sum_{i=0}^j D^i(\expm_k)(g(0))B_{j,i}^g(0)\\
		&=\sum_{i=0}^j \kronecker{i}{k}B_{j,i}^g(0)=B_{j,k}^g(0).
\end{align*}
Since $B_{j,k}^g(0)=0$ for $j<k$, this proves (i).

(ii). By \eq{fdbpoly_0case} $F_n=\sum_{k=0}^n D^k(f)(0)B_{n,k}$, hence
{\allowdisplaybreaks
\begin{align*}
	F_n(G_1,\ldots,G_n)&=\sum_{k=0}^n D^k(f)(0)B_{n,k}(G_1,\ldots,G_{n-k+1})\\
	                   &\underset{\text{(i)}}{=}\sum_{k=0}^n D^k(f)(0)\sum_{j=k}^n B_{j,k}^{g}(0)B_{n,j},\\
										 &=\sum_{j=0}^n\bigg(\sum_{k=0}^{j}D^k(f)(0)B_{j,k}^{g}(0)\bigg)B_{n,j}.
\end{align*}
} 
\enlargethispage{1.5ex}

\vspace*{-1ex}
\noindent
The inner sum is equal to $F_j^g(0)$.
\end{proof}

\begin{cor}\label{bell_substitutions_1}
\begin{align*}
	\text{\emph{(i)}\quad}B_{n,k}(B_1,\ldots,B_{n-k+1})=\sum_{j=k}^n s_2(j,k)B_{n,j},\\
	\text{\emph{(ii)}\quad}B_{n,k}(L_1,\ldots,L_{n-k+1})=\sum_{j=k}^n s_1(j,k)B_{n,j}.
\end{align*}
\end{cor}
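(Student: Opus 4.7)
Both identities fall out as direct instances of the Substitution Lemma \ref{substitution_lemma}(i), which asserts that for any $g\in\funcs_0$ and its FdB polynomials $G_n=\Phi_n(g)$,
\[
B_{n,k}(G_1,\ldots,G_{n-k+1}) \;=\; \sum_{j=k}^n B_{j,k}^{g}(0)\,B_{n,j}.
\]
The plan is simply to choose $g=\expm$ for part (i) and $g=\logm$ for part (ii), and to identify the coefficient sequence $B_{j,k}^{g}(0)$ in each case with the appropriate Stirling numbers.

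For (i), take $g=\expm\in\funcs_0$. By \examples{examples_fdbpoly}(i), $G_n=\Phi_n(\expm)=B_n$ for $n\geq 1$, so the left-hand sides of (i) and of the lemma coincide. Next, since $D^i(\expm)(0)=1$ for every $i\geq 1$ (cf.\ \eqref{higherderiv_exp}), the coefficients reduce to unifications:
\[
B_{j,k}^{\expm}(0) \;=\; B_{j,k}(1,1,\ldots,1) \;=\; B_{j,k}\comp 1 \;=\; s_2(j,k),
\]
by \eqref{unification_B}. Substituting yields (i).

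For (ii), take $g=\logm\in\funcs_0$, so that $G_n=\Phi_n(\logm)=L_n$ by \examples{examples_fdbpoly}(ii). It remains to verify $B_{j,k}^{\logm}(0)=s_1(j,k)$. The cleanest route is via the connection \eqref{connection_A_B}: setting $\varphi=\expm$ (whose compositional inverse is $\logm$) gives
\[
A_{j,k}^{\expm} \;=\; B_{j,k}^{\logm}\comp\expm,
\]
and evaluating at $0$ (where $\expm(0)=0$) yields $B_{j,k}^{\logm}(0)=A_{j,k}^{\expm}(0)=A_{j,k}(1,1,\ldots,1)=s_1(j,k)$, the last equality by \eqref{unification_A}. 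Substituting into the Substitution Lemma gives (ii).

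There is no real obstacle here; the only point requiring a moment's care is the identification $B_{j,k}^{\logm}(0)=s_1(j,k)$, which one could alternatively derive by directly plugging the Taylor coefficients $D^i(\logm)(0)=(-1)^{i-1}(i-1)!$ from \eqref{higherderiv_log} into $B_{j,k}$ and recognizing the classical formula for the signed Stirling numbers of the first kind. Either way, the corollary follows in a single substitution step from the lemma already proved.
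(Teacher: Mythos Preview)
Your proof is correct and takes essentially the same approach as the paper's own proof: apply the Substitution Lemma~\ref{substitution_lemma}(i) with $g=\expm$ for (i) and $g=\logm$ for (ii), then identify $B_{j,k}^{\expm}(0)=s_2(j,k)$ via \eqref{unification_B} and $B_{j,k}^{\logm}(0)=A_{j,k}^{\expm}(0)=s_1(j,k)$ via \eqref{connection_A_B} and \eqref{unification_A}. The paper's proof is just a terser version of exactly this argument.
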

\begin{proof} 
(i). Put $g=\expm$ in part (i) of \lemma{substitution_lemma}; it follows $B_{j,k}^{\expm}(0)=B_{j,k}\comp 1=s_2(j,k)$. --- (ii). Substitute $\logm$ for $g$; hence by \eq{connection_A_B} $B_{j,k}^{\logm}(0)=A_{j,k}^{\expm}(0)=A_{j,k}\comp 1=s_1(j,k)$. 
\end{proof}

\begin{rem}
Let $b(n)$ denote the $n$th \emph{Bell number} (total number of partitions of an $n$-set). We then have $b(n)=B_n\comp 1$ and by part (i) of \corollary{bell_substitutions_1} the following identity established by Yang \cite[Equation (31)]{yang2008}: 
\begin{center}
$B_{n,k}(b(1),\ldots,b(n-k+1))=\sum_{j=k}^n s_2(n,j)s_2(j,k)$.
\end{center}
\end{rem}

The right-hand side of (i) in \corollary{bell_substitutions_1} can assume yet another form by substituting for $s_2(j,k)$ the explicit expression from part (ii) in \remark{power_unification}. By the homogeneity of the Bell polynomials one easily obtains after a short calculation
\begin{equation}\label{bell_substitutions_2}
	B_{n,k}(B_1,\ldots,B_{n-k+1})=\frac{1}{k!}\sum_{j=1}^k (-1)^{k-j}\binom{k}{j}B_n(j X_1,\ldots,j X_n).
\end{equation}
\begin{prop}\label{product_identity}
	$P_{n,k}(B_1,\ldots,B_n)=B_n(k X_1,\ldots,k X_n)$.
\end{prop}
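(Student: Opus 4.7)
The plan is to compute $\Phi_n(\exp^k)$ in two different ways, by writing the power function $\exp^k$ as a composite in two natural forms and applying the basic composition rule \proposition{basic_CR} to each. Equating the two resulting polynomial expressions will yield the identity directly.

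First decomposition: Since $(1 + (e^x - 1))^k = e^{kx}$, we have $\exp^k = (1+\id)^k \comp \expm$. Here $(1+\id)^k \in \poly \subset \funcs$ and $\expm \in \funcs_0$, so we are in the 0-case. Applying \proposition{basic_CR} gives
\[
\Phi_n(\exp^k) \;=\; \Phi_n((1+\id)^k) \comp \Phi_{\num}(\expm).
\]
By \eq{power_1case} the outer polynomial is $\Phi_n((1+\id)^k) = P_{n,k}$, and by \examples{examples_fdbpoly}\,(i) the inner FdB polynomials are $\Phi_j(\expm) = B_j$ for $j \geq 1$. Hence $\Phi_n(\exp^k) = P_{n,k}(B_1, \ldots, B_n)$.

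Second decomposition: Since $\exp(kx) = e^{kx}$, we also have $\exp^k = \exp \comp (k\id)$, with $\exp \in \funcs$ and $k\id \in \funcs_0$, again the 0-case. Applying \proposition{basic_CR} gives
\[
\Phi_n(\exp^k) \;=\; \Phi_n(\exp) \comp \Phi_{\num}(k\id).
\]
Here $\Phi_n(\exp) = \sum_{j=0}^n B_{n,j} = B_n$. Because the only nonzero Taylor coefficient of $k\id$ is $D(k\id)(0) = k$, we get $\Phi_j(k\id) = k\,B_{j,1} = k X_j$ for $j \geq 1$. Thus $\Phi_n(\exp^k) = B_n(kX_1, \ldots, kX_n)$.

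Equating the two expressions for $\Phi_n(\exp^k)$ gives $P_{n,k}(B_1, \ldots, B_n) = B_n(kX_1, \ldots, kX_n)$, which is the claim. There is no real obstacle here; the one thing to verify carefully is that both decompositions land in the 0-case (which they do, since $\expm, k\id \in \funcs_0$), so that \proposition{basic_CR} applies in its simple form and the substitution in $\Phi_{\num}$ starts at index one.
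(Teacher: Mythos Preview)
Your proof is correct and takes a genuinely different route from the paper's. The paper proceeds by direct expansion: it writes $P_{n,k}(B_1,\ldots,B_n)=\sum_{j}(k)_j\,B_{n,j}(B_1,\ldots)$, invokes \corollary{bell_substitutions_1}\,(i) to get $B_{n,j}(B_1,\ldots)=\sum_{r\geq j} s_2(r,j)B_{n,r}$, interchanges sums, and then uses the numerical identity $\sum_{j}(k)_j\,s_2(r,j)=k^r$ together with the homogeneity of $B_{n,r}$. Your argument instead computes the single FdB polynomial $\Phi_n(\exp^k)$ via two factorizations, $(1+\id)^k\comp\expm$ and $\exp\comp(k\id)$, and reads off the identity from \proposition{basic_CR}. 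This is cleaner: it bypasses both the Substitution Lemma and the Stirling-number identity, trading them for the purely structural fact that $\Phi_n$ of a fixed function is well-defined regardless of how that function is decomposed. One small remark: your parenthetical ``$(1+\id)^k\in\poly$'' holds only for $k\geq 0$; for negative $k$ one has $(1+\id)^k\in\funcs_1\subset\funcs$ as a power series, which is still exactly what the $0$-case of \proposition{basic_CR} requires, so the argument goes through for all integers $k$ (as it must, since the paper later specializes to $k=-1$ in \eq{subst_reciprocal}).
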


\begin{proof}
Again using part (i) of \corollary{bell_substitutions_1} yields
\begin{align*}
	P_{n,k}(B_1,\ldots,B_n)&=\sum_{j=0}^n \powerfall{k}{j}B_{n,j}(B_1,\ldots,B_n)=\sum_{j=0}^n \powerfall{k}{j}\sum_{r=j}^n s_2(r,j)B_{n,r}\\
	&=\sum_{r=0}^n\bigg(\sum_{j=0}^r \powerfall{k}{j} s_2(r,j)\bigg)B_{n,r}=\sum_{r=0}^n k^r B_{n,r}\\
	&=\sum_{r=0}^n B_{n,r}(k X_1,k X_2,\ldots)=B_n(k X_1,\ldots,k X_n).\hspace*{2em}\qedhere
\end{align*}
\end{proof}

Let us return to \proposition{basic_CR}, since we are now in a position to prove that also the converse statement holds. We give it a slightly different form.

\begin{thm}[First Composition Rule]\label{first_CR}
	Let $f$ and $g$ be any functions such that $f\comp g\in\funcs$. Then, for all $h\in\funcs:$\quad $h=f\comp g$ $\iff$ $H_n=F_n\comp G_{\num}$.
\end{thm}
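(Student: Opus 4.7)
The forward implication $(\Rightarrow)$ is exactly \proposition{basic_CR}, so the entire task is the converse. Assume $H_n = F_n \comp G_{\num}$ for every $n \geq 0$. Applying \proposition{basic_CR} to the pair $(f,g)$ we have $F_n \comp G_{\num} = \Phi_n(f \comp g)$, so the hypothesis rewrites as the polynomial identity $\Phi_n(h) = \Phi_n(f \comp g)$ for every $n$. My plan is to recover the functional identity $h = f \comp g$ from this sequence of polynomial identities.

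Using the explicit expansion from \eqref{fdbpoly_0case}, we may write $\Phi_n(h) = \sum_{k=0}^{n} D^k(h)(0)\, B_{n,k}$ and similarly for $f \comp g$ (in the 1-case the coefficient $D^k(f \comp g)(0)$ is replaced by a Laurent polynomial in $X_0$, but the argument is the same). The partial Bell polynomials $B_{n,0}, B_{n,1}, \ldots, B_{n,n}$ are linearly independent over $\const$ because, by the partition sum \eqref{bell_partitionsum}, they are homogeneous of the pairwise distinct degrees $0, 1, \ldots, n$. Coefficient-matching therefore forces $D^k(h)(0) = D^k(f \comp g)(0)$ for every pair $(n,k)$ with $0 \leq k \leq n$. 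Letting $n$ range freely, this equality holds for every $k \geq 0$, and since an element of $\funcs$ is determined by its Taylor coefficients, we conclude $h = f \comp g$.

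Equivalently, one could specialize the indeterminates to $X_1 = 1$ and $X_j = 0$ for $j \neq 1$ (corresponding to $\varphi = \id \in \funcs_0$); since $B_{n,k}(1,0,\ldots,0) = \kronecker{n}{k}$, this single evaluation collapses $\Phi_n(h)$ to $D^n(h)(0)$ and the identification of Taylor coefficients follows at once. There is no real obstacle; the only point deserving a moment of care is that the linear-independence (or specialization) argument applies uniformly in the 0-case and the 1-case, the latter merely enlarging the ground ring to $\const[X_0^{-1},X_0]$ without destroying the independence of the $B_{n,k}$.
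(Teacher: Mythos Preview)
Your proof is correct and follows essentially the same strategy as the paper: reduce to the equality $\Phi_n(h)=\Phi_n(f\comp g)$, expand both sides in the $B_{n,k}$, and use linear independence to match Taylor coefficients. The only difference is cosmetic: the paper reaches the expansion of $F_n\comp G_{\num}$ via the Substitution Lemma (\lemma{substitution_lemma}(ii)) and then identifies $F_k^{g}(0)=D^k(f\comp g)(0)$ through the FdB formula, whereas you shortcut this by invoking \proposition{basic_CR} directly to get $F_n\comp G_{\num}=\Phi_n(f\comp g)$ in one stroke---a slightly cleaner route to the same endpoint.
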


\begin{proof}
`$\Rightarrow$': By \proposition{basic_CR}. --- `$\Leftarrow$': First assume the 0-case: $f,h\in\funcs$ and $g\in\funcs_0$. Let $n$ be any non-negative integer and suppose $H_n=F_n\comp G_{\num}$. Part~(ii) of \lemma{substitution_lemma} then yields
\begin{equation*}
	\sum_{k=0}^n D^k(h)(0)B_{n,k} = \sum_{k=0}^n F_k^{g}(0)B_{n,k}.
\end{equation*}
If $n=0$, then $h(0)=f(0)=(f\comp g)(0)$ according to \remark{fdbpoly_index0}. For $n>0$ we obtain from \proposition{msp_2} that the sequence $B_{n,1},B_{n,2},\ldots,B_{n,n}$ is linearly independent in $\const[X_1,\ldots,X_n]$. Hence for every $k$, $1\leq k\leq n$, by the FdB formula \eqref{fdb_formula}
\begin{equation}\label{equate_taylorcoeffs}
	D^k(h)(0)=F_k^{g}(0)=\sum_{j=0}^k D^j(f)(0)B_{k,j}^{g}(0)=D^k(f\comp g)(0).
\end{equation}
This shows that the Taylor coefficients of $h$ agree with those of $f\comp g$. --- The same reasoning works for the 1-case ($f\in\laurentpoly$, $g\in\funcs_1$, $h\in\funcs$), however with $D^k(h)(X_0)=D^k(f\comp g)(X_0)$ for $k=0,1,2,\ldots$ instead of \eqref{equate_taylorcoeffs} at the end. Here already  $k=0$ yields the desired result. 
\end{proof}

Our second main result concerns the mapping $P\mapsto P^{\varphi}$ in its effect on the Stirling polynomials.
\begin{thm}[Second Composition Rule]\label{second_CR}
{\allowdisplaybreaks
\begin{align*}
	\text{\emph{(i)}\quad}B_{n,k}^{f\,\comp\,g}(0)&=\sum_{j=k}^n B_{n,j}^{g}(0)B_{j,k}^{f}(0)\qquad(f,g\in\funcs_0),\\
	\text{\emph{(ii)}\quad}A_{n,k}^{f\,\comp\,g}(0)&=\sum_{j=k}^n A_{n,j}^{f}(0)A_{j,k}^{g}(0)\qquad(f,g\in\invfuncs).
\end{align*}
}
\end{thm}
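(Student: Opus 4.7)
The plan is to prove (i) by specialising part~(i) of Lemma~\ref{substitution_lemma} (the Substitution Lemma), and then to deduce (ii) by inverting the resulting matrix identity using the orthogonality of the families $(A_{n,k})$ and $(B_{n,k})$.

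For part (i), I start from the polynomial identity
\[
B_{n,k}(G_1,\ldots,G_{n-k+1}) \;=\; \sum_{j=k}^n B_{j,k}^{g}(0)\,B_{n,j}
\]
in $\const[X_1,X_2,\ldots]$, where $G_i=\Phi_i(g)$ and $g\in\funcs_0$ plays the role of an auxiliary function. To this identity I apply the evaluation $P\mapsto P^{\varphi}(0)$ for an arbitrary $\varphi\in\funcs_0$. On the right, $B_{n,j}$ turns directly into $B_{n,j}^{\varphi}(0)$. On the left, the substitution rule \eqref{substitution_rule} and the definition of $G_i$ give $G_i^{\varphi}=\sum_r D^r(g)(0)\,B_{i,r}^{\varphi}$, and the Fa\`{a} di Bruno formula \eqref{fdb_formula} applied to $g\comp\varphi$ at $x=0$ (using $\varphi(0)=0$ to collapse each $D^r(g)\comp\varphi$ to the constant $D^r(g)(0)$) yields $G_i^{\varphi}(0)=D^i(g\comp\varphi)(0)$. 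Consequently the left-hand side becomes $B_{n,k}^{g\,\comp\,\varphi}(0)$, and after the relabelling $g\to f$, $\varphi\to g$ this is precisely statement~(i).

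For part (ii), introduce the infinite lower-triangular matrices $\mathcal{B}(\psi):=(B_{n,k}^{\psi}(0))_{n,k\ge0}$ and $\mathcal{A}(\psi):=(A_{n,k}^{\psi}(0))_{n,k\ge0}$ for $\psi\in\invfuncs$. The diagonal entries $B_{n,n}^{\psi}(0)=D(\psi)(0)^n$ are non-zero, so both matrices are genuinely invertible; moreover the orthogonality identity \eqref{orthogonality_intro} evaluated at $\psi$ asserts $\mathcal{A}(\psi)\,\mathcal{B}(\psi)=I$. The already-proved statement (i) reads in matrix form $\mathcal{B}(f\comp g)=\mathcal{B}(g)\,\mathcal{B}(f)$; inverting both sides gives $\mathcal{A}(f\comp g)=\mathcal{B}(f)^{-1}\mathcal{B}(g)^{-1}=\mathcal{A}(f)\,\mathcal{A}(g)$, whose $(n,k)$-entry is exactly $\sum_j A_{n,j}^{f}(0)\,A_{j,k}^{g}(0)$.

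The only mildly delicate point is the identification $G_i^{\varphi}(0)=D^i(g\comp\varphi)(0)$ needed in the proof of (i), and this is nothing but Fa\`{a} di Bruno read off at $x=0$. No genuine obstacle is anticipated; the theorem is essentially a matrix-theoretic reformulation of the associativity $(f\comp g)\comp\varphi=f\comp(g\comp\varphi)$, transported through the composition and substitution machinery already established earlier in the section.
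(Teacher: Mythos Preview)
Your argument for part~(i) is essentially the paper's own: both evaluate the Substitution Lemma identity $B_{n,k}(\Phi_1(\cdot),\ldots)=\sum_j B_{j,k}^{(\cdot)}(0)\,B_{n,j}$ at a second function using the substitution rule \eqref{substitution_rule} and read off the Taylor coefficients via Fa\`{a} di Bruno. The only cosmetic difference is direction: the paper starts from $B_{n,k}^{f\comp g}(0)$ and unfolds; you start from the lemma and specialise.

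Part~(ii) takes a genuinely different route. The paper uses the connection identity \eqref{connection_A_B}, $A_{n,k}^{\varphi}=B_{n,k}^{\inv{\varphi}}\comp\varphi$, to write $A_{n,k}^{f\comp g}(0)=B_{n,k}^{\inv{g}\,\comp\,\inv{f}}(0)$ and then applies part~(i) directly to the reversed composition $\inv{g}\comp\inv{f}$. Your approach instead interprets (i) as a matrix product $\mathcal{B}(f\comp g)=\mathcal{B}(g)\mathcal{B}(f)$ and inverts, using the orthogonality \eqref{orthogonality_intro} to identify $\mathcal{B}(\psi)^{-1}=\mathcal{A}(\psi)$. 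Both are valid and short. The paper's route is self-contained within its own logical flow, since \eqref{connection_A_B} is established in the preliminaries. Your route relies on the orthogonality relation, which in \emph{this} paper appears as Corollary~\ref{orthocompanions_A_B}, a consequence of part~(i) of the very theorem you are proving; so strictly within the paper's ordering you would need to insert the proof of orthogonality between (i) and (ii). That is logically harmless (Corollary~\ref{orthocompanions_A_B} uses only part~(i)), and in any case the orthogonality was proved independently in \cite{schr2015}. The matrix viewpoint has the merit of making the covariance of $A$ versus the contravariance of $B$ transparent as a formal inversion, whereas the paper's route makes explicit \emph{why} the order flips: because $\inv{f\comp g}=\inv{g}\comp\inv{f}$.
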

\begin{rem}
Part (i) of the theorem is essentially due to Jabotinsky \cite{jabo1947,jabo1953}. Note that the right-hand side of (i) can be interpreted as the contravariant product of two instances of the matrix $(B_{n,k})$. Comtet used this idea in order to generalize Fa\`{a} di Bruno's formula so as to apply to fractionary iterates of formal series \cite[p.\,144]{comt1974}. 
\end{rem}

\begin{proof}
(i). The result can be obtained by direct computation:
\begin{align*}
	B_{n,k}^{f\,\comp\,g}(0)&=B_{n,k}(D^1(f\comp g)(0),\ldots,D^{n-k+1}(f\comp g)(0))\\
		&=B_{n,k}(\Phi_1(f)^{g}(0),\ldots,\Phi_{n-k+1}(f)^{g}(0))&\hspace*{-2.2em}\text{(\eq{fdb_formula}  and \eqref{fdbpoly_0case})}\\
		&=B_{n,k}(\Phi_1(f),\ldots,\Phi_{n-k+1}(f))^{g}(0).&\text{(\eq{substitution_rule})}
\end{align*} 
Applying \lemma{substitution_lemma}\,(i) to $B_{n,k}(\Phi_1(f),\ldots,\Phi_{n-k+1}(f))$ yields the assertion.

(ii). We use part (i) and \eq{connection_A_B}. Note the covariant behavior of $A_{n,k}$.
\begin{align*}
	A_{n,k}^{f\,\comp\,g}(0)&=(B_{n,k}^{\inv{f\,\comp\,g}}\comp(f\comp g))(0)=B_{n,k}^{\inv{g}\,\comp\,\inv{f}}(f(g(0)))\\
	&=B_{n,k}^{\inv{g}\,\comp\,\inv{f}}(0)=\sum_{j=k}^n B_{n,j}^{\inv{f}}(0)B_{j,k}^{\inv{g}}(0)\\
	&=\sum_{j=k}^n A_{n,j}^{f}(\inv{f}(0))A_{j,k}^{g}(\inv{g}(0)).
\end{align*} 
Since $\inv{f}(0)=\inv{g}(0)=0$, the result follows. 
\end{proof}

\begin{cor}
	If $H_n=\Phi_n(h)$, $h\in\funcs$, then
\[
	H_n^{f\comp g}(0)=\sum_{k=0}^n B_{n,k}^{g}(0)H_k^{f}(0).
\]	
\end{cor}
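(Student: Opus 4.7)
The plan is to reduce the claim to Theorem~\ref{second_CR}(i) by unpacking the definition of $H_n=\Phi_n(h)$ and swapping the order of summation. The key observation is that the Bell polynomials $B_{n,k}$ form the ``building blocks'' of every FdB polynomial, and the Second Composition Rule tells us exactly how $B_{n,k}^{\,\cdot\,}(0)$ behaves under composition of the underlying function; the corollary should therefore follow by linear extension to arbitrary $H_n$.

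Concretely, I would start from the expansion
\[
H_n \;=\; \sum_{k=0}^{n} D^{k}(h)(0)\,B_{n,k}
\]
(see \eqref{fdbpoly_0case}), apply the substitution $X_j \mapsto D^{j}(f\comp g)(0)$ and evaluate at $0$ to obtain
\[
H_n^{f\comp g}(0)\;=\;\sum_{k=0}^{n} D^{k}(h)(0)\,B_{n,k}^{f\comp g}(0).
\]
Then I would invoke Theorem~\ref{second_CR}(i) to rewrite each $B_{n,k}^{f\comp g}(0)$ as the Jabotinsky-type convolution $\sum_{j=k}^{n} B_{n,j}^{g}(0)\,B_{j,k}^{f}(0)$, and interchange the order of summation:
\[
H_n^{f\comp g}(0)\;=\;\sum_{j=0}^{n} B_{n,j}^{g}(0)\!\!\sum_{k=0}^{j} D^{k}(h)(0)\,B_{j,k}^{f}(0).
\]
The inner sum is precisely $\Phi_j(h)^{f}(0)=H_j^{f}(0)$ by \eqref{fdbpoly_0case} and \eqref{substitution_rule}, which yields the asserted identity (after renaming $j$ back to $k$).

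I do not anticipate a genuine obstacle: the argument is a short, formal manipulation once Theorem~\ref{second_CR}(i) is in hand. The only matter requiring a moment of care is to check that the hypotheses of Theorem~\ref{second_CR}(i) (namely $f,g\in\funcs_0$) are compatible with the stated hypothesis $h\in\funcs$; since $H_n^{f\comp g}(0)$ is meaningful only when $f\comp g$ is a function whose Taylor coefficients substitute into $H_n$, the natural setting is precisely the $0$-case, so no additional assumption is needed.
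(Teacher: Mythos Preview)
Your proposal is correct and follows exactly the route the paper indicates: the paper's proof reads ``By a short calculation using \eqref{fdbpoly_0case} and \theorem{second_CR}\,(i)'', which is precisely what you carry out---expand $H_n$ via \eqref{fdbpoly_0case}, apply the Second Composition Rule termwise, and interchange the finite double sum. Your remark about the implicit hypothesis $f,g\in\funcs_0$ is also apt, since the corollary inherits that setting from \theorem{second_CR}\,(i).
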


\begin{proof}
By a short calculation using \eq{fdbpoly_0case} and \theorem{second_CR}\,(i).  
\end{proof}

The following statement was originally published in \cite[Theorem~5.1]{schr2015} under the title `Inversion Law' and was proven there by induction. The new proof presented below is much more natural.

\begin{cor}\label{orthocompanions_A_B}
	$(A_{n,k})$ and $(B_{n,k})$ are orthogonal companions of each other: $A_{n,k}=\ortho{B}_{n,k}$ and $B_{n,k}=\ortho{A}_{n,k}$.
\end{cor}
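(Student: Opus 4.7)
The natural route is to specialise the Second Composition Rule (Theorem~\ref{second_CR}(i)) to the case $f\in\invfuncs$ and $g=\inv{f}$. Since $f\comp g=\id$, the left-hand side becomes $B_{n,k}^{\id}(0)$. The Taylor coefficients of $\id$ are $D^j(\id)(0)=\kronecker{j}{1}$, so $B_{n,k}^{\id}(0)=B_{n,k}(1,0,0,\ldots)$; by the partition-sum representation \eqref{bell_partitionsum}, the only contributing partition type is $r_1=k$ with $r_1=n$, hence $B_{n,k}^{\id}(0)=\kronecker{n}{k}$.

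Next I would rewrite the right-hand side using the connecting identity \eqref{connection_A_B}. That identity gives $A_{n,j}^{f}=B_{n,j}^{\inv{f}}\comp f$, so evaluated at $0$ (recalling $f(0)=0$) it reads $A_{n,j}^{f}(0)=B_{n,j}^{\inv{f}}(0)$. Substituting this into Theorem~\ref{second_CR}(i) with $g=\inv{f}$ yields
\[
\kronecker{n}{k}=\sum_{j=k}^{n}A_{n,j}^{f}(0)\,B_{j,k}^{f}(0)=\Bigl(\sum_{j=k}^{n}A_{n,j}B_{j,k}\Bigr)^{f}\!(0),
\]
where the last equality uses the substitution rule \eqref{substitution_rule}. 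Because this holds for every $f\in\invfuncs$, Remark~\ref{identical_polynomials} (whose proof only needs arbitrary sequences $f_1,f_2,\ldots\in\const$ with $f_1\neq 0$, which is exactly what is available here given the $X_1^{-1}$ occurring in $A_{n,j}$) forces the polynomial identity $\sum_{j=k}^{n}A_{n,j}B_{j,k}=\kronecker{n}{k}$. Extending the sum to $j=0,\ldots,n$ changes nothing by triangularity, so $A_{n,k}=\ortho{B}_{n,k}$.

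The reverse orthogonality $B_{n,k}=\ortho{A}_{n,k}$ is obtained symmetrically by starting from $\inv{f}\comp f=\id$ in Theorem~\ref{second_CR}(i): the same manipulation gives $\kronecker{n}{k}=\sum_{j=k}^{n}B_{n,j}^{f}(0)A_{j,k}^{f}(0)$, and Remark~\ref{identical_polynomials} again delivers the polynomial identity. (Alternatively one may invoke Theorem~\ref{second_CR}(ii) with $g=\inv{f}$, using $A_{j,k}^{\inv{f}}(0)=B_{j,k}^{f}(0)$.)

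The only mild subtlety is the step $B_{n,k}(1,0,\ldots,0)=\kronecker{n}{k}$, and checking that the substitution lemma actually justifies collapsing $\sum A_{n,j}^{f}(0)B_{j,k}^{f}(0)$ into $(\sum A_{n,j}B_{j,k})^{f}(0)$; both are routine. Everything else is pure bookkeeping, and in particular this proof is considerably shorter than the inductive argument of \cite{schr2015}.
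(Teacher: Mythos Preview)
Your proof is correct and follows essentially the same route as the paper: specialise the Second Composition Rule (Theorem~\ref{second_CR}(i)) to a pair $\varphi,\inv{\varphi}$, evaluate $B_{n,k}^{\id}(0)=\kronecker{n}{k}$, convert $B_{n,j}^{\inv{\varphi}}(0)$ to $A_{n,j}^{\varphi}(0)$ via \eqref{connection_A_B}, collapse using \eqref{substitution_rule}, and conclude with Remark~\ref{identical_polynomials}. Your explicit treatment of the $X_1^{-1}$ issue in Remark~\ref{identical_polynomials} and of the reverse orthogonality is a welcome bit of extra care, but the argument itself is the paper's.
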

\begin{proof}
	Let $\varphi$ be any function from $\invfuncs$. Then 
\[	
	B_{n,k}^{\varphi\,\comp\,\inv{\varphi}}(0)=B_{n,k}^{\id}(0)=B_{n,k}(1,0,\ldots,0)=\kronecker{n}{k}.\tag{*} 
\]	
	On the other hand, by part (i) of \theorem{second_CR} and by \eq{substitution_rule} we have
\[
	B_{n,k}^{\varphi\,\comp\,\inv{\varphi}}(0)=\sum_{j=k}^n A_{n,j}^{\varphi}(\inv{\varphi}(0))B_{j,k}^{\varphi}(0)=\bigg(\sum_{j=k}^n A_{n,j}B_{j,k}\bigg)^{\varphi}(0).\tag{**}
\]	
Now equate the right-hand sides of (*) and (**). Then, finally applying the argument from \remark{identical_polynomials} shows that the orthogonality relation \eq{orthogonality_relation} is satisfied by $A_{n,k}$ and $B_{n,k}$.  
\end{proof}
\label{Dual companions}
\section{Representation by Bell polynomials}
As we have seen in Section~4, composing FdB polynomials again yields FdB polynomials. In this section we will go beyond by dealing with polynomial families (not necessarily FdB) that can be represented as instances of the partial Bell polynomials. A well-known example is the cycle indicator. In addition, several new polynomial families will be introduced and examined in more detail, including `forest polynomials' (generated by tree polynomials) as well as multivariate Lah polynomials, which form a self-orthogonal fami\-ly. Since a (regular) family represented this way always has an orthogonal companion, we can establish a corresponding inverse relation in each case. 

\subsection{B-representability}
A polynomial family $(Q_{n,k})$ is said to be \emph{B-representable}, if there is an infinite sequence of polynomials $H_1,H_2,H_3,\ldots$ such that for all integers $n,k$ with $1\leq k\leq n$
\begin{equation}\label{def_B_representable}
	Q_{n,k}=B_{n,k}\comp H_{\num}=B_{n,k}(H_1,\ldots,H_{n-k+1}).
\end{equation}
The Bell polynomials itself are, of course, B-representable (since $B_{n,k}=B_{n,k}\comp B_{\num,1}$ with $B_{n,1}=X_n$). The same holds for the associate Bell polynomials $\widetilde{B}_{n,k}$ \cite[Corollary 4.5]{schr2015} with $\widetilde{B}_{j,1}=X_j$ for $j\geq 2$, but $\widetilde{B}_{1,1}=0$ (implying that this family is not regular). Non-trivial examples of B-representable families provide $(A_{n,k})$ (see \eq{A_B_representable}), or the Stirling numbers $s_2(n,k)$ (see \eq{unification_B}).\medskip

First, we gather some basic properties.

\begin{prop}\label{B_rep_properties}
	Let $(Q_{n,k})$ be any B-representable family of polynomials. Then we have
	\vspace*{-1.2ex}
	\begin{align*}
		\text{\emph{(i)}}\quad&(Q_{n,k})\text{~is lower triangular}.\\
		\text{\emph{(ii)}}\quad&Q_{n,k}=B_{n,k}(Q_{1,1},\ldots,Q_{n-k+1,1})\quad(1\leq k\leq n)\\
		\text{\emph{(iii)}}\quad&Q_{n,n}=(Q_{1,1})^n\\
		\text{\emph{(iv)}}\quad&\text{If~}(Q_{n,k})\text{~is regular, then~}\ortho{Q}_{n,k}=A_{n,k}(Q_{1,1},\ldots,Q_{n-k+1,1})\text{~exists}\\
		&\text{and is B-representable}.
	\end{align*}
\end{prop}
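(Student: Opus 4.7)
My plan is as follows. The defining relation $Q_{n,k} = B_{n,k}(H_1,\ldots,H_{n-k+1})$ combined with three elementary facts about the partial Bell polynomials, namely $B_{n,k} = 0$ for $n < k$, $B_{j,1} = X_j$, and $B_{n,n} = X_1^n$, handles the first three items essentially for free. Item (i) simply transports the triangularity of $B_{n,k}$ through the substitution. For (ii), I would first specialize to $k = 1$, which forces $H_j = Q_{j,1}$ and shows that the generating sequence is canonically recovered from the family itself; re-substituting this back into the defining relation yields the asserted formula. For (iii), setting $n = k$ gives $Q_{n,n} = H_1^n = Q_{1,1}^n$.

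The substantive part is (iv). My approach is to define the candidate companion by $\ortho{Q}_{n,k} := A_{n,k}(Q_{1,1},\ldots,Q_{n-k+1,1})$ and then verify, in turn, that (a) it is orthogonal to $(Q_{n,k})$, and (b) it is itself B-representable. For (a), the key step is to regard Corollary~\ref{orthocompanions_A_B}, i.e.\ $\sum_{j=k}^n A_{n,j} B_{j,k} = \kronecker{n}{k}$, as a polynomial identity in the indeterminates $X_1, X_2, \ldots$ and to substitute $X_i \mapsto Q_{i,1}$. Since substitution is a ring homomorphism, the identity survives, and applying (ii) to rewrite $B_{j,k}(Q_{1,1},\ldots,Q_{j-k+1,1})$ as $Q_{j,k}$ immediately delivers $\sum_{j=k}^n \ortho{Q}_{n,j} Q_{j,k} = \kronecker{n}{k}$.

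For (b), I would invoke the identity \eqref{A_B_representable}, namely $A_{n,k} = B_{n,k} \comp A_{\num,1}$, together with the associative law \eqref{composition_associative_law} for polynomial composition:
\[
\ortho{Q}_{n,k} = A_{n,k} \comp Q_{\num,1} = (B_{n,k} \comp A_{\num,1}) \comp Q_{\num,1} = B_{n,k} \comp (A_{\num,1} \comp Q_{\num,1}).
\]
Setting $K_j := A_{j,1}(Q_{1,1},\ldots,Q_{j,1})$ then exhibits $\ortho{Q}_{n,k} = B_{n,k}(K_1,\ldots,K_{n-k+1})$ as a B-representation of the companion.

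The one delicate point I foresee concerns well-definedness of the substitution in (iv): since $A_{n,k} \in \const[X_1^{-1}, X_2, \ldots, X_{n-k+1}]$, the first argument $Q_{1,1}$ must be invertible. Regularity, together with (iii), forces the diagonal entries $Q_{n,n} = Q_{1,1}^n$ to be non-zero for every $n$, so $Q_{1,1}$ admits an inverse in the appropriate localization and the substitution is meaningful. Once this is granted, everything else reduces to formal manipulation with the previously established identities, and no further calculation is required.
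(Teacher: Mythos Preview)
Your proposal is correct and follows essentially the same route as the paper: parts (i)--(iii) are handled identically via $B_{n,k}=0$ for $n<k$, $B_{j,1}=X_j$, and $B_{n,n}=X_1^n$, and for (iv) both you and the paper deduce $Q_{1,1}\neq 0$ from regularity plus (iii), substitute $X_i\mapsto Q_{i,1}$ into the orthogonality relation of Corollary~\ref{orthocompanions_A_B}, and then invoke \eqref{A_B_representable} (with associativity \eqref{composition_associative_law}) to exhibit the B-representation of $\ortho{Q}_{n,k}$. Your write-up is slightly more explicit about the role of associativity and the ring-homomorphism nature of substitution, but the argument is the same.
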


\begin{proof}
(i) Clear by definition.\,---\,(ii) Taking $k=1$ in \eq{def_B_representable} gives $Q_{n,1}=B_{n,1}(H_1,\ldots,H_n)=H_n$.\,---\,(iii) A special case of (ii) is $Q_{1,1}=H_1$; hence, putting $k=n$ in \eq{def_B_representable} we get $Q_{n,n}=B_{n,n}(H_1)=(H_1)^n=(Q_{1,1})^n$.\,--- (iv)~It follows from regularity and triangularity $Q_{j,j}\neq 0$ for all $j$ (diagonal entries), in particular $Q_{1,1}\neq 0$. Therefore, $A_{n,k}(Q_{1,1},\ldots,Q_{n-k+1,1})$ is well-defined (see \eq{stirling_partitionsum}) and by \corollary{orthocompanions_A_B} equal to $\ortho{Q}_{n,k}$. From this we finally obtain by \eq{A_B_representable}
%
\begin{align*}
B_{n,k}(\ortho{Q}_{1,1},&\ortho{Q}_{2,1},\ldots,\ortho{Q}_{n-k+1,1})\\
	&=B_{n,k}(A_{1,1}(Q_{1,1}),A_{2,1}(Q_{1,1},Q_{2,1},\ldots),\ldots)\\
	&=A_{n,k}(Q_{1,1},Q_{2,1},\ldots,Q_{n-k+1,1})=\ortho{Q}_{n,k}.\qedhere
\end{align*}
\end{proof}

\noindent
Part (iii) of \proposition{B_rep_properties} may serve as a \emph{necessary condition for B-re\-pre\-sen\-ta\-bility}. Since, for example, $P_{1,1}=X_1$ and $P_{2,2}=2X_1^2+2X_2\neq (P_{1,1})^2$, the potential polynomials are not B-representable.
\label{Polynomials not B-representable}

\sloppy
\begin{rem}\label{Bell_equations}
As a precaution, it should be emphasized that criterion (iii) is not sufficient. The crucial point here is that \eq{def_B_representable} comprises an \emph{infinite} number of equations and unknowns $H_1,H_2,H_3,\ldots$. On the other hand: Given any fixed $n\geq 1$, the system consisting of the first $n$ equations from \eqref{def_B_representable} is indeed solvable, if and only if there exists $H_1$ such that $(H_1)^n=Q_{n,n}$. 

To see this, recall that $H_n=Q_{n,1}$ and observe that the remaining $H_2,\ldots,H_{n-1}$ appear only linearly, since \eq{Bells_identity} yields 
\[
\frac{\partial{B_{n,k}}}{\partial{X_{n-k+1}}}(H_1,\ldots,H_{n-k+1})=\binom{n}{k-1}H_1^{k-1} 
\]
for every $k$ with $2\leq k\leq n-1$.

As an example, put $n=3$ and consider the equation system 
\[
	B_{3,k}(H_1,\ldots,H_{4-k})=Q_{3,k}:=1, \quad 1\leq k\leq 3.
\]
If we choose $H_1=1$, then necessarily $H_2=\nicefrac{1}{3}$ and $H_3=1$. But this solution cannot be extended to $n=4$, for choosing in this case $H_1=1$ implies $H_2=\nicefrac{1}{6}$, $H_3=-\nicefrac{11}{48}$, and $H_4=1$ so that there is no infinite sequence $H_1,H_2,H_3,\ldots$ satisfying $B_{n,k}(H_1,\ldots,H_{n-k+1})=1$ \emph{for all} $n,k$ with $1\leq k\leq n$.
\end{rem}

\fussy
We will now characterize the B-representable polynomial families. As to FdB polynomials, the following can be easily inferred from the First Composition Rule (FCR).

\begin{prop}\label{FdB_representable}
	Let $(Q_{n,k})$ be any family of FdB polynomials. Then we have: $(Q_{n,k})$ is B-representable $\iff$ $Q_{n,k}=\Phi_n(\frac{h^k}{k!})$ for some $h\in\funcs$. 
\end{prop}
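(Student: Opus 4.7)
My plan is to deduce both directions from the First Composition Rule (\proposition{basic_CR}) via the single observation that $h^k/k! = \expm_k \comp h$, where $\expm_k = \id^k/k!$ (so that $\expm_k(h) = h^k/k!$), combined with the identity $\Phi_n(\expm_k) = B_{n,k}$ already recorded just after \eq{power_0case}.

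For the sufficient direction ($\Leftarrow$), assuming $Q_{n,k} = \Phi_n(h^k/k!)$, I would rewrite the argument as $\Phi_n(\expm_k \comp h)$ and apply \proposition{basic_CR} to get
\[
Q_{n,k} = \Phi_n(\expm_k) \comp \Phi_{\num}(h) = B_{n,k} \comp \Phi_{\num}(h) = B_{n,k}(\Phi_1(h),\ldots,\Phi_{n-k+1}(h)).
\]
Setting $H_j := \Phi_j(h)$ then exhibits the required B-representation.

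For the necessary direction ($\Rightarrow$), I would read ``family of FdB polynomials'' as asserting that, for each $k \geq 1$, there is a single function $q_k \in \funcs$ with $Q_{n,k} = \Phi_n(q_k)$ for all $n \geq k$. Put $h := q_1$. By \proposition{B_rep_properties}\,(ii) the given B-representation has $H_n = Q_{n,1} = \Phi_n(h)$, and hence
\[
Q_{n,k} = B_{n,k} \comp \Phi_{\num}(h) = \Phi_n(\expm_k) \comp \Phi_{\num}(h) = \Phi_n(\expm_k \comp h) = \Phi_n(h^k/k!),
\]
the middle equality being \proposition{basic_CR} run in reverse.

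The only conceptual hurdle I anticipate is fixing the reading of ``family of FdB polynomials'': the argument requires that the sequence $H_n = Q_{n,1}$ come from one fixed function $h$ rather than from separately chosen $f_n$ with $H_n = \Phi_n(f_n)$. Once that reading is built in, both directions collapse to a single application of the FCR, used in opposite senses.
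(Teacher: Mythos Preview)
Your proof is correct and follows essentially the same route as the paper: both directions hinge on writing $h^k/k!=\expm_k\comp h$, using $\Phi_n(\expm_k)=B_{n,k}$, and applying the composition rule. One small difference worth noting: in the `$\Rightarrow$' direction the paper invokes the full First Composition Rule (\theorem{first_CR}), using its converse implication to deduce $f_k=\expm_k\comp f_1$ at the level of functions, whereas your argument only needs the forward direction (\proposition{basic_CR}) to obtain $Q_{n,k}=\Phi_n(h^k/k!)$ directly --- a slightly more economical route, since the converse of the FCR required the Substitution Lemma and a linear-independence argument. Your reading of ``family of FdB polynomials'' matches the paper's.
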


\begin{proof}
`$\Rightarrow$': Let $f_k$ be functions such that $\Phi_n(f_k)=Q_{n,k}=B_{n,k}\comp Q_{\num,1}$. According to the FCR (\theorem{first_CR}, `$\Leftarrow$') the equation $f_k=\expm_k\comp f_1=f_1^k/k!$ holds, where $f_1\in\funcs$.\,---\,`$\Leftarrow$': Immediately, by the FCR (`$\Rightarrow$') $Q_{n,k}=\Phi_n(\frac{h^k}{k!})=\Phi_n(\expm_k\comp h)=\Phi_n(\expm_k)\comp\Phi_{\num}(h)=B_{n,k}\comp Q_{\num,1}$. 
\end{proof}

In the case of an arbitrary polynomial family $(Q_{n,k})$, the statement to follow provides  a necessary and sufficient condition for B-representability.

\begin{prop}\label{general_B_representable}
	$(Q_{n,k})$ is B-representable, if and only if
\[	
	Q_{n,k}=\sum_{j=1}^{n-k+1}\binom{n-1}{j-1}Q_{j,1}Q_{n-j,k-1}.\quad\tag{*}
\]
\end{prop}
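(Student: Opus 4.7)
The plan is to pivot on the standard additive recurrence for the partial Bell polynomials, namely
\[
	B_{n,k}=\sum_{j=1}^{n-k+1}\binom{n-1}{j-1}X_j B_{n-j,k-1}\qquad(n\geq k\geq 1),
\]
with the usual conventions $B_{0,0}=1$ and $B_{n,0}=\kronecker{n}{0}$. This recurrence is not quoted explicitly earlier, but it is immediate from \eq{Bells_identity}: since $B_{n,k}$ is isobaric of weight $n$, Euler's relation gives $n B_{n,k}=\sum_{j}jX_j\,\partial B_{n,k}/\partial X_j=\sum_j jX_j\binom{n}{j}B_{n-j,k-1}$, and dividing by $n$ (using $j\binom{n}{j}=n\binom{n-1}{j-1}$) produces the recurrence above.

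For the direction ``$\Rightarrow$'', suppose $(Q_{n,k})$ is B-representable. By \proposition{B_rep_properties}\,(ii) we may write $Q_{n,k}=B_{n,k}(Q_{1,1},\ldots,Q_{n-k+1,1})$. Substituting $X_j\mapsto Q_{j,1}$ in the Bell recurrence and invoking \proposition{B_rep_properties}\,(ii) again to identify $B_{n-j,k-1}(Q_{1,1},\ldots)=Q_{n-j,k-1}$, the right-hand side becomes exactly $\sum_{j=1}^{n-k+1}\binom{n-1}{j-1}Q_{j,1}Q_{n-j,k-1}$, which is (*).

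For ``$\Leftarrow$'', assume (*) holds (with the analogous boundary convention $Q_{n,0}=\kronecker{n}{0}$, so that the $k=1$ instance of (*) is trivially satisfied). Set $H_j:=Q_{j,1}$ and prove by induction on $k\geq 1$ that $Q_{n,k}=B_{n,k}(H_1,\ldots,H_{n-k+1})$ for every $n\geq k$. The base case $k=1$ is immediate since $B_{n,1}=X_n$, giving $B_{n,1}(H_1,\ldots,H_n)=H_n=Q_{n,1}$. For the step $k-1\to k$ (with $k\geq 2$), the inductive hypothesis yields $Q_{n-j,k-1}=B_{n-j,k-1}(H_1,\ldots,H_{n-j-k+2})$, so by (*)
\[
	Q_{n,k}=\sum_{j=1}^{n-k+1}\binom{n-1}{j-1}H_j\,B_{n-j,k-1}(H_1,\ldots,H_{n-j-k+2})=B_{n,k}(H_1,\ldots,H_{n-k+1}),
\]
the last equality being the Bell recurrence above, evaluated at the $H_j$'s. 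This completes the induction and hence the proof.

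The only genuinely substantial ingredient is the additive Bell recurrence itself; once it is in hand, both directions are a single substitution followed (in the ``if'' direction) by a routine induction on $k$. The mildest source of friction is just fixing the right extension conventions for the index $k=0$ so that (*) makes sense uniformly; with $Q_{n,0}:=\kronecker{n}{0}$ this is seamless.
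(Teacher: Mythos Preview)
Your proof is correct and follows essentially the same route as the paper's: both directions hinge on the additive Bell recurrence $B_{n,k}=\sum_j\binom{n-1}{j-1}X_jB_{n-j,k-1}$, with necessity obtained by substituting $X_j\mapsto Q_{j,1}$ and sufficiency by induction on $k$. The only difference is cosmetic: the paper simply cites the recurrence from external sources (Knuth, and the author's earlier paper), whereas you supply a self-contained derivation via Euler's relation for isobaric polynomials combined with \eq{Bells_identity}---a nice touch that keeps the argument internal to the paper.
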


\begin{proof}Necessity (`only if'): Immediate by the fact that $Q_{n,k}:=B_{n,k}$ satisfies (*); see, e.\,g., \cite{knut1992a} and \cite[Proposition 5.5, Remark 5.6]{schr2015}.

\noindent
Sufficiency (by induction): Suppose (*); then, observing 
\[
	Q_{j,1}=B_{j,1}(Q_{1,1},\ldots,Q_{j,1})
\]
and applying the induction hypothesis 
\[
	Q_{n-j,k-1}=B_{n-j,k-1}(Q_{1,1},\ldots,Q_{n-k-j+2,1})\quad (1\leq j\leq n-k+1)
\]
one gets
\begin{align*}
	Q_{n,k}&=\mspace{-4mu}\sum_{j=1}^{n-k+1}\mspace{-2mu}\binom{n-1}{j-1}B_{j,1}(Q_{1,1},Q_{2,1}\ldots)B_{n-j,k-1}(Q_{1,1},Q_{2,1},\ldots)\\
	       &=B_{n,k}(Q_{1,1},\ldots,Q_{n-k+1,1})\quad\text{(since $B_{n,k}$ satisfies (*)).}\qedhere
\end{align*}
\end{proof}

\subsection{Generalized Stirling inversion}
The ordinary Stirling inversion \cite[Proposition~1.4.1b]{stan1986} is based on the well-known orthogonality relation satisfied by the Stirling numbers of the first and second kind: $\sum_{j=k}^n s_1(n,j)s_2(j,k)=\kronecker{n}{k}$ for all $n\geq 0$ and $0\leq k\leq n$. This type of inversion can be generalized considerably by taking advantage of the fact that according to \proposition{B_rep_properties}\,(iv), every regular B-representable family of polynomials $Q_{n,k}$ has an orthogonal companion $\ortho{Q}_{n,k}$, which is also B-representable.

\sloppy
\begin{prop}[Generalized Stirling inversion]\label{generalized_stirling_inversion}
Let $U_0,U_1,U_2,\ldots$ and $V_0,V_1,V_2,\ldots$ be two sequences (of polynomials from an arbitrary overring of $\const[X_1^{-1},X_1,X_2,\ldots]$, say) and let $(Q_{n,k})$ be any regular B-representable family of polynomials. Then the following statements are equivalent:
\begin{align*}
	\text{\emph{(i)}} \quad& U_n=\sum_{k=0}^n Q_{n,k}V_k\quad\text{for all~}n\geq 0,\\
	\text{\emph{(ii)}}\quad& V_n=\sum_{k=0}^n \ortho{Q}_{n,k}U_k\quad\text{for all~}n\geq 0.
\end{align*}
\end{prop}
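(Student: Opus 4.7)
The statement is a standard linear-algebraic inversion formula; the work is done by the orthogonality relation furnished by Proposition~\ref{B_rep_properties}(iv). My plan is simply to substitute one identity into the other and apply orthogonality after swapping the order of summation.

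First I would record the key input: since $(Q_{n,k})$ is regular and B-representable, Proposition~\ref{B_rep_properties}(iv) guarantees that the orthogonal companion $(\ortho{Q}_{n,k})$ exists and is itself B-representable, hence (by part (i) of the same proposition) lower triangular with the same non-vanishing diagonal. Therefore the defining relation
\[
	\sum_{j=0}^{n}\ortho{Q}_{n,j}Q_{j,k}=\kronecker{n}{k}
\]
holds, and because both matrices are lower triangular with invertible diagonal, the companion relation
\[
	\sum_{j=0}^{n}Q_{n,j}\ortho{Q}_{j,k}=\kronecker{n}{k}
\]
holds as well (finite triangular matrices that are one-sided inverses are two-sided inverses). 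Only these two identities will be used.

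Next, I would prove (i)$\Rightarrow$(ii). Assuming $U_n=\sum_{k=0}^{n}Q_{n,k}V_k$ for every $n\geq 0$, I compute
\[
	\sum_{k=0}^{n}\ortho{Q}_{n,k}U_k
	=\sum_{k=0}^{n}\ortho{Q}_{n,k}\sum_{j=0}^{k}Q_{k,j}V_j
	=\sum_{j=0}^{n}V_j\sum_{k=j}^{n}\ortho{Q}_{n,k}Q_{k,j}.
\]
Triangularity forces $\ortho{Q}_{n,k}Q_{k,j}=0$ for $k<j$ and for $k>n$, so the inner sum equals $\sum_{k=0}^{n}\ortho{Q}_{n,k}Q_{k,j}=\kronecker{n}{j}$ by orthogonality. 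Hence the right-hand side collapses to $V_n$. The converse (ii)$\Rightarrow$(i) is the mirror image: substitute $V_k=\sum_{j=0}^{k}\ortho{Q}_{k,j}U_j$ into $\sum_{k=0}^{n}Q_{n,k}V_k$, exchange summations, and apply the companion orthogonality $\sum_{k=j}^{n}Q_{n,k}\ortho{Q}_{k,j}=\kronecker{n}{j}$.

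There is essentially no obstacle here; the only delicate point is verifying that both directions of orthogonality are available, which is why I would make a single sentence out of the finite triangular-matrix remark before turning either sums-swap computation loose.
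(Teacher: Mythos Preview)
Your argument is correct: the orthogonality relation together with lower triangularity is all that is needed, and your remark about finite triangular truncations cleanly supplies the two-sided inverse.

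The paper takes a slightly different path. Rather than running the substitution-and-swap computation directly, it first quotes the special case $Q_{n,k}=B_{n,k}$, $\ortho{Q}_{n,k}=A_{n,k}$ (established as \cite[Corollary~5.2]{schr2015} from Corollary~\ref{orthocompanions_A_B}), and then observes that the general case follows by substituting $Q_{\num,1}$ into the indeterminates: since $Q_{n,k}=B_{n,k}\comp Q_{\num,1}$ and $\ortho{Q}_{n,k}=A_{n,k}\comp Q_{\num,1}$ by Proposition~\ref{B_rep_properties}(iv), the equivalence transfers verbatim. Your proof is more self-contained (no external citation) and in fact more general, since it never uses B-representability beyond the existence of an orthogonal companion; the paper's version, in exchange, highlights that the whole statement is just the $A/B$ inversion viewed through the substitution $X_j\mapsto Q_{j,1}$.
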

\fussy
\begin{proof}
By \corollary{orthocompanions_A_B} $\ortho{B}_{n,k}=A_{n,k}$; hence (i) and (ii) are equivalent when $Q_{n,k}$ is replaced by $B_{n,k}$ (and consequently $\ortho{Q}_{n,k}$ by $A_{n,k}$); see \cite[Corollary 5.2]{schr2015}. On the other hand, we have by assumption $Q_{n,k}=B_{n,k}\comp Q_{\num,1}$, and thus by part~(iv) of \proposition{B_rep_properties} also $\ortho{Q}_{n,k}=A_{n,k}\comp Q_{\num,1}$. From this follows the assertion.
\end{proof}

\begin{rem}\label{orthobased_inversion}
The above generalizations by no means exhaust all possibilities of basing inverse relationships on orthogonality. In principle, any invertible infinite matrix $(Q_{n,k})$ can be used for this purpose (for convenience, of lower triangular form). For example, Riordan \cite[p.\,100]{rior1968} compiled a large stock of inverse relations by establishing identities involving the Taylor coefficients of a function $f\in\funcs_1$ and its reciprocal $f^{-1}$. With the concepts developed in Section 3, it is easy to convince oneself that all these relations fall under the orthogonality scheme
\[
	\sum_{j=k}^n\frac{X_{n-j}}{(n-j)!}\frac{\widehat{R}_{j-k}}{(j-k)!}=\kronecker{n}{k}.
\]
\sloppy
For the general case, Milne and Bhatnagar \cite{mibh1998} have found recurrences, which characterize the entries $Q_{n,k}$ of an orthogonality relation. Huang \cite{huan2002}, by  representing functions relative to strictly monotone Schauder bases in $\funcs$, has shown that the inverse relationship (with the orthogonality property) comes about through the interchange of bases.
\end{rem}

\fussy
While the matrix entries $Q_{n,k}$ in the orthogonality relation generally only determine each other implicitly, in the case of B-representable polynomials we immediately have the explicit expression $\ortho{Q}_{n,k}=A_{n,k}\comp Q_{\num,1}$. We can go a step further here and resolve $A_{n,k}$ into lower-order expressions, for example according to \corollary{family_A}, to \theorem{mainresult}, or to the general polynomial version of the famous Schl\"omilch formula for the Stirling numbers $s_1(n,k)$ in terms of $s_2(n,k)$. This generalization has been presumably for the first time established and proven in \cite[Theorem 6.4]{schr2015}. After some few index shifts and applying elementary properties of the binomial coefficients, the latter result can be easily rewritten in the form of an identity of the following Schl\"omilch-Schl\"afli type:
\begin{equation}\label{schloemilch_schlaefli_A_B}
	A_{n,n-k}=(-1)^k\sum_{j=0}^k\binom{k+n}{k-j}\binom{k-n}{k+j}X_1^{-(n+j)}B_{k+j,j}.
\end{equation}
By finally applying \proposition{B_rep_properties} to this, we are easily led to the corresponding generalizations regarding B-representable polynomials.

\sloppy
\begin{thm}[Generalized Schl\"omilch-Schl\"afli identities]\label{schloemilch_schlaefli_general}
For every regular B-representable family of polynomials $Q_{n,k}$ the following holds for all \mbox{$n\geq k\geq 0$}:
\begin{align*}
	\text{\emph{(i)}} \quad \ortho{Q}_{n,n-k}&=(-1)^k\sum_{j=0}^k\binom{k+n}{k-j}\binom{k-n}{k+j}(\ortho{Q}_{1,1})^{n+j}Q_{k+j,j,}\\
	\text{\emph{(ii)}}\quad Q_{n,n-k}&=(-1)^k\sum_{j=0}^k\binom{k+n}{k-j}\binom{k-n}{k+j}(Q_{1,1})^{n+j}\ortho{Q}_{k+j,j}.
\end{align*}
\end{thm}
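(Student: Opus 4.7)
The plan is to reduce both (i) and (ii) to the already-established identity \eqref{schloemilch_schlaefli_A_B}, which expresses $A_{n,n-k}$ as a weighted sum of $B_{k+j,j}$ multiplied by negative powers of $X_1$. Since $(Q_{n,k})$ is regular and B-representable, Proposition~\ref{B_rep_properties}\,(ii) gives $Q_{n,k}=B_{n,k}(Q_{1,1},Q_{2,1},\ldots)$ and part (iv) gives $\ortho{Q}_{n,k}=A_{n,k}(Q_{1,1},Q_{2,1},\ldots)$. The idea is simply to evaluate both sides of \eqref{schloemilch_schlaefli_A_B} under the substitution $X_j\mapsto Q_{j,1}$.

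For this to give the claimed form of (i), I need the auxiliary identity $\ortho{Q}_{1,1}=Q_{1,1}^{-1}$, so that $X_1^{-(n+j)}$ becomes $(\ortho{Q}_{1,1})^{n+j}$. This follows at once from the orthogonality relation \eqref{orthogonality_relation}: since the family is lower triangular, taking $n=k=1$ yields $\ortho{Q}_{1,1}\,Q_{1,1}=1$. Combining this with the substitution, the right-hand side of \eqref{schloemilch_schlaefli_A_B} becomes
\[
(-1)^k\sum_{j=0}^{k}\binom{k+n}{k-j}\binom{k-n}{k+j}(\ortho{Q}_{1,1})^{n+j}\,B_{k+j,j}(Q_{1,1},Q_{2,1},\ldots),
\]
and the inner $B$-term is precisely $Q_{k+j,j}$. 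The left-hand side becomes $A_{n,n-k}(Q_{1,1},\ldots)=\ortho{Q}_{n,n-k}$, which proves (i).

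For (ii), the cleanest route is a duality argument: the orthogonal companion of $\ortho{Q}_{n,k}$ is $Q_{n,k}$ itself, and by Proposition~\ref{B_rep_properties}\,(iv) the family $(\ortho{Q}_{n,k})$ is again regular and B-representable. Applying the just-proved identity (i) with the roles of $Q$ and $\ortho{Q}$ interchanged then yields (ii) directly, after observing that $\ortho{\ortho{Q}}_{1,1}=Q_{1,1}$.

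The only mildly delicate step is the passage $X_1^{-(n+j)}\leadsto(\ortho{Q}_{1,1})^{n+j}$, which relies on inverting the scalar $Q_{1,1}$; this is legitimate in an overring where $Q_{1,1}$ is a unit, or equivalently in $\const[Q_{1,1}^{-1},Q_{1,1},Q_{2,1},\ldots]$, so no algebraic issue arises. Beyond that, the proof is essentially a one-line substitution plus an invocation of the self-dual symmetry between $Q$ and $\ortho{Q}$.
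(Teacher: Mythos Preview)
Your proof is correct and follows essentially the same route as the paper: substitute $X_j\mapsto Q_{j,1}$ in \eqref{schloemilch_schlaefli_A_B} using Proposition~\ref{B_rep_properties}\,(ii),(iv), identify $X_1^{-(n+j)}$ with $(\ortho{Q}_{1,1})^{n+j}$ via $\ortho{Q}_{1,1}=Q_{1,1}^{-1}$, and then obtain (ii) by the natural duality $Q\leftrightarrow\ortho{Q}$. The paper states the argument more tersely (and derives $\ortho{Q}_{1,1}=Q_{1,1}^{-1}$ from $A_{1,1}=X_1^{-1}$ rather than from orthogonality), but the substance is the same.
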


\fussy
\begin{rem}
Note that $\ortho{Q}_{1,1}=A_{1,1}(Q_{1,1})=Q_{1,1}^{-1}$.
\end{rem}

\begin{rem}
Unification on both sides of \eqref{schloemilch_schlaefli_A_B} immediately yields Schl\"afli's formula for $s_1(n,n-k)$ in terms of Stirling numbers of the second kind (see Quaintance and Gould \cite[Eq. (13.31)]{qugo2016}). Conversely, if we take $B_{n,n-k}$ for $Q_{n,n-k}$ and apply unification to part (ii) of \theorem{schloemilch_schlaefli_general}, we get Gould's formula for $s_2(n,n-k)$ in terms of Stirling numbers of the first kind (see \cite{goul1960} and \cite[Eq. (13.42)]{qugo2016}).
\end{rem}

\subsection{Cycle indicator polynomials}
Let $\ptsall$ be the union of all sets of partition types. We define the mapping $\zeta:\ptsall\longrightarrow\naturals$ by 
\begin{equation}\label{cycle_function}
	\zeta(r_1,r_2,r_3,\ldots):=\frac{(r_1+2r_2+3r_3+\cdots)!}{r_1!r_2!r_3!\cdots 1^{r_1}2^{r_2}3^{r_3}\cdots}.
\end{equation}

The right-hand side of \eqref{cycle_function} is Cauchy's famous expression that counts the permutations having exactly $r_j$ cycles of size $j$ ($j=1,2,3,\ldots$). The corresponding partition polynomial
\[
	Z_{n,k}:=\sum_{\ptsind{n}{k}}\zeta(r_1,r_2,r_3,\ldots)X_1^{r_1}X_2^{r_2}X_3^{r_3}\cdots
\]
could rightly be called \emph{partial cycle indicator}, inasmuch the term \emph{cycle indicator} \cite[p.\,68]{rior1958} (sometimes also \emph{augmented cycle index} \cite[p.\,19]{stan1999}) is reserved for $Z_{n}:=Z_{n,1}+Z_{n,2}+\cdots+Z_{n,n}$.
\label{Table: Partial cycle indicator polynomials}

\sloppy
It is easy to check that $(Z_{n,k})$ is B-representable. We have $Z_{n,n}=X_1^n=Z_{1,1}^n$ (the necessary condition (iii) from \proposition{B_rep_properties}) and \mbox{$Z_{n,1}=(n-1)!X_n$}. Thus, a few lines of a direct calculation (cf. \cite[p.\,247]{comt1974}) result in 
\begin{equation}\label{B_rep_cycleindicator}
	B_{n,k}\comp Z_{\num,1}=B_{n,k}(0!X_1,1!X_2,2!X_3,\ldots)=Z_{n,k}.
\end{equation}
By part (iv) of \proposition{B_rep_properties} we obtain as orthogonal companion $\ortho{Z}_{n,k}=A_{n,k}(0!X_1,1!X_2,2!X_3,\ldots)$. As is well known, unification of $\eqref{B_rep_cycleindicator}$ gives the signless Stirling numbers of the first kind $Z_{n,k}\comp 1=c(n,k)=B_{n,k}(0!,1!,2!,\ldots)$ \cite[p.\,135]{comt1974}. Hence $\ortho{Z}_{n,k}\comp 1=A_{n,k}(0!,1!,2!,\ldots)=(-1)^{n-k}s_2(n,k)$, which might be called \emph{signed} Stirling numbers of the se\-cond kind.

\fussy
\begin{rem}
From \eq{B_rep_cycleindicator} it is only a small step to the so-called \emph{exponential formula} that is based on the idea of interpreting the coefficients of $e^{f(x)}$  combinatorially. Here $f$ is assumed to be any function in $\funcs_0$ and $t_n:=D^n(f)(0)/(n-1)!$ for $n\geq 1$. Then, the $n$th Taylor coefficient of $e^{f(x)}$ is $B^{f}_n(0)=B_n(0!t_1,\ldots,(n-1)!t_n)=Z_n(t_1,\ldots,t_n)$, and we immediately obtain
\[
	\exp\bigg(\sum_{n\geq1}t_n\frac{x^n}{n}\bigg)=\sum_{n\geq 0}Z_n(t_1,\ldots,t_n)\frac{x^n}{n!}.
\]
\sloppy
For a detailed treatment, various combinatorial applications and historical notes on this topic, the reader is referred to Stanley \cite[Section 5.1]{stan1999}.
\end{rem}

\fussy
\subsection{Idempotency polynomials. Forest polynomials}
While the potential polynomials itself are not B-representable, this is actually yet the case with certain closely related families to be investigated in the sequel. 

\noindent
A simple example of this kind follows directly from \corollary{family_B}:
\begin{equation}\label{idempot_poly}
	B_{n,k}(X_0,2X_1,3X_2,\ldots) = \binom{n}{k}\widehat{P}_{n-k,k}.
\end{equation}

\vspace*{-1ex}
\noindent
This identity was established by Comtet in a slightly modified form (with $X_0=1$) \cite[Suppl. no.\,4, p.\,156/7]{comt1974}. Observing part (i) of \remark{power_unification} we immediately obtain by unification 
\begin{equation}\label{idempot_numbers}
	B_{n,k}(1,2,3,\ldots)=\binom{n}{k}k^{n-k},
\end{equation}
which equals the number of idempotent maps from an $n$-set into itself having exactly $k$ cycles (see, e.\,g., \cite{hasc1967}). Comtet's argument is based on considering $B_{n,k}^\psi(0)$ with $\psi(x):=x e^x$ \cite[p.\,135]{comt1974}.

Given this combinatorial meaning, it seems justified calling the expressions on the right-hand side of \eq{idempot_poly} \emph{idempotency polynomials}. The following theorem exhibits them (slightly modified) in the role of an orthogonal companion.

\begin{thm}\label{forests_from_trees}
For all $n,k\in\integers$ with $1\leq k\leq n$ we have
\begin{align*}
	\text{\emph{(i)}\qquad}B_{n,k}(\widehat{T}_1,\ldots,\widehat{T}_{n-k+1})&=\binom{n-1}{k-1}\widehat{P}_{n-k,n},\\
	\text{\emph{(ii)}\qquad}A_{n,k}(\widehat{T}_1,\ldots,\widehat{T}_{n-k+1})&=\binom{n}{k}\widehat{P}_{n-k,-k}.
\end{align*}
\end{thm}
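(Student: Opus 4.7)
The plan is to exhibit a single invertible function $f$ whose compositional inverse has Taylor-coefficient polynomials equal precisely to the tree polynomials $\widehat{T}_j$, and then to compute $\Omega_n(\inv{f}^k\,|\,\varphi)$ (for part (i)) and $\Omega_n(f^k\,|\,\varphi)$ (for part (ii)) in two independent ways. Equating the two expressions in each case will yield the claimed polynomial identities, because both evaluations produce $\varphi$-independent polynomials on the nose.

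For the test function, I would take $\varphi\in\funcs_1$ and set $f:=\id/\varphi$. By \remark{invertible_product} one has $f\in\invfuncs$, and trivially $\id/f=\varphi$, so $\Omega_j(\id/f\,|\,\varphi)=X_j$ for every $j\geq 0$. Inserting this into \proposition{inverse_lagrange} gives the key identification
\[
\Omega_j(\inv{f}\,|\,\varphi)\;=\;\widehat{T}_j(X_0,\ldots,X_{j-1})\;=\;\widehat{T}_j\qquad(j\geq 1).
\]

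For part (i), the first computation applies \theorem{power_of_function}\,(i) with $h=\inv{f}\in\invfuncs$, giving at once
\[
\Omega_n(\inv{f}^k\,|\,\varphi)\;=\;k!\,B_{n,k}(\widehat{T}_1,\ldots,\widehat{T}_{n-k+1}).
\]
The second computation reads off \theorem{inverse_power}\,(ii) one step before its final substitution for the $\widehat{R}$-arguments; the intermediate form is $k!\binom{n-1}{k-1}\widehat{P}_{n-k,n}(\Omega_0(\id/f\,|\,\varphi),\ldots,\Omega_{n-k}(\id/f\,|\,\varphi))$, which with our choice of $f$ collapses to $k!\binom{n-1}{k-1}\widehat{P}_{n-k,n}$. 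Cancelling $k!$ yields (i).

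For part (ii), I would instead compute $\Omega_n(f^k\,|\,\varphi)$ in two ways. First, \theorem{inverse_power}\,(i) applied to $\inv{f}$ (whose compositional inverse is $f$) delivers $k!\,A_{n,k}(\widehat{T}_1,\ldots,\widehat{T}_{n-k+1})$. Second, I would factor $f^k=\id^k\cdot\varphi^{-k}$ and apply the Leibniz rule \eqref{omega_product}; since $\Omega_{n-j}(\id^k\,|\,\varphi)=\kronecker{n-j}{k}\cdot k!$, only the index $j=n-k$ survives, leaving $\binom{n}{k}k!\cdot\Omega_{n-k}(\varphi^{-k}\,|\,\varphi)$. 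Writing $\varphi^{-k}=\id^{-k}\comp\varphi$ (a 1-case composition, since $\varphi\in\funcs_1$) and invoking \eqref{omega_composition_1case} with $\Omega_j(\varphi\,|\,\varphi)=X_j$ produces exactly $\widehat{P}_{n-k,-k}$ by the definition \eqref{potential_polynomial_1}. Equating the two evaluations and dividing by $k!$ yields (ii). The only real subtlety, modest given the accumulated machinery, will be spotting the test function $f=\id/\varphi$: it is the single choice that makes \proposition{inverse_lagrange} produce the $\widehat{T}_j$ with no wrapping $\widehat{R}$-layer and simultaneously forces \theorem{inverse_power}\,(ii) to collapse to a pure potential polynomial.
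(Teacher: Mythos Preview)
Your proof is correct, but the route is genuinely different from the paper's. The paper argues purely at the level of polynomial substitution: it first establishes
\[
\widehat{T}_n=A_{n,1}(\widehat{R}_0,2\widehat{R}_1,3\widehat{R}_2,\ldots)
\]
(from \corollary{corollary_stirling}\,(ii) and \corollary{multiplication_rule}\,(iii)), then feeds this into $B_{n,k}\comp(\ldots)$ and $A_{n,k}\comp(\ldots)$ using the fundamental compositional identities $B_{n,k}\comp A_{\num,1}=A_{n,k}$ and $A_{n,k}\comp A_{\num,1}=B_{n,k}$ (Eqs.~\eqref{A_B_representable},~\eqref{B_A_representable}), and finishes by invoking \corollary{family_A}, \eq{idempot_poly}, and \corollary{multiplication_rule}.

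Your approach is functional rather than compositional: the single choice $f=\id/\varphi$ with $\varphi\in\funcs_1$ makes $\id/f=\varphi$, so that the Lagrange step inside \theorem{inverse_power}\,(ii) collapses to the raw $\widehat{P}_{n-k,n}$, and the $\Omega_j(\inv{f}\,|\,\varphi)$ are the $\widehat{T}_j$ on the nose. This bypasses both the intermediate identity for $\widehat{T}_n$ and the $A\text{--}B$ composition laws; what you use instead is the Leibniz rule plus the $1$-case FdB expansion for $\varphi^{-k}$. The gain is a cleaner and more direct argument that stays close to the $\Omega$-calculus of Section~3; the paper's version, by contrast, showcases the polynomial-substitution machinery and makes the role of the identities \eqref{A_B_representable}--\eqref{B_A_representable} explicit. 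One small point of presentation: your second computation for (i) quotes an intermediate step from the \emph{proof} of \theorem{inverse_power}\,(ii) rather than its stated form; that is legitimate here, but you may want to flag it as such.
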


Ahead of the proof three remarks are in order.

\begin{rem}\label{khelifa_identity}
Unification on part (i) of \theorem{forests_from_trees} immediately yields
\[
	B_{n,k}(1^0,2^1,3^2,\ldots)=\binom{n-1}{k-1}n^{n-k}. 
\]
This numerical identity has been formulated and given a lengthy proof by Khelifa and Cherruault \cite{khch2000}. Abbas and Bouroubi \cite[Theorems 3 and 6]{abbo2005} provided a significantly shorter argument together with an extension to binomial sequences.
\end{rem}

\begin{rem}\label{trees_and_forests}
One might say, somewhat jokingly, that the Bell polynomials return forests on receiving trees. Recall that there are $\widehat{T}_n\comp 1=n^{n-1}$ rooted (labeled) trees on $n$ vertices. On the other hand, $\binom{n-1}{k-1}n^{n-k}$ counts the planted forests with $k$ components on $n$ vertices (see, e.\,g., \cite[Proposition 5.3.2]{stan1999}). Motivated by this, we shall refer to the expressions on the right-hand side of (i) as \emph{forest polynomials}, denoted by $W_{n,k}$. Of course we have $W_{j,1}=\widehat{T}_j$.
\end{rem}
\label{Table: Forest polynomials}

\begin{rem}
The forest polynomials form a regular family; so, according to \proposition{B_rep_properties}\,(iv) its orthogonal companion $\ortho{W}_{n,k}$ is the polynomials, which appear in \theorem{forests_from_trees}\,(ii). It differs from the idempotency polynomials only in that the second index is negated. By unification, the orthogonal relationship is  immediately passed on to the corresponding number sequences (the orthogonality of which has been observed by Wang and Wang \cite{wtwa2011}).
\end{rem}

\begin{proof}[\theorem{forests_from_trees}]
The proof can be carried out exclusively using polynomial identities. The key idea is here to express the tree polynomials as 
\[
	\widehat{T}_n=A_{n,1}(\widehat{R}_0,2\widehat{R}_1,3\widehat{R}_2,\ldots),\tag{*}
\]
which follows from \corollary{corollary_stirling}\,(ii) and \corollary{basic_CR_polynomialcase}\,(iii). We then have
\begin{align*}
	B_{n,k}(\widehat{T}_1,\widehat{T}_2,\widehat{T}_3,\ldots)&=B_{n,k}(A_{1,1}(\widehat{R}_0),A_{2,1}(\widehat{R}_0,2\widehat{R}_1),\ldots))\hspace*{-5em}&\text{(by (*))}\\
	&=A_{n,k}(\widehat{R}_0,2\widehat{R}_1,3\widehat{R}_2,\ldots)&\text{(by \eq{A_B_representable})}\\	
	&=\binom{n-1}{k-1}\widehat{P}_{n-k,n},&\hspace*{-1em}\text{(by Cor.\,\ref{family_A}, Cor.\,\ref{basic_CR_polynomialcase}\,(iii))}
\end{align*}
which proves (i). In the same manner, (ii) can be shown by applying (*), \eq{B_A_representable}, \eq{idempot_poly}, and \corollary{basic_CR_polynomialcase}\,(ii):
\[
	A_{n,k}(\widehat{T}_1,\widehat{T}_2,\widehat{T}_3,\ldots)=B_{n,k}(\widehat{R}_0,2\widehat{R}_1,3\widehat{R}_2,\ldots)=\binom{n}{k}\widehat{P}_{n-k,-k}.\qedhere
\]
\end{proof}

\begin{prop}\label{cor_forests_from_trees}
	$A_{n,k}(X_0,2X_1,3X_2,\ldots)=\binom{n-1}{k-1}\widehat{P}_{n-k,-n}$.
\end{prop}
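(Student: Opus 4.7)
My plan is to derive this identity by mirroring the structure of the proof of Theorem~\ref{forests_from_trees}, but starting from $A_{n,k}$ instead of $B_{n,k}$ and ``dualising'' the tree-to-forest passage via $\widehat{R}_{\num}$.

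First, I would use the B-representability of $A_{n,k}$: by \eqref{A_B_representable}, $A_{n,k}=B_{n,k}(A_{1,1},\ldots,A_{n-k+1,1})$. Substituting $(X_0,2X_1,3X_2,\ldots)$ and observing that $A_{j,1}\in\const[X_1^{-1},X_2,\ldots,X_j]$, this becomes
\[
A_{n,k}(X_0,2X_1,3X_2,\ldots)=B_{n,k}\bigl(A_{1,1}(X_0),\,A_{2,1}(X_0,2X_1),\,\ldots\bigr).
\]
Now the crucial ingredient is Corollary~\ref{corollary_stirling}(ii), which identifies $A_{j,1}(X_0,2X_1,\ldots,jX_{j-1})=\widehat{T}_j(\widehat{R}_0,\ldots,\widehat{R}_{j-1})$, i.e.\ the ``(*)'' identity used inside the proof of Theorem~\ref{forests_from_trees}. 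Hence the right-hand side becomes $B_{n,k}\comp(\widehat{T}_{\num}\comp\widehat{R}_{\num})$.

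Next I invoke the associative law \eqref{composition_associative_law} to regroup this as $(B_{n,k}\comp\widehat{T}_{\num})\comp\widehat{R}_{\num}$. By Theorem~\ref{forests_from_trees}(i), the inner composition equals $\binom{n-1}{k-1}\widehat{P}_{n-k,n}$. Finally, Corollary~\ref{multiplication_rule}(ii) states precisely that $\widehat{P}_{m,r}\comp\widehat{R}_{\num}=\widehat{P}_{m,-r}$, so
\[
(B_{n,k}\comp\widehat{T}_{\num})\comp\widehat{R}_{\num}=\binom{n-1}{k-1}\widehat{P}_{n-k,n}\comp\widehat{R}_{\num}=\binom{n-1}{k-1}\widehat{P}_{n-k,-n},
\]
which is the claim.

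The proof is essentially a three-line chain of equalities once the right ``polynomial algebra'' tools are lined up; no induction or partition-type computation is needed. The only real bookkeeping concern — and thus the most error-prone step — is making sure the index shift in Corollary~\ref{corollary_stirling}(ii) lines up correctly so that the composition $\widehat{T}_{\num}\comp\widehat{R}_{\num}$ has the indexing demanded by $B_{n,k}\comp(\cdot)$, i.e.\ that the $\num$-slot of $\widehat{T}_{\num}$ matches the slot supplied to $B_{n,k}$. Once this is verified the rest follows mechanically from associativity and the multiplication rule for potential polynomials.
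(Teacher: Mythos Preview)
Your argument is correct. The chain
\[
A_{n,k}(X_0,2X_1,\ldots)
\;\overset{\eqref{A_B_representable}}{=}\;B_{n,k}\comp\bigl(A_{\num,1}(X_0,2X_1,\ldots)\bigr)
\;\overset{\text{Cor.~\ref{corollary_stirling}(ii)}}{=}\;B_{n,k}\comp(\widehat{T}_{\num}\comp\widehat{R}_{\num})
\;\overset{\eqref{composition_associative_law}}{=}\;(B_{n,k}\comp\widehat{T}_{\num})\comp\widehat{R}_{\num}
\]
and then Theorem~\ref{forests_from_trees}(i) together with Corollary~\ref{multiplication_rule}(ii) indeed gives $\binom{n-1}{k-1}\widehat{P}_{n-k,-n}$. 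The indexing concern you flag is harmless: $B_{n,k}$ takes arguments at indices $1,\ldots,n-k+1$, and for each such $j$ the entry $\widehat{T}_j\comp\widehat{R}_{\num}=\widehat{T}_j(\widehat{R}_0,\ldots,\widehat{R}_{j-1})$ is exactly what Corollary~\ref{corollary_stirling}(ii) supplies. (Your parenthetical identification of this with the ``(*)'' line in the proof of Theorem~\ref{forests_from_trees} is slightly off: (*) there is the inverse direction $\widehat{T}_n=A_{n,1}(\widehat{R}_0,2\widehat{R}_1,\ldots)$, obtained from Corollary~\ref{corollary_stirling}(ii) by composing with $\widehat{R}_{\num}$. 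But you only use Corollary~\ref{corollary_stirling}(ii) itself, so this does not affect the argument.)

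The paper's own proof is shorter: it applies Corollary~\ref{family_A} directly. Under the substitution $X_j\mapsto jX_{j-1}$ the arguments $\tfrac{X_j}{j}$ in that corollary become $X_{j-1}$, so the right-hand side collapses to $\binom{n-1}{k-1}\widehat{P}_{n-k,n}(\widehat{R}_0,\ldots,\widehat{R}_{n-k})$, and one finishes with Corollary~\ref{multiplication_rule}(ii) just as you do. Your route is a detour through Theorem~\ref{forests_from_trees}(i), which itself already rests on Corollary~\ref{family_A}; so the two proofs share the same final step and the same underlying identity, but the paper bypasses the tree polynomials entirely.
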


\begin{proof}
To evaluate the left-hand-side, use \corollary{family_A}, and then apply \corollary{multiplication_rule}\,(ii).
\end{proof}

\subsection{Lah polynomials. Involution}
\sloppy
Analogous to the considerations in Section~5.3 we define the mapping \mbox{$\omega:\ptsall\longrightarrow\naturals$} by 
\begin{equation}\label{order}
	\omega(r_1,r_2,r_3,\ldots):=\frac{(r_1+2r_2+3r_3+\cdots)!}{r_1!r_2!r_3!\cdots}.
\end{equation}

\fussy
The right-hand side of \eq{order} counts the number of ways a set of $n=r_1+2r_2+3r_3+\cdots$ objects can be partitioned into linearly ordered subsets, $r_j$ denoting the number of subsets with $j$ elements $(j=1,2,3,\ldots)$. The corresponding partition polynomials
\[
	L^{+}_{n,k}:=\sum_{\ptsind{n}{k}}\omega(r_1,r_2,r_3,\ldots)X_1^{r_1}X_2^{r_2}X_3^{r_3}\cdots
\]
will be called \emph{unsigned Lah polynomials}. A simple computation shows that they form a B-representable family. We have $L^{+}_{n,1}=n!X_n$ and
\begin{equation}\label{B_rep_unsignedlahpoly}
	B_{n,k}\comp L^{+}_{\num,1}=B_{n,k}(1!X_1,2!X_2,3!X_3,\ldots)=L^{+}_{n,k}.
\end{equation}
From this it can easily be derived that $L^{+}_{n,k}\comp 1$ are the unsigned Lah numbers $l^{+}(n,k):=\frac{n!}{k!}\binom{n-1}{k-1}$ \cite[p.\,135]{comt1974}. 

Let us now consider the signed Lah numbers $l(n,k):=(-1)^n l^{+}(n,k)$, which are known to be self-orthogonal in the sense that $\sum_{j=k}^n l(n,j)l(j,k)=\kronecker{n}{k}$; see, e.\,g., \cite[p.\,44]{rior1958} and \cite[Examples 5.2\,(ii)]{schr2015}. It is therefore natural to look for self-orthogonal polynomials $L_{n,k}$ such that the signed Lah numbers can be obtained by unification: $L_{n,k}\comp 1=l(n,k)$.

At first glance, $B_{n,k}(-1!X_1,2!X_2,-3!X_3,\ldots)=(-1)^n L^{+}_{n,k}$ might be supposed to be a suitable candidate; but this fails because these polynomials are not orthogonal companions of their own. It is however a promising (and eventually working) idea to raise to the level of multivariate polynomials the well-known identity expressing the signed Lah numbers by the Stirling numbers of the first and second kind \cite[p.\,44]{rior1958}, which is mirrored in the following

\begin{dfn}[Signed Lah polynomials]\label{def_lahpoly}
	$L_{n,k}:=\sum_{j=k}^n (-1)^j A_{n,j}B_{j,k}$.
\end{dfn}
\label{Signed Lah polynomials}

Using this definition we immediately regain by unification (and observing \eqref{unification_A}, \eqref{unification_B}) the numerical identity 
\[
L_{n,k}\comp 1=\sum_{j=k}^n (-1)^j s_1(n,j)s_2(j,k)=l(n,k), 
\]
which just has been alluded to. We also have $L_{n,k}\in\const[X_1^{-1},X_2,\ldots,X_n]$ and the homogeneity $L_{n,k}(t X_1,t X_2,\ldots)=t^{-(n-k)}L_{n,k}$. The instances of $L_{5,k}$, $1\leq k\leq 5$, may serve as an illustration:
\begin{align*}
L_{5,1}&=-\tfrac{210 X_2^4}{X_1^8}+\tfrac{120 X_3 X_2^2}{X_1^7}-\tfrac{30 X_4
   X_2}{X_1^6},\\
	L_{5,2}&=-\tfrac{270 X_2^3}{X_1^6}+\tfrac{40 X_3 X_2}{X_1^5}-\tfrac{10
   X_4}{X_1^4},\\
	L_{5,3}&=-\tfrac{120 X_2^2}{X_1^4},\quad	L_{5,4}=-\tfrac{20 X_2}{X_1^2},\quad
	L_{5,5}=-1.
\end{align*}
Indeed, the remarkable orthogonality relation $\ortho{L}_{n,k}=L_{n,k}$ also applies.

\begin{prop}\label{orthogonal_lahpoly}
	$\sum_{j=k}^n L_{n,j}L_{j,k}=\kronecker{n}{k}$\qquad$(1\leq k\leq n)$.
\end{prop}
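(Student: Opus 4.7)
The plan is to exploit the matrix-theoretic content of \definition{def_lahpoly}. Viewing the triangular arrays $(A_{n,k})$ and $(B_{n,k})$ as infinite lower-triangular matrices $\mathbf{A}$ and $\mathbf{B}$, the orthogonality relation established in \corollary{orthocompanions_A_B} says $\mathbf{A}\mathbf{B}=\mathbf{B}\mathbf{A}=\mathbf{I}$. Introduce the diagonal matrix $\mathbf{D}=\mathrm{diag}((-1)^j)_{j\geq 0}$, which obviously satisfies $\mathbf{D}^2=\mathbf{I}$. Then \definition{def_lahpoly} reads precisely $\mathbf{L}=\mathbf{A}\mathbf{D}\mathbf{B}$, where $\mathbf{L}=(L_{n,k})$. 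Since every product appearing here involves only lower-triangular factors, the entries of every matrix product are finite sums and no convergence issues arise.

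From this description the claim follows by a one-line computation:
\[
\mathbf{L}\cdot\mathbf{L}
\;=\;(\mathbf{A}\mathbf{D}\mathbf{B})(\mathbf{A}\mathbf{D}\mathbf{B})
\;=\;\mathbf{A}\mathbf{D}(\mathbf{B}\mathbf{A})\mathbf{D}\mathbf{B}
\;=\;\mathbf{A}\mathbf{D}^{2}\mathbf{B}
\;=\;\mathbf{A}\mathbf{B}
\;=\;\mathbf{I}.
\]
Reading off the $(n,k)$-entry gives $\sum_{j=k}^{n}L_{n,j}L_{j,k}=\kronecker{n}{k}$, as required. In particular, $\mathbf{L}$ is its own inverse, i.e.\ $\ortho{L}_{n,k}=L_{n,k}$.

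I do not foresee a genuine obstacle; the only point to check carefully is that the triple sum obtained by expanding $(\mathbf{L}\mathbf{L})_{n,k}$ may indeed be reindexed so that the inner sum collapses via $\sum_{j=l}^{i}B_{i,j}A_{j,l}=\kronecker{i}{l}$. Concretely, writing
\[
(\mathbf{L}\mathbf{L})_{n,k}=\sum_{k\le l\le j\le i\le n}(-1)^{i+l}A_{n,i}B_{i,j}A_{j,l}B_{l,k},
\]
one fixes $i$ and $l$ and performs the $j$-summation first, which yields the Kronecker factor $\kronecker{i}{l}$; the resulting exponent $(-1)^{2i}$ is $+1$, leaving $\sum_{i=k}^{n}A_{n,i}B_{i,k}=\kronecker{n}{k}$. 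This makes the matrix manipulation above entirely rigorous within the algebra of lower-triangular arrays considered throughout the paper.
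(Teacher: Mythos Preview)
Your proof is correct and is essentially the same as the paper's: the paper's proof states that the assertion follows from \definition{def_lahpoly} ``by a direct straightforward computation and applying twice \corollary{orthocompanions_A_B}'', which is exactly your argument in matrix notation (using $\mathbf{B}\mathbf{A}=\mathbf{I}$ once and $\mathbf{A}\mathbf{B}=\mathbf{I}$ once). Your explicit reindexing of the triple sum spells out precisely the ``straightforward computation'' the paper leaves to the reader.
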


\begin{proof}
The assertion follows from \definition{def_lahpoly} by a direct straightforward computation and applying twice \corollary{orthocompanions_A_B}.
\end{proof}

The main result concerning Lah polynomials provides a characterization of all regular B-representable polynomial families, which are orthogonal companions of their own.

\begin{thm}\label{mainthm_lahpoly}
Let $(Q_{n,k})$ be any regular B-representable family of polynomials. Then,
$\ortho{Q}_{n,k}=Q_{n,k}$ holds, if and only if there exists a family of FdB polynomials $(H_1,H_2,H_3,\ldots)\in\prod_{n\geq 1}\Phi_n[\invfuncs]$ such that $Q_{n,k}=L_{n,k}\comp H_{\num}$. 
\end{thm}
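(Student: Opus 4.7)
The plan is to translate the equivalence into a statement about involutions in the formal-composition group $(\invfuncs, \comp)$ and to exploit Jabotinsky's second composition rule.

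For the \emph{if} direction, assume $Q_{n,k} = L_{n,k} \comp H_{\num}$ with $(H_n) \in \prod_{n \geq 1} \Phi_n[\invfuncs]$. Since polynomial substitution commutes with sums and products, I would simply substitute $H_i$ for $X_i$ on both sides of the self-orthogonality identity $\sum_{j=k}^n L_{n,j}\,L_{j,k} = \kronecker{n}{k}$ of Proposition~\ref{orthogonal_lahpoly}, obtaining $\sum_{j=k}^n Q_{n,j}\,Q_{j,k} = \kronecker{n}{k}$, which is exactly $\ortho{Q}_{n,k} = Q_{n,k}$.

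For the \emph{only if} direction, set $q_i := Q_{i,1}$. By B-representability and Proposition~\ref{B_rep_properties}(iv), $Q_{n,k} = B_{n,k}(q_{\num})$ and $\ortho{Q}_{n,k} = A_{n,k}(q_{\num})$. Comparing the hypothesis $\sum_j Q_{n,j}Q_{j,k} = \kronecker{n}{k}$ with the substituted $A$–$B$ orthogonality of Corollary~\ref{orthocompanions_A_B} and using regularity of the matrix $\bigl(B_{j,k}(q_{\num})\bigr)$ yields $A_{n,k}(q_{\num}) = B_{n,k}(q_{\num})$ for all $n \geq k \geq 1$. For $k = 1$, Remark~\ref{inverse_function} then says the formal series $\varphi$ defined by $D^i(\varphi)(0) = q_i$ satisfies $\inv{\varphi} = \varphi$, i.e.\ is an involution; in particular $q_1 = -1$. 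Setting $\inv{h} := \tfrac{1}{2}(\id - \varphi)$ gives $\inv{h} \in \invfuncs$ (since $D(\inv{h})(0) = 1$), and the involution identity $\varphi\comp\varphi = \id$ translates into $\inv{h} \comp \varphi = -\inv{h}$, which rearranges to $\varphi = h \comp (-\id) \comp \inv{h}$. Two applications of the Second Composition Rule (Theorem~\ref{second_CR}(i)), together with $B_{m,j}^{-\id}(0) = (-1)^m \kronecker{m}{j}$ and $B_{n,j}^{\inv{h}}(0) = A_{n,j}^h(0)$ (from \eqref{connection_A_B}), then give
\[
Q_{n,k} = B_{n,k}^\varphi(0) = \sum_{j=k}^n (-1)^j A_{n,j}^h(0)\,B_{j,k}^h(0) = L_{n,k}^h(0).
\]

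Finally I would set $H_i := \Phi_i(h) \in \Phi_i[\invfuncs]$ and invoke the group isomorphism of Remark~\ref{hopf_algebra}, which identifies $h$ with the sequence $(\Phi_n(h))_n$, to rewrite $L_{n,k}^h(0)$ as $L_{n,k} \comp H_{\num}$. The main obstacle is precisely this last identification: one must pass from the numerical substitution $L_{n,k}(D^1(h)(0), D^2(h)(0), \ldots)$ to the polynomial substitution $L_{n,k}(\Phi_1(h), \Phi_2(h), \ldots)$. This requires choosing the representative $h$ within its conjugacy class (which is determined uniquely by $\varphi$ up to right-composition by an odd function) so that its Taylor data lift correctly to the element of $\prod_n \Phi_n[\invfuncs]$ provided by the Hopf-algebra structure; this is the step where the interplay between Remark~\ref{hopf_algebra} and the substitution rule \eqref{substitution_rule} must be exploited most carefully.
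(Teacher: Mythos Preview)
Your sufficiency argument is correct and identical to the paper's. Your necessity argument follows the same overall strategy as the paper (reduce to an involution, decompose it, apply Jabotinsky), and your explicit choice $\inv{h}=\tfrac12(\id-\varphi)$ is in fact a concrete realisation of the McCarthy decomposition $\varphi=\widetilde g\comp\inv g$ (with $g=h$) that the paper cites. That part is a genuine improvement in self-containedness.

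However, there is a real gap that starts earlier than where you place your ``main obstacle''. The $q_i=Q_{i,1}$ are \emph{polynomials} in $X_1,X_2,\ldots$, not constants in $\const$. Consequently the series $\varphi$ you write down, with ``$D^i(\varphi)(0)=q_i$'', does not live in $\funcs=\const[[x]]$, so it is not an element of $\invfuncs$; the subsequent assertions $\inv h\in\invfuncs$ and $H_i:=\Phi_i(h)\in\Phi_i[\invfuncs]$ are therefore not meaningful in the paper's framework. This is exactly why your final lifting step cannot be carried out: you are trying to compare a rational-function-valued substitution $L_{n,k}(q_1,\ldots)$ with a polynomial substitution $L_{n,k}(H_1,\ldots)$, but the intermediate object $h$ is not a function over $\const$ at all. (Also note that $q_1^2=1$ only forces $q_1=\pm1$; the case $q_1=+1$, giving the trivial family $Q_{n,k}=\delta_{n,k}$, is not covered by your construction since then $D(\inv h)(0)=0$.)

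The paper avoids this by evaluating everything at an \emph{arbitrary} $f\in\invfuncs$ first: one works with the constants $Q_{i,1}^f(0)\in\const$, builds a genuine $\varphi\in\invfuncs$ and a genuine $g\in\invfuncs$, and only \emph{then} introduces $h:=g\comp\inv f$ and $H_m:=\Phi_m(h)$. The passage from $L_{n,k}^g(0)$ to $(L_{n,k}\comp H_\num)^f(0)$ is then the routine computation $L_{n,k}^{h\comp f}(0)=L_{n,k}(\Phi_1(h)^f(0),\ldots)$ via the Fa\`a di Bruno formula \eqref{fdb_formula} and the substitution rule \eqref{substitution_rule}, after which Remark~\ref{identical_polynomials} closes the argument. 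If you rewrite your proof with this ``evaluate at $f$, then vary $f$'' device, your explicit decomposition slots in perfectly and the acknowledged obstacle dissolves.
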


\begin{proof}
Sufficiency (`if'): Immediately from \proposition{orthogonal_lahpoly} by substituting $H_j$ for $X_j$ ($j=1,2,3,\ldots$).

Necessity (`only if'): Let $f$ be any function from $\invfuncs$. We then define the function $\varphi(x):=\sum_{n\geq 1}Q_{n,1}^f(0) \frac{x^n}{n!}$, which is invertible, as $Q_{n,k}$ is regular. With this we obtain
{\allowdisplaybreaks
\begin{align*}
	B_{n,k}^{\varphi}(0)&=B_{n,k}(Q_{1,1}^f(0),\ldots,Q_{n-k+1,1}^f(0))\\
		&=Q_{n,k}^f(0) &\hspace*{-2em}\text{(by \eqref{substitution_rule}, $Q_{n,k}$ B-representable)}\\
		&=(\ortho{Q}_{n,k})^f(0) &\text{(by assumption)}\\
		&=A_{n,k}(Q_{1,1}^f(0),\ldots,Q_{n-k+1,1}^f(0))&\hspace*{-5em}\text{(by \eqref{substitution_rule}, Prop.~\ref{B_rep_properties}\,(iv))}\\
		&=A_{n,k}^{\varphi}(0)=B_{n,k}^{\inv{\varphi}}(0)&\hspace*{-5em}\text{(by \eqref{connection_A_B}, $\varphi(0)=0$)}.
\intertext{%
By the Identity \lemma{identity_lemma} we have $\varphi=\inv{\varphi}$. As an involutory function, $\varphi$ can be written in the form $\varphi=\widetilde{g}\comp\inv{g}$, where $g$ is an appropriate function from $\invfuncs$, and $\widetilde{g}(x)=g(-x)$ (see, e.\,g.,\cite{mcca1980}). It follows
}
	Q_{n,k}^f(0)&=B_{n,k}^{\varphi}(0)=B_{n,k}^{\widetilde{g}\,\comp\,\inv{g}}(0)\\
      &=\sum_{j=k}^n B_{n,j}^{\inv{g}}(0)B_{j,k}^{\widetilde{g}}(0)&\hspace*{-5em}\text{(by Jabotinsky's \theorem{second_CR}\,(i))}\\
			&=\sum_{j=k}^n (-1)^j A_{n,j}^g(0)B_{j,k}^g(0)&\hspace*{-5em}\text{(by \eqref{connection_A_B}, $\inv{g}(0)=0$)}\\
			&=L_{n,k}^g(0)&\hspace*{-5em}\text{(by \definition{def_lahpoly})}.
\intertext{%
Hence, putting $h:=g\comp \inv{f}\in\invfuncs$ and $H_m:=\Phi_m(h)$ we get
}
	Q_{n,k}^f(0)&=L_{n,k}^{h\,\comp\,f}(0)=L_{n,k}\comp D^{\num}(h \comp f)(0)\\
	 &=L_{n,k}\comp\sum_{j=0}^{\num}D^j(h)(0)B_{\num,j}^f(0)&\hspace*{-5em}\text{~(by \eq{fdb_formula})}\\
	 &=L_{n,k}\comp\Phi_{\num}(h)^{f}(0)\\
	 &=L_{n,k}(H_1,H_2,H_3,\ldots)^f(0)\hspace*{-4em}&\text{~(by \eq{substitution_rule})}.
\end{align*}
}
Applying the argument from \remark{identical_polynomials} completes the proof.
\end{proof}

\begin{cor}\label{B_rep_lahpoly}
	The signed Lah polynomials $(L_{n,k})$ are B-representable.
\end{cor}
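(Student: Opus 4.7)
The target identity, forced by \proposition{B_rep_properties}(ii), is
\[
L_{n,k} \;=\; B_{n,k}(L_{1,1}, L_{2,1}, \ldots, L_{n-k+1,1}) \qquad (1 \leq k \leq n).
\]
My plan is to verify it by evaluating both sides on the Taylor coefficients of a variable $g \in \invfuncs$ and then invoking the identity argument from \remark{identical_polynomials}, suitably extended to Laurent polynomials in $X_1$ (since $L_{n,k}$ involves $X_1^{-1}$).

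The main tool is the involution trick already employed in the proof of \theorem{mainthm_lahpoly}. Given $g \in \invfuncs$, set $\widetilde{g}(x) := g(-x)$ and $\varphi := \widetilde{g} \comp \inv{g}$, so that $\varphi \in \invfuncs$ and $\varphi = \inv{\varphi}$. Applying Jabotinsky's formula (\theorem{second_CR}(i)) to $\widetilde{g} \comp \inv{g}$, and combining \eqref{connection_A_B} (which, since $\inv{g}(0)=0$, gives $B_{n,j}^{\inv{g}}(0) = A_{n,j}^{g}(0)$) with the isobaric property of $B_{j,k}$ (which gives $B_{j,k}^{\widetilde{g}}(0) = (-1)^{j} B_{j,k}^{g}(0)$), one obtains
\[
B_{n,k}^{\varphi}(0) \;=\; \sum_{j=k}^{n} (-1)^{j}\, A_{n,j}^{g}(0)\, B_{j,k}^{g}(0) \;=\; L_{n,k}^{g}(0)
\]
by \definition{def_lahpoly}. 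Specializing to $k=1$ and recalling $B_{n,1} = X_n$ identifies the Taylor coefficients of $\varphi$ itself: $D^{n}(\varphi)(0) = L_{n,1}^{g}(0)$.

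Combining these two observations with the substitution rule \eqref{substitution_rule} yields, for every $g \in \invfuncs$,
\[
L_{n,k}^{g}(0) \;=\; B_{n,k}^{\varphi}(0) \;=\; B_{n,k}\bigl(D^{1}(\varphi)(0), \ldots, D^{n-k+1}(\varphi)(0)\bigr) \;=\; B_{n,k}(L_{1,1}, \ldots, L_{n-k+1,1})^{g}(0).
\]
The only mild obstacle is the concluding identity step, since $L_{n,k}$ involves $X_1^{-1}$ and \remark{identical_polynomials} is phrased for ordinary polynomials over $\const$. This is routine: multiplying through by a sufficiently high power of $X_1$ reduces the assertion to an ordinary polynomial identity that holds on the Zariski-dense subset $\{\,g \in \funcs : [x]g(x) \neq 0\,\}$ of parameter values, so the standard argument from \remark{identical_polynomials} applies and forces $L_{n,k} = B_{n,k}(L_{1,1}, \ldots, L_{n-k+1,1})$. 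Hence $(L_{n,k})$ is B-representable, as claimed.
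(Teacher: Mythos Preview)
Your proof is correct and rests on exactly the same computation as the paper's argument---the Jabotinsky expansion of $B_{n,k}^{\varphi}(0)$ for the involution $\varphi=\widetilde{g}\comp\inv{g}$, which already appears verbatim in the proof of \theorem{mainthm_lahpoly}.

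The only difference is organizational. The paper does not re-evaluate at a variable $g$; instead it takes the polynomial identity $L_{n,k}(H_1,H_2,\ldots)=B_{n,k}(Q_{1,1},Q_{2,1},\ldots)$ established in \theorem{mainthm_lahpoly} and substitutes $X_j\mapsto\inv{H}_j=\Phi_j(\inv{h})$ on both sides, using the FCR to collapse the left-hand side to $L_{n,k}$. That route stays entirely at the level of formal polynomial substitution and so sidesteps the Laurent-in-$X_1$ extension of \remark{identical_polynomials} that you had to supply. Your route, on the other hand, is more self-contained: it reads off directly that the Taylor coefficients of $\varphi$ are $L_{n,1}^{g}(0)$ and concludes $L_{n,k}^{g}(0)=B_{n,k}(L_{1,1},\ldots,L_{n-k+1,1})^{g}(0)$ for all $g\in\invfuncs$, after which your denominator-clearing argument is indeed enough to finish. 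Either way the substance is the same.
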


\begin{proof}
Let $h$ be the function from the proof of \theorem{mainthm_lahpoly}. Then, for $j=1,2,3,\ldots$ set $\inv{H}_j:=\Phi_j(\inv{h})$. By the FCR we have $H_j\comp\inv{H}_{\num}=X_j$. The assertion now readily follows by replacing each $X_j$ in $L_{n,k}(H_1,H_2,\ldots)=B_{n,k}(Q_{1,1},Q_{2,1},\ldots)$ with $\inv{H}_j$.
\end{proof}

The Lah polynomials allow us to characterize involutory functions.

\begin{prop}\label{involutory_functions}
A function $f\in\invfuncs$ is involutory, if and only if there exists $g\in\invfuncs$ such that $f(x)=\sum_{n\geq 1}L_{n,1}^g(0)\frac{x^n}{n!}$.
\end{prop}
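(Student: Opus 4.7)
The plan is to extract the $k=1$ case of the computation already performed in the proof of Theorem \ref{mainthm_lahpoly}, running the argument in both directions.

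For necessity, assume $f\in\invfuncs$ with $f\comp f=\id$. I would invoke the classical decomposition of an involutory formal series (cited from \cite{mcca1980} in the proof of \theorem{mainthm_lahpoly}): there exists $g\in\invfuncs$ such that $f=\widetilde{g}\comp\inv{g}$, with $\widetilde{g}(x)=g(-x)$. Since $B_{n,1}=X_n$, applying Jabotinsky's Second Composition Rule (\theorem{second_CR}(i)) gives
\[
D^n(f)(0)=B_{n,1}^{\widetilde{g}\,\comp\,\inv{g}}(0)=\sum_{j=1}^n B_{n,j}^{\inv{g}}(0)\,B_{j,1}^{\widetilde{g}}(0).
\]
Now by \eqref{connection_A_B} together with $\inv{g}(0)=0$ we have $B_{n,j}^{\inv{g}}(0)=A_{n,j}^g(0)$, while $B_{j,1}^{\widetilde{g}}(0)=D^j(\widetilde{g})(0)=(-1)^jD^j(g)(0)=(-1)^jB_{j,1}^g(0)$. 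Substituting and using \definition{def_lahpoly} yields
\[
D^n(f)(0)=\sum_{j=1}^n(-1)^j A_{n,j}^g(0)\,B_{j,1}^g(0)=L_{n,1}^g(0),
\]
which is precisely the claimed Taylor expansion of $f$.

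For sufficiency, given $g\in\invfuncs$, define $f$ by the displayed series. Reading the chain of equalities above in reverse shows $D^n(f)(0)=D^n(\widetilde{g}\comp\inv{g})(0)$ for every $n\geq1$; together with $f(0)=0=(\widetilde{g}\comp\inv{g})(0)$ this forces $f=\widetilde{g}\comp\inv{g}$. Writing $\sigma:=-\id$, which satisfies $\sigma\comp\sigma=\id$, we have $\widetilde{g}=g\comp\sigma$ and therefore
\[
f\comp f=(g\comp\sigma\comp\inv{g})\comp(g\comp\sigma\comp\inv{g})=g\comp\sigma\comp\sigma\comp\inv{g}=\id,
\]
so $f$ is involutory. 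Finally, $f\in\invfuncs$ because $D(f)(0)=L_{1,1}^g(0)=-B_{1,1}^g(0)=-D(g)(0)\neq 0$.

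The only genuinely non-routine ingredient is the decomposition $f=\widetilde{g}\comp\inv{g}$ for an arbitrary involutory $f$; everything else is bookkeeping with \eqref{connection_A_B}, \theorem{second_CR}(i), and \definition{def_lahpoly}. Since that decomposition has already been used (and cited) in the proof of \theorem{mainthm_lahpoly}, no new difficulty arises.
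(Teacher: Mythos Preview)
Your necessity argument coincides with the paper's. For sufficiency you take a genuinely different route: the paper uses the B-representability of the Lah polynomials (\corollary{B_rep_lahpoly}) together with their self-orthogonality (\proposition{orthogonal_lahpoly}) to compute, via the Fa\`a di Bruno formula,
\[
D^n(f\comp f)(0)=\sum_{k=1}^n L_{k,1}^g(0)\,B_{n,k}^f(0)=\sum_{k=1}^n L_{n,k}^g(0)L_{k,1}^g(0)=\kronecker{n}{1}.
\]
You instead observe that the chain of equalities from the necessity direction already shows $D^n(\widetilde{g}\comp\inv{g})(0)=L_{n,1}^g(0)$ for any $g\in\invfuncs$, so the given $f$ must equal $\widetilde{g}\comp\inv{g}$, and involutivity then follows by elementary associativity. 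Your argument is shorter and sidesteps both auxiliary results; the paper's version has the merit of exhibiting the self-orthogonality of $L_{n,k}$ as the structural reason for the involution.

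One small slip in your last line: $L_{1,1}=-A_{1,1}B_{1,1}=-X_1^{-1}X_1=-1$, so $L_{1,1}^g(0)=-1$, not $-D(g)(0)$. The conclusion $f\in\invfuncs$ is of course unaffected.
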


\begin{proof}
Necessity: As in the proof of \theorem{mainthm_lahpoly} we can write $f=\widetilde{g}\comp\inv{g}$ for some $g\in\invfuncs$. Again it follows that $D^n(f)(0)=L_{n,1}^g(0)$.

\noindent
Sufficiency: By the assumption and by \corollary{B_rep_lahpoly} 
\begin{align*}
	B_{n,k}^{f}(0)&=B_{n,k}(L_{1,1}^g(0),\ldots,L_{n-k+1,1}^g(0))=L_{n,k}^g(0).
\end{align*}
Now, applying the FdB formula \eqref{fdb_formula} and \proposition{orthogonal_lahpoly} we obtain for every $n\geq 1$
\[
	D^n(f\comp f)(0)=\sum_{k=1}^n D^k(f)(0)B_{n,k}^{f}(0)=\sum_{k=1}^n L_{n,k}^g(0)L_{k,1}^g(0)=\kronecker{n}{1},
\]
that is, we have $f\comp f=\id$. 
\end{proof}
\label{Involutory functions}

From the above reasoning it can be seen immediately that, given any involution $f=\widetilde{g}\comp\inv{g}$, $g\in\invfuncs$, the corresponding $f$-polynomial $\Phi_n(f)$ takes the form
\begin{equation}\label{involution_poly}
		J_{g,n}:=\sum_{k=1}^n L_{k,1}^g(0)B_{n,k}.
\end{equation}
According to the FCR, the \emph{involution polynomials} \eqref{involution_poly} are self-inverse, that is, they satisfy the relation $J_{g,n}\comp J_{g,\num}=X_n$.

\subsection{Comtet's polynomials}
\sloppy
We now return to Comtet's attempt \cite{comt1973} (already mentioned in Section~2.1), to determine the class of polynomials associated with the higher-order Lie operator $(\theta D)^n$, $\theta\in\funcs$. A statement concerning expansion and recurrence, analogous to Propositions \ref{msp_1} and \ref{msp_2}, serves as the starting point.

\begin{prop}\label{comtet_poly}
There exist polynomials $C_{n,k}\in\const[X_0,\ldots,X_{n-k}]$ such that
\[
	(\theta D)^n=\sum_{k=0}^{n}C_{n,k}^{\,\theta}\cdot D^{k}.
\]
The family $(C_{n,k})$ is triangular, regular, and uniquely determined by the differential recurrence
\[
	C_{n+1,k}=X_0\left(C_{n,k-1}+\sum_{j=0}^{n-k}X_{j+1}\pderiv{C_{n,k}}{X_j}\right),\quad C_{n,0}=\kronecker{n}{0}.
\]
\end{prop}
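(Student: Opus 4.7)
The plan is to mimic the arguments for Propositions~\ref{msp_1} and \ref{msp_2}, proceeding by induction on $n$. For the base case $n=0$, set $C_{0,0}=1$ and $C_{0,k}=0$ for $k\neq 0$, so that $(\theta D)^{0} = I = C_{0,0}^{\,\theta}D^{0}$ trivially, and $C_{0,k}\in\const[X_0,\ldots,X_{-k}]$ holds vacuously for $k>0$.

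For the inductive step, assume $(\theta D)^n = \sum_{k=0}^n C_{n,k}^{\,\theta}D^k$ with $C_{n,k}\in\const[X_0,\ldots,X_{n-k}]$. Apply $\theta D$ on the left and use the Leibniz rule to obtain
\[
(\theta D)^{n+1} = \sum_{k=0}^n\theta\cdot D(C_{n,k}^{\,\theta})\cdot D^k \;+\; \sum_{k=0}^n\theta\cdot C_{n,k}^{\,\theta}\cdot D^{k+1}.
\]
The chain rule combined with the substitution rule \eqref{substitution_rule} and the identity $D^{j+1}(\theta)=X_{j+1}^{\,\theta}$ gives
\[
D(C_{n,k}^{\,\theta}) \;=\; \sum_{j=0}^{n-k}\bigg(\pderiv{C_{n,k}}{X_j}\bigg)^{\!\theta}\!\cdot D^{j+1}(\theta) \;=\; \bigg(\sum_{j=0}^{n-k}X_{j+1}\pderiv{C_{n,k}}{X_j}\bigg)^{\!\theta}.
\]
Since $\theta=X_0^{\,\theta}$, multiplication by $\theta$ moves $X_0$ inside the bracketed polynomial expression; shifting the summation index $k\mapsto k-1$ in the second sum of the previous display then yields
\[
(\theta D)^{n+1} \;=\; \sum_{k=0}^{n+1}\bigg(X_0\bigg(C_{n,k-1}+\sum_{j=0}^{n-k}X_{j+1}\pderiv{C_{n,k}}{X_j}\bigg)\!\bigg)^{\!\theta} D^k,
\]
with the conventions $C_{n,-1}=0$ and $C_{n,n+1}=0$. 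Reading off the coefficient of $D^k$ defines $C_{n+1,k}$ by exactly the recurrence in the statement and shows it lies in $\const[X_0,\ldots,X_{n+1-k}]$.

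Triangularity is immediate from this bracket range. Regularity reduces to non-vanishing of the diagonal: the recurrence gives $C_{n+1,n+1}=X_0\cdot C_{n,n}$, and since $C_{0,0}=1$, induction yields $C_{n,n}=X_0^n\neq 0$, so each initial matrix $(C_{i,j})_{0\leq i,j\leq n}$ is upper-unitriangular in the appropriate sense and non-singular. Finally, uniqueness follows by the standard device of Remark~\ref{identical_polynomials}: since $\theta\in\funcs$ is arbitrary and its Taylor coefficients $D^j(\theta)(0)$ may be prescribed freely in $\const$, any two polynomial families realizing the operator expansion must yield the same value under every such substitution (evaluate both expansions at $f=\id^j/j!$ at $x=0$ to isolate $C_{n,j}^{\,\theta}(0)$), hence coincide as polynomials. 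The only delicate point is the interplay between $D$ and the evaluation map $P\mapsto P^{\,\theta}$, which is precisely the content of \eqref{substitution_rule} and presents no real obstacle; the rest is bookkeeping with the index shift.
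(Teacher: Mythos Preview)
Your proposal is correct and follows precisely the approach the paper indicates: a straightforward induction on $n$ in the style of Propositions~\ref{msp_1} and \ref{msp_2}, with the paper itself giving no further detail beyond that reference. One cosmetic slip: the matrix is lower-triangular with diagonal $C_{n,n}=X_0^{\,n}$, not ``upper-unitriangular''; but your actual argument for regularity (non-vanishing diagonal) is fine.
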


\fussy
\begin{proof}
By a straightforward inductive argument as applied in the proof of \proposition{msp_1}; cf. \cite[Propositions 3.1 and 3.5]{schr2015}.
\end{proof}

From this the following representation can be inferred:
\begin{equation}\label{comtetpoly_diophantine}
	C_{n,k}=\sum_{\ptsind{2n-k}{n}} \gamma_{n,k}(r_0,\ldots,r_{n-k})X_0^{r_0}\cdots X_{n-k}^{r_{n-k}},
\end{equation}
the sum ranging over all non-negative integral values of $r_0$ to $r_{n-k}$ such that $r_0+\cdots+r_{n-k}=n$ and $r_0+2r_1+3r_2+\cdots=2n-k$. The coefficients $\gamma_{n,k}(r_0,\ldots,r_{n-k})$ turn out to be positive integers. In \cite[Section 5]{comt1973} Comtet has tabulated $(C_{n,k})_{1\leq k\leq n}$ up to $n=7$ and claimed (without proof) that $C_{n,k}\comp 1=c(n,k)$ and $C_{n,k}(1,1,0,\ldots,0)=s_2(n,k)$. His main result [ibid., Equation (8), p.\,166)] provides the following expression in diophantine form:
\begin{equation}\label{comtetpoly_explicit}
	C_{n,k}=\frac{X_0}{k!}\cdot\mspace{-14mu}\sum_{\rho(n,1)=n-k}k\cdot\prod_{j=1}^{n-1}\frac{k+\rho(n,j)-j}{r_{j}!}X_{r_j},
\end{equation}
where $\rho(n,j)$ denotes the sum $r_{1}+\cdots+r_{n-j}$ (with non-negative integers $r_1,r_2,r_3,\ldots$). This formula appears only to a modest extent suitable for computational purposes. For example, although $C_{6,2}$ consists of only 5 monomials, a total of 70 solutions of the equation $\rho(6,1)=4$ has to be checked in order to finally obtain $C_{6,2}=31 X_0^2 X_1^4+146  X_0^3 X_1^2 X_2+34 X_0^4 X_2^2+57 X_0^4 X_1 X_3+6 X_0^5 X_4$.
\label{Diophantine solutions}

In the following it will be shown that Comtet's polynomial family $(C_{n,k})$ can be smoothly integrated into our algebraic framework developed so far. In particular, it turns out that $C_{n,k}$ can be represented by the Stirling polynomials of the first and second kind.

\begin{thm}\label{A_rep_comtetpoly}
	$C_{n,k}=A_{n,k}(\widehat{R}_0,\ldots,\widehat{R}_{n-k})$\qquad $(0\leq k\leq n)$.
\end{thm}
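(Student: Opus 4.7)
The plan is to exploit the fact that the operator $\theta D$ coincides with the Lie derivation $D_\varphi$ precisely when $\theta = D(\varphi)^{-1}$. Given an arbitrary $\varphi\in\invfuncs$, set $\theta := D(\varphi)^{-1}\in\funcs_1$. Then $\theta D = D_\varphi$, hence $(\theta D)^n = D_\varphi^n$, and comparing the two expansions provided by \proposition{comtet_poly} and \proposition{msp_1} produces the operator identity
\[
\sum_{k=0}^n C_{n,k}^{\,\theta}\cdot D^k \;=\; \sum_{k=0}^n A_{n,k}^\varphi\cdot D^k.
\]
Applying both sides to $\id^m$ and evaluating at $0$ (using $D^j(\id^m)(0) = m!\,\kronecker{j}{m}$) yields $C_{n,m}^{\,\theta}(0) = A_{n,m}^\varphi(0)$ for each $m$ with $0\leq m\leq n$.

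The next step is to translate both sides into polynomial values in a common set of indeterminates. Let $\theta_j := D^j(\theta)(0)$; then $\theta_0\neq 0$ is arbitrary and $\theta_1,\theta_2,\ldots$ range freely in $\const$. The left-hand side reads $C_{n,k}^{\,\theta}(0) = C_{n,k}(\theta_0,\theta_1,\ldots,\theta_{n-k})$. For the right-hand side, using that $D(\varphi) = \theta^{-1}$, the Taylor coefficient $\varphi_{j+1} := D^{j+1}(\varphi)(0) = D^j(\theta^{-1})(0)$ is given by \proposition{reciprocal_function} as $\varphi_{j+1} = \widehat{R}_j(\theta_0,\theta_1,\ldots,\theta_j)$. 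Consequently
\[
A_{n,k}^\varphi(0) \;=\; A_{n,k}(\varphi_1,\ldots,\varphi_{n-k+1}) \;=\; A_{n,k}\bigl(\widehat{R}_0,\widehat{R}_1,\ldots,\widehat{R}_{n-k}\bigr)(\theta_0,\theta_1,\ldots,\theta_{n-k}).
\]

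Equating these expressions shows that the Laurent polynomials $C_{n,k}$ and $A_{n,k}(\widehat{R}_0,\widehat{R}_1,\ldots,\widehat{R}_{n-k})$, both elements of $\const[X_0^{-1},X_0,X_1,\ldots,X_{n-k}]$, agree at every point $(\theta_0,\theta_1,\ldots,\theta_{n-k})$ with $\theta_0\neq 0$. Since $\const$ is infinite and the locus $\{X_0\neq 0\}$ is Zariski-dense, the reasoning of \remark{identical_polynomials} (extended in the obvious way to the Laurent setting, by clearing a suitable power of $X_0$) forces the desired polynomial identity $C_{n,k} = A_{n,k}(\widehat{R}_0,\widehat{R}_1,\ldots,\widehat{R}_{n-k})$. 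The only delicate point in the argument is the passage from the operator equation $(\theta D)^n = D_\varphi^n$ to the coefficientwise equality $C_{n,k}^{\,\theta}(0) = A_{n,k}^\varphi(0)$; this relies on the uniqueness clauses in \proposition{comtet_poly} and \proposition{msp_1}, equivalently on the linear independence of $D^0,\ldots,D^n$ over $\funcs$ witnessed by the test functions $\id^0,\ldots,\id^n$.
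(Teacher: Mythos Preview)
Your proof is correct and follows essentially the same approach as the paper's: choose $\theta = D(\varphi)^{-1}$, equate the two expansions of $(\theta D)^n = D_\varphi^n$ from \proposition{comtet_poly} and \proposition{msp_1}, rewrite $D^{j}(\varphi) = D^{j-1}(\theta^{-1}) = \widehat{R}_{j-1}^{\,\theta}$ via \proposition{reciprocal_function}, and conclude the polynomial identity. The only cosmetic difference is that the paper equates the coefficients $C_{n,k}^{\,\theta} = A_{n,k}^{\varphi}$ directly as functions in $\funcs$ and then replaces $D^j(\theta)$ by $X_j$, whereas you evaluate at $0$, extract coefficients via the test functions $\id^m$, and invoke a Zariski-density argument; both routes rest on the same uniqueness of the expansion and the reasoning of \remark{identical_polynomials}.
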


\begin{proof}
Let $\varphi$ be any function such that $D(\varphi)\in\funcs_1$. According to Propositions~\ref{comtet_poly} and \ref{msp_1} we have
\[
	(\theta D)^n=\sum_{k=0}^n C_{n,k}^{\,\theta}D^k\quad\text{and}\quad(D(\varphi)^{-1}D)^n=\sum_{k=0}^n A_{n,k}^{\varphi}D^k.\tag{*}
\]
Choose $\theta=D(\varphi)^{-1}$, so that both expansions agree. Hence, by \proposition{reciprocal_function} we obtain for every $j\geq 1$
\[
	D^j(\varphi)=D^{j-1}(\theta^{-1})=\widehat{R}_{j-1}^{\,\theta}
\]
and by equating the coefficients of $D^k$ in (*) 
\[
	C_{n,k}^{\,\theta}=A_{n,k}^{\varphi}=A_{n,k}\comp D^{\num}(\varphi)=A_{n,k}(\widehat{R}_0^{\,\theta},\ldots,\widehat{R}_{n-k}^{\,\theta}).
\]
Now replace $D^j(\theta)$ by $X_j$ ($j=0,1,2,\ldots$) on both sides of the equation. This completes the proof.
\end{proof}

\begin{cor}\label{B_rep_comtetpoly}
	$(C_{n,k})$ and $(\ortho{C}_{n,k})$ are B-representable.
\end{cor}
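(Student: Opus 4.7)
The plan is to read the corollary as a direct consequence of the preceding Theorem~\ref{A_rep_comtetpoly} combined with the structural properties of B-representability already established in Section~5.1.

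First, I would substitute $\widehat{R}_{j-1}$ for $X_j$ ($1 \leq j \leq n-k+1$) on both sides of the composition identity \eqref{A_B_representable}, namely $A_{n,k} = B_{n,k}(A_{1,1},\ldots,A_{n-k+1,1})$. Invoking the associativity of polynomial composition \eqref{composition_associative_law} on the right-hand side yields
\[
A_{n,k}(\widehat{R}_0,\ldots,\widehat{R}_{n-k}) = B_{n,k}\bigl(A_{1,1}(\widehat{R}_0),\,A_{2,1}(\widehat{R}_0,\widehat{R}_1),\,\ldots,\,A_{n-k+1,1}(\widehat{R}_0,\ldots,\widehat{R}_{n-k})\bigr).
\]
By Theorem~\ref{A_rep_comtetpoly}, each $A_{j,1}(\widehat{R}_0,\ldots,\widehat{R}_{j-1})$ equals $C_{j,1}$, and the left-hand side equals $C_{n,k}$. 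Hence
\[
C_{n,k} = B_{n,k}(C_{1,1},\,C_{2,1},\,\ldots,\,C_{n-k+1,1}) \qquad (1 \leq k \leq n),
\]
which by definition means that $(C_{n,k})$ is B-representable with $H_j := C_{j,1}$.

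Next, I would verify regularity so that Proposition~\ref{B_rep_properties}(iv) applies. Regularity of $(C_{n,k})$ is already guaranteed by Proposition~\ref{comtet_poly}; alternatively, the recurrence there gives $C_{1,1} = X_0 \neq 0$, whence the diagonal entries $C_{n,n} = X_0^n$ are nonzero. Thus the orthogonal companion exists, and Proposition~\ref{B_rep_properties}(iv) delivers simultaneously the explicit formula
\[
\ortho{C}_{n,k} = A_{n,k}(C_{1,1},\,C_{2,1},\,\ldots,\,C_{n-k+1,1})
\]
and the B-representability of $(\ortho{C}_{n,k})$.

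There is no real obstacle here: the only thing to watch is the index bookkeeping, since $C_{n,k}$ lives in $\const[X_0,\ldots,X_{n-k}]$ while the Bell polynomials are evaluated on a sequence indexed from $1$. Treating the $H_j = C_{j,1}$ as polynomials in the overring $\const[X_0,X_1,X_2,\ldots]$ (as permitted by the formulation of Proposition~\ref{generalized_stirling_inversion}) dissolves this mismatch entirely.
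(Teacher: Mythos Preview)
Your proposal is correct and follows essentially the same route as the paper: both combine Theorem~\ref{A_rep_comtetpoly} with \eqref{A_B_representable} and associativity to exhibit $(C_{n,k})$ as B-representable, then invoke Proposition~\ref{B_rep_properties}\,(iv) for the orthogonal companion. The paper additionally records the explicit formula $\ortho{C}_{n,k}=B_{n,k}(\widehat{R}_0,\ldots,\widehat{R}_{n-k})$, but this is a bonus rather than a different method; your closing worry about index bookkeeping is harmless but unnecessary.
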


\begin{proof}
\theorem{A_rep_comtetpoly} yields $C_{n,k}=A_{n,k}\comp \widehat{R}_{\num}$, whence by \eq{A_B_representable} and \eqref{composition_associative_law} $C_{n,k}=B_{n,k}\comp(A_{\num,1}\comp\widehat{R}_{\num})$, that is, $C_{n,k}$ is B-representable.\,---\,From \proposition{B_rep_properties}\,(iv) follows $\ortho{C}_{n,k}=B_{n,k}(\widehat{R}_0,\ldots,\widehat{R}_{n-k})$, hence $\ortho{C}_{n,k}$ is B-represen\-table.
\end{proof}

\begin{prop}\label{comtetpoly_special_values}
	\begin{align*}
		\text{\emph{(i)}}\quad&C_{n,k}(1,\ldots,1)=c(n,k)\\
		\text{\emph{(ii)}}\quad&\ortho{C}_{n,k}(1,\ldots,1)=(-1)^{n-k}s_2(n,k)\\
		\text{\emph{(iii)}}\quad&C_{n,k}(1,1,0,\ldots,0)=s_2(n,k)
	\end{align*}
\end{prop}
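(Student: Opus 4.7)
The plan is to use Theorem~\ref{A_rep_comtetpoly} and Corollary~\ref{B_rep_comtetpoly} to reduce (i), (ii), (iii) to evaluations of $A_{n,k}$ and $B_{n,k}$ on specialized arguments that arise from unifying the reciprocal polynomials $\widehat{R}_j$. Once one knows the values $\widehat{R}_j(1,\ldots,1)$ and $\widehat{R}_j(1,1,0,\ldots,0)$, the rest will come from the known unifications \eqref{unification_A}, \eqref{unification_B} and the identity $A_{n,k}(0!,1!,2!,\ldots)=(-1)^{n-k}s_2(n,k)$ derived in Section~5.3 from $\ortho{Z}_{n,k}\comp 1$, combined with the two scaling symmetries of $A_{n,k}$ and $B_{n,k}$.

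For (i) and (ii) I would first observe $\widehat{R}_j(1,\ldots,1)=\sum_{k=0}^j(-1)^k k!\,s_2(j,k)$ and invoke the classical Stirling identity $\sum_k s_2(j,k)(x)_k=x^j$ at $x=-1$, which gives $\widehat{R}_j(1,\ldots,1)=(-1)^j$. Hence $C_{n,k}(1,\ldots,1)=A_{n,k}(1,-1,1,-1,\ldots)$ and $\ortho{C}_{n,k}(1,\ldots,1)=B_{n,k}(1,-1,1,-1,\ldots)$. I would then exploit that $A_{n,k}$ is homogeneous of degree $n-1$ and isobaric of weight $2n-1-k$, while $B_{n,k}$ is homogeneous of degree $k$ and isobaric of weight $n$ (see the remarks after \eqref{stirling_partitionsum}). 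The isobaric substitution $X_j\mapsto(-1)^j X_j$ followed by a global sign flip $X_j\mapsto -X_j$ converts $(1,1,1,\ldots)$ into $(1,-1,1,-1,\ldots)$ while producing the prefactors $(-1)^{n-1}(-1)^{2n-1-k}=(-1)^{n-k}$ for $A_{n,k}$ and $(-1)^k(-1)^n=(-1)^{n-k}$ for $B_{n,k}$. Since $(-1)^{n-k}s_1(n,k)=c(n,k)$, these yield (i) and (ii) respectively.

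For (iii) I would first note that $B_{j,k}(1,0,\ldots,0)=\kronecker{j}{k}$, so only the top term survives in $\widehat{R}_j(1,1,0,\ldots,0)=\sum_{k=0}^j(-1)^k k!\,B_{j,k}(1,0,\ldots,0)$, giving $\widehat{R}_j(1,1,0,\ldots,0)=(-1)^j j!$. Therefore $C_{n,k}(1,1,0,\ldots,0)=A_{n,k}(0!,-1!,2!,-3!,\ldots)$. Starting from the Section~5.3 identity $A_{n,k}(0!,1!,2!,\ldots)=(-1)^{n-k}s_2(n,k)$, applying once more the isobaric rescaling $X_j\mapsto(-1)^j X_j$ together with a global sign flip introduces exactly the alternating pattern $(0!,-1!,2!,-3!,\ldots)$, and the overall sign prefactor is $(-1)^{n-1}\cdot(-1)^{2n-1-k}\cdot(-1)^{n-k}=(-1)^{4n-2k-2}=1$. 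Hence $C_{n,k}(1,1,0,\ldots,0)=s_2(n,k)$.

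The main obstacle is simply careful bookkeeping of sign parities when stacking the homogeneous and isobaric scaling symmetries; no additional ingredient beyond Theorem~\ref{A_rep_comtetpoly}, Corollary~\ref{B_rep_comtetpoly}, the Stirling identity $\sum_k s_2(j,k)(-1)^k k!=(-1)^j$, and the $\ortho{Z}_{n,k}$-unification is required.
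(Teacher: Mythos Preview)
Your argument is correct. Parts (i) and (ii) are essentially the paper's proof: the paper obtains $\widehat{R}_j\comp 1=(-1)^j$ via Remark~\ref{power_unification}(i) (namely $P_{j,-1}\comp 1=(-1)^j$) rather than via the Stirling identity $\sum_k s_2(j,k)(x)_k=x^j$ at $x=-1$, and it extracts the sign $(-1)^{n-k}$ in a single step from $\sum(j-1)r_j=n-k$ rather than decomposing it into an isobaric scaling followed by a global sign flip, but this is the same computation.

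For (iii) the two proofs genuinely diverge after reaching $\widehat{R}_j(1,1,0,\ldots,0)=(-1)^j j!$. You feed these values into the Section~5.3 identity $A_{n,k}(0!,1!,2!,\ldots)=\ortho{Z}_{n,k}\comp 1=(-1)^{n-k}s_2(n,k)$ and cancel the two $(-1)^{n-k}$ factors via the same homogeneity/isobarity trick used in (i). The paper instead observes that $(-1)^{j-1}(j-1)!=s_1(j,1)=A_{j,1}(1,\ldots,1)$, so that the argument vector is exactly $(A_{1,1}\comp 1,A_{2,1}\comp 1,\ldots)$, and then applies the compositional identity \eqref{B_A_representable}, $A_{n,k}\comp A_{\num,1}=B_{n,k}$, to land directly on $B_{n,k}(1,\ldots,1)=s_2(n,k)$. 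Your route reuses the sign machinery uniformly across all three parts; the paper's route avoids the extra parity bookkeeping by recognising a structural identity, at the cost of invoking one more earlier result.
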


\begin{proof}
(i) Observing $\widehat{R}_j\comp 1=P_{j,-1}\comp 1=(-1)^j$ we obtain by \theorem{A_rep_comtetpoly} and \eq{unification_A}
{\allowdisplaybreaks
\begin{align*}
	C_{n,k}(1,\ldots,1)&=A_{n,k}(1,-1,\ldots,(-1)^{n-k})\\
	      &=(-1)^{n-k}A_{n,k}(1,\ldots,1)\\
				&=(-1)^{n-k}s_1(n,k)=c(n,k).
\intertext{%
(ii) $\ortho{C}_{n,k}(1,\ldots,1)=B_{n,k}(1,-1,1,-1,\ldots)=(-1)^{n-k}B_{n.k}(1,\ldots,1)$.\newline
(iii) Set $\varphi:=1+\id$; then $C_{n,k}^{\,\varphi}(0)=C_{n,k}(1,1,0,\ldots,0)$, and by \proposition{reciprocal_function} for every $j\geq 0$
}
	\widehat{R}_j^{\,\varphi}(0)&=D^j(\varphi^{-1})(0)=D^j((1+\id)^{-1})(0)\\
			&=(-1)^j j!=s_1(j+1,1)=A_{j+1,1}(1,\ldots,1),
\intertext{%
whence by \eq{B_A_representable} and \eq{unification_B}
}
  C_{n,k}(1,1,0,\ldots,0)&=C_{n,k}^{\,\varphi}(0)=A_{n,k}(\widehat{R}_0^{\,\varphi}(0),\ldots,\widehat{R}_{n-k}^{\,\varphi}(0))\\
	&=A_{n,k}(A_{1,1}(1),\ldots,A_{n-k+1,1}(1,\ldots,1))\\
	&=B_{n,k}(1,\ldots,1)=s_2(n,k).\qedhere
\end{align*}
}
\end{proof}

\sloppy
\begin{rem}
The question of how the coefficients $\gamma_{n,k}(r_0,\ldots,r_{n-k})$ of \eqref{comtetpoly_diophantine} are made up in detail appears to be rather tricky and must remain open for the time being. Attempting a direct evaluation of $A_{n,k}(\widehat{R}_0,\ldots,\widehat{R}_{n-k})$ in general form quickly leads to a piling up of more and more cumbersome expressions. While the coefficients of $A_{n,k}$ and $B_{n,k}$ are products of simple combinatorial terms (see \eq{bell_partitionsum} and \eq{stirling_partitionsum}), one might doubt whether this also applies to the coefficients of $C_{n,k}$. For example, consider
\begin{align*}
C_{n,n-4} &= \binom{n}{5}\frac{15n^3-150n^2+485n-502}{48} X_0^{n-4}X_1^4\\
	&+ \binom{n}{5}\frac{15n^2-85n+116}{6} X_0^{n-3} X_1^2 X_2
	+\binom{n}{5}\frac{5n-13}{3} X_0^{n-2}X_2^2\\
	&+ \binom{n}{5}\frac{5n-11}{2} X_0^{n-2}X_1 X_3 + \binom{n}{5}X_0^{n-1}X_4.
\end{align*}	
\fussy
Here, the first two coefficients $\gamma_{n,n-4}(n-4,4)$ and $\gamma_{n,n-4}(n-3,2,1)$, regarded as polynomials in $n$, cannot be written as products of linear factors over the field $\const$. However, replacing each $X_j$ by $\widehat{R}_j$ $(0\leq j\leq 4)$ and applying \corollary{basic_CR_polynomialcase}\,(iii) resembles a magic wand that turns $C_{n,n-4}\comp\widehat{R}_{\num}$ into
\begin{align*}
A_{n,n-4}(&X_0,\ldots,X_4) = 105\binom{n+3}{8}X_0^{-n-4}X_1^4	\\
                          &- 105\binom{n+2}{7}X_0^{-n-3} X_1^2 X_2 + 10\binom{n+1}{6}X_0^{-n-2}X_2^2\\
													&\hspace*{5em}+ 15\binom{n+1}{6}X_0^{-n-2}X_1 X_3 - \binom{n}{5}X_0^{-n-1}X_4.
\end{align*}	
Also for the simpler cases $A_{n,n-k}$ with $k=0,1,2,3$ one gets similarly closed product representations of the coefficients as here (see Todorov \cite[Equations (14) to (16)]{todo1985}). This could be seen as an indication that focusing on the iterations of $\theta D$ ultimately turns out to be a less well-posed problem. The remedy is, of course, simply the choice $\theta=D(\varphi)^{-1}$ (see \eq{todorovs_choice}).
\end{rem}
\label{Table: Comtet's polynomials}
\section{Applications to binomial sequences}
This section is to demonstrate the succinct way the classical topic of binomial sequences can be treated within the conceptual frame of the preceeding sections. Some new results will be proved.
\subsection{Definition and representation}
Let $f_n=f_n(t)\in\const[t]$ $(n=0,1,2,3\ldots)$ be a sequence of polynomials with $\deg f_n=n$. Then, $f_0,f_1,f_2,\ldots$ is said to be \emph{binomial}, or \emph{of binomial type}, if for every $n\geq 0$
\begin{equation}\label{def_binseq}
	f_n(s+t)=\sum_{k=0}^n\binom{n}{k}f_{n-k}(s)f_k(t).
\end{equation}
Note that clearly $f_0=1$ and $f'_n(0)\neq 0$.

The sequences $t^n$ and $(t)_n$ are binomial (cf. Stanley \cite[Exercise 5.37]{stan1999} for more examples). Knuth \cite{knut1992a} also deals with binomial sequences $f_n$, but in their guise of \emph{convolution polynomials} $f_n/n!$. As is well-known, binomial sequences are closely related to the exponential polynomials. This is reflected in the following two statements.
\begin{prop}\label{rep_binseq}
Let $f_0,f_1,f_2,\ldots$ be a sequence of polynomials from $\const[t]$. Then the following holds:
\begin{align*}
  \text{\emph{(i)}\quad}(f_n)&\text{~binomial~}\iff\exists\,\varphi\in\invfuncs:f_n(t)=[\tfrac{x^n}{n!}]e^{t\varphi(x)}\\
	\text{\emph{(ii)}\quad}(f_n)&\text{~binomial~}\iff f_n(t)=B_n(tf'_1(0),\ldots,tf'_n(0))
\end{align*}
\end{prop}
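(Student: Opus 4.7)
The plan is to pass to the bivariate exponential generating function
$$F(x,t) := \sum_{n\geq 0} f_n(t)\,\frac{x^n}{n!}\;\in\;\const[t][[x]],$$
and observe that the convolution identity \eqref{def_binseq} is equivalent to the multiplicative functional equation $F(x,s+t) = F(x,s)\cdot F(x,t)$. The bulk of the work lies in (i); once (i) is available, (ii) follows by expanding the exponential via the complete Bell polynomial.

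For the forward direction of (i), specialising the functional equation at $s=t=0$ gives $F(x,0)^2=F(x,0)$ in the integral domain $\const[[x]]$, forcing $F(x,0)\in\{0,1\}$. The hypothesis $\deg f_0=0$ excludes the value $0$, so $F(x,0)=1$, i.e. $f_0=1$ and $f_n(0)=0$ for all $n\geq 1$. Set $\varphi(x):=\partial_t F(x,0)=\sum_{n\geq 1}f'_n(0)x^n/n!\in\funcs_0$; then differentiating $F(x,s+t)=F(x,s)F(x,t)$ in $s$ at $s=0$ yields the linear ODE $\partial_t F(x,t)=\varphi(x)F(x,t)$, whose unique solution in $\const[t][[x]]$ with $F(x,0)=1$ is $F(x,t)=e^{t\varphi(x)}$ (well-defined since $\varphi(0)=0$). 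Finally, $\deg f_1=1$ together with $f_1(0)=0$ forces $f_1(t)=f'_1(0)\,t$ with $f'_1(0)\neq 0$, so $\varphi'(0)\neq 0$ and $\varphi\in\invfuncs$. The converse direction of (i) is immediate: from $F(x,t)=e^{t\varphi(x)}$ and $e^{(s+t)\varphi(x)}=e^{s\varphi(x)}e^{t\varphi(x)}$ one recovers \eqref{def_binseq} by equating coefficients of $x^n/n!$, while $\deg f_n=n$ will come out of the expansion in (ii).

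For (ii), I would invoke the classical Bell evaluation of an exponential: applying \eqref{fdb_formula} to $\exp\comp g$ at $x=0$ with $g\in\funcs_0$ (and using $D^k(\exp)(0)=1$) gives $[x^n/n!]\,e^{g(x)}=B_n(D(g)(0),\ldots,D^n(g)(0))$. Specialising to $g(x)=t\varphi(x)$ and using that $B_{n,k}$ is homogeneous of degree $k$, one obtains
$$f_n(t)\;=\;B_n(t\varphi_1,\ldots,t\varphi_n)\;=\;\sum_{k=0}^n t^k\,B_{n,k}(\varphi_1,\ldots,\varphi_{n-k+1}),$$
where $\varphi_j:=D^j(\varphi)(0)$. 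Since $B_{n,1}=X_n$ is the only component of $B_n$ contributing a term linear in $t$, differentiating in $t$ at $t=0$ yields $\varphi_n=f'_n(0)$, so the displayed formula becomes $f_n(t)=B_n(tf'_1(0),\ldots,tf'_n(0))$, as asserted; this also confirms $\deg f_n=n$, since the top monomial is $t^n(f'_1(0))^n$ with $f'_1(0)\neq 0$. Conversely, the same computation run in reverse shows that the representation in (ii) (forcing $f'_1(0)\neq 0$ via the degree condition) implies $f_n(t)=[x^n/n!]\,e^{t\varphi(x)}$ with $\varphi(x):=\sum_{n\geq 1}f'_n(0)x^n/n!\in\invfuncs$, which by (i) is binomial.

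The only slightly delicate step is the uniqueness in the ODE argument, but this is entirely routine in $\const[t][[x]]$: comparing coefficients of $x^n/n!$ in $\partial_t F=\varphi F$ gives the triangular recursion $f'_n(t)=\sum_{k=1}^n\binom{n}{k}\varphi_k\,f_{n-k}(t)$, so each polynomial $f_n(t)$ is determined inductively from $f_0,\ldots,f_{n-1}$ and the initial value $f_n(0)=\kronecker{n}{0}$. One may alternatively bypass the ODE entirely: define $\varphi(x):=\sum f'_n(0)x^n/n!$ directly and verify $F=e^{t\varphi(x)}$ coefficient-by-coefficient via the Bell expansion of (ii).
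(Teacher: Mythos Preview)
Your proof is correct and follows essentially the same architecture as the paper: both pass to the bivariate generating function $F(x,t)=\sum f_n(t)x^n/n!$, translate \eqref{def_binseq} into the multiplicative equation $F(x,s+t)=F(x,s)F(x,t)$, identify $\varphi(x)=\partial_t F(x,0)=\sum_{n\geq 1} f'_n(0)\,x^n/n!$, and then read off (ii) via the Fa\`a di Bruno expansion of $e^{t\varphi}$. The only cosmetic difference lies in how $F=e^{t\varphi}$ is obtained: the paper expands $F$ as $\sum_{n\geq 0}g_n(x)t^n/n!$ and shows inductively that $g_n(x)=g_1(x)^n$, whereas you differentiate the functional equation to get the ODE $\partial_t F=\varphi F$ and invoke uniqueness---both are standard and equally short.
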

\begin{proof}
(i) `$\Leftarrow$': Let $\varphi$ be any invertible function and denote by $f_n(t)$ the $n$th Taylor coefficient  of $e^{t\varphi}$; then it is easy to check that $f_n$ satisfies the binomial property \eqref{def_binseq}.\,---\,`$\Rightarrow$': Conversely, given $f_n$ binomial, then $g(t,x):=\sum_{n\geq 0}f_n(t)(x^n/n!)$ satisfies $g(s+t,x)=g(s,x)g(t,x)$. Writing $g(t,x)=\sum_{n\geq 0}g_n(x)(t^n/n!)$, one gets $g_n(x)=g_1(x)^n$ for all $n\geq 0$ (by an inductive argument), and hence $g(t,x)=\sum_{n\geq 0}(g_1(x)t)^n/n!=e^{tg_1(x)}$. On the other hand we have $g_1(x)=\pderiv{}{t}g(t,x)|_{t=0}=\sum_{n\geq 1}f'_n(0)(x^n/n!)$ and therefore $g'_1(0)=f'_1(0)\neq 0$. Thus, taking $\varphi:=g_1\in\invfuncs$ delivers $[x^n/n!]e^{t\varphi}=[x^n/n!]g(t,x)=f_n(t)$.

\sloppy
(ii) According to (i) every binomial sequence can be written $f_n(t)=D^n(e^{t\varphi})(0)$. We have $\varphi(x)=\sum_{n\geq 1}f'_n(0)(x^n/n!)$ (as in the proof of (i)), whence $D^j(\varphi)(0)=f'_j(0)$. Now the FdB formula \eqref{fdb_formula} yields $D^n(e^{t\varphi})(0)=\sum_{k=0}^n t^k B_{n,k}^{\varphi}(0)=B_n\comp tD^{\num}(\varphi)(0)=B_n(tf'_{1}(0),\ldots,tf'_{n}(0))$ (see also \cite[Proposition~7.3\,(i)]{schr2015}).
\end{proof}

\fussy
\subsection{Some results on substitution}
Several authors have dealt with evaluating the Bell polynomials in cases the variables have been replaced by special values from combinatorial number families and, more generally, also by binomial sequences (see, e.\,g., \cite{comt1974,feng2020,miho2008b,miho2010,wtwa2009,yang2008}). Keeping in mind that, according to \proposition{rep_binseq}\,(ii), the latter are instances of the exponential polynomials, it is not surprising that statements concerning the substitution of the $B_n$ into polynomials can be readily transferred to the substitution of binomial sequences.

The subsequent list contains some of the more interesting results together with comments or short proofs. Suppose that $(f_n)$ is a binomial sequence. Then the following statements hold:
\begin{align}
	\label{log_binseq}
	&L_n(f_1(t),\ldots,f_n(t))=tf'_n(0),\\
	\label{subst_potential}
	&P_{n,k}(f_1(t),\ldots,f_n(t))=f_n(kt),\\
	\label{subst_reciprocal}
	&R_n(f_1(t),\ldots,f_n(t))=f_n(-t),\\
	\label{subst_treepoly}
	&T_n(f_1(t),\ldots,f_n(t))=f_{n-1}(nt).
\end{align}
\begin{proof}
\eq{log_binseq} follows directly from \proposition{rep_binseq}\,(ii) by observing the inverse relationship between $L_n$ and $B_n$ (see \remark{simple_fdb_inversion}).
\eq{subst_potential} is derived from \proposition{product_identity} by replacing each $X_j$ with $tf'_j(0)$. Equations \eqref{subst_reciprocal} and \eqref{subst_treepoly} are special cases of \eqref{subst_potential}; recall that $R_n=P_{n,-1}$ and $T_n=P_{n-1,n}$.
\end{proof}

In the same way, without any calculation one gets from \eqref{factorial_potentialpoly} and \eqref{subst_potential}  
\begin{align}
	\label{subst_factorial}
	&F_{n,k}(f_1(t),\ldots,f_n(t))=\sum_{j=0}^k s_1(k,j)f_n(jt),\\
\intertext{furthermore from \proposition{comtet_logpoly} }	
	\label{subst_logarithmic}
	&L_n(f_1(t),\ldots,f_n(t))=\sum_{k=1}^n(-1)^{k-1}\frac{1}{k}\binom{n}{k}f_n(kt)\\
\intertext{and likewise from \eq{bell_substitutions_2}}	
	&B_{n,k}(f_1(t),\ldots,f_{n-k+1}(t))=\frac{1}{k!}\sum_{j=0}^k(-1)^{k-j}\binom{k}{j}f_n(jt).\label{bertrand_binseq}
\end{align}
\begin{rem}
\eq{bertrand_binseq} has been proven by Yang \cite[Theorem 2]{yang2008}. Mihoubi, by using the methods of Umbral Calculus, derived a slightly more general version \cite[Proposition 2]{miho2008b}. Equations \eqref{subst_potential}--\eqref{subst_logarithmic} are new.
\end{rem}

\noindent
From \eq{idempot_poly} one immediately obtains by \eqref{subst_potential}
\begin{align}\label{subst_idempot}
	&B_{n,k}(f_0(t),2f_1(t),3f_2(t),\ldots)=\binom{n}{k}f_{n-k}(kt).\\
\intertext{%
Then, \proposition{cor_forests_from_trees} yields the corresponding orthogonal companion
}\label{subst_forest_0}
	&A_{n,k}(f_0(t),2f_1(t),3f_2(t),\ldots)=\binom{n-1}{k-1}f_{n-k}(-nt).
\end{align}
Furthermore, by substituting \eqref{subst_treepoly} into the equations of \theorem{forests_from_trees} we readily get
\vspace*{-2ex}
\begin{align}\label{subst_forest_1}
	&B_{n,k}(f_0(t),f_1(2t),f_2(3t),\ldots)=\binom{n-1}{k-1}f_{n-k}(nt).\\
\intertext{%
together with the orthogonal companion
}\label{subst_forest_2}
	&A_{n,k}(f_0(t),f_1(2t),f_2(3t),\ldots)=\binom{n}{k}f_{n-k}(-kt).
\end{align}
\begin{rem}
\eq{subst_idempot} has been proved by Yang \cite[Theorem 1]{yang2008}. Abbas and Bouroubi \cite[Theorem 3]{abbo2005} have shown \eqref{subst_forest_1} for the special case $t=1$, which just represents the binomial variant of the Khelifa/Cherruault identity mentioned in \remark{khelifa_identity}. The identities \eqref{subst_forest_0} and \eqref{subst_forest_2} are new.
\end{rem}

\subsection{A binomial sequence related to trees}
Let $\tau$ denote the exponential generating function for labeled rooted trees, that is, $\tau(x)=\sum_{n\geq 1}n^{n-1}x^n/n!$ (see \remark{power_unification}\,(iii)). Knuth and Pittel \cite{knpi1989} have introduced univariate expressions $t_n(y)$, $n\geq 0$, called `tree polynomials' and defined by
\begin{equation*}\label{def_univtreepoly}
	t_n(y):=\left[\frac{x^n}{n!}\right]\frac{1}{(1-\tau(x))^y}.
\end{equation*}
Defining the functions $g,\varphi\in\invfuncs$ by $g(x):=x(1-x)^{-1}$ and \mbox{$\varphi:=\logm\comp g\comp\tau$}, we obtain $(1-\tau(x))^{-y}=e^{y\varphi(x)}$. From this it follows by \proposition{rep_binseq}\,(i) that 
\[
t_0(y)=1,\quad t_1(y)=y,\quad t_2(y)=3y+y^2,\quad t_3(y)=17y+9y^2+y^3,\ldots 
\]
is actually a binomial sequence of polynomials from $\integers[y]$. If we write $t_n(y)=\sum_{k=0}^n t_{n,k}y^k$, then the coefficient $t_{n,k}$ counts the total number of mappings of $\left\{1,2,\ldots,n\right\}$ into itself having exactly $k$ different cycles.

Knuth and Pittel derived some integral formulas and a $\Gamma$-function representation of the $t_n(y)$ (see ibid., equations (2.12), (2.13)), inasmuch the focus of their paper is on the asymptotic behavior of these polynomials. Supplementary to this, we will prove here a new explicit representation built up exclusively by means of elementary combinatorial operations.

\begin{thm}\label{knuth_pittel_poly}
	\begin{align*}
\text{\emph{(i)}\qquad}&t_{n,k}=\mspace{-2mu}\sum_{k\leq j\leq i\leq n}\mspace{-4mu}s_1(j,k)\frac{n^{n-i}i!}{j!}\binom{i-1}{j-1}\binom{n-1}{i-1};\\ 
\text{\emph{(ii)}\qquad}&t_n(y)=B_n(yt_{1,1},\ldots,yt_{n,1}),\text{~where for~}r\geq 1\\
                       &t_{r,1}=\mspace{-2mu}\sum_{1\leq j\leq i\leq r}\mspace{-4mu}(-1)^{j-1}\frac{r^{r-i}i!}{j}\binom{i-1}{j-1}\binom{r-1}{i-1}.
	\end{align*}
\end{thm}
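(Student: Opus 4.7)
The plan is to use the factorization $\varphi = \logm\comp g\comp\tau$ established immediately before the theorem (so that $e^{y\varphi(x)} = (1-\tau(x))^{-y}$), and then to compute the required Taylor coefficients of $\varphi$ at $0$ by applying \theorem{second_CR}\,(i) twice, exploiting the fact that each of the three factors $\tau$, $g$, $\logm$ admits a transparent partial-Bell-polynomial evaluation.

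Part (ii) will fall out of part (i) almost for free: since $t_n(y) = [x^n/n!]\,e^{y\varphi(x)}$ with $\varphi\in\invfuncs$, \proposition{rep_binseq}\,(i) makes $(t_n)$ a binomial sequence, so \proposition{rep_binseq}\,(ii) yields $t_n(y) = B_n(yt'_1(0),\ldots,yt'_n(0))$; and $t'_r(0) = t_{r,1}$. The explicit form of $t_{r,1}$ is then just the $k=1$ specialization of (i), after observing that $s_1(j,1) = (-1)^{j-1}(j-1)!$. So the whole task reduces to proving (i).

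For (i), homogeneity of the partial Bell polynomials gives
\[
  t_{n,k} \;=\; [y^k]\,B_n(yt_{1,1},\ldots,yt_{n,1}) \;=\; B_{n,k}(t_{1,1},\ldots,t_{n-k+1,1}) \;=\; B_{n,k}^{\varphi}(0).
\]
Writing $\varphi = \logm\comp h$ with $h := g\comp\tau$ and peeling off $\logm$ by \theorem{second_CR}\,(i), and then peeling off $\tau$ from $h$ in the same way, one obtains the triple sum
\[
  B_{n,k}^{\varphi}(0) \;=\; \sum_{k\,\leq\,j\,\leq\,i\,\leq\,n} B_{n,i}^{\tau}(0)\, B_{i,j}^{g}(0)\, B_{j,k}^{\logm}(0).
\]
The three inner factors are each already known: the proof of \corollary{bell_substitutions_1}\,(ii) delivers $B_{j,k}^{\logm}(0) = s_1(j,k)$; the computation $D^k(g)(0) = k!$ for $g(x) = x(1-x)^{-1}$ together with the unsigned Lah evaluation \eq{B_rep_unsignedlahpoly} gives $B_{i,j}^{g}(0) = \frac{i!}{j!}\binom{i-1}{j-1}$; and $D^k(\tau)(0) = k^{k-1}$ together with the Khelifa--Cherruault identity (\remark{khelifa_identity}, itself a unification of \theorem{forests_from_trees}\,(i)) produces $B_{n,i}^{\tau}(0) = \binom{n-1}{i-1}n^{n-i}$. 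Substituting these three ingredients into the triple sum is exactly the formula in (i).

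The whole argument is little more than bookkeeping once the factorization $\varphi = \logm\comp g\comp\tau$ is in view. The only step where some care is needed is the order in which the compositions get unwound: since \theorem{second_CR}\,(i) is contravariant, the summation indices $n \geq i \geq j \geq k$ appear by stripping off the outermost factor $\logm$ first, then $g$, and only last $\tau$.
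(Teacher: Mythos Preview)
Your proposal is correct and follows essentially the same route as the paper: identify $t_{n,k}=B_{n,k}^{\varphi}(0)$, apply Jabotinsky's formula (\theorem{second_CR}\,(i)) twice to the factorization $\varphi=\logm\comp g\comp\tau$, and evaluate the three resulting factors via $s_1(j,k)$, the unsigned Lah numbers, and the forest-number identity. The only cosmetic difference is that the paper reads off $t_{n,k}=B_{n,k}^{\varphi}(0)$ directly from the expansion $e^{y\varphi(x)}=\sum_n\big(\sum_k B_{n,k}^{\varphi}(0)y^k\big)x^n/n!$, whereas you route this through \proposition{rep_binseq}\,(ii) and homogeneity; the content is the same.
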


\begin{proof}
(i) With the terms introduced above we have
\[
	\sum_{n\geq 0}t_n(y)\frac{x^n}{n!}=e^{y\varphi(x)}=\sum_{n\geq 0}\bigg(\sum_{k=0}^n B_{n,k}^{\varphi}(0)y^k\bigg)\frac{x^n}{n!}
\]
\sloppy
(cf. \cite[Proposition 7.3\,(i)]{schr2015}), whence $t_n(y)=\sum_{k=0}^n B_{n,k}^{\varphi}(0)y^k$ and by Jabotinsky's \theorem{second_CR}\,(i)
\[
	t_{n,k}=B_{n,k}^{\varphi}(0)=B_{n,k}^{\logm\,\comp\,(g\,\comp\,\tau)}(0)=\sum_{j=k}^n B_{n,j}^{g\,\comp\,\tau}(0)B_{j,k}^{\logm}(0).\tag{*}
\]
\fussy
Now observe that $B_{n,k}^{\logm}(0)=B_{n,k}^{\inv{\expm}}(0)=A_{n,k}^{\expm}(0)=s_1(n,k)$; thus again applying Jabotinsky's formula, we obtain from (*)
\[
	t_{n,k}=\sum_{j=k}^n s_1(j,k)\sum_{i=j}^n B_{n,i}^{\tau}(0)B_{i,j}^g(0).\tag{**}
\]
\sloppy
Evaluating both substitutions gives 
\[
	B_{n,i}^{\tau}(0)=B_{n,i}(1^0,2^1,3^2,\ldots)=\binom{n-1}{i-1}n^{n-i}
\]
by applying \theorem{forests_from_trees}\,(i) (forest numbers, cf. \remark{khelifa_identity}), and 
\[
	B_{i,j}^{g}(0)=B_{i,j}(1!,2!,3!,\ldots)=l^{+}(i,j)=\frac{i!}{j!}\binom{i-1}{j-1}
\]
(unsigned Lah numbers, cf. \eqref{B_rep_unsignedlahpoly}). Finally, we substitute these results into (**) thus arriving at the asserted statement. 

\fussy
(ii) Recall that $s_1(j,1)=(-1)^{j-1}(j-1)!$. Thus $t_{r,1}$ is obtained from (i) by taking $n=r$ and $k=1$. Furthermore $t'_{r}(0)=t_{r,1}$, whence the assertion follows according to \proposition{rep_binseq}\,(ii). 
\end{proof}

\subsection{Coupling binomial sequences}
In their seminal paper \cite{muro1970} Mullin and Rota developed a general theory of binomial sequences. They introduced a family of shift-invariant linear differential operators (called `delta operators'}) on the vector space of polynomials such that each sequence of binomial type can be associated uniquely to a specific (`basic') delta operator. On that basis, they were able to describe the exact form of the connection between two given binomial sequences (see also Aigner \cite[Chapter III]{aign1979}).

In the following it will be demonstrated how the main result of these investigations can also (alternatively) be established by using only properties of the Bell polynomials.

\begin{thm}[Mullin \& Rota 1970]\label{thm_mullin_rota}
Let $(f_n)$ and $(g_n)$ be any two binomial sequences. Then there exist unique constants $c_{n,k}\in\const$, with $c_{n,k}=0$ for $n<k$, such that $f_n(t)=\sum_{k=0}^n c_{n,k}g_k(t)$ for every $n\geq 0$, and the sequence $(h_n)$ defined by $h_n(t):=\sum_{k=0}^n c_{n,k}t^k$ is of binomial type.
\end{thm}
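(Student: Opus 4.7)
The plan is to lift the statement to the level of exponential generating functions via Proposition~6.1 and then apply Jabotinsky's formula (Theorem~4.12(i)). By part~(i) of Proposition~6.1 there exist $\varphi,\psi\in\invfuncs$ with $f_n(t)=[x^n/n!]e^{t\varphi(x)}$ and $g_n(t)=[x^n/n!]e^{t\psi(x)}$. Since $B_{n,k}$ is homogeneous of degree $k$, this reads
\[
f_n(t)=\sum_{k=0}^{n}B^{\varphi}_{n,k}(0)\,t^k,\qquad g_n(t)=\sum_{k=0}^{n}B^{\psi}_{n,k}(0)\,t^k.
\]

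The key step is to introduce $\chi:=\inv{\psi}\comp\varphi$, which lies in $\invfuncs$ because the latter is a group under composition, and to set $c_{n,k}:=B^{\chi}_{n,k}(0)$. Triangularity of the Bell polynomials immediately gives $c_{n,k}=0$ for $n<k$. Writing $\varphi=\psi\comp\chi$, Theorem~4.12(i) yields
\[
B^{\varphi}_{n,j}(0)=B^{\psi\,\comp\,\chi}_{n,j}(0)=\sum_{k=j}^{n}B^{\chi}_{n,k}(0)\,B^{\psi}_{k,j}(0)=\sum_{k=j}^{n}c_{n,k}\,B^{\psi}_{k,j}(0).
\]
Substituting this into the expansion of $f_n(t)$ above and swapping summations,
\[
f_n(t)=\sum_{j=0}^{n}t^j\sum_{k=j}^{n}c_{n,k}\,B^{\psi}_{k,j}(0)=\sum_{k=0}^{n}c_{n,k}\sum_{j=0}^{k}B^{\psi}_{k,j}(0)\,t^j=\sum_{k=0}^{n}c_{n,k}\,g_k(t),
\]
as required. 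Uniqueness of the $c_{n,k}$ is automatic: since $\deg g_k=k$, the polynomials $g_0,g_1,\ldots,g_n$ form a basis of the polynomials of degree $\leq n$, so the expansion of $f_n$ in this basis is unique.

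For the binomiality of $(h_n)$ with $h_n(t):=\sum_{k=0}^{n}c_{n,k}t^k=\sum_{k=0}^{n}B^{\chi}_{n,k}(0)\,t^k$, the same homogeneity trick rewrites $h_n(t)=B_n(t\chi_1,\ldots,t\chi_n)$ where $\chi_j:=D^j(\chi)(0)$. Since $h'_j(0)=B^{\chi}_{j,1}(0)=\chi_j$, part~(ii) of Proposition~6.1 identifies $(h_n)$ as precisely the binomial sequence associated with $\chi$ (and $\chi_1\neq 0$ guarantees nondegeneracy of the leading coefficient $c_{n,n}=\chi_1^n$). The only real pitfall is keeping straight the direction of composition in Jabotinsky's formula: choosing $\varphi=\psi\comp\chi$ rather than $\chi\comp\psi$ is precisely what makes the outer summation index deliver the $t$-independent coefficients $c_{n,k}$ while the inner index reproduces the expansion coefficients of $g_k(t)$.
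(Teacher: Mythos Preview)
Your proof is correct and follows essentially the same approach as the paper's: both represent the sequences via $\varphi,\psi\in\invfuncs$ using Proposition~6.1, introduce the composite $\inv{\psi}\comp\varphi$, and apply Jabotinsky's formula (Theorem~4.12(i)) to identify $c_{n,k}=B^{\inv{\psi}\,\comp\,\varphi}_{n,k}(0)$. Your version is slightly more streamlined in that you define the candidate $c_{n,k}$ constructively and verify the expansion directly, whereas the paper first invokes linear independence to get abstract coefficients and then identifies them via a triangular subtraction argument; both routes arrive at the same formula and invoke Proposition~6.1(ii) to conclude that $(h_n)$ is binomial.
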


\begin{proof}
According to \proposition{rep_binseq} there are functions $\varphi,\psi\in\invfuncs$ such that $f_n$ and $g_n$ can be written in the form
\begin{equation}\label{two_binseqs}
g_n(t)=\sum_{k=0}^n t^k B_{n,k}^{\varphi}(0)\quad\text{and}\quad f_n(t)=\sum_{k=0}^n t^k B_{n,k}^{\psi}(0).
\end{equation}
Since every sequence of binomial type consists of polynomials linearly independent in $\const[t]$, it follows that 
\begin{align}\label{connecting_coeffs}
f_n(t)&=\sum_{k=0}^n c_{n,k}g_k(t)\\
\intertext{%
with uniquely determined connecting coefficients $c_{n,k}\in\const$. Now substituting the right-hand sides of \eqref{two_binseqs} into \eqref{connecting_coeffs} we obtain
}
	\sum_{k=0}^n t^k B_{n,k}^{\psi}(0)&=\sum_{k=0}^n c_{n,k}\sum_{j=0}^k t^j B_{k,j}^{\varphi}(0)\notag\\
			 &=\sum_{k=0}^n t^k\sum_{j=k}^n c_{n,j} B_{j,k}^{\varphi}(0)\notag\\
\intertext{%
after rearranging the double series. Hence, by equating the coefficients of $t^k$
}
\label{connecting_the_bellpolys_1}
	B_{n,k}^{\psi}(0)&=\sum_{j=k}^n c_{n,j} B_{j,k}^{\varphi}(0).\\
\intertext{%
On the other hand, applying Jabotinsky's formula (\theorem{second_CR}\,(i)) to the `decomposition' $\psi=\varphi\comp(\inv{\varphi}\comp\psi)$ yields
}	\label{connecting_the_bellpolys_2}
	B_{n,k}^{\psi}(0)&=\sum_{j=k}^n B_{n,j}^{\inv{\varphi}\,\comp\,\psi}(0)B_{j,k}^{\varphi}(0).
\end{align}
\label{Rearranging double series}
We now set $V_{n,j}:=c_{n,j}-B_{n,j}^{\inv{\varphi}\,\comp\,\psi}(0)$. Then, from \eqref{connecting_the_bellpolys_1} and \eqref{connecting_the_bellpolys_2}
\begin{equation}\label{vanishing_coeffs}
	\sum_{j=k}^n V_{n,j}B_{j,k}(\varphi_1,\ldots,\varphi_{j-k+1})=0\qquad(0\leq k\leq n),
\end{equation}
where $\varphi_1,\varphi_2,\varphi_3,\ldots$ are the Taylor coefficients of $\varphi$. We take for $k$ the values $n,n-1,n-2,\ldots$ in that order to show $V_{n,j}=0$ for $j=n,n-1,\ldots,0$. In the case $k=n$ \eqref{vanishing_coeffs} becomes $V_{n,n}B_{n,n}(\varphi_1)=V_{n,n}\varphi_1^n=0$, hence $V_{n,n}=0$ because of $\varphi_1\neq 0$. For $k=n-1$ we similarly get $0=V_{n,n-1}B_{n-1,n-1}(\varphi_1)+V_{n,n}B_{n,n-1}(\varphi_1)=V_{n,n-1}\varphi_1^{n-1}$, that is, $V_{n,n-1}=0$, and so on until finally $V_{n,0}=0$. All in all, \eqref{vanishing_coeffs} implies
\[
c_{n,j}=B_{n,j}^{\inv{\varphi}\,\comp\,\psi}(0)=B_{n,j}(a_1,\ldots,a_{n-j+1})\quad(0\leq j\leq n)
\]
with $a_j=D^j(\inv{\varphi}\comp\psi)(0)$. Clearly we now have $c_{n,j}=0$ for $n<j$, and furthermore
\[
 h_n(t)=\sum_{k=0}^n c_{n,k}t^k=\sum_{k=0}^n t^k B_{n,k}(a_1,\ldots,a_{n-k+1})=B_n(ta_1,\ldots,ta_n),
\] 
where $h'_j(0)=a_j$. Hence $(h_n)$ is binomial by \proposition{rep_binseq}\,(ii). 
\end{proof}

\sloppy
From the above proof we learn that for any two given binomial sequences their connecting coefficients are B-representable. The question here is whether one of the sequences can be of binomial type while the other is not. The (negative) answer is provided by the following \emph{both-or-none statement}.

\fussy
\begin{thm}\label{both_or_none_thm}
Let $(f_n)$ and $(g_n)$ be any sequences of polynomials from $\const[t]$ and $a_1\,(\neq 0),a_2,a_3,\ldots$ a sequence of constants from $\const$. Suppose that
\[
f_n(t)=\sum_{k=0}^n g_k(t)B_{n,k}(a_1,\ldots,a_{n-k+1})
\]
holds for all $n\geq 0$. Then we have: $g_n \text{~binomial~} \iff f_n \text{~binomial~}$.
\end{thm}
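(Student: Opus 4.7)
The plan is to recognize the transfer matrix $B_{n,k}(a_1,\ldots,a_{n-k+1})$ as the Taylor evaluation of $B_{n,k}$ at a single auxiliary function and then reduce both directions to a single application of Jabotinsky's formula (\theorem{second_CR}\,(i)). Since $a_1\neq 0$, I would first pick any $\psi\in\invfuncs$ whose Taylor coefficients are $D^j(\psi)(0)=a_j$ $(j\geq 1)$. The hypothesis then reads
\[
f_n(t)=\sum_{k=0}^n g_k(t)\,B_{n,k}^{\psi}(0)\qquad(n\geq 0).
\]
Invertibility of this triangular system via \corollary{orthocompanions_A_B}, together with \eqref{connection_A_B} and $\psi(0)=0$, supplies the companion relation
\[
g_n(t)=\sum_{k=0}^n f_k(t)\,A_{n,k}^{\psi}(0)=\sum_{k=0}^n f_k(t)\,B_{n,k}^{\inv{\psi}}(0),
\]
so the hypothesis is actually symmetric in $f$ and $g$. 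This is what makes a ``both-or-none'' statement possible.

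For the direction $(g_n)\text{ binomial}\Rightarrow(f_n)\text{ binomial}$, I would invoke \proposition{rep_binseq}\,(i) to write $g_k(t)=\sum_{j=0}^k t^j B_{k,j}^{\varphi}(0)$ for some $\varphi\in\invfuncs$, substitute into the hypothesis, interchange the order of summation, and apply \theorem{second_CR}\,(i) to the inner sum to collapse it to $B_{n,j}^{\varphi\,\comp\,\psi}(0)$. Since $\varphi\comp\psi\in\invfuncs$, another application of \proposition{rep_binseq}\,(i) identifies the resulting representation of $f_n(t)$ as that of a binomial sequence. The reverse direction is formally identical: use the inverted relation to express $g_n$ in terms of $f_k$ and $B_{n,k}^{\inv{\psi}}(0)$, write a binomial $f_k(t)=\sum_j t^j B_{k,j}^{\eta}(0)$ via \proposition{rep_binseq}\,(i), and apply Jabotinsky's formula to $\eta\comp\inv{\psi}\in\invfuncs$.

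The only real obstacle is bookkeeping: one must be careful that the order of composition matches the matrix product in \theorem{second_CR}\,(i), and that the invertibility hypothesis $a_1\neq 0$ is used precisely at two points, namely to guarantee $\psi\in\invfuncs$ so that $\inv{\psi}$ exists, and to make $\varphi\comp\psi$ (respectively $\eta\comp\inv{\psi}$) belong to $\invfuncs$ so that \proposition{rep_binseq}\,(i) is applicable in the conclusion. Once these are aligned, no further computation is required.
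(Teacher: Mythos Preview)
Your proposal is correct and follows essentially the same route as the paper: introduce $\psi\in\invfuncs$ with Taylor coefficients $a_j$, use Jabotinsky's formula (\theorem{second_CR}\,(i)) together with \proposition{rep_binseq} for the forward direction, and invert the relation via $A_{n,k}^{\psi}(0)=B_{n,k}^{\inv{\psi}}(0)$ to reduce the reverse direction to the same computation. The only cosmetic difference is that the paper cites the generalized Stirling inversion (\proposition{generalized_stirling_inversion}) rather than \corollary{orthocompanions_A_B} for the inversion step, but these amount to the same thing here.
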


\begin{proof}
\sloppy
1. Assume $g_n(t)$ to be of binomial type; then, as in the proof of \theorem{thm_mullin_rota}, $g_n(t)=\sum_{k=0}^n t^k B_{n,k}^{\varphi}(0)$ with $\varphi\in\invfuncs$. We set $\psi(x):=\sum_{n\geq 1}a_n(x^n/n!)$, which turns out to be invertible because of $\psi'(0)=a_1\neq 0$. From this we obtain
{\allowdisplaybreaks
\begin{align*}
	f_n(t)&=\sum_{k=0}^n\bigg(\sum_{j=0}^k t^j B_{k,j}^{\varphi}(0)\bigg)B_{n,k}^{\psi}(0)\\
	      &=\sum_{k=0}^n t^k\bigg(\sum_{j=k}^n B_{n,j}^{\psi}(0)B_{j,k}^{\varphi}(0)\bigg)&\text{(by rearranging the double series)}\\
				&=\sum_{k=0}^n t^k B_{n,k}^{\varphi\,\comp\,\psi}(0).\quad&\text{(by Jabotinsky's formula)}
\end{align*}
} 
It follows by \proposition{rep_binseq} that $f_n(t)$ is of binomial type.

2. Conversely, suppose $f_n(t)$ is binomial. This case will be reduced to the situation of part 1. With the same denotations we have $f_n(t)=\sum_{k=0}^n g_k(t)B_{n,k}^{\psi}(0)$, $\psi\in\invfuncs$. To this now, the generalized Stirling inversion (\proposition{generalized_stirling_inversion}) can be applied thus yielding
\begin{align*}
	g_n(t)&=\sum_{k=0}^n f_k(t)A_{n,k}^{\psi}(0)\\
	      &=\sum_{k=0}^n f_k(t)B_{n,k}^{\inv{\psi}}(0)=\sum_{k=0}^n f_k(t)B_{n,k}(\inv{a}_1,\ldots,\inv{a}_{n-k+1}),
\end{align*}
where $\inv{a}_j=D^j(\inv{\psi})(0)$. Since $\inv{a}_1=\nicefrac{1}{a_1}\neq 0$, we see from the statement of part~1 that $(g_n)$ is binomial.
\end{proof}

\fussy
With any binomial sequence it is now easy to create a new one by linearly combining its polynomials with instances of the partial Bell polynomials as connecting coefficients. The following special case has been established by Yang \cite[Lemma 2, p.\,53]{yang2008}.
\begin{cor}\label{yang_lemma}
A sequence $(f_n)$ is binomial, if and only if for every $n\geq 0$
\[
	f_n(t)=\sum_{k=0}^n(t)_k B_{n,k}(f_1(1),\ldots,f_{n-k+1}(1)).
\]
\end{cor}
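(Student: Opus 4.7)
The plan is to invoke Theorem~\ref{both_or_none_thm} with the binomial sequence $g_n(t) := (t)_n$ (falling factorials; their binomiality is Vandermonde's convolution) and with constants $a_j := f_j(1)$. Both directions then fall out of that theorem together with a modest uniqueness argument.

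For the direction ``$(f_n)$ binomial $\Rightarrow$ identity'', I first check the precondition $a_1 = f_1(1) \neq 0$: setting $s=t=0$ in \eqref{def_binseq} gives $f_n(0) = \sum_k \binom{n}{k} f_{n-k}(0) f_k(0)$, which together with $f_0 = 1$ forces $f_n(0) = 0$ for all $n \geq 1$ by induction, so $f_1(t) = f_1'(0)\,t$ and $f_1(1) = f_1'(0) \neq 0$. I then define $F_n(t) := \sum_{k=0}^n (t)_k B_{n,k}(f_1(1), \ldots, f_{n-k+1}(1))$; Theorem~\ref{both_or_none_thm} immediately yields that $(F_n)$ is binomial. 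Since $(1)_0 = (1)_1 = 1$ and $(1)_k = 0$ for $k \geq 2$, I get $F_0 = 1 = f_0$ and $F_n(1) = B_{n,1}(f_1(1),\ldots,f_n(1)) = f_n(1)$ for $n \geq 1$. By Proposition~\ref{rep_binseq}(ii) every binomial sequence has the form $B_n(t\alpha_1,\ldots,t\alpha_n)$ with $\alpha_j = f_j'(0)$; evaluating at $t=1$ and using $B_{n,1} = X_n$, the triangular relation $f_n(1) = \alpha_n + (\text{polynomial in } \alpha_1,\ldots,\alpha_{n-1})$ recovers the $\alpha_j$ inductively from $\{f_\nu(1)\}_{\nu\le j}$. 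Hence the values at $t=1$ pin down a binomial sequence uniquely, and $F_n = f_n$.

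For the converse, assume the identity holds. Its $n=1$ instance reads $f_1(t) = f_1(1)\,t$, so the standing degree requirement $\deg f_n = n$ forces $a_1 = f_1(1) \neq 0$. Theorem~\ref{both_or_none_thm}, applied with $g_n = (t)_n$ binomial, then transfers binomiality to $(f_n)$.

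The only delicate point is the uniqueness step in the forward direction\,---\,that two binomial sequences coinciding at $t=1$ must be equal\,---\,and this is handled cleanly by the triangular recovery of the $f_j'(0)$ from the values $f_\nu(1)$ sketched above. Everything else is a direct specialization of Theorem~\ref{both_or_none_thm}.
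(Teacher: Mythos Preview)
Your proof is correct. The converse direction (identity $\Rightarrow$ binomial) coincides with the paper's argument: both apply Theorem~\ref{both_or_none_thm} with $g_n(t)=(t)_n$. The forward direction, however, is handled differently. The paper invokes the Mullin--Rota Theorem~\ref{thm_mullin_rota}: given $(f_n)$ binomial and $(t)_n$ binomial, that theorem furnishes connecting coefficients which (from its proof) are automatically of the form $B_{n,k}(a_1,\ldots,a_{n-k+1})$; evaluating at $t=1$ then identifies $a_j=f_j(1)$. You instead define the candidate $F_n$ by the right-hand side, use Theorem~\ref{both_or_none_thm} to certify it as binomial, and then argue that a binomial sequence is determined by its values at $t=1$ via the triangular recovery of the $f_j'(0)$ from $B_n(f_1'(0),\ldots,f_n'(0))$. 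Your route avoids the heavier Mullin--Rota machinery and stays entirely within Theorem~\ref{both_or_none_thm} plus Proposition~\ref{rep_binseq}(ii); the price is the small additional uniqueness lemma. The paper's route is shorter but leans on the structure of the Mullin--Rota proof (specifically that the connecting coefficients are B-representable). Both are clean; yours is arguably the more elementary of the two.
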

\begin{proof}
Since we have
\[
	(t)_n=\sum_{k=0}^n t^k s_1(n,k)=\sum_{k=0}^n t^k B_{n,k}^{\logm}(0)=\left[\frac{x^n}{n!}\right]e^{t\logm(x)},
\]
$(t)_n$ turns out to be of binomial type (by \proposition{rep_binseq}\,(i)). Suppose now
\[
	f_n(t)=\sum_{k=0}^n(t)_k B_{n,k}(a_1,\ldots,a_{n-k+1})\tag{*}
\]
with $a_1\neq 0$. One clearly has $f_j(1)=a_j$, and $f_n(t)$ is binomial by \theorem{both_or_none_thm}.\,---\,Conversely, assume $f_n(t)$ is binomial. The Mullin-Rota \theorem{thm_mullin_rota} then yields connecting coefficients $B_{n,k}(a_1,\ldots,a_{n-k+1})$ such that equation (*) is satisfied.
\end{proof}
\section{Lagrange inversion polynomials}
\subsection{Some preliminaries}
After having encountered various inverse relationships ba\-sed on orthogonality in previous sections, we now turn to the type of \emph{compositional inversion} already mentioned in \example{simple_fdb_inversion} and \remark{hopf_algebra}. First we want to free ourselves from the assumption made there, according to which the polynomials in question must be FdB, i.\,e., elements of $\Phi_n[\invfuncs]$. Therefore, we will consider sequences of arbitrary polynomials $(U_n)$ and $(V_n)$ such that for every $n\geq 0$
\begin{equation}\label{compositional_inversion}
	U_n\comp V_{\num}=X_n\quad\text{and}\quad V_n\comp U_{\num}=X_n.
\end{equation}

However, the idea that a polynomial sequence remains (uniquely) associated to a function should still be adhered to. In order to roughly sketch the intended situation here, let us assume that $f,g\in\funcs$ are somehow cha\-racterized by sequences of constants $c_0,c_1,c_2,\ldots$ and $d_0,d_1,d_2,\ldots$, respectively. Next suppose there is a sequence of polynomials $(U_n)$ such that $d_n=U_n(c_0,\ldots,c_n)$ for every $n\geq 0$. Then $U_n$ will be called \emph{conversion polynomial} (of $f$ with respect to $g$). Reversely, if we additionally have conversion polynomials $V_n$ of $g$ (w.r.t. $f$), then the pair $(U_n,V_n)$ satisfies \eqref{compositional_inversion}. In the special case $g=\inv{f}$, the corresponding conversion polynomials will henceforth be called (\emph{generalized}) \emph{Lagrange inversion polynomials}.

\sloppy
The classical Lagrange inversion is a special case that arises when $f$ and $\inv{f}$ are characterized by their respective Taylor coefficients $f_1,f_2,\ldots$ and $\inv{f}_1,\inv{f}_2,\ldots$. While a major part of the extensive literature on the subject is devoted to its analytical and combinatorial aspects (e.\,g. \cite{krat1988,gess2016}), explicit formulas for the coefficients of $\inv{f}$ in the form of polynomial expressions
\[
	\inv{f}_n=\Lambda_n(f_1,\ldots,f_n)
\]
\fussy
have been studied at times, for instance \cite{whit1951}, \cite[p.\,412]{mofe1953}, \cite[Section 2.5]{gess2016}. Comtet \cite[p.\,151]{comt1974} replaced these determinantal and diophantine representations with the following elegant formula (the prehistory of which is sketched in \cite{john2002a}):
\begin{equation}\label{comtet_formula}
		\Lambda_n=\sum_{k=0}^{n-1}(-1)^k X_1^{-(n+k)}B_{n-1+k,k}(0,X_2,\ldots,X_n).
\end{equation}
\eq{comtet_formula} is a special case of \theorem{mainresult} ($=$~Theorem~6.1 in \cite{schr2015}), from which immediately follows that $\Lambda_n=A_{n,1}$. Alternatively, if one wants to avoid the associate Bell polynomials $\widetilde{B}_{n-1+k,k}$ in \eqref{comtet_formula}, the Schl\"omilch-Schl\"afli representation \eqref{schloemilch_schlaefli_A_B} could be used (taking $k=n-1$) to obtain
\begin{equation*}
	\Lambda_n=\sum_{k=0}^{n-1}(-1)^k\binom{2n-1}{n-1-k}X_1^{-(n+k)}B_{n-1+k,k}.
\end{equation*}
Finally, according to \corollary{corollary_stirling}, $\Lambda_n$ can also be expressed by virtue of the reciprocal polynomials and the tree polynomials.

Since the Lagrange inversion polynomial $\Lambda_n$ is the conversion polynomial of $f$ (w.r.t. $\inv{f}$) and of $\inv{f}$ (w.r.t. $f$) as well, it obviously must be self-inverse: $\Lambda_n\comp \Lambda_{\num}=X_n$ (see also \eq{B_A_representable} for $k=1$). 

In addition, we see from Theorems \ref{inverse_power} and \ref{power_of_function} that $B_{n,k}(f_1,\ldots,f_{n-k+1})$ and $A_{n,k}(f_1,\ldots,f_{n-k+1})$ are the Taylor coefficients of $f^k$ and $\inv{f}^k$, respectively. Again we encounter an orthogonality relation, this time through raising a function and its inverse to the $k$th power.

\subsection{A modified inversion problem}
We start with a theorem that gives the solution to a modified inversion problem.
\begin{thm}\label{comtet_thm_F}
Let $s$ be any positive integer and let $c_0,c_1,c_2,\ldots\in\const$ be any sequence of constants with $c_0=1$. Then, $f(x)=\sum_{n\geq 0}\frac{c_n}{n!}x^{s n+1}$ has an inverse of the form $\inv{f}(x)=\sum_{n\geq 0}\frac{d_n}{n!}x^{s n+1}$ with $d_0=1$ and $d_n=\Lambda_n(c_1,\ldots,c_n)$ for every $n\geq 1$, where
\[
	\Lambda_{n}=\sum_{k=1}^n (-1)^k \binom{sn+k}{k-1}(k-1)!B_{n,k}.
\]
\end{thm}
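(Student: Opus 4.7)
My approach is to reduce the problem to a single application of the classical Lagrange inversion formula, followed by expanding a negative power of a unit series via the potential polynomials $\widehat{P}_{n,k}$ from Section~3.

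\textbf{First step: factor out the visible power of $x$.} The form of $f$ shows immediately that $f(x)=x\cdot g(x^s)$, where $g(y):=\sum_{n\geq 0}(c_n/n!)y^n$. Since $g(0)=c_0=1$, we have $g\in\funcs_1$ and $f'(0)=g(0)=1$, so $f\in\invfuncs$. Hence
\[
\frac{y}{f(y)}=g(y^s)^{-1}.
\]
Applying the classical Lagrange formula (the $k=1$ case of \theorem{inverse_power}\,(ii), equivalently \proposition{inverse_lagrange}) yields
\[
[x^n]\,\inv{f}(x)=\frac{1}{n}[y^{n-1}]\,g(y^s)^{-n}.
\]
Because $g(y^s)^{-n}\in\const[[y^s]]$ is a series in $y^s$ alone, this coefficient vanishes unless $n\equiv 1\pmod{s}$. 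Writing $n=sm+1$ and substituting $z=y^s$ proves the claimed sparsity pattern of $\inv{f}$, and gives
\[
\frac{d_m}{m!}=[x^{sm+1}]\,\inv{f}(x)=\frac{1}{sm+1}\,[z^m]\,g(z)^{-(sm+1)}\qquad(m\geq 0),
\]
together with $d_0=1$ (since $\inv{f}$ starts with $x$).

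\textbf{Second step: evaluate the coefficient via potential polynomials.} The Taylor coefficients of $g$ are $g_0=1$ and $g_j=c_j$ for $j\geq 1$. By \eqref{potential_polynomial_1}, with $X_0=1$, the $m$th derivative of $g^{-(sm+1)}$ at $0$ satisfies
\[
m!\,[z^m]\,g(z)^{-(sm+1)}=\sum_{j=0}^{m}\powerfall{-(sm+1)}{j}\,B_{m,j}(c_1,\ldots,c_{m-j+1}).
\]
The falling factorial unwinds to $\powerfall{-(sm+1)}{j}=(-1)^j(sm+1)(sm+2)\cdots(sm+j)=(-1)^j(sm+j)!/(sm)!$.

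\textbf{Third step: assemble.} Combining the last two displays,
\[
d_m=\sum_{j=0}^{m}(-1)^j\,\frac{(sm+j)!}{(sm+1)!}\,B_{m,j}(c_1,\ldots,c_{m-j+1}).
\]
For $m\geq 1$ the $j=0$ term vanishes because $B_{m,0}=\kronecker{m}{0}$, and for $j\geq 1$ the identity $(sm+j)!/(sm+1)!=(j-1)!\binom{sm+j}{j-1}$ converts the sum into
\[
d_m=\sum_{k=1}^{m}(-1)^k\binom{sm+k}{k-1}(k-1)!\,B_{m,k}(c_1,\ldots,c_{m-k+1})=\Lambda_m(c_1,\ldots,c_m),
\]
as claimed.

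\textbf{Main obstacle.} There is no real obstacle: once one sees the factorisation $f(x)=x\,g(x^s)$ the proof is forced. The only place where one can slip is in the bookkeeping of factorials in the third step, so I would verify the identity $(sm+j)!/(sm+1)!=(j-1)!\binom{sm+j}{j-1}$ carefully and check the result on the case $s=1$ (where $\Lambda_n$ must reproduce $A_{n,1}$ up to the obvious reindexing between $c_j$ and the Taylor coefficients $f_j=j\,c_{j-1}$).
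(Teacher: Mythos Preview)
Your proof is correct and considerably more direct than the paper's. Whereas the paper postpones the argument, builds up the machinery of generalized Lagrange inversion polynomials $\Lambda_n(a,\varphi\,|\,b,\psi)$ (Definition~\ref{gen_lagrange_invpoly}, Theorems~\ref{generalized_lagrange_inversion} and \ref{lagrangepoly_special_2}), and only then recovers \theorem{comtet_thm_F} as a specialization $\Lambda_n(\id,\id^s\,|\,\id,\id)$ involving the auxiliary polynomials $\widehat{I}_{j,n}$, you bypass all of this by exploiting the factorisation $f(x)=x\,g(x^s)$ from the outset. A single application of the classical Lagrange formula together with the expansion of $g^{-(sm+1)}$ via the potential polynomials $\widehat{P}_{m,-(sm+1)}$ from \eqref{potential_polynomial_1} then finishes the job. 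Your route is essentially the shortest path to the result and uses only material from Section~3; the paper's route is longer but illustrates how the theorem sits inside the general conversion framework of Section~7. Both the sparsity argument ($g(y^s)^{-n}\in\const[[y^s]]$ forces $n\equiv 1\pmod s$) and the factorial bookkeeping $(sm+j)!/(sm+1)!=(j-1)!\binom{sm+j}{j-1}$ are handled cleanly.
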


\label{A special inverse relation} 

This statement is Mihoubi's \cite{miho2010} slightly modified (symmetrized) version of a theorem established by Comtet \cite[Theorem F, p.\,151]{comt1974}. The possibly first proof to be found in the literature results from specializing a pair of inverse polynomials investigated by Birmajer, Gil and Weiner \cite[Theorem~4.6, Example~4.9]{bigw2012}. In the following, too, we will regain Comtet's theorem as a corollary of a much more general statement. However, to achieve this, we keep following the idea (outlined in Section 7.1) of switching between certain representations of a function and its inverse using generalized Lagrange inversion polynomials. The one from \theorem{comtet_thm_F}, $\Lambda_n$, is the conversion polynomial of $f$ (characterized by $c_0,c_1,c_2,\ldots$) w.r.t. $\inv{f}$ (characterized by $d_0,d_1,d_2,\ldots$), and \emph{vice versa}, that is, we obviously have $\Lambda_{n}\comp \Lambda_{\num}=X_n$.
\label{Comtet's Theorem F}

In order to obtain a more comprehensive class of inversion polynomials, we will modify the assumptions made in \theorem{comtet_thm_F} as follows: Suppose $f$ is any invertible function and denote by $f_n$ and $\inv{f}_n$, $n\geq 1$, the respective Taylor coefficients of $f$ and $\inv{f}$. Both functions are now to be given the new form
\begin{equation}\label{rewrite_functions}
	\text{(i)~~}f(x)=\sum_{n\geq 0}\frac{c_n}{n!}a(x)\varphi(x)^n\quad\text{~and~}\quad\text{(ii)~~}\inv{f}(x)=\sum_{n\geq 0}\frac{d_n}{n!}b(x)\psi(x)^n,
\end{equation}
where $a,b$ and $\varphi,\psi$ are assumed to be fixed functions (with suitable properties). We will see that the required conversions can be achieved by merely adjusting the sequences $c_0,c_1,c_2,\ldots$ and $d_0,d_1,d_2,\ldots$. In a final step, Lagrange inversion polynomials $U_n,V_n$ with property \eqref{compositional_inversion} will be constructed satisfying $d_n=U_n(c_0,\ldots,c_n)$ and $c_n=V_n(d_0,\ldots,d_n)$.

First we have to fix the conditions that guarantee the existence of the above representations (7.3). For this purpose we define the functions
\[
	c(x)=\sum_{n\geq 0}c_n\frac{x^n}{n!}\qquad\text{and}\qquad d(x)=\sum_{n\geq 0}d_n\frac{x^n}{n!}
\]
so that the series \eqref{rewrite_functions} can equivalently be rewritten as function terms:
\begin{equation}\label{function_terms}
	\text{(i)~~}f=a\cdot(c\comp\varphi)\quad\text{~and~}\quad\text{(ii)~~}\inv{f}=b\cdot(d\comp\psi).
\end{equation}
It is at once clear that $f,\varphi,\psi$ must be invertible functions (what will be tacidly assumed for the remainder of this section). Furthermore, we ge\-nerally suppose that both $c$ and $f$ are neither part of $a$ nor of $\varphi$, and that the corresponding equally holds for $d$ and $\inv{f}$ with respect to $b$ and $\psi$. As usual, the Taylor coefficients of $a$ and $b$ are denoted by $a_0,a_1,a_2,\ldots$ and $b_0,b_1,b_2,\ldots$, respectively.

Let us first turn to the equation (i) in \eqref{function_terms}. 

\begin{prop}\label{conv_c_to_f}
	Suppose that equation \emph{(\ref{function_terms},\,i)} holds. Then, for all $n\geq 0:$ $f_n=\Gamma_n(a,\varphi)(c_0,\ldots,c_n)$, where
\[
	\Gamma_n(a,\varphi):=\sum_{k=0}^n\bigg(\sum_{j=k}^n\binom{n}{j}a_{n-j}B_{j,k}^{\varphi}(0)\bigg)X_k.
\]
\end{prop}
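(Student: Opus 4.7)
The approach is direct: apply the higher-order Leibniz rule to the product $f = a \cdot (c \comp \varphi)$, then the Fa\`a di Bruno formula \eqref{fdb_formula} to the composite factor $c \comp \varphi$, and finally swap the order of the resulting double sum so that the coefficient of each $c_k$ is exposed as a polynomial expression in the Taylor data of $a$ and $\varphi$ alone.

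In detail, I would first write
\[
f_n \;=\; D^n(f)(0) \;=\; \sum_{j=0}^n \binom{n}{j}\, a_{n-j}\, D^j(c \comp \varphi)(0)
\]
by Leibniz. Since $\varphi \in \invfuncs \subseteq \funcs_0$, we have $\varphi(0) = 0$, so the 0-case of the FdB formula \eqref{fdb_formula} expands the inner derivative as
\[
D^j(c \comp \varphi)(0) \;=\; \sum_{k=0}^j D^k(c)(0)\, B_{j,k}^{\varphi}(0) \;=\; \sum_{k=0}^j c_k\, B_{j,k}^{\varphi}(0).
\]
Substituting and interchanging the two summations (legitimate since the sums are finite, and $B_{j,k}^{\varphi}(0) = 0$ for $j < k$ by triangularity) groups the constants $c_0,\ldots,c_n$ with the coefficients
\[
\sum_{j=k}^n \binom{n}{j}\, a_{n-j}\, B_{j,k}^{\varphi}(0),
\]
which is exactly the coefficient of $X_k$ in $\Gamma_n(a,\varphi)$. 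Hence $f_n = \Gamma_n(a,\varphi)(c_0,\ldots,c_n)$.

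No genuine obstacle arises; the statement is a bookkeeping identity that recasts the Leibniz--FdB expansion of $a \cdot (c \comp \varphi)$ as a linear form in the characterizing sequence $c_0, c_1, \ldots, c_n$, with coefficients depending only on the Taylor coefficients of $a$ and the iterated Bell evaluations $B_{j,k}^{\varphi}(0)$. The only point that requires a word of attention is the verification that $\varphi(0) = 0$ so that the 0-case of \eqref{fdb_formula} applies; this is immediate from $\varphi \in \invfuncs$.
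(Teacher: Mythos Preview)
Your proof is correct and follows essentially the same route as the paper's: Leibniz on the product, Fa\`a di Bruno on the composite factor, then interchange of summation. The paper merely phrases the same computation through its $\Omega_n(\cdot\,|\,c)$ formalism (so that \eqref{omega_product} and \eqref{omega_composition_0case} play the roles of the Leibniz and FdB steps), but the underlying argument is identical.
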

\begin{proof}
According to \definition{def_omega} the conversion polynomial $\Gamma_n(a,\varphi)$ we are looking for is $\Omega_n(f\,|\,c)$. Therefore, we obtain by \eq{omega_product}
\begin{equation*}
	\Gamma_n(a,\varphi)=\Omega_n(a\cdot(c\comp\varphi)\,|\,c)=\sum_{j=0}^n\binom{n}{j}\Omega_{n-j}(a\,|\,c)\Omega_j(c\comp\varphi\,|\,c).\tag{*}
\end{equation*}
Since $c$ does not appear in either $a$ or $\varphi$, we have $\Omega_{n-j}(a\,|\,c)=a_{n-j}$ and observing \eqref{omega_composition_0case} and \eqref{omega_phi}
\begin{equation*}
	\Omega_j(c\comp\varphi\,|\,c)=\sum_{k=0}^j\Omega_k(c\,|\,c)(B_{j,k}\comp\Omega_{\num}(\varphi\,|\,c))=\sum_{k=0}^j B_{j,k}^{\varphi}(0)X_k.
\end{equation*}
Substitution into (*) and rearranging the double series in $\Gamma_n(a,\varphi)$ yields the assertion.
\end{proof}

\begin{rem}
Note that the requirements for \proposition{conv_c_to_f} can be weakend to $f\in\funcs$ and $\varphi\in\funcs_0$. Under these assumptions, the special case $a=1$ is of interest here. It yields 
\[
	\sum_{n\geq 0}c_n\frac{\varphi(x)^n}{n!}=\sum_{n\geq 0}\bigg(\sum_{k=0}^n B_{n,k}^{\varphi}(0)c_k\bigg)\frac{x^n}{n!},
\]
which, by further specializing $c_k=t^k$, leads to the expansion of $e^{t\varphi(x)}$ used in Section 6 (\proposition{rep_binseq}); see also \cite[Proposition 7.3]{schr2015}.
\end{rem}

Applying \remark{invertible_product} to (\ref{function_terms},\,i) we see that $f$ is invertible in just the following two cases, both of which are covered by the linear form $\Gamma_n(a,\varphi)$: 
\begin{enumerate}[(1)]
	\item \quad$a\in\funcs_1$ and $c\comp\varphi\in\invfuncs$,
	\item \quad$a\in\invfuncs$ and $c\comp\varphi\in\funcs_1$.
\end{enumerate}

The inverse transformations that compute the $c_n$'s from the $f_n$'s also turn out to be linear forms. There are two case-specific types that differ from one another.
\begin{prop}\label{conv_f_to_c_case1}
Suppose that \emph{(\ref{function_terms},\,i)} and $a\in\funcs_1$. Then, for every $n\geq 0:$ \mbox{$c_n=\inv{\Gamma}_n^{(1)}(a,\varphi)(0,f_1,\ldots,f_n)$}, where
\[
	\inv{\Gamma}_n^{(1)}(a,\varphi):=\sum_{k=0}^n\bigg(\sum_{j=k}^n\binom{j}{k}\widehat{R}_{j-k}(a_0,\ldots,a_{j-k})A_{n,j}^{\varphi}(0)\bigg)X_k.
\]
\end{prop}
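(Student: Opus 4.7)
The approach is to solve equation \eqref{function_terms}\,(i) for $c$ in closed form and then read off its Taylor coefficients via Fa\`{a} di Bruno. Since $a\in\funcs_1$ is a unit and $\varphi\in\invfuncs$, I can rewrite \eqref{function_terms}\,(i) as
\[
	c = (f\cdot a^{-1})\comp\inv{\varphi},
\]
which reduces the task to computing the Taylor expansion of this explicit composition.

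The next step is to apply the Fa\`{a} di Bruno formula \eqref{fdb_formula} to the outer composition with $h:=f\cdot a^{-1}$ and to evaluate at $0$, using $\inv{\varphi}(0)=0$ to collapse the outer composition factors, so that
\[
	c_n \;=\; \sum_{k=0}^{n} D^{k}(h)(0)\cdot B_{n,k}^{\inv{\varphi}}(0).
\]
Identity \eqref{connection_A_B}, together with $\varphi(0)=0$, gives $B_{n,k}^{\inv{\varphi}}(0)=A_{n,k}^{\varphi}(0)$, producing the $A$-polynomial factor demanded by the statement. The inner coefficient $D^{k}(h)(0)$ is then expanded by Leibniz, with \proposition{reciprocal_function} used to write the Taylor coefficients of $a^{-1}$ as $\widehat{R}_{j}(a_0,\ldots,a_j)$; this yields
\[
	D^{k}(h)(0) \;=\; \sum_{i=0}^{k}\binom{k}{i}\, f_i\, \widehat{R}_{k-i}(a_0,\ldots,a_{k-i}).
\]
Substituting this into the Fa\`{a} di Bruno expansion and swapping the order of summation (promoting $i$ to the outer index and renaming the former $k$ to $j$) produces precisely the linear form $\inv{\Gamma}_n^{(1)}(a,\varphi)$ evaluated at $(f_0,f_1,\ldots,f_n)$.

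The last point to verify is that substituting $X_0=0$ rather than $X_0=f_0$ is legitimate: since $c\comp\varphi\in\invfuncs\subseteq\funcs_0$ and $\varphi(0)=0$, one has $c_0=c(\varphi(0))=(c\comp\varphi)(0)=0$, whence $f_0=a_0c_0=0$, and the $k=0$ term contributes nothing either way. I expect no real conceptual obstacle here; the whole argument is bookkeeping driven by the triple interplay of the Leibniz rule, the chain rule, and the conversion $B^{\inv{\varphi}}(0)=A^{\varphi}(0)$, and the only care needed is the index reshuffle in the double sum.
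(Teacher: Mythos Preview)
Your proof is correct and follows essentially the same route as the paper's: both invert the defining equation to $c=(a^{-1}\cdot f)\comp\inv{\varphi}$, apply Fa\`{a} di Bruno (the paper via its $\Omega$-operator rules \eqref{omega_composition_0case} and \eqref{omega_product}, you directly via \eqref{fdb_formula} and Leibniz), use \proposition{reciprocal_function} for the coefficients of $a^{-1}$, convert $B_{n,k}^{\inv{\varphi}}(0)$ to $A_{n,k}^{\varphi}(0)$ via \eqref{connection_A_B}, and rearrange the double sum. The only difference is notational packaging.
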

\begin{proof}
In order to obtain $c_n$, we transform (\ref{function_terms},\,i) into
\begin{equation}\label{transformed_equation}
	c=(a^{-1}\cdot f)\comp\inv{\varphi}.
\end{equation}
From this we get the inverse form
{\allowdisplaybreaks
\begin{align*}
	\inv{\Gamma}_n^{(1)}(&a,\varphi)=\Omega_n(c\,|\,f)=\Omega_n((a^{-1}\cdot f)\comp\inv{\varphi}\,|\,f)\\
	&=\sum_{j=0}^n\Omega_j(a^{-1}\cdot f\,|\,f)(B_{n,j}\comp\Omega_{\num}(\inv{\varphi}\,|\,f))&\hspace*{-0.7em}\text{(by \eqref{omega_composition_0case})}\\
	&=\sum_{j=0}^n\sum_{k=0}^j\binom{j}{k}\Omega_{j-k}(a^{-1}\,|\,f)\Omega_k(f\,|\,f)B_{n,j}^{\inv{\varphi}}(0)&\hspace*{-0.7em}\text{(by \eqref{omega_product})}\\
	&\mspace{-6mu}=\sum_{j=0}^n\sum_{k=0}^j\binom{j}{k}\widehat{R}_{j-k}(a_0,\ldots,a_{j-k})A_{n,j}^{\varphi}(0)X_k.&\hspace*{-0.7em}\text{(by Prop.\,\ref{reciprocal_function}, \eqref{connection_A_B})}
\end{align*}
}
The assertion now follows by rearranging the double series as in the proof of \proposition{conv_c_to_f}.
\end{proof}

\begin{prop}\label{conv_f_to_c_case2}
	Suppose that \emph{(\ref{function_terms},\,i)} and $a\in\invfuncs$. Then, for every $n\geq 0:$ \mbox{$c_n=\inv{\Gamma}_n^{(2)}(a,\varphi)(f_1,\ldots,f_{n+1})$}, where
\[
	\inv{\Gamma}_n^{(2)}(a,\varphi):=\sum_{k=0}^n\frac{1}{k+1}\bigg(\sum_{j=k}^n\binom{j}{k}\widehat{R}_{j-k}(\tfrac{a_1}{1},\ldots,\tfrac{a_{j-k+1}}{j-k+1})A_{n,j}^{\varphi}(0)\bigg)X_{k+1}.
\]	
\end{prop}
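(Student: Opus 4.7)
\textbf{Proof plan for Proposition~\ref{conv_f_to_c_case2}.} The difficulty compared with Proposition~\ref{conv_f_to_c_case1} is that now $a(0)=0$, so the reciprocal $a^{-1}$ does not exist in $\funcs$. My plan is to bypass this by factoring out $\id$: since $a\in\invfuncs$, Remark~\ref{invertible_product} (specifically \eqref{invertible_function}) gives $g:=a/\id\in\funcs_1$, and likewise $f/\id\in\funcs_1$ since $f\in\invfuncs$ (using again that the hypothesis $c\comp\varphi\in\funcs_1$ together with $a\in\invfuncs$ makes $f=a\cdot(c\comp\varphi)$ invertible). Rewriting $f=a\cdot(c\comp\varphi)=\id\cdot g\cdot(c\comp\varphi)$ and dividing by $\id$ gives $c\comp\varphi=(f/\id)\cdot g^{-1}$, whence
\[
	c=\bigl((f/\id)\cdot g^{-1}\bigr)\comp\inv{\varphi}.
\]

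The conversion polynomial is $\inv{\Gamma}_n^{(2)}(a,\varphi)=\Omega_n(c\,|\,f)$. Since $\inv{\varphi}(0)=0$, the 0-case composition rule \eqref{omega_composition_0case} applies, giving
\[
	\Omega_n(c\,|\,f)=\sum_{j=0}^{n}\Omega_j\bigl((f/\id)\cdot g^{-1}\,\big|\,f\bigr)\cdot\bigl(B_{n,j}\comp\Omega_{\num}(\inv{\varphi}\,|\,f)\bigr).
\]
Because $f$ does not occur in $\varphi$ (nor in $\inv{\varphi}$), $\Omega_r(\inv{\varphi}\,|\,f)=D^r(\inv{\varphi})(0)$, so $B_{n,j}\comp\Omega_{\num}(\inv{\varphi}\,|\,f)=B_{n,j}^{\inv{\varphi}}(0)=A_{n,j}^{\varphi}(0)$ by \eqref{connection_A_B}.

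Next I apply the Leibniz rule \eqref{omega_product} to the inner $\Omega_j$-term:
\[
	\Omega_j\bigl((f/\id)\cdot g^{-1}\,\big|\,f\bigr)=\sum_{k=0}^{j}\binom{j}{k}\Omega_k(f/\id\,|\,f)\,\Omega_{j-k}(g^{-1}\,|\,f).
\]
The Taylor coefficients of $f/\id$ at order $r$ are $f_{r+1}/(r+1)$, hence $\Omega_k(f/\id\,|\,f)=X_{k+1}/(k+1)$. Since $f$ does not occur in $g=a/\id$, Proposition~\ref{reciprocal_function} yields
\[
	\Omega_{j-k}(g^{-1}\,|\,f)=\widehat{R}_{j-k}\bigl(\tfrac{a_1}{1},\ldots,\tfrac{a_{j-k+1}}{j-k+1}\bigr),
\]
using that $g$ has Taylor coefficients $a_{r+1}/(r+1)$.

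Substituting everything back and rearranging the resulting double sum by interchanging the order of summation (swapping $\sum_{j=0}^{n}\sum_{k=0}^{j}$ for $\sum_{k=0}^{n}\sum_{j=k}^{n}$) gives the asserted closed form for $\inv{\Gamma}_n^{(2)}(a,\varphi)$, with the factor $1/(k+1)$ and the indeterminate $X_{k+1}$ arising from $\Omega_k(f/\id\,|\,f)$. No real obstacle is expected; the only point requiring care is the replacement of the nonexistent $a^{-1}$ by $(a/\id)^{-1}$, which also naturally explains why the output polynomial involves $X_{k+1}$ (rather than $X_k$) and the shifted Taylor coefficients $a_{j-k+1}/(j-k+1)$.
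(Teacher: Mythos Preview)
Your proposal is correct and essentially identical to the paper's own proof: the paper likewise factors $a=\id\cdot h$ with $h\in\funcs_1$ (your $g$), rewrites $c=(h^{-1}\cdot(f/\id))\comp\inv{\varphi}$, applies \eqref{omega_composition_0case}, the Leibniz rule \eqref{omega_product}, Proposition~\ref{reciprocal_function}, and the identification $\Omega_k(f/\id\,|\,f)=\tfrac{1}{k+1}X_{k+1}$, then rearranges the double sum. There is no substantive difference.
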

\begin{proof}
Again we use \eq{transformed_equation}, but now with $a_0=0$ and $a_1\neq 0$. Defining $h\in\funcs_1$ by $h(x):=\sum_{n\geq 0}\frac{a_{n+1}}{n+1}\frac{x^n}{n!}$ we obtain $a(x)=x\cdot h(x)$, and hence $a^{-1}\cdot f=h^{-1}\cdot(f/\id)$. As in the proof of \proposition{conv_f_to_c_case1}, it follows
\begin{align*}
	\inv{\Gamma}_n^{(2)}(a,\varphi)&=\Omega_n(c\,|\,f)=\Omega_n((h^{-1}\cdot\tfrac{f}{\id})\comp\inv{\varphi}\,|\,f)\\
	&=\sum_{j=0}^n\Omega_j(h^{-1}\cdot\tfrac{f}{\id}\,|\,f)A_{n,j}^{\varphi}(0).
\end{align*}
Now we have by \eqref{omega_product} and \proposition{reciprocal_function}
\[
	\Omega_j(h^{-1}\cdot\tfrac{f}{\id}\,|\,f)=\sum_{k=0}^j\binom{j}{k}\widehat{R}_{j-k}(\tfrac{a_1}{1},\ldots,\tfrac{a_{j-k+1}}{j-k+1})\Omega_k(\tfrac{f}{\id}\,|\,f).
\]
\sloppy
Observing $\Omega_k(\tfrac{f}{\id}\,|\,f)=\tfrac{1}{k+1}X_{k+1}$ and rearranging the double series completes the proof.
\end{proof}

\sloppy
For the sake of clarity we will distinguish the cases (1) $a\in\funcs_1$ and (2)~$a\in\invfuncs$ also for $\Gamma_n(a,\varphi)$ in Pro\-po\-sition~\ref{conv_c_to_f}. We  set $\Gamma_n^{(1)}(a,\varphi):=\Gamma_n(a,\varphi)(0,X_1,\ldots,X_n)$, since $a_0\neq 0$ requires $c_0=0$ in (\ref{function_terms},\,i). Other\-wise, we use $\Gamma_n^{(2)}(a,\varphi)$ to denote the result obtained by putting $a_0=0$ in $\Gamma_n(a,\varphi)$. 

\fussy
\vspace*{2ex}
Now we get from Propositions \ref{conv_c_to_f}, \ref{conv_f_to_c_case1}, \ref{conv_f_to_c_case2} the

\begin{cor}\label{inverse_conversions}
For every $n\geq 1$ we have 
\[
	\Gamma_n^{(i)}(a,\varphi)\comp \inv{\Gamma}_{\num}^{(i)}(a,\varphi)=\inv{\Gamma}_n^{(i)}(a,\varphi)\comp\Gamma_{\num}^{(i)}(a,\varphi)=X_n\quad(i=1,2).
\]	
\end{cor}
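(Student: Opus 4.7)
The strategy is to deduce each polynomial identity by first establishing the corresponding functional identity at arbitrary admissible Taylor sequences, then lifting back via the identity principle of \remark{identical_polynomials}. This is precisely the pattern used to prove \corollary{orthocompanions_A_B}: the polynomial claim is converted to the tautology ``conversion followed by inverse conversion is the identity'' on functions, and then lifted to polynomials.

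To prove $\inv{\Gamma}_n^{(i)}(a,\varphi)\comp\Gamma_{\num}^{(i)}(a,\varphi)=X_n$, I would fix an admissible pair $(a,\varphi)$ (that is, $\varphi\in\invfuncs$, and $a\in\funcs_1$ in case $i=1$ or $a\in\invfuncs$ in case $i=2$) and take $c\in\funcs$ subject only to the open conditions that make $f:=a\cdot(c\comp\varphi)$ lie in $\invfuncs$ (namely $c_0=0,\,c_1\neq0$ in case~1, and $c_0\neq0$ in case~2; cf.~\remark{invertible_product}). Then \proposition{conv_c_to_f} gives
\[
	f_k=\Gamma_k^{(i)}(a,\varphi)(c_0,c_1,c_2,\ldots)\qquad\text{for all~}k\geq 0,
\]
while \proposition{conv_f_to_c_case1} (in case~1) or \proposition{conv_f_to_c_case2} (in case~2) gives
\[
	c_n=\inv{\Gamma}_n^{(i)}(a,\varphi)(f_1,f_2,\ldots).
\]
Substituting the first family into the second yields
\[
	c_n=\bigl(\inv{\Gamma}_n^{(i)}(a,\varphi)\comp\Gamma_{\num}^{(i)}(a,\varphi)\bigr)(c_0,c_1,c_2,\ldots),
\]
an identity valid for every admissible sequence. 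Since this lives in finitely many indeterminates and the set of admissible sequences is Zariski-dense over the infinite domain $\const$, \remark{identical_polynomials} upgrades it to the polynomial identity $\inv{\Gamma}_n^{(i)}\comp\Gamma_{\num}^{(i)}=X_n$. The companion identity $\Gamma_n^{(i)}\comp\inv{\Gamma}_{\num}^{(i)}=X_n$ is obtained by the same argument with the roles of $c$ and $f$ exchanged: start with an arbitrary admissible $f$, recover $c$ via $\inv{\Gamma}$, and re-express $f_n$ via $\Gamma$.

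The main obstacle is purely notational: correctly aligning variable slots across the two cases. In case $i=2$ there is a one-step index shift (since $a_0=0$ removes $c_n$ from $f_n$, so $c_n$ first appears in $f_{n+1}$), hence $\Gamma_k^{(2)}$ is a polynomial in $X_0,\ldots,X_{k-1}$ while $\inv{\Gamma}_n^{(2)}$ is a polynomial in $X_1,\ldots,X_{n+1}$. In case $i=1$ one must respect the $X_0=0$ convention shared by $\Gamma_n^{(1)}$ and the substitution rule in \proposition{conv_f_to_c_case1}, which is consistent because $f_0=a_0c_0=0$ when $c_0=0$. With these conventions fixed, the argument above goes through uniformly.
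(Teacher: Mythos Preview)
Your proposal is correct and follows the same route as the paper, which simply cites Propositions~\ref{conv_c_to_f}, \ref{conv_f_to_c_case1}, \ref{conv_f_to_c_case2} without further argument: those propositions exhibit $\Gamma_n^{(i)}$ and $\inv{\Gamma}_n^{(i)}$ as mutually inverse conversions between the sequences $(c_n)$ and $(f_n)$, and you have correctly spelled out the passage from functional to polynomial identity via \remark{identical_polynomials} together with the index-alignment bookkeeping in each case.
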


\sloppy
\begin{cnv}
Similarly as $\Gamma_n(a,\varphi)$ covers both cases (1)~$a\in\funcs_1$ and \mbox{(2)~$a\in\invfuncs$}, we formally write for brevity $\inv{\Gamma}_n(a,\varphi)$, which is to mean either $\inv{\Gamma}_n^{(1)}(a,\varphi)$ or $\inv{\Gamma}_n^{(2)}(a,\varphi)$ depending on which of the two cases actually occurs.
\end{cnv}

\fussy
We are now in a position to construct the desired general type of Lagrange inversion polynomial that fits the situation given in \eq{rewrite_functions}.

\begin{dfn}[Generalized Lagrange inversion polynomial]\label{gen_lagrange_invpoly}
~\newline Given any $\varphi,\psi\in\invfuncs$ and $a,b\in\funcs_1\cup\invfuncs$, we define
\[
	\Lambda_n(a,\varphi\,|\,b,\psi):=\inv{\Gamma}_n(b,\psi)\comp A_{\num,1}\comp\Gamma_{\num}(a,\varphi).
\]
\end{dfn}

\begin{thm}\label{generalized_lagrange_inversion}
Suppose $f,\varphi,\psi\in\invfuncs$ and $a,b\in\funcs_1\cup\invfuncs$. Then, there are uniquely determined sequences of constants $(c_n)$ and $(d_n)$ such that the equations \eqref{rewrite_functions} hold. Moreover, we have $d_n=\Lambda_n(a,\varphi\,|\,b,\psi)(c_0,\ldots,c_n)$ and $c_n=\Lambda_n(b,\psi\,|\,a,\varphi)(d_0,\ldots,d_n)$, or equivalently
\[
	\Lambda_n(a,\varphi\,|\,b,\psi)\comp \Lambda_{\num}(b,\psi\,|\,a,\varphi)=X_n\qquad\text{for every $n\geq 1$}.
\] 
\end{thm}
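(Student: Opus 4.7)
My plan is to factor $\Lambda_n(a,\varphi\,|\,b,\psi)$ through three successive polynomial conversions between four sequences of constants: the source sequence $(c_n)$, the Taylor coefficients $(f_n)$ of $f$, the Taylor coefficients $(\inv{f}_n)$ of $\inv{f}$, and the target sequence $(d_n)$. Existence and uniqueness of $(c_n)$ (resp.\ $(d_n)$) will be immediate: rewriting $f=a\cdot(c\comp\varphi)$ as $c=(a^{-1}\cdot f)\comp\inv{\varphi}$ determines $c$ unambiguously in either of the two cases ($a\in\funcs_1$ with $c_0=0$, or $a\in\invfuncs$ with $c_0\neq 0$) that are required by Remark~\ref{invertible_product} for $f$ to be invertible.

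The chain of conversions I would assemble is the following. First, Proposition~\ref{conv_c_to_f} gives $f_n=\Gamma_n(a,\varphi)(c_0,\ldots,c_n)$. Second, the classical Lagrange formula recorded in Remark~\ref{inverse_function} provides $\inv{f}_n = A_{n,1}(f_1,\ldots,f_n)$. Third, applying Proposition~\ref{conv_f_to_c_case1} or \ref{conv_f_to_c_case2} to the $(b,\psi,\inv{f})$-side recovers $d_n$ from $(\inv{f}_n)$ via the case-appropriate $\inv{\Gamma}_n(b,\psi)$. Composing these three polynomial substitutions using the associative law \eqref{composition_associative_law} and unfolding Definition~\ref{gen_lagrange_invpoly} yields
\[
d_n \;=\; \bigl(\inv{\Gamma}_n(b,\psi)\comp A_{\num,1}\comp\Gamma_{\num}(a,\varphi)\bigr)(c_0,c_1,\ldots) \;=\; \Lambda_n(a,\varphi\,|\,b,\psi)(c_0,\ldots,c_n),
\]
which is the first assertion. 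The entirely symmetric argument, with the roles of $(a,\varphi,c_n)$ and $(b,\psi,d_n)$ interchanged, delivers the dual formula $c_n = \Lambda_n(b,\psi\,|\,a,\varphi)(d_0,\ldots,d_n)$.

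Substituting the formula for $c_n$ back into the formula for $d_n$ shows that
\[
\bigl(\Lambda_n(a,\varphi\,|\,b,\psi)\comp\Lambda_{\num}(b,\psi\,|\,a,\varphi)\bigr)(d_0,d_1,\ldots) \;=\; d_n
\]
for \emph{every} admissible sequence $(d_n)$. Since the entries $d_1,d_2,\ldots$ range freely over $\const$ (and only $d_0$ is mildly restricted by the case for $b$), an identity argument in the spirit of Remark~\ref{identical_polynomials} upgrades this numerical equality to the polynomial identity $\Lambda_n(a,\varphi\,|\,b,\psi)\comp\Lambda_{\num}(b,\psi\,|\,a,\varphi)=X_n$, completing the proof.

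The hard part will be bookkeeping the four case-combinations arising from the independent choices $a,b\in\funcs_1\cup\invfuncs$, since $\Gamma^{(1)}$ and $\Gamma^{(2)}$ (and their inverses) carry slightly different index conventions ($X_0$ being formally set to $0$ versus being absorbed into a shift that starts the expansion at $X_{k+1}$). Corollary~\ref{inverse_conversions} already certifies the consistency of $\Gamma^{(i)}$ with $\inv{\Gamma}^{(i)}$ within each case; what remains is to check that, under composition with $A_{\num,1}$, the $X_0$-positions of the various $\Gamma,\inv{\Gamma}$ align properly and that the mild constraint on $d_0$ (respectively $c_0$) does not obstruct the final polynomial-identity argument.
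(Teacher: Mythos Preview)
Your proposal is correct and follows essentially the same route as the paper: factor through $\Gamma_\num(a,\varphi)$, then $A_{\num,1}$, then $\inv{\Gamma}_n(b,\psi)$, appeal to Propositions~\ref{conv_c_to_f}--\ref{conv_f_to_c_case2} and Definition~\ref{gen_lagrange_invpoly}, and note that the four case-combinations for $a,b\in\funcs_1\cup\invfuncs$ only affect the bookkeeping of the $X_0$-slot. The paper likewise treats one representative case in detail (namely $a\in\invfuncs$, $b\in\funcs_1$) and leaves the remaining three implicit; your closing identity argument via Remark~\ref{identical_polynomials} is the natural way to pass from the numerical equalities to the polynomial identity stated as ``equivalently'' in the theorem.
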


\begin{proof}
From $f$ we obtain $(c_n)$ by applying $\inv{\Gamma}_n(a,\varphi)$ according to Propositions \ref{conv_f_to_c_case1} and \ref{conv_f_to_c_case2}; and in the same unique way one gets $(d_n)$ by applying $\inv{\Gamma}_n(b,\psi)$ to the Taylor coefficients of $\inv{f}$. 

\sloppy
Upon closer inspection, it becomes clear that in general we have $\Lambda_n(a,\varphi\,|\,b,\psi)\in\const[X_0^{-1},X_1^{-1},X_0,X_1,\ldots,X_{n+1}]$. Depending on the choice of $a,b$ the (Laurent) polynomial $\Lambda_n(a,\varphi\,|\,b,\psi)$ can take four diffe\-rent forms. It may suffice here to consider, for example, the case $a\in\invfuncs$ and $b\in\funcs_1$. Observing $f_0=\inv{f}_0=0$ we obtain
{\allowdisplaybreaks
\begin{align*}
	d_n&=\inv{\Gamma}_n^{(1)}(b,\psi)(0,\inv{f}_1,\ldots,\inv{f}_n)&\text{\hspace*{-2em}(Proposition~\ref{conv_f_to_c_case1})}\\
	&=\inv{\Gamma}_n^{(1)}(b,\psi)(A_{0,1}(0),A_{1,1}(f_1),\ldots,A_{n,1}(f_1,\ldots,f_n))&\text{\hspace*{-2em}(Remark~\ref{inverse_function})}\\
	&=\inv{\Gamma}_n^{(1)}(b,\psi)\comp A_{\num,1}\comp f_{\num}\\
	&=\inv{\Gamma}_n^{(1)}(b,\psi)\comp A_{\num,1}\comp \Gamma_{\num}^{(2)}(a,\varphi)(c_0,\ldots,c_{\num-1},0)&\text{\hspace*{-2em}(Proposition~\ref{conv_c_to_f})}\\
&=\Lambda_n(a,\varphi\,|\,b,\psi)(c_0,\ldots,c_{n-1},0).&\text{\hspace*{-2em}(Definition~\ref{gen_lagrange_invpoly})}
\intertext{%
\sloppy
In the last step, the indeterminate $X_j$ in $\inv{\Gamma}_n^{(1)}(b,\psi)(0,A_{1,1},\ldots,A_{n,1})$ is replaced by $\Gamma_{j}^{(2)}(a,\varphi)(c_0,\ldots,c_{j-1},0)$ for each $j=1,\ldots,n$.\,---\,
In the reverse direction one obtains in a similar way
}
c_n&=\inv{\Gamma}_n^{(2)}(a,\varphi)(f_1,\ldots,f_{n+1})&\text{\hspace*{-2em}(Proposition~\ref{conv_f_to_c_case2})}\\
	&=\inv{\Gamma}_n^{(2)}(a,\varphi)\comp A_{\num,1}\comp \inv{f}_{\num}&\\
	&=\inv{\Gamma}_n^{(2)}(a,\varphi)\comp A_{\num,1}\comp \Gamma_{\num}^{(1)}(b,\psi)(0,d_1,\ldots,d_{\num})&\\
	&=\Lambda_n(b,\psi\,|\,a,\varphi)(0,d_1,\ldots,d_n).&\qedhere
\end{align*}
}
\end{proof}

\fussy
\label{Table: Generalized Lagrange inversion polynomials}
In the general case, the inversion polynomials $\Lambda_n(a,\varphi\,|\,b,\psi)$ turn out to be rather complicated expressions even for small values of $n$. We therefore confine ourselves here to the study of two interesting special cases: $a=b=1$ and $a=b=\id$.

\begin{thm}\label{lagrangepoly_special_1}
Let $\varphi,\psi$ be invertible functions. Then, for every $n\geq 1$
\[
	\Lambda_n(1,\varphi\,|\,1,\psi)=\sum_{k=1}^n\bigg(A_{n,k}^{\psi}(0)\sum_{j=1}^k A_{j,1}^{\varphi}(0)A_{k,j}\bigg).
\]
\end{thm}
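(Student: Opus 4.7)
The plan is to unfold Definition \ref{gen_lagrange_invpoly} specialised to $a=b=1$ and then identify each ingredient with the help of Fa\`{a} di Bruno / Jabotinsky. Since the constant function $1$ lies in $\funcs_1$, case~(1) of Propositions \ref{conv_c_to_f} and \ref{conv_f_to_c_case1} applies. Its Taylor coefficients are $a_0=1$, $a_i=0$ for $i\ge 1$, so in \proposition{conv_c_to_f} only the term $j=n$ survives, and recalling $B_{n,0}^{\varphi}(0)=\kronecker{n}{0}$, one obtains for $n\ge 1$
\[
\Gamma_n^{(1)}(1,\varphi)=\sum_{k=1}^n B_{n,k}^{\varphi}(0)\,X_k.
\]
A parallel calculation handles $\inv{\Gamma}_n^{(1)}(1,\psi)$: the reciprocal polynomials satisfy $\widehat{R}_m(1,0,\ldots,0)=\kronecker{m}{0}$ (because $\widehat{R}_0=X_0^{-1}$ and $B_{m,k}$ has no constant term when $m\ge 1$), so only $j=k$ contributes in \proposition{conv_f_to_c_case1}, yielding $\inv{\Gamma}_n^{(1)}(1,\psi)=\sum_{k=1}^n A_{n,k}^{\psi}(0)\,X_k$ for $n\ge 1$ (the $X_0$-coefficient drops out since $A_{n,0}=\kronecker{n}{0}$).

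Plugging these into \definition{gen_lagrange_invpoly} and using the linearity of $\inv{\Gamma}_n^{(1)}(1,\psi)$ in its indeterminates reduces the task to
\[
\Lambda_n(1,\varphi\,|\,1,\psi)=\sum_{k=1}^n A_{n,k}^{\psi}(0)\bigl(A_{k,1}\comp\Gamma_{\num}^{(1)}(1,\varphi)\bigr).
\]
So the decisive step is the identification
\[
A_{k,1}\comp\Gamma_{\num}^{(1)}(1,\varphi)=\sum_{j=1}^k A_{j,1}^{\varphi}(0)\,A_{k,j}. \tag{$\ast$}
\]

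To prove $(\ast)$ I would invoke \remark{identical_polynomials}: fix any sequence $c_1,c_2,\ldots\in\const$ with $c_1\ne 0$ and let $c\in\invfuncs$ be the function with these Taylor coefficients. Then $\Gamma_l^{(1)}(1,\varphi)(c_1,\ldots,c_l)=D^l(c\comp\varphi)(0)$ is the $l$-th Taylor coefficient of $f:=c\comp\varphi\in\invfuncs$, so the left-hand side of $(\ast)$ evaluates at $(c_1,c_2,\ldots)$ to $A_{k,1}^{f}(0)=A_{k,1}^{c\,\comp\,\varphi}(0)$. Applying Jabotinsky's formula (\theorem{second_CR}\,(ii)) to $c\comp\varphi$ gives
\[
A_{k,1}^{c\,\comp\,\varphi}(0)=\sum_{j=1}^{k}A_{k,j}^{c}(0)\,A_{j,1}^{\varphi}(0)=\sum_{j=1}^k A_{j,1}^{\varphi}(0)\,A_{k,j}(c_1,\ldots,c_{k-j+1}),
\]
which is exactly the right-hand side of $(\ast)$ evaluated at $(c_1,c_2,\ldots)$. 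Since both sides are polynomials in $X_1,X_2,\ldots$ agreeing on a Zariski-dense set of substitutions, \remark{identical_polynomials} forces $(\ast)$ as a polynomial identity, and combining with the previous display finishes the proof.

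The main obstacle is essentially bookkeeping: correctly placing $a=b=1$ in case~(1) of Propositions \ref{conv_c_to_f} / \ref{conv_f_to_c_case1} and verifying the two small evaluations $\widehat{R}_m(1,0,\ldots,0)=\kronecker{m}{0}$ and $B_{n,0}^{\varphi}(0)=A_{n,0}^{\psi}(0)=\kronecker{n}{0}$ needed to collapse $\Gamma^{(1)}$ and $\inv{\Gamma}^{(1)}$ to their linear forms. Once that is clean, a single application of Jabotinsky plus the identity lemma does the rest.
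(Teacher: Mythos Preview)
Your proof is correct and follows essentially the same route as the paper: collapse $\inv{\Gamma}_n^{(1)}(1,\psi)$ to the linear form $\sum_k A_{n,k}^{\psi}(0)X_k$, insert $A_{\num,1}\comp\Gamma_{\num}(1,\varphi)$, and then identify $A_{k,1}^{c\,\comp\,\varphi}(0)$ via Jabotinsky's formula (\theorem{second_CR}\,(ii)) for a generic $c\in\invfuncs$, concluding the polynomial identity by \remark{identical_polynomials}. The only cosmetic difference is that the paper cites \theorem{generalized_lagrange_inversion} rather than unfolding \definition{gen_lagrange_invpoly} directly, and it does not bother to write out $\Gamma_n^{(1)}(1,\varphi)$ explicitly before evaluating at $(0,c_1,\ldots,c_n)$.
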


\begin{proof}
Since $b_0=1$ and $b_n=0$, $n\geq 1$, we have $\widehat{R}_{j-k}(b_0,b_1,\ldots,b_{j-k})=\kronecker{j}{k}$, and hence $\inv{\Gamma}_n^{(1)}(1,\psi)=\sum_{k=1}^n A_{n,k}^{\psi}(0)X_k$ by \proposition{conv_f_to_c_case1}. Thus it follows from \theorem{generalized_lagrange_inversion}
\begin{equation}\label{lagrangepoly_special_1_eq1}
	\Lambda_n(1,\varphi\,|\,1,\psi)=\sum_{k=1}^n A_{n,k}^{\psi}(0)A_{k,1}(\Gamma_1(1,\varphi),\ldots,\Gamma_k(1,\varphi)).
\end{equation}
Now consider an arbitrary $f\in\invfuncs$ of the form $f=c\comp\varphi$. Then $c_0=0$, $c_1\neq 0$ and $f_0=0$, $f_1\neq 0$, and by \proposition{conv_c_to_f}: $f_n=\Gamma_n(1,\varphi)(0,c_1,\ldots,c_n)$ for $n\geq 0$. Applying \eqref{lagrangepoly_special_1_eq1} to $(c_0,\ldots,c_n)$ thus yields
\begin{equation}\label{lagrangepoly_special_1_eq2}
	\Lambda_n(1,\varphi\,|\,1,\psi)(0,c_1,\ldots,c_n)=\sum_{k=1}^n A_{n,k}^{\psi}(0)A_{k,1}^{f}(0).
\end{equation}
Finally, we obtain by \theorem{second_CR}\,(ii) 
\[
	A_{k,1}^{f}(0)=\sum_{j=k}A_{j,1}^{\varphi}(0)A_{k,j}(c_1,\ldots,c_{k-j+1}).
\]
Substitution of the latter into \eqref{lagrangepoly_special_1_eq2} gives the assertion.
\end{proof}

\sloppy
\begin{rem}
$\Lambda_n(1,\varphi\,|\,1,\psi)$ solves the problem of inverting a function \mbox{$c\comp\varphi\in\invfuncs$} into a function of the prescribed form $d\comp\psi$ by computing the sequence $d_1,d_2,\ldots$ from $c_1,c_2,\ldots$. In the particular case $\varphi=\psi=\id$ this amounts to the classical Lagrange inversion. Indeed, observing $A_{n,k}^{\id}(0)=A_{n,k}(1,0,\ldots,0)=\kronecker{n}{k}$ we get from \theorem{lagrangepoly_special_1}: $\Lambda_n(1,\id\,|\,1,\id)=A_{n,1}$.
\end{rem}

\fussy
Our second case, $a=b=\id$, is a little more involved; its solution at least can be formulated more succintly by introducing the following special polynomial:
\begin{equation*}\label{def_special_invpoly}
	\widehat{I}_{n,k}:=\sum_{j=0}^n(-1)^j(k+j)_{j-1}X_0^{-(k+1+j)}B_{n,j}.
\end{equation*}
We shall also use the instance $I_{n,k}:=\widehat{I}_{n,k}(1,X_1,\ldots,X_n)$ (which, by the way, is FdB, since $I_{n,k}=\Phi_n(\eta_k)$ with $\eta_k(x):=((1+x)^{-(k+1)}-1)/(k+1)$).

\begin{thm}\label{lagrangepoly_special_2}
Let $\varphi,\psi$ be invertible functions. Then, for every $n\geq 0$
\[
	\Lambda_n(\id,\varphi\,|\,\id,\psi)=\sum_{k=0}^n\bigg(A_{n,k}^{\psi}(0)\sum_{j=0}^k B_{k,j}^{\varphi}(0)\,\widehat{I}_{j,k}\bigg).
\]
\end{thm}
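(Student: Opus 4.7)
The plan is to follow the same pattern as the proof of Theorem~\ref{lagrangepoly_special_1}: unpack $\Lambda_n(\id,\varphi\,|\,\id,\psi)$ via Definition~\ref{gen_lagrange_invpoly}, interpret the resulting polynomial through the inversion of a concrete function of the form $f=\id\cdot(c\comp\varphi)$ by means of Theorem~\ref{generalized_lagrange_inversion}, and finally conclude by the uniqueness argument from Remark~\ref{identical_polynomials}.

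First I would specialize the two building blocks to $a=b=\id$. Since $\id\in\invfuncs$ has Taylor coefficients $a_1=1$ and $a_j=0$ for $j\geq 2$, the evaluation $\widehat{R}_{j-k}(1,0,\ldots,0)=\kronecker{j-k}{0}$ collapses the inner sum in Proposition~\ref{conv_f_to_c_case2} to the single term $j=k$, giving
\[
	\inv{\Gamma}_n(\id,\psi)=\sum_{k=0}^n\frac{1}{k+1}A_{n,k}^{\psi}(0)\,X_{k+1}.
\]
An analogous collapse in Proposition~\ref{conv_c_to_f} yields $\Gamma_n(\id,\varphi)=n\sum_{k=0}^{n-1}B_{n-1,k}^{\varphi}(0)\,X_k$, so Definition~\ref{gen_lagrange_invpoly} reads
\[
	\Lambda_n(\id,\varphi\,|\,\id,\psi)=\sum_{k=0}^n\frac{1}{k+1}A_{n,k}^{\psi}(0)\bigl(A_{k+1,1}\comp\Gamma_{\num}(\id,\varphi)\bigr).
\]

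Next, for an arbitrary $c\in\funcs_1$ the function $f:=\id\cdot(c\comp\varphi)$ lies in $\invfuncs$, and Theorem~\ref{generalized_lagrange_inversion} asserts that the coefficients $d_n$ in $\inv{f}=\id\cdot(d\comp\psi)$ are obtained by $d_n=\Lambda_n(\id,\varphi\,|\,\id,\psi)(c_0,\ldots,c_n)$. Using $A_{k+1,1}(f_1,\ldots,f_{k+1})=\inv{f}_{k+1}$ (Remark~\ref{inverse_function}) I read off
\[
	d_n=\sum_{k=0}^n\frac{1}{k+1}A_{n,k}^{\psi}(0)\,\inv{f}_{k+1}.
\]
To close the argument I would evaluate $\inv{f}_{k+1}$ in closed form by the Lagrange inversion formula $\inv{f}_{k+1}=k!\,[x^k](x/f(x))^{k+1}$ (already invoked in the proof of Theorem~\ref{inverse_power}). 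Since $x/f(x)=1/(c\comp\varphi)(x)$, this becomes $\inv{f}_{k+1}=D^k(c^{-(k+1)}\comp\varphi)(0)$, which by the Fa\`a di Bruno formula \eqref{fdb_formula} and $\varphi(0)=0$ expands to
\[
	\inv{f}_{k+1}=\sum_{j=0}^k D^j(c^{-(k+1)})(0)\,B_{k,j}^{\varphi}(0).
\]

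The decisive identification is $D^j(c^{-(k+1)})(0)=(k+1)\,\widehat{I}_{j,k}(c_0,\ldots,c_j)$. This I would obtain by applying the 1-case rule \eqref{omega_composition_1case} to $\mu_k\comp c$ with $\mu_k(y):=y^{-(k+1)}/(k+1)\in\laurentpoly$: the derivatives $D^j(\mu_k)(y)=(-1)^j(k+j)_{j-1}\,y^{-(k+1+j)}$ (with the boundary convention $(k)_{-1}:=1/(k+1)$ forced at $j=0$) reproduce precisely the monomials in the definition of $\widehat{I}_{j,k}$. Substituting back, the factors $1/(k+1)$ coming from $\inv{\Gamma}_n(\id,\psi)$ and the $(k+1)$ absorbed into $\widehat{I}_{j,k}$ cancel, and one arrives at
\[
	d_n=\sum_{k=0}^n A_{n,k}^{\psi}(0)\sum_{j=0}^k B_{k,j}^{\varphi}(0)\,\widehat{I}_{j,k}(c_0,\ldots,c_j).
\]
Since this identity holds for every admissible sequence $c_0\,(\neq 0),c_1,c_2,\ldots$ (i.\,e., for every $c\in\funcs_1$), Remark~\ref{identical_polynomials} forces $\Lambda_n(\id,\varphi\,|\,\id,\psi)$ to coincide with the Laurent polynomial on the right-hand side, yielding the claimed formula. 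The main obstacle I expect to be the bookkeeping in identifying $D^j(c^{-(k+1)})(0)$ with $(k+1)\,\widehat{I}_{j,k}(c_0,\ldots,c_j)$, in particular the $j=0$ boundary case where the convention $(k)_{-1}=1/(k+1)$ is what makes the two $(k+1)$ factors cancel cleanly across the whole double sum.
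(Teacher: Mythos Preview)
Your proposal is correct and follows the same overall architecture as the paper's proof: specialize $\inv{\Gamma}_n(\id,\psi)$ and $\Gamma_n(\id,\varphi)$, plug into Definition~\ref{gen_lagrange_invpoly}, evaluate on a generic $c\in\funcs_1$ via Theorem~\ref{generalized_lagrange_inversion}, and conclude by Remark~\ref{identical_polynomials}.

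The one genuine difference lies in how you evaluate the inner term $\frac{1}{k+1}A_{k+1,1}(f_1,\ldots,f_{k+1})=\frac{1}{k+1}\inv{f}_{k+1}$. The paper first rewrites $f_n=n g_{n-1}$ with $g=c\comp\varphi$, invokes Proposition~\ref{cor_forests_from_trees} to get $\widehat{P}_{k,-(k+1)}(g_0,\ldots,g_k)$, and then applies Jabotinsky's formula (Theorem~\ref{second_CR}\,(i)) to split $B_{k,j}^{c\,\comp\,\varphi}(0)$ into its $\varphi$- and $c$-parts. You instead go straight to Lagrange inversion, observe that $(\id/f)^{k+1}=c^{-(k+1)}\comp\varphi$ is already factored, and so a \emph{single} Fa\`a di Bruno expansion produces the $B_{k,j}^{\varphi}(0)$ factors directly; what remains, $D^j(c^{-(k+1)})(0)$, is then identified with $(k+1)\widehat{I}_{j,k}(c_0,\ldots,c_j)$ via the 1-case expansion \eqref{omega_composition_1case}. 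Your route is a bit more economical (it bypasses both Proposition~\ref{cor_forests_from_trees} and Jabotinsky); the paper's route has the virtue of staying entirely within the polynomial machinery already assembled in Sections~4 and~5. The boundary convention $(k)_{-1}=1/(k+1)$ that you flag is exactly the one implicit in the paper's definition of $\widehat{I}_{j,k}$ (visible from the remark that $I_{n,k}=\Phi_n(\eta_k)$ with $\eta_k(0)=0$), so there is no gap there.
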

\begin{proof}
Suppose the situation \eqref{function_terms} with $a=b=\id$, which implies $a_n=b_n=\kronecker{n}{1}$, $c\in\funcs_1$ and $f_0=0$. Similar to the proof of \theorem{lagrangepoly_special_1}, we get by \proposition{conv_f_to_c_case2}
\begin{equation}\label{lagrangepoly_special_2_eq1}
	\inv{\Gamma}_n^{(2)}(\id,\psi)=\sum_{k=0}^n A_{n,k}^{\psi}(0)\cdot\frac{1}{k+1}X_{k+1},
\end{equation}
hence by \theorem{generalized_lagrange_inversion}
\begin{equation}\label{lagrangepoly_special_2_eq2}
	\Lambda_n(\id,\varphi\,|\,\id,\psi)=\sum_{k=0}^n A_{n,k}^{\psi}(0)\cdot\frac{1}{k+1}A_{k+1,1}(\Gamma_1(\id,\varphi),\ldots,\Gamma_{k+1}(\id,\varphi)).
\end{equation}
We now set $g=c\comp\varphi$ and thus obtain by \proposition{conv_c_to_f} for every $n\geq 1$
\begin{align*}
	f_n=\Gamma_n(\id,\varphi)(c_0,\ldots,c_n)&=\sum_{k=0}^n\bigg(\sum_{j=k}^n\binom{n}{j}\kronecker{(n-j)}{1}B_{j,k}^{\varphi}(0)\bigg)c_k\\
	&=n\sum_{k=0}^{n-1}B_{n-1,k}^{\varphi}(0)c_k\\
	&=n\Omega_{n-1}(g\,|\,c)^{c}(0)\\
	&=ng_{n-1}.
\end{align*}
Combining this result with \eqref{lagrangepoly_special_2_eq2} gives
\begin{equation}\label{lagrangepoly_special_2_eq3}
	\Lambda_n(\id,\varphi\,|\,\id,\psi)(c_0,\ldots,c_n)=\sum_{k=0}^n A_{n,k}^{\psi}(0)\frac{A_{k+1,1}(g_0,2g_1,\ldots,(k+1)g_k)}{k+1}.
\end{equation}
Note that $g_0=c_0$; then by \proposition{cor_forests_from_trees} 
\begin{align*}
	A_{k+1,1}(g_0,2g_1,\ldots,(k+1)g_k)&=\widehat{P}_{k,-(k+1)}(g_0,g_1,\ldots,g_k)\\
		&=\sum_{j=0}^k(-k-1)_j\,c_0^{-k-1-j}B_{k,j}^{g}(0).
\end{align*}

\vspace*{-1ex}
\noindent
Here we can evaluate $B_{k,j}^{g}(0)$ with the help of Jabotinsky's formula (\theorem{second_CR}\,(i)) as follows:\\[-2ex]
\[
	B_{k,j}^{g}(0)=B_{k,j}^{c\,\comp\,\varphi}(0)=\sum_{i=j}^k B_{k,i}^{\varphi}(0)B_{i,j}(c_1,\ldots,c_{i-j+1}).
\]
\vspace*{-0.5ex}
\noindent
With this the fractional expression in \eqref{lagrangepoly_special_2_eq3} becomes
\[
	\frac{A_{k+1,1}(g_0,2g_1,\ldots)}{k+1}\mspace{-2mu}=\mspace{-2mu}\sum_{j=0}^k\sum_{i=j}^k\frac{(-k-1)_j}{k+1}\,c_0^{-k-1-j}B_{k,i}^{\varphi}(0)B_{i,j}(c_1,c_2,\ldots).
\]
\vspace*{-1.2ex}
Rearranging the double series and observing
\[
	\frac{(-k-1)_j}{k+1}=(-1)^j(k+j)_{j-1}
\]

\vspace*{-1.7ex}
\noindent
we obtain
\vspace*{-1ex}
\[
	\frac{A_{k+1,1}(g_0,2g_1,\ldots,(k+1)g_k)}{k+1}=\sum_{i=0}^k B_{k,i}^{\varphi}(0)\widehat{I}_{i,k}(c_0,c_1,\ldots,c_k).
\]
Substitution into \eqref{lagrangepoly_special_2_eq3} finally gives the desired result.
\end{proof}

\sloppy
We conclude this section by showing that \theorem{comtet_thm_F} ($=$ Comtet's Theorem~F) can be obtained as a corollary from \theorem{lagrangepoly_special_2}.

\fussy
\begin{proof}
Considering the assumptions of \theorem{comtet_thm_F}, one might be tempted at first glance to directly evaluate $\Lambda_n(\id,\id^s\,|\,\id,\id^s)$. This of course fails, because $\id^s$ ($s\geq 2$) is not invertible. However, \theorem{lagrangepoly_special_2} remains valid even with the condition $\varphi\in\invfuncs$ weakened to $\varphi\in\funcs_0$. Therefore, as an alternative, we will first deal with calculating $\Lambda_n(\id,\varphi\,|\,\id,\psi)$, where $\varphi=\id^s(\in\funcs_0)$ and $\psi=\id(\in\invfuncs)$. \theorem{lagrangepoly_special_2} yields
\begin{equation}\label{comtet_thm_F_eq1}
 \Lambda_n(\id,\id^s\,|\,\id,\id)=\sum_{k=0}^n\bigg(\kronecker{n}{k}\sum_{j=0}^k B_{k,j}^{\id^s}(0)\,\widehat{I}_{j,k}\bigg)=\sum_{j=0}^n B_{n,j}^{\id^s}(0)\,\widehat{I}_{j,n}.
\end{equation}
Since $D^{r}(\id^s)(0)=\kronecker{s}{r}s!$, we have
\begin{align*}
	B_{n,j}^{\id^s}(0)&=B_{n,j}(0,\ldots,0,s!,0,\ldots,0)&\text{($s!$ at position $s$)}\\
		&=(s!)^j\cdot B_{n,j}(0,\ldots,0,1,0,\ldots,0)&\text{(by homogeneity)}\\
		&=(s!)^j\cdot\frac{(sj)!}{j!(s!)^j}&\text{(if $n=sj$, else 0; by \eqref{bell_partitionsum})}\\
		&=\kronecker{n}{(sj)}\frac{(sj)!}{j!}.
\end{align*}
Thus \eqref{comtet_thm_F_eq1} becomes
\begin{equation}\label{comtet_thm_F_eq2}
	\Lambda_{n}(\id,\id^s\,|\,\id,\id)=\sum_{j=0}^{n}\kronecker{n}{(sj)}\frac{(sj)!}{j!}\,\widehat{I}_{j,n}.
\end{equation}
According to \theorem{generalized_lagrange_inversion} the constants $e_n=\Lambda_{n}(\id,\id^s\,|\,\id,\id)(c_0,c_1,\ldots,c_n)$ are coefficients satisfying $\inv{f}(x)=\sum_{n\geq 0}\frac{e_n}{n!}x^{n+1}$. From \eq{comtet_thm_F_eq2} we immediately see that $e_n$ is unequal to zero, if and only if $n$ is an integral multiple of $s$. Assuming $n=sr$ for some integer $r\geq 0$ and observing $c_0=1$, we get
\[
	e_{sr}=\frac{(sr)!}{r!}\widehat{I}_{r,sr}(c_0,c_1,\ldots,c_r)=\frac{(sr)!}{r!}I_{r,sr}(c_1,\ldots,c_r),
\]
hence
\[
	\inv{f}(x)=\sum_{r\geq 0}\frac{e_{sr}}{(sr)!}\,x^{sr+1}=\sum_{r\geq 0}\frac{d_r}{r!}\,x^{sr+1}
\] 
with 
\begin{align*}
	d_r&=I_{r,sr}(c_1,\ldots,c_r)\\
	   &=\sum_{j=0}^r(-1)^j \binom{sr+j}{j-1}(j-1)!B_{r,j}(c_1,\ldots,c_{r-j+1}).\qedhere
\end{align*}
\end{proof}

\begin{rem}
The polynomials $\widehat{I}_{n,sn}$ and $I_{n,sn}$ are self-inverse, that is, we have $\widehat{I}_{n,sn}\comp\widehat{I}_{\num,s\num}=X_n$ ($n\geq 0$) and $I_{n,sn}\comp I_{\num,s\num}=X_n$ ($n\geq 1$).
\end{rem}
%
\section{Reciprocity theorems}
In this final section we will be occupied by establishing reciprocity laws for several previously studied classes of polynomials. Stanley \cite{stan1974} pointed out that reciprocity (or `duality between two related enumeration problems') is a `rather vague concept' that only becomes clearer through concrete examples. This also applies to the reciprocity statements we are concerned with here. We are usually dealing with two families of polynomials (or sequences of numbers) that arise in some way from certain opposing aspects of a situation, while both families are actually united by their law of reciprocity.

According to Stanley \cite[p.\,15/16]{stan1986} the relationship between the number of $k$-combi\-na\-tions of $n$ elements \emph{with} repetitions and the corresponding number of combinations \emph{without} repetitions is `the simplest instance of a combinatorial reciprocity theorem' (which we shall make use of in the following):
\begin{equation}\label{reciprocity_combinations}
	\binom{n+k-1}{k}=(-1)^k\binom{-n}{k}.
\end{equation}

\begin{rem}\label{binomial_coeffs_def}
\eq{reciprocity_combinations} is valid for integers $k\geq 0$. We will extend it also in the case $k<0$ by setting $\binom{-n}{k}=0$, if $k>-n$, and $\binom{-n}{k}=\binom{-n}{-n-k}$ otherwise. Here again \eqref{reciprocity_combinations} can be applied because of $-n-k\geq 0$.
\end{rem}

Another particularly typical example are the Stirling numbers of the first and second kind. Knuth \cite{knut1992b} has repeatedly emphasized their importance and provided insightful historical comments on their reciprocity. This `beautiful and easily remembered law of duality' can be expressed in different (equivalent) ways, such as
\begin{align}
		\label{reciprocity_cycle_numbers}
		s_2(n,k)&=c(-k,-n)\\
		\label{reciprocity_stirling_numbers}
		\text{or~~}s_2(n,k)&=(-1)^{n-k}s_1(-k,-n)
\end{align}
\sloppy
`implying that there really is only one ``kind'' of Stirling number' [p.\,412, ibid.]. In \cite{knut1992a} Knuth has started with generalizing these relationships to sequences of coefficients belonging to arbitrary pairs of inverse functions.\footnote{See also the enlightening hint from I.\,Gessel that Knuth mentioned in this context.}
\vspace*{2ex}

The first aim of this section is to find a \emph{polynomial analogue} of this reciprocity, that is, a multivariable identity that turns into \eqref{reciprocity_cycle_numbers} or \eqref{reciprocity_stirling_numbers} through unification. From this point of view the resulting numerical identities might appear, so to speak, rather as a shadow of the rich intrinsic structure that is preserved in the corresponding original polynomial equations.

It is clear that $s_2(n,k)$ is to be understood as $B_{n,k}\comp 1$. As for the signless Stirling numbers of the first kind, we know the two options $c(n,k)=Z_{n,k}\comp 1$ (cf. Section~5.3) and $c(n,k)=C_{n,k}\comp 1$ (by \proposition{comtetpoly_special_values}\,(i)). The former must be discarded for compelling reasons the reader will find discussed in \cite[Remark 6.3]{schr2015}. According to \theorem{A_rep_comtetpoly}, the latter amounts to \mbox{$C_{n,k}\comp 1$} $=A_{n,k}((-1)^0,\ldots,(-1)^{n-k})=(-1)^{n-k}s_1(n,k)$, that is, the resulting situation is the same as in \eq{reciprocity_stirling_numbers}. In summary, we therefore re\-cognize the following identity as the best suitable candidate for the desired polynomial version of \eqref{reciprocity_stirling_numbers}: 
\begin{equation}\label{reciprocity_A_B}
	B_{n,k}=(-1)^{n-k}A_{-k,-n}.	
\end{equation}

\noindent
Looking at \eqref{reciprocity_A_B} we face a problem that needs to be solved: \emph{extending the index domain to the integers}. As far as \eqref{reciprocity_stirling_numbers} is concerned, Gould \cite{goul1960} \cite[p.\,417]{knut1992b} was possibly the first to observe that the domain of Stirling numbers can be extended to negative values of $n$ by using the fact that $s_1(n,n-k)$ and $s_2(n,n-k)$ are polynomials in $n$ of degree $2k$. As a consequence, these polynomials can also be defined for arbitrary complex (and \emph{a fortiori} negative integer) values. Details of Gould's method are described in \cite[Section 14.1]{qugo2016}, where also a proof of \eqref{reciprocity_stirling_numbers} is given with the help of the Schl\"omilch-Schl\"afli formula for the signed Stirling numbers of the first kind.

\begin{rem}\label{extended_bellpoly}
One quickly sees that Gould's trick obviously does not work when transferred to the indices of the polynomial families in question. Alternatively, however, the following identity can be used for the same purpose:
\begin{equation}\label{Bell_through_ass_B}
	B_{n,n-k}=\sum_{j=0}^k\binom{n}{k+j}X_1^{n-k-j}\widetilde{B}_{k+j,j}.
\end{equation}
\eq{Bell_through_ass_B} can easily be derived from Eq.\,[3l] in \cite[p.\,136]{comt1974} (cf.~also \cite[Corollary 4.5]{schr2015}) with a few calculations and index shifts. Based on this identity, one can obtain \eq{reciprocity_A_B} as a consequence of Theorem~\ref{mainresult} ($=$ Theorem~6.1 in \cite{schr2015}), and also vice versa, it can be shown that Theorem~\ref{mainresult} follows from \eq{reciprocity_A_B}. The proofs are omitted here, as we are going to take a quite different path.
\end{rem}
\label{Equivalent version of reciprocity}

To enlarge the domain of indices, we propose an easy and more straightforward procedure, which is based on the results obtained in Section 3. First, as customary, we set the value of a void sum to zero. If $n$ is negative, we therefore have according to \eqref{fdbpoly_0case} $\Phi_n(f)=0$ and, in particular, $\widehat{P}_{n,k}=B_{n,k}=0$. Next, let us have a look at the Corollaries \ref{family_B} and \ref{family_A}. In both identities, the partial Bell polynomials $B_{n,k}$ and their orthogonal companions $A_{n,k}$ are expressed by certain instances of $\binom{n}{k}\widehat{P}_{n-k,k}$ and of $\binom{n-1}{k-1}\widehat{P}_{n-k,n}$, respectively. We now have nothing else to do but \emph{redefine} $B_{n,k}$ \emph{and} $A_{n,k}$ \emph{by the right-hand sides of these identities under the assumption $n,k\in\integers$}. That's all!

Given any integer $n$, it is easy to check that $B_{n,k}=0$ if and only if $k>n$ or $n,k$ have unequal signs; the same holds for $A_{n,k}$. The original Stirling polynomials are restricted to indices $n=k=0$ and $1\leq k\leq n$, that is, they are equal to zero for all other values of $n,k$. After their redefinition the extended domain additionally includes $k\leq n\leq-1$. For example, instead of $B_{-3,-5}=0$ (old value), we now get the new value
\[
	B_{-3,-5}=\binom{-3}{-3+5}\widehat{P}_{2,-5}(\tfrac{X_1}{1},\tfrac{X_2}{2},\tfrac{X_3}{3})=45X_1^{-7}X_2^{2}-10X_1^{-6}10X_3,
\]
which is not new at all, since it is equal to $(-1)^{-3+5}A_{5,3}$\,---\,evidently an instance of \eq{reciprocity_A_B}. The following matrix $(B_{n,k})$ with \mbox{$-4\leq n,k\leq 4$} shows, in a neighborhood of $(n,k)=(0,0)$, the family of Stirling polynomials  united by their fundamental reciprocity law:
{\small
\[
\left(
\begin{array}{ccccccccc}
 \frac{1}{X_1^4} & 0 & 0 & 0 & 0 & 0 & 0 & 0 & 0 \\
 \frac{6 X_2}{X_1^5} & \frac{1}{X_1^3} & 0 & 0 & 0 & 0 & 0 & 0 & 0 \\
 \frac{15 X_2^2}{X_1^6}-\frac{4 X_3}{X_1^5} & \frac{3 X_2}{X_1^4} &
   \frac{1}{X_1^2} & 0 & 0 & 0 & 0 & 0 & 0 \\
 \frac{15 X_2^3}{X_1^7}-\frac{10 X_3 X_2}{X_1^6}+\frac{X_4}{X_1^5} & \frac{3
   X_2^2}{X_1^5}-\frac{X_3}{X_1^4} & \frac{X_2}{X_1^3} & \frac{1}{X_1} & 0 & 0
   & 0 & 0 & 0 \\
 0 & 0 & 0 & 0 & 1 & 0 & 0 & 0 & 0 \\
 0 & 0 & 0 & 0 & 0 & X_1 & 0 & 0 & 0 \\
 0 & 0 & 0 & 0 & 0 & X_2 & X_1^2 & 0 & 0 \\
 0 & 0 & 0 & 0 & 0 & X_3 & 3 X_1 X_2 & X_1^3 & 0 \\
 0 & 0 & 0 & 0 & 0 & X_4 & 3 X_2^2+4 X_1 X_3 & 6 X_1^2 X_2 & X_1^4 \\
\end{array}
\right)
\]
}
Let us now turn to the proof of the reciprocity law \eqref{reciprocity_A_B}.
\begin{proof}
If $n,k$ have different signs, then \eqref{reciprocity_A_B} holds because of $B_{n,k}=0$ and $A_{-k,-n}=0$. For $n=k=0$ both sides of \eqref{reciprocity_A_B} are equal to 1. Suppose now that $n,k$ are either both positive or both negative integers. In the case $k>n$ we then have $\widehat{P}_{n-k,\pm k}(\cdots)=0$ because of $n-k<0$ and therefore $B_{n,k}=0=A_{-k,-n}$ (according to the redefinition by the right-hand side expressions in \corollary{family_A} and \corollary{family_B}, respectively). Thus the two cases $1\leq k\leq n$ and $k\leq n\leq-1$ remain. Assuming the latter we obtain $n-k\geq 0$, and hence
{\allowdisplaybreaks
\begin{align*}
	&B_{n,k}=\binom{n}{n-k}\widehat{P}_{n-k,k}(\tfrac{X_1}{1},\tfrac{X_2}{2},\ldots)&\hspace*{-7em}\text{(\corollary{family_B}, \remark{binomial_coeffs_def})}\\
	       &~=(-1)^{n-k}\binom{-k-1}{n-k}\widehat{P}_{n-k,k}(\tfrac{X_1}{1},\tfrac{X_2}{2},\ldots)&\hspace*{-7em}\text{(\eq{reciprocity_combinations})}\\
				 &~=(-1)^{n-k}\binom{-k-1}{-n-1}\widehat{P}_{n-k,-k}(\widehat{R}_0(\tfrac{X_1}{1}),\widehat{R}_1(\tfrac{X_1}{1},\tfrac{X_2}{2}),\ldots)&\hspace*{-7em}\text{(Cor.\,\ref{multiplication_rule}\,(ii))}\\
				&~=(-1)^{n-k}A_{-k,-n}.&\hspace*{-7em}\text{(\corollary{family_A})}
\end{align*}
}
The case $1\leq k\leq n$ can be done in practically the same way and may be left to the reader.
\end{proof}

Of course, \eq{reciprocity_stirling_numbers} immediately follows from \eq{reciprocity_A_B} through unification. On the other hand, it is easy to generalize \eq{reciprocity_A_B} to a reciprocity theorem for B-representable polynomials.

\begin{thm}[General Reciprocity Law]\label{reciprocity_B_rep}
Let $(Q_{n,k})$ be any regular B-representable family of polynomials. Then for all $n,k\in\integers$
\[
	\ortho{Q}_{n,k}=(-1)^{n-k}Q_{-k,-n} .
\]
\end{thm}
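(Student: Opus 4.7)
My strategy is to reduce the statement directly to the reciprocity \eqref{reciprocity_A_B} for the Stirling polynomials, after first extending the two families $(Q_{n,k})$ and $(\ortho Q_{n,k})$ from the classical cone $1\le k\le n$ to all of $\integers\times\integers$.

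First I would fix the extension. Following the procedure used just above for $A_{n,k}$ and $B_{n,k}$, I set for arbitrary $n,k\in\integers$
\[
Q_{n,k} := B_{n,k}(Q_{1,1},Q_{2,1},Q_{3,1},\ldots),
\qquad
\ortho Q_{n,k} := A_{n,k}(Q_{1,1},Q_{2,1},Q_{3,1},\ldots),
\]
where $B_{n,k}$ and $A_{n,k}$ are understood in their extended senses coming from Corollaries~\ref{family_B} and \ref{family_A}. Since $B_{n,k}$ (resp.\ $A_{n,k}$) involves only the variables $X_1,\ldots,X_{n-k+1}$ and vanishes off the two cones $1\le k\le n$ and $k\le n\le -1$, these definitions are well-posed. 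On the classical cone they reproduce the B-representability of $(Q_{n,k})$ and, by Proposition~\ref{B_rep_properties}\,(iv), the orthogonal companion $(\ortho Q_{n,k})$ as well, so the extension is consistent with the hypothesis.

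The proof proper is then a one-line substitution. For every $n,k\in\integers$,
\begin{align*}
\ortho Q_{n,k}
 &= A_{n,k}(Q_{1,1},Q_{2,1},\ldots) \\
 &= (-1)^{n-k}\,B_{-k,-n}(Q_{1,1},Q_{2,1},\ldots) \\
 &= (-1)^{n-k}\,Q_{-k,-n},
\end{align*}
the first equality being the extended definition, the second an application of \eqref{reciprocity_A_B}, and the third the observation that $B_{-k,-n}$ depends only on the variables $X_1,\ldots,X_{(-k)-(-n)+1}=X_1,\ldots,X_{n-k+1}$, so that the substitution $X_j\mapsto Q_{j,1}$ used on the middle line is precisely the one defining $Q_{-k,-n}$ in the first step.

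The main obstacle is therefore not algebraic but lies in the bookkeeping around the extension. One must verify that the supports of $A_{n,k}$ and of $B_{-k,-n}$ coincide, so that the degenerate pairs $(n,k)$ with $k>n$ or with indices of opposite signs are automatically handled by $0=\pm0$, and that the binomial convention from Remark~\ref{binomial_coeffs_def} is respected throughout. Once this is confirmed, the entire content of the theorem has been transported from the Stirling reciprocity \eqref{reciprocity_A_B} to the arbitrary regular B-representable family $(Q_{n,k})$ by nothing more than substitution.
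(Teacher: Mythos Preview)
Your proof is correct and follows essentially the same approach as the paper: extend $Q_{n,k}$ and $\ortho Q_{n,k}$ to all integer indices via $B_{n,k}\comp Q_{\num,1}$ and $A_{n,k}\comp Q_{\num,1}$ using the already-extended Stirling polynomials, then apply the base reciprocity \eqref{reciprocity_A_B} and substitute. The paper compresses this into a single line, but your more explicit discussion of the extension and of the degenerate index pairs is entirely in the spirit of the surrounding text.
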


\begin{proof}
The above redefinitions of $B_{n,k}$ and $A_{n,k}$ can obviously be regarded as applying also to $Q_{n,k}=B_{n,k}\comp Q_{\num,1}$ and $\ortho{Q}_{n,k}=A_{n,k}\comp Q_{\num,1}$, respectively. Thus \eq{reciprocity_A_B} immediately yields
\begin{equation*}
		\ortho{Q}_{n,k}=A_{n,k}\comp Q_{\num,1}=(-1)^{n-k}B_{-k,-n}\comp Q_{\num,1}=(-1)^{n-k}Q_{-k,-n}.\qedhere
\end{equation*}
\end{proof}
\begin{exms}\label{examples_reciprocity}
In Section 5 we examined a handful of regular polynomial fami\-lies, all of which obey this law of reciprocity: $Z_{n,k}$ (cycle indicators), $W_{n,k}$ (forest polynomials), $L_{n,k}$ (signed Lah polynomials, and unsigned: $L^{+}_{n,k}$) as well as $C_{n,k}$ (Comtet's polynomials). Because of $\ortho{L}_{n,k}=L_{n,k}$ there is also a case of self-reciprocity: $L_{n,k}=(-1)^{n-k}L_{-k,-n}$.
\end{exms}

Looking back for a moment at the proof of \eq{reciprocity_A_B} and at the statements of Corollaries \ref{family_A} and \ref{family_B}, it quickly becomes clear that $\widehat{P}_{n,k}$ is the real hero of the story. In the remainder of this section, we shall deepen that  impression and deal with some other interesting reciprocity properties of the potential polynomials. 

\begin{prop}\label{reciprocity_potpoly_1}
Let $n,k$ be integers with $k\leq n\neq 0$. Then
\[
	\widehat{P}_{n-k,k}(\tfrac{X_1}{1},\ldots,\tfrac{X_{n-k+1}}{n-k+1})=\frac{k}{n}\cdot\widehat{P}_{n-k,-n}(\tfrac{A_{1,1}}{1},\ldots,\tfrac{A_{n-k+1,1}}{n-k+1}).
\]
\end{prop}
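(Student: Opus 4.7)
My plan is to evaluate $\Omega_n(\varphi^k\,|\,\varphi)$ in two different ways for an arbitrary $\varphi\in\invfuncs$ and $1\leq k\leq n$, and then equate the outcomes. On the one hand, by \theorem{power_of_function}\,(i) together with \corollary{family_B},
\[
 \Omega_n(\varphi^k\,|\,\varphi) \;=\; k!\,B_{n,k}(X_1,\ldots,X_{n-k+1}) \;=\; k!\binom{n}{k}\,\widehat{P}_{n-k,k}\bigl(\tfrac{X_1}{1},\ldots,\tfrac{X_{n-k+1}}{n-k+1}\bigr).
\]
On the other hand, exploiting the tautology $\varphi=\inv{(\inv{\varphi})}$ and setting $f:=\inv{\varphi}$, \theorem{inverse_power}\,(ii) combined with \proposition{inverse_stirling} (which gives $\Omega_j(\inv{\varphi}\,|\,\varphi)=A_{j,1}$) yields
\[
 \Omega_n(\varphi^k\,|\,\varphi) \;=\; k!\binom{n-1}{k-1}\,\widehat{P}_{n-k,n}\comp\widehat{R}_{\num}\bigl(\tfrac{A_{1,1}}{1},\ldots,\tfrac{A_{n-k+1,1}}{n-k+1}\bigr).
\]

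Next I would invoke \corollary{multiplication_rule}\,(ii), which as a polynomial identity in $\const[X_0^{-1},X_0,\ldots,X_{n-k}]$ reads $\widehat{P}_{n-k,-n}=\widehat{P}_{n-k,n}\comp\widehat{R}_{\num}$. Substituting $X_j\mapsto A_{j+1,1}/(j+1)$ uniformly into this identity converts the expression on the right of the second evaluation into $\widehat{P}_{n-k,-n}\bigl(\tfrac{A_{1,1}}{1},\ldots,\tfrac{A_{n-k+1,1}}{n-k+1}\bigr)$. Equating the two expressions for $\Omega_n(\varphi^k\,|\,\varphi)$, cancelling $k!$, and using $\binom{n-1}{k-1}/\binom{n}{k}=k/n$ then delivers the claimed identity as an equation of functions valid for every $\varphi\in\invfuncs$; the argument from \remark{identical_polynomials} promotes it to an identity of Laurent polynomials in $X_1^{\pm1},X_2,\ldots,X_{n-k+1}$.

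The main obstacle is widening the conclusion from the principal range $1\leq k\leq n$ to the full hypothesis $k\leq n\neq 0$, since the derivation through $\varphi^k$ collapses when $k<0$ (as $\varphi(0)=0$ precludes reciprocal powers in $\invfuncs$). I would dispose of the residual cases as follows: $k=0<n$ is trivial since $\widehat{P}_{n,0}=B_{n,0}=\kronecker{n}{0}$ makes both sides vanish; for the cases with negative indices, the extended definitions employed in Section~8 (redefining $B_{n,k}$ and $A_{n,k}$ for arbitrary integers via \corollary{family_B} and \corollary{family_A}) together with the general reciprocity law $B_{n,k}=(-1)^{n-k}A_{-k,-n}$ (\theorem{reciprocity_B_rep}) and \corollary{multiplication_rule}\,(ii) reduce everything back to the principal case already settled.
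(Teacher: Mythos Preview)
Your proposal is correct and follows essentially the same route as the paper: both arguments pass through the chain $\widehat{P}_{n-k,k}(\tfrac{X_j}{j})\leftrightarrow B_{n,k}\leftrightarrow A_{n,k}\comp A_{\num,1}\leftrightarrow \widehat{P}_{n-k,n}\comp\widehat{R}_{\num}(\tfrac{A_{j,1}}{j})\leftrightarrow \widehat{P}_{n-k,-n}(\tfrac{A_{j,1}}{j})$, invoking Corollaries~\ref{family_B}, \ref{family_A}, \ref{multiplication_rule}(ii) and the identity~\eqref{B_A_representable}. The only cosmetic difference is that the paper works directly with the redefined polynomial families of Section~8 (so that the single calculation covers all $k\leq n\neq 0$ at once), whereas you phrase the principal case $1\leq k\leq n$ via $\Omega_n(\varphi^k\,|\,\varphi)$ for generic $\varphi$, appeal to \remark{identical_polynomials}, and then dispose of the remaining index ranges separately with the reciprocity law---a slightly longer but equivalent path.
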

\begin{proof}
Using the redefinition of the Stirling polynomials via Corollaries \ref{family_B} and \ref{family_A} we have
{\allowdisplaybreaks
\begin{alignat*}{4}
	\widehat{P}_{n-k,k}(\tfrac{X_1}{1},\ldots,&\tfrac{X_{n-k+1}}{n-k+1})=\binom{n}{k}^{-1}\mspace{-18mu}\cdot B_{n,k}\\
	&=\frac{k!}{n!}(n-k)!A_{n,k}(A_{1,1},\ldots,A_{n-k+1,1})&\hspace*{-9em}\text{(by \eqref{B_A_representable})}&\\
	&=\frac{k}{n}\binom{n-1}{k-1}^{-1}\mspace{-18mu}\cdot A_{n,k}(A_{1,1},\ldots,A_{n-k+1,1})\\
	&=\frac{k}{n}\cdot\widehat{P}_{n-k,n}(\widehat{R}_0(\tfrac{A_{1,1}}{1}),\ldots,\widehat{R}_{n-1}(\tfrac{A_{1,1}}{1},\ldots,\tfrac{A_{n,1}}{n}))\\
	&=\frac{k}{n}\cdot\widehat{P}_{n-k,-n}(\tfrac{A_{1,1}}{1},\ldots,\tfrac{A_{n-k+1,1}}{n-k+1}).&\hspace*{-4.5em}\text{(Cor.\,\ref{multiplication_rule}\,(ii))}\qedhere
\end{alignat*}
}
\end{proof}

\begin{rem}\label{schur_jabotinsky_thm}
\proposition{reciprocity_potpoly_1} may be regarded as a reformulation of a reciprocity law known as the \emph{Schur-Jabotinsky theorem} (see Jabotinsky \cite{jabo1953} and Gessel \cite{gess2016}). To see this we have to temporarily allow (formal) Laurent series over $\const$ as functions. Suppose $\varphi(x)=\sum_{n\geq 1}\varphi_n\tfrac{x^n}{n!}\in\invfuncs$ and $k\in\integers$; then the $k$th powers of $\varphi$ and $\inv{\varphi}$ can be expanded into such series: $\varphi(x)^k=\sum_{n} a_{n,k}x^n$ and $\inv{\varphi}(x)^k=\sum_{n} b_{n,k}x^n$. Since $a_{n,k}=b_{n,k}=0$ for $n<k$, we assume $n\geq k$. In the case $k\geq 0$ the Taylor coefficient $n!a_{n,k}=n![x^n]\varphi(x)^k$ is equal to $k!B_{n,k}^{\varphi}(0)$, whence by \corollary{family_B}
\begin{equation*}
		a_{n,k}=\frac{1}{(n-k)!}\widehat{P}_{n-k,k}(\tfrac{\varphi_{1}}{1},\ldots,\tfrac{\varphi_{n-k+1}}{n-k+1}).
\end{equation*}
This formula represents $a_{n,k}$ also for $n,k\in\integers$ with $n\geq k$ and can thus be applied to $\inv{\varphi}^k$:
\[
	b_{-k,-n}=\frac{1}{(-k+n)!}\widehat{P}_{-k+n,-n}(\tfrac{\inv{\varphi}_{1}}{1},\ldots,\tfrac{\inv{\varphi}_{n-k+1}}{n-k+1}).
\]
\proposition{reciprocity_potpoly_1} now yields the reciprocity law in question in the form given by Gessel [Eq.\,(2.1.11), ibid.]: 
\[a_{n,k}=\frac{k}{n}\,b_{-k,-n}\quad (n\neq 0).\]
\end{rem}

In the proofs of \eq{reciprocity_A_B} and of \proposition{reciprocity_potpoly_1}, the statement (ii) of \corollary{multiplication_rule} has been used. It seems to be the simplest reciprocity law for the potential polynomials and obviously it is also true for the variant without hat:
\begin{equation}\label{basic_reciprocity_potpoly}
	P_{n,-k}=P_{n,k}(R_1,\ldots,R_n).
\end{equation}
Comtet \cite{comt1974} has established a formula that expresses $P_{n,-k}$ as a linear combination of the $P_{n,1},\ldots,P_{n,n}$ (see Theorem C, p.\,142, ibid.). It is stated in the following proposition and given a new and straightforward proof.

\begin{prop}\label{comtet_thm_C_potpoly}
Let $n,k$ be integers with $n\geq 0$ and $(-k)\notin\left\{0,1,\ldots,n\right\}$. Then
\[
		P_{n,-k}=k\binom{n+k}{n}\sum_{j=0}^{n}(-1)^{j}\frac{1}{k+j}\binom{n}{j}P_{n,j}.
\]
\end{prop}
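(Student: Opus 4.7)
The strategy is to expand both sides in the basis $(B_{n,i})_{i=0}^n$ and verify equality coefficient by coefficient. Substituting \eqref{power_1case} gives
\[
P_{n,j}=\sum_{i=0}^n i!\binom{j}{i}B_{n,i},
\]
so after interchanging the two sums the right-hand side becomes
\[
k\binom{n+k}{n}\sum_{i=0}^n i!\,B_{n,i}\,S_i,\qquad
S_i:=\sum_{j=0}^n (-1)^j\frac{1}{k+j}\binom{n}{j}\binom{j}{i}.
\]

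The main step is to evaluate $S_i$. Using $\binom{n}{j}\binom{j}{i}=\binom{n}{i}\binom{n-i}{j-i}$ and the substitution $m=j-i$, we get
\[
S_i=(-1)^i\binom{n}{i}\sum_{m=0}^{n-i}(-1)^m\binom{n-i}{m}\frac{1}{k+i+m}.
\]
Here I would invoke the classical Beta-integral identity
\[
\sum_{m=0}^{N}(-1)^m\binom{N}{m}\frac{1}{c+m}=\frac{(c-1)!\,N!}{(c+N)!}\qquad(c\in\naturals,\ N\geq 0),
\]
which follows by expanding $(1-x)^N$ under $\int_0^1 x^{c-1}(1-x)^N\,dx=B(c,N+1)$. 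Applied with $c=k+i$ and $N=n-i$, this yields
\[
S_i=(-1)^i\binom{n}{i}\,\frac{(k+i-1)!\,(n-i)!}{(k+n)!}.
\]

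The final step is purely arithmetic: the prefactor $k\binom{n+k}{n}=\tfrac{k\,(n+k)!}{n!\,k!}$ combines with $i!\binom{n}{i}\,(n-i)!=n!$ and $(k+n)!$ in the denominator to collapse everything to
\[
k\binom{n+k}{n}\sum_{i=0}^n i!\,B_{n,i}\,S_i
=\sum_{i=0}^n (-1)^i\,\frac{(k+i-1)!}{(k-1)!}\,B_{n,i}.
\]
Since $\powerfall{-k}{i}=(-k)(-k-1)\cdots(-k-i+1)=(-1)^i\,\tfrac{(k+i-1)!}{(k-1)!}$, the coefficient of $B_{n,i}$ equals $\powerfall{-k}{i}$, and by \eqref{potential_polynomial_1} (with $X_0=1$) the sum is exactly $P_{n,-k}$. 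This proves the identity.

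The only real obstacle is the identification of $S_i$; it is purely a manipulation of binomial coefficients together with the Beta-type summation formula, and the condition $(-k)\notin\{0,1,\ldots,n\}$ is precisely what ensures that none of the denominators $k+j$ vanish, so that the formula is well-defined.
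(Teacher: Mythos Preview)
Your argument is correct and follows essentially the same plan as the paper: pass to the basis $(B_{n,i})$ via the mutually inverse relations \eqref{power_1case}/\eqref{bertrand_formula}, interchange the double sum, and reduce everything to a single binomial summation. The paper runs it in the other direction---it expands $P_{n,-k}$ in the $B_{n,j}$, then feeds each $B_{n,j}$ through Bertrand's formula \eqref{bertrand_formula} to land on $P_{n,r}$---so its inner sum is $\sum_{j}\frac{1}{k+j}\binom{k+j}{k}\binom{j}{r}$, handled by a short induction, whereas your inner sum $S_i$ is the classical partial-fraction/Beta identity. These are two faces of the same computation.

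One small caveat: invoking the Beta integral literally only gives $\sum_{m}(-1)^m\binom{N}{m}\frac{1}{c+m}=\frac{N!}{c(c+1)\cdots(c+N)}$ for $c>0$, while the proposition also allows integers $k\le -n-1$. This is harmless---after clearing denominators the identity is polynomial in $c$ and hence holds whenever no $c+m$ vanishes---but you should say so, and throughout write the right-hand side as $N!/\prod_{l=0}^{N}(c+l)$ rather than with factorials, so that the final coefficient reads $(-1)^i k(k+1)\cdots(k+i-1)=\powerfall{-k}{i}$ without any appeal to $(k-1)!$.
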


\begin{proof}
By \eq{potential_polynomial_1} we have
\[
	P_{n,-k}=\sum_{j=0}^n(-k)_j B_{n,j}=\sum_{j=0}^n j!\binom{-k}{j} B_{n,j}.
\]
Replacing the binomial term by its `reciprocal' in the sense of \eq{reciprocity_combinations} and applying Bertrand's formula \eqref{bertrand_formula} to $B_{n,j}$ thus yields
\begin{align*}
	P_{n,-k}&=\sum_{j=0}^n j!(-1)^j\binom{k-1+j}{j}\cdot\frac{1}{j!}\sum_{r=0}^n(-1)^{j-r}\binom{j}{r}P_{n,r}\\
	&=\sum_{r=0}^n\sum_{j=0}^n\binom{j}{r}\binom{k-1+j}{j}(-1)^r P_{n,r}.\tag{*}
\end{align*}
Clearly we have $\binom{k-1+j}{j}=\binom{k-1+j}{k-1}=\frac{k}{k+j}\binom{k+j}{k}$ and by an easy inductive argument
\[
	\sum_{j=0}^n\frac{1}{k+j}\binom{k+j}{k}\binom{j}{r}=\frac{1}{k+r}\binom{k+n}{k}\binom{n}{r}.
\]
Substituting this into (*) gives the assertion.
\end{proof}

Comtet proved a slightly stronger version of \proposition{comtet_thm_C_potpoly}, where $k$ can be a complex number. The following theorem shows how the statement may be extended in another direction.

\begin{thm}\label{generalized_thm_C_potpoly}
Let $m,n,k\in\integers$ with $m\geq n\geq 0$ and $(-k)\notin\left\{0,1,\ldots,m\right\}$. Then
\[
		P_{n,-k}=k\binom{m+k}{m}\sum_{j=0}^{m}(-1)^{j}\frac{1}{k+j}\binom{m}{j}P_{n,j}.
\]
\end{thm}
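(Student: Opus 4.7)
The idea is to imitate the proof of Proposition~\ref{comtet_thm_C_potpoly}, but to take advantage of the fact that the upper summation bound $n$ can be painlessly enlarged to any $m\geq n$. Since $B_{n,j}=0$ whenever $j>n$, one may write
\[
P_{n,-k}=\sum_{j=0}^{n}(-k)_{j}B_{n,j}=\sum_{j=0}^{m}(-1)^{j}j!\binom{k+j-1}{j}B_{n,j},
\]
using $(-k)_{j}=(-1)^{j}j!\binom{k+j-1}{j}$. I would then expand each $B_{n,j}$ by Bertrand's formula \eqref{bertrand_formula} in terms of $P_{n,0},\ldots,P_{n,j}$ (the identity is consistent also for $j>n$ because $P_{n,r}$ is a polynomial of degree $n$ in $r$, hence annihilated by higher-order finite differences), and interchange the order of summation, arriving at
\[
P_{n,-k}=\sum_{r=0}^{m}(-1)^{r}\left(\sum_{j=r}^{m}\binom{k+j-1}{j}\binom{j}{r}\right)P_{n,r}.
\]

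The assertion then reduces to the binomial identity
\[
\sum_{j=r}^{m}\binom{k+j-1}{j}\binom{j}{r}=\frac{k\binom{m+k}{m}\binom{m}{r}}{k+r},
\]
which I would verify as follows. Applying the trinomial revision $\binom{a}{b}\binom{b}{c}=\binom{a}{c}\binom{a-c}{b-c}$ with $a=k+j-1$, $b=j$, $c=r$, and then the variant $\binom{a}{b}\binom{a-b}{c-b}=\binom{a}{c}\binom{c}{b}$ with $a=k+j-1$, $b=r$, $c=k+r-1$, one obtains
\[
\binom{k+j-1}{j}\binom{j}{r}=\binom{k+r-1}{r}\binom{k+j-1}{k+r-1}.
\]
The hockey-stick identity now yields $\sum_{j=r}^{m}\binom{k+j-1}{k+r-1}=\binom{k+m}{k+r}$, so the double sum collapses to $\binom{k+r-1}{r}\binom{k+m}{k+r}$. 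A routine factorial computation identifies this with $\frac{k\binom{m+k}{m}\binom{m}{r}}{k+r}$, and substitution into the preceding display for $P_{n,-k}$ gives the claim.

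The main obstacle is the binomial identity above, but it dissolves into a double application of trinomial revision followed by the standard hockey-stick identity, so the argument is purely combinatorial bookkeeping. Observe that the hypothesis $(-k)\notin\{0,\ldots,m\}$ is exactly what is needed to ensure $k+r\neq 0$ for every $r$ in the range of summation, so no division by zero occurs. Setting $m=n$ recovers Proposition~\ref{comtet_thm_C_potpoly} (Comtet's Theorem~C), confirming that the present statement is a genuine generalization.
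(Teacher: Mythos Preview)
Your proof is correct, and it takes a different, more direct route than the paper. The paper first proves the special case $m=n$ (Proposition~\ref{comtet_thm_C_potpoly}) by essentially the same expansion-via-Bertrand argument you use, and then establishes the general statement by a separate induction on $m$, computing $S_{m+1}-S_m$ and showing it vanishes. You instead observe that the base-case computation extends verbatim to any $m\geq n$: since $B_{n,j}=0$ for $j>n$, the sum $\sum_{j=0}^{n}(-k)_{j}B_{n,j}$ may be written with upper limit $m$; and Bertrand's formula \eqref{bertrand_formula} continues to hold for $j>n$ because the right-hand side is the $j$th finite difference of the degree-$n$ polynomial $r\mapsto P_{n,r}$, hence zero. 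This collapses the two-step argument of the paper into one. The binomial identity you isolate is exactly the one appearing (in a slightly different normalization) in the paper's proof of Proposition~\ref{comtet_thm_C_potpoly}, where it is dispatched ``by an easy inductive argument''; your route via trinomial revision and hockey-stick is a clean alternative. One small point worth noting: since $k$ can be a negative integer (with $k\leq -m-1$), it is safest to regard your chain of binomial manipulations as polynomial identities in $k$, which they are, so that the hockey-stick step $\sum_{i=0}^{m-r}\binom{k+r-1+i}{i}=\binom{k+m}{m-r}$ needs no assumption on the sign of $k$.
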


\begin{proof}
We assume $n$ to be a fixed non-negative integer and proceed by induction on $m$. The basis step $m=n$ is already done by \proposition{comtet_thm_C_potpoly}. Let $S_m$ denote the right-hand side of the induction hypothesis. For the inductive step it is then enough to show that the difference $\Delta:=S_{m+1}-S_{m}$ is equal to zero. Applying some elementary properties of the binomial numbers yields
\begin{equation*}
	\Delta=\binom{m+k}{k-1}\sum_{j=0}^{m+1}(-1)^j\binom{m}{j}\bigg(\frac{k}{k+j}P_{n,j}-\frac{k+m+1}{k+j+1}P_{n,j+1}\bigg).\tag{*}
\end{equation*}
We use the abbreviations
\[
	a_j:=(-1)^j\binom{m}{j}\frac{k}{k+j}\quad\text{and}\quad b_j:=(-1)^j\binom{m}{j}\frac{k+m+1}{k+j+1}
\]
and rewrite (*) as
\[
	\Delta=\binom{m+k}{k-1}\bigg(a_0 P_{n,0}+\sum_{j=0}^{m-1}(a_{j+1}-b_j)P_{n,j+1}-b_m P_{n,m+1}\bigg),
\]
where $a_0 P_{n,0}=\kronecker{n}{0}$ and $-b_m P_{n,m+1}=(-1)^{m+1}P_{n,m+1}$. A little calculation gives
\[
	a_{j+1}-b_j=(-1)^{j+1}\binom{m+1}{j+1}.
\]
In summary it results
\begin{align*}
	\binom{m+k}{k-1}^{-1}\Delta&=\kronecker{n}{0}+\sum_{j=0}^{m-1}(-1)^{j+1}\binom{m+1}{j+1}P_{n,j+1}+(-1)^{m+1}P_{n,m+1}\\
	&=\kronecker{n}{0}+\sum_{j=1}^{m+1}(-1)^j\binom{m+1}{j}P_{n,j}\\
	&=\sum_{j=0}^{m+1}(-1)^j\binom{m+1}{j}P_{n,j}.
\end{align*}
By means of the Bertrand formula \eqref{bertrand_formula} one readily verifies that the last sum is equal to $(-1)^{m+1}(m+1)!B_{n,m+1}$, which in fact vanishes for \mbox{$m\geq n$}.
\end{proof}

Comtet's formula for $P_{n,-k}$ in \proposition{comtet_thm_C_potpoly} has a striking resemblance to a well-known binomial transformation attributed to Melzak \cite{melz1951,megp1953}. Let $p(x)$ be any polynomial of degree $\leq n$.  If we now simply write $p(x+k)$ instead of $P_{n,-k}$ and $p(x-j)$ instead of $P_{n,j}$, the result is Melzak's formula
\begin{equation}\label{melzak_formula}
	p(x+k)=k\binom{n+k}{n}\sum_{j=0}^{n}(-1)^{j}\frac{1}{k+j}\binom{n}{j}p(x-j).
\end{equation}

Recently, several authors have presented new studies on this remarkable identity. Quaintance and Gould \cite{qugo2016} devoted Chapter 7 of their monograph to the subject. Boyadzhiev \cite{boya2016b} and Abel \cite{abel2020} provided extensions (concerning the degree of $p(x)$) and new proofs. The former author even traced \eq{melzak_formula} back up to Nielsen's treatise \cite{niel1923} on Bernoulli numbers.

We finally will derive an extended version of \eqref{melzak_formula} from \theorem{generalized_thm_C_potpoly}. This demonstrates that the successful replacement of $P_{n,-k}$ by $p(x+k)$ in Comtet's formula is not an accident. Rather, it turns out in this way that \theorem{generalized_thm_C_potpoly} is the more comprehensive statement.

\begin{thm}\label{thm_melzak}
Let $m,n$ be integers with $m\geq n\geq 0$ and $f_n(x)\in\complex[x]$ any polynomial of degree $n$. Then for all $k\in\integers\setminus\left\{-m,\ldots,-1,0\right\}$
\[
	f_n(x+k)=k\binom{m+k}{m}\sum_{j=0}^m(-1)^j\frac{1}{k+j}\binom{m}{j}f_n(x-j).
\]
\end{thm}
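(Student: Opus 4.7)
The plan is to deduce Theorem~\ref{thm_melzak} directly from Theorem~\ref{generalized_thm_C_potpoly} by combining $\complex$-linearity in $f_n$ with the unification $P_{s,k}\comp 1 = k^s$ recorded in Remark~\ref{power_unification}(i). Since both sides of the asserted identity depend $\complex$-linearly on $f_n \in \complex[x]_{\le n}$, it suffices to verify the identity on the monomial basis $f_n(y) = y^r$ for $0 \le r \le n$.

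Fix $r$ with $0 \le r \le n$. I would expand $(x+k)^r = \sum_{s=0}^r \binom{r}{s} x^{r-s} k^s$ and $(x-j)^r = \sum_{s=0}^r \binom{r}{s} x^{r-s}(-j)^s$ via the binomial theorem, treat $x$ as a formal indeterminate, and compare coefficients of $x^{r-s}$ on the two sides of Melzak's identity. The claim for $f_n(y) = y^r$ then breaks into the family of scalar identities
\[
(-k)^s \;=\; k\binom{m+k}{m}\sum_{j=0}^m (-1)^j \frac{1}{k+j}\binom{m}{j}\, j^s, \qquad 0 \le s \le r.
\]

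Each such identity is precisely what results from Theorem~\ref{generalized_thm_C_potpoly} (with $n$ replaced by $s$) upon applying the unification $X_i \mapsto 1$: by Remark~\ref{power_unification}(i) the left side becomes $P_{s,-k}\comp 1 = (-k)^s$ and the right side becomes $k\binom{m+k}{m}\sum_j (-1)^j\frac{1}{k+j}\binom{m}{j}(P_{s,j}\comp 1)$, which equals $k\binom{m+k}{m}\sum_j (-1)^j\frac{1}{k+j}\binom{m}{j}\, j^s$. The hypotheses match up: the requirement $m \ge s$ in Theorem~\ref{generalized_thm_C_potpoly} follows from $m \ge n \ge r \ge s$, and the excluded values $(-k)\notin\{0,1,\ldots,m\}$ translate exactly to the assumption $k \in \integers \setminus\{-m,\ldots,-1,0\}$.

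No substantive obstacle arises; the only aspect needing care is the sign bookkeeping through the two binomial expansions. Conceptually, the proof just observes that Melzak's extended identity for a polynomial $f_n$ of degree $\le n$ decouples, monomial by monomial in $x$, into one instance of the unified generalized Comtet identity for each degree $s \le n$, so that Theorem~\ref{thm_melzak} is a purely formal consequence of Theorem~\ref{generalized_thm_C_potpoly}.
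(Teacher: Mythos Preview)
Your argument is correct and considerably more direct than the paper's own proof. Both proofs start from the same ingredient: the unification of Theorem~\ref{generalized_thm_C_potpoly} via Remark~\ref{power_unification}(i), which yields the scalar identity $(-k)^s = k\binom{m+k}{m}\sum_{j=0}^m(-1)^j\frac{1}{k+j}\binom{m}{j}j^s$ (equivalently, the paper's equation $k^r = k\binom{m+k}{m}\sum_j(-1)^j\frac{1}{k+j}\binom{m}{j}(-j)^r$). From there, however, the routes diverge. You reduce to monomials $y^r$ by linearity, expand $(x+k)^r$ and $(x-j)^r$ binomially, and match coefficients of $x^{r-s}$; each coefficient equation is exactly one instance of the scalar identity with $s\le r\le n\le m$, so the hypothesis $m\ge s$ of Theorem~\ref{generalized_thm_C_potpoly} is automatically satisfied. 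The paper instead splits $f_n(x+k)=f_n(k)+g_{n,k}(x)$, handles the constant part $f_n(k)$ by the same linearity argument, but for the variable part $g_{n,k}(x)$ invokes the full polynomial identity $P_{n,k}=\cdots P_{n,-j}$ (obtained from Theorem~\ref{generalized_thm_C_potpoly} via the reciprocity $P_{n,-k}=P_{n,k}\comp R_{\num}$) and must solve a system $B_{n,j}(c_1,\ldots,c_{n-j+1})=\text{const}$ to realise $g_{n,k}(x)$ as $P_{n,k}(c_1x,\ldots,c_nx)$, appealing to Remark~\ref{Bell_equations} and requiring $n$th roots in $\complex$. Your route avoids all of that machinery; the paper's route, on the other hand, exhibits Melzak as a genuine \emph{specialisation} of the polynomial identity (not merely of its unification), which is in keeping with the section's emphasis on polynomial reciprocity.
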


\begin{proof}
We apply $\comp R_{\num}$ on both sides of the equation of \theorem{generalized_thm_C_potpoly}. According to the basic reciprocity law \eqref{basic_reciprocity_potpoly} this gives
\begin{equation}\label{thm_melzak_eq1}
	P_{r,k}=k\binom{m+k}{m}\sum_{j=0}^{m}(-1)^{j}\frac{1}{k+j}\binom{m}{j}P_{r,-j}
\end{equation}
for every non-negative $r\leq n$. Unification then yields
\begin{equation}\label{thm_melzak_eq2}
	k^r=k\binom{m+k}{m}\sum_{j=0}^m(-1)^j\frac{1}{k+j}\binom{m}{j}(-j)^r.
\end{equation}
By assumption we have $f_n(x)=a_{0}+a_{1}x+\cdots+a_{n}x^{n}$, $a_n\neq 0$, and hence from \eqref{thm_melzak_eq2}
\begin{equation}\label{thm_melzak_eq3}
	f_n(k)=k\binom{m+k}{m}\sum_{j=0}^m(-1)^j\frac{1}{k+j}\binom{m}{j}f_n(-j).
\end{equation}
Setting $g_{n,k}(x):=f_n(x+k)-f_n(k)$ we obtain $g_{n,k}(0)=0$ and
\begin{equation}\label{thm_melzak_eq4}
	g_{n,k}(x)=\sum_{r=1}^n a_{r}((x+k)^r-k^r)=\sum_{j=1}^n\bigg(\underbrace{\sum_{r=j}^n\binom{r}{j}a_{r}k^{r-j}}_{(\ast)}\bigg)x^j.
\end{equation}
Abbreviate the inner sum $(\ast)$ to $b_j(k)$.

We will now show that $g_{n,k}$ satisfies the Melzak formula. We prove that there are constants $c_1,\ldots,c_n\in\complex$ such that for all $n\geq 1$
\begin{equation}\label{thm_melzak_eq5}
	P_{n,k}(c_{1}x,\ldots,c_{n}x)=g_{n,k}(x).
\end{equation}
According to \eqref{power_1case} and because of the homogeneity of the partial Bell polynomials, the left-hand side of \eqref{thm_melzak_eq5} can be written as 
\begin{equation}\label{thm_melzak_eq6}
	P_{n,k}(c_{1}x,\ldots,c_{n}x)=\sum_{j=1}^n j!\binom{k}{j}B_{n,j}(c_1,\ldots,c_{n-j+1})x^j.
\end{equation}
Equating the coefficients of $x^j$ in \eqref{thm_melzak_eq4} and \eqref{thm_melzak_eq6} then yields the following equation system for the constants $c_1,\ldots,c_n$:
\begin{equation}\label{thm_melzak_eq7}
	B_{n,j}(c_1,\ldots,c_{n-j+1})=\frac{b_j(k)}{j!\binom{k}{j}}\quad(1\leq j\leq n).
\end{equation}
Recall that according to \remark{Bell_equations} the system \eqref{thm_melzak_eq7} is solvable if and only if $c_1\in\complex$ exists such that 
\begin{equation}\label{thm_melzak_eq8}
	c_1^n=\frac{b_n(k)}{n!}\binom{k}{n}^{\negmedspace-1}.
\end{equation}
Therefore, we choose $c_1$ to be any of the $n$th roots of the right-hand side of \eqref{thm_melzak_eq8}. Then the remaining constants $c_2,\ldots,c_n$ are uniquely determined (each of them depending on $k$, for example, $c_n=b_1(k)/k$). We now put $r=n$ and replace in \eqref{thm_melzak_eq1} each $X_j$ by $c_{j}x$, $1\leq j\leq n$. Since \eq{thm_melzak_eq5} is true for the constants $c_1,\ldots,c_n$, we are led to
\begin{equation}\label{thm_melzak_eq9}
	g_{n,k}(x)=k\binom{m+k}{m}\sum_{j=0}^m(-1)^j\frac{1}{k+j}\binom{m}{j}g_{n,-j}(x).
\end{equation}
Because of $f_n(x+k)=f_n(k)+g_{n,k}(x)$ the assertion follows from \eq{thm_melzak_eq3} and \eq{thm_melzak_eq9}.
\end{proof}
\appendix

\end{document}